\theoremstyle{plain}
\newtheorem{thm}{\protect\theoremname}
\theoremstyle{definition}
\newtheorem{defn}[thm]{\protect\definitionname}
\theoremstyle{plain}
\newtheorem{lem}[thm]{\protect\lemmaname}
\theoremstyle{plain}
\newtheorem{assumption}[thm]{\protect\assumptionname}
\theoremstyle{plain}
\newtheorem{cor}[thm]{\protect\corollaryname}
\theoremstyle{remark}
\newtheorem{rem}[thm]{\protect\remarkname}
\theoremstyle{definition}
\newtheorem{example}[thm]{\protect\examplename}
\theoremstyle{plain}
\newtheorem{prop}[thm]{\protect\propositionname}
\tikzstyle{block} = [rectangle, rounded corners, minimum width = 4cm, minimum height=1cm, text centered, draw = black, text width = 6cm]
\tikzstyle{arrow} = [thick, ->, >=stealth]
\tikzstyle{imparrow} = [double, double equal sign distance, -implies]
\providecommand{\assumptionname}{Assumption}
\providecommand{\corollaryname}{Corollary}
\providecommand{\definitionname}{Definition}
\providecommand{\examplename}{Example}
\providecommand{\lemmaname}{Lemma}
\providecommand{\propositionname}{Proposition}
\providecommand{\remarkname}{Remark}
\providecommand{\theoremname}{Theorem}
\begin{document}
\title{Explicit convergence bounds for Metropolis Markov chains: isoperimetry,
spectral gaps and profiles}
\author{Christophe Andrieu$^{*}$, Anthony Lee$^{*}$, Sam Power$^{*}$, Andi
Q. Wang$^{\dagger}$\\
School of Mathematics, University of Bristol$^{*}$\\
Department of Statistics, University of Warwick$^{\dagger}$}
\maketitle
\begin{abstract}
We derive the first explicit bounds for the spectral gap of a random
walk Metropolis algorithm on $\mathbb{R}^{d}$ for any value of the
proposal variance, which when scaled appropriately recovers the correct
$d^{-1}$ dependence on dimension for suitably regular invariant distributions.
We also obtain explicit bounds on the ${\rm L}^{2}$-mixing time for
a broad class of models. In obtaining these results, we refine the
use of isoperimetric profile inequalities to obtain conductance profile
bounds, which also enable the derivation of explicit bounds in a much
broader class of models. We also obtain similar results for the preconditioned
Crank--Nicolson Markov chain, obtaining dimension-independent bounds
under suitable assumptions.
\end{abstract}
\global\long\def\dif{\mathrm{d}}%
\global\long\def\Var{\mathrm{Var}}%
\global\long\def\R{\mathbb{R}}%
\global\long\def\X{\mathcal{X}}%
\global\long\def\calE{\mathcal{E}}%
\global\long\def\E{\mathsf{E}}%
\global\long\def\Ebb{\mathbb{E}}%

\global\long\def\ELL{\mathrm{L}^{2}}%
\global\long\def\osc{\mathrm{osc}}%
\global\long\def\Id{\mathrm{Id}}%
\global\long\def\essup{\mathrm{ess\,sup}}%
 
\global\long\def\shortminus{\mathrm{\shortminussymb}}%
\global\long\def\muess{\mathrm{ess_{\mu}}}%

\global\long\def\GapR{\mathrm{Gap_{{\rm R}}}}%


\section{Introduction}

\subsection{Results for Metropolis Markov chains}

Let $\pi$ be a probability distribution on $\left(\mathsf{E},\mathcal{\mathscr{E}}\right)$,
where $\E=\R^{d}$ and $\mathscr{E}$ denotes its Borel $\sigma$-algebra,
and suppose we seek to approximately sample from $\pi$. Markov chain
Monte Carlo (MCMC) algorithms address this problem by simulating an
ergodic, time-homogeneous Markov chain $\left(X_{n}\right)_{n\in\mathbb{N}}$
with invariant distribution $\pi$. One of the most simple and yet
enduringly popular MCMC algorithms is the \textit{Metropolis algorithm}
of \citet{Metropolis1953}. Assuming that $\pi$ has density $\varpi={\rm d}\pi/{\rm d}\nu$
with respect to some $\sigma$-finite measure $\nu$, and $Q$ is
a $\nu$-reversible Markov kernel, a \textit{Metropolis Markov kernel}
may be written 
\begin{equation}
P\left(x,A\right)=\int_{A}Q\left(x,{\rm d}y\right)\alpha\left(x,y\right)+{\bf 1}_{A}\left(x\right)\bar{\alpha}\left(x\right),\qquad x\in\mathsf{E},A\in\mathscr{E}.\label{eq:metropolis-kernel}
\end{equation}
where for $x,y\in\E$,
\begin{equation}
\alpha\left(x,y\right):=\min\left\{ 1,\frac{\varpi\left(y\right)}{\varpi\left(x\right)}\right\} ,\quad\bar{\alpha}\left(x\right):=1-\alpha\left(x\right),\quad\alpha\left(x\right):=\int_{\mathsf{E}}Q\left(x,{\rm d}y\right)\alpha\left(x,y\right).\label{eq:alpha_defs}
\end{equation}

In many applications, $Q\left(x,\cdot\right)$ is a multivariate normal
distribution with mean $x$ and covariance matrix $\sigma^{2}\cdot I_{d}$,
where $I_{d}$ is the identity matrix, in which case $\nu$ is the
Lebesgue measure and $P$ is the \textit{Random-Walk Metropolis} (RWM)
Markov kernel. Despite its simplicity, the RWM algorithm is known
to perform very well for certain classes of target distributions,
and furthermore to be a \textit{robust} algorithm \citep[see, e.g.,][]{Roberts1997,christensen2005scaling,chenJMLR:v21:19-441,Livingstone2022}.
In this paper, quantitative analysis of the $\ELL$-mixing time and
spectral gap of the RWM Markov chain is the primary application, with
a particular emphasis on the dependence of these quantities on dimension.
This analysis relies on a more general theory applicable beyond the
specific scenarios considered here; see Section~\ref{subsec:Roadmap}.

For a given target distribution $\pi$, after fixing the coordinate
system, the only tuning parameter of the RWM kernel is the proposal
variance $\sigma^{2}$. It is well-known that if $\sigma^{2}$ is
too large, then the acceptance function $\alpha$ will deteriorate,
and the Markov chain will tend to get ``stuck'' for long periods.
On the other hand, if $\sigma^{2}$ is too small, then the Markov
chain will tend to make very small steps. Both of these regimes correspond
intuitively to slow convergence of the Markov chain. In the celebrated
optimal scaling paper of \citet{Roberts1997}, it was shown, for a
fairly restrictive class of target distributions, that the proposal
variance $\sigma^{2}$ of RWM on $\R^{d}$ should scale like $d^{-1}$
to obtain a stable acceptance ratio in the high-dimensional limit,
and that the complexity of sampling depends linearly on dimension,
via a particular but indicative weak convergence result to a Langevin
diffusion. In this paper, we study the high-dimensional properties
of the RWM algorithm from a different angle: we seek to explicitly
bound the spectral gap of the RWM kernel in arbitrary dimension $d$
and for \textit{any} value of $\sigma^{2}$. For appropriately regular
distributions, we find that scaling $\sigma^{2}$ as $d^{-1}$ does
indeed imply a spectral gap that is precisely of order $d^{-1}$,
and that this choice of polynomial scaling is optimal. The following
is a combination of Corollary~\ref{cor:rwm-conductance-lower-Lm}
and Theorem~\ref{thm:upper-bound-gap}: 
\begin{thm}
\label{thm:intro-gap}Let $\pi$ have density $\pi\left(x\right)\propto\exp\left(-U\left(x\right)\right)$
with respect to Lebesgue measure on $\R^{d}$, where the potential
$U$ is $L$-smooth, $m$-strongly convex and twice continuously differentiable.
If $P$ is the $\pi$-reversible RWM kernel with $\mathcal{N}\left(0,\sigma^{2}\cdot I_{d}\right)$
proposal increments, then the spectral gap $\gamma_{P}$ of $P$ satisfies
\begin{equation}
C\cdot L\cdot d\cdot\sigma^{2}\cdot\exp\left(-2\cdot L\cdot d\cdot\sigma^{2}\right)\cdot\frac{m}{L}\cdot\frac{1}{d}\leqslant\gamma_{P}\leqslant\min\left\{ \frac{1}{2}\cdot L\cdot\sigma^{2},\left(1+m\cdot\sigma^{2}\right)^{-d/2}\right\} ,\label{eq:gap-lower-upper}
\end{equation}
where $C=1.972\times10^{-4}$.
\end{thm}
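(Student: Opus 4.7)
The two sides of \eqref{eq:gap-lower-upper} are essentially independent, and I would prove them separately; the lower bound is the substantive half.

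For the upper bound $\gamma_{P} \leq L\sigma^{2}/2$, I would apply the Rayleigh--Poincar\'e variational characterisation of the gap to the linear test function $f(x) = x_{1}$. A direct computation of the Dirichlet form, using $\alpha \leq 1$ and the fact that $Y_{1} - x_{1} \sim \mathcal{N}(0,\sigma^{2})$ under $Q(x,\cdot)$, yields
\[
\calE_{P}(f,f) = \tfrac{1}{2}\int\!\!\int (y_{1}-x_{1})^{2}\alpha(x,y)\, Q(x,\dif y)\,\pi(\dif x) \leq \tfrac{\sigma^{2}}{2}.
\]
A Cram\'er--Rao-type computation together with integration by parts gives $\Var_\pi(x_{1}) \geq 1/\Ebb_\pi[\partial^{2}_{11}U] \geq 1/L$ (using $\nabla^{2}U \preceq L\Id$), so $\gamma_{P} \leq \calE_{P}(f,f)/\Var_\pi(f) \leq L\sigma^{2}/2$. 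For the alternative bound $(1+m\sigma^{2})^{-d/2}$, I would specialise to the Gaussian benchmark $U_{\star}(x) = \frac{m}{2}|x|^{2}$, which lies in the hypothesised class with $L = m$; the isotropy of $\pi$ and of $Q$ makes the RWM operator tractable and a direct computation (e.g.\ via a Hermite-type polynomial eigenvector, or alternatively via a conductance upper bound testing against a half-space) exhibits an eigenvalue at precisely $(1+m\sigma^{2})^{-d/2}$.

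For the lower bound, the plan is Cheeger-plus-profile. \emph{First}, since $\pi$ is $m$-strongly log-concave, the Bakry--Ledoux theorem yields a Gaussian-type isoperimetric profile for $\pi$ with constant $\gtrsim\sqrt{m}$. \emph{Second}, using $L$-smoothness, I would derive a local conductance estimate for $P$: for any pair $x,x'$ with $|x-x'| \lesssim \sigma$, the worst-case $\log\varpi(y) - \log\varpi(x)$ over a proposal of typical radius $\sigma\sqrt{d}$ is $O(Ld\sigma^{2})$, which gives an expected acceptance, and hence a one-step total-variation overlap between $P(x,\cdot)$ and $P(x',\cdot)$, of order $e^{-2Ld\sigma^{2}}$. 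Combined via a Cheeger-type inequality this produces a conductance bound of order $\sqrt{m}\,\sigma\,e^{-Ld\sigma^{2}}$, hence a crude spectral gap lower bound of order $m\sigma^{2}e^{-2Ld\sigma^{2}}$, which rearranges to the stated expression. \emph{Third}, in order to recover the correct polynomial order in $d$ (not merely an $\exp(-cd)$ version of it), the cruder Cheeger step is replaced by the conductance-profile machinery developed earlier in the paper, culminating in Corollary~\ref{cor:rwm-conductance-lower-Lm}; this feeds the full isoperimetric profile of $\pi$, rather than only its Cheeger constant, into the gap lower bound.

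The main obstacle is this third step. A naive Cheeger argument based only on the worst-case acceptance probability incurs an exponential-in-$d$ loss rather than an exponential-in-$Ld\sigma^{2}$ one, so at the optimal scaling $\sigma^{2}\asymp 1/(Ld)$ the resulting bound would miss the correct $d^{-1}$ dependence predicted by optimal-scaling theory. Fixing this requires the scale-aware, profile-level analysis already in place in the paper; once those bounds are available, only careful bookkeeping of constants is needed to obtain \eqref{eq:gap-lower-upper}.
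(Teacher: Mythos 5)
Your route for the lower bound and for the $\tfrac{1}{2}\cdot L\cdot\sigma^{2}$ upper bound is essentially the paper's: Gaussian isoperimetry with constant $m^{1/2}$ from strong log-concavity (the paper uses Caffarelli's contraction theorem rather than Bakry--Ledoux, to the same effect), a uniform-in-$x$ lower bound $\alpha_{0}\geqslant\tfrac{1}{2}\cdot\exp(-\tfrac{1}{2}\cdot L\cdot d\cdot\sigma^{2})$ from $L$-smoothness, close coupling, a conductance bound, and Cheeger; and the linear test function with Cram\'er--Rao is exactly Lemma~\ref{lem:gap-upper-small-sigma}. However, your proof of the remaining upper bound $\gamma_{P}\leqslant(1+m\cdot\sigma^{2})^{-d/2}$ has a genuine gap: this inequality is asserted for \emph{every} $m$-strongly convex, $L$-smooth $U$, whereas specialising to the Gaussian $U_{\star}(x)=\tfrac{m}{2}\cdot|x|^{2}$ could at best show that the bound is attained by some member of the class, which is a different (optimality-type) claim. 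The paper's argument (Proposition~\ref{prop:kappa-upper-big-sigma}) works for general $U$: $m$-strong convexity and an explicit Gaussian integral give the pointwise bound $\alpha(x)\leqslant(1+m\cdot\sigma^{2})^{-d/2}\cdot\exp\bigl(\tfrac{1}{2}\cdot\tfrac{\sigma^{2}}{1+m\cdot\sigma^{2}}\cdot|\nabla U(x)|^{2}\bigr)$, one then tests the conductance against the sets $B_{\rho}$ on which $|\nabla U|$ is small, lets $\rho\to0^{+}$, and concludes via the upper Cheeger inequality $\gamma_{P}\leqslant\Phi_{P}^{*}$. You would need an argument of this form; note also that even for the Gaussian target the accept/reject step destroys the Hermite structure, so the claim that $(1+m\cdot\sigma^{2})^{-d/2}$ is an exact eigenvalue is not a safe fallback.

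On the lower bound, your ``main obstacle'' is not actually an obstacle, and the fix you propose misattributes the role of the conductance profile. The profile is needed only for the mixing-time bounds; the spectral gap lower bound in Corollary~\ref{cor:rwm-conductance-lower-Lm} is precisely the ``crude'' Cheeger step of your second stage. Since $\alpha_{0}\geqslant\tfrac{1}{2}\cdot\exp(-\tfrac{1}{2}\cdot L\cdot d\cdot\sigma^{2})$ uniformly (Corollary~\ref{cor:normal-accept-x}, via Jensen), the close-coupling parameters are $\delta=\alpha_{0}\cdot\sigma$ and $\varepsilon=\tfrac{1}{2}\cdot\alpha_{0}$, the conductance satisfies $\Phi_{P}^{*}\gtrsim\alpha_{0}^{2}\cdot\sigma\cdot m^{1/2}$, and squaring yields $\gamma_{P}\gtrsim m\cdot\sigma^{2}\cdot\exp(-2\cdot L\cdot d\cdot\sigma^{2})$ --- already the stated bound with the correct $d^{-1}$ order at $\sigma^{2}\asymp(L\cdot d)^{-1}$, as your own arithmetic shows. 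There is no exponential-in-$d$ loss to repair, because the worst-case acceptance probability depends on $d$ only through $L\cdot d\cdot\sigma^{2}$; the only refinement needed beyond your second stage is bookkeeping of the constants ($C=2^{-9}\cdot C_{\gamma}^{2}$).
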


Twice continuous differentiability of $U$ is only used to obtain
the upper bound. For some intuition, densities with $m$-strongly
convex and $L$-smooth potentials $U$ can be sandwiched between $\mathcal{N}\left(x_{*},L^{-1}\cdot I_{d}\right)$
and $\mathcal{N}\left(x_{*},m^{-1}\cdot I_{d}\right)$, up to constant
factors, where $x_{*}$ is the maximizer of the density of $\pi$;
see Lemma~\ref{lem:gauss-bd}.

Both the lower and upper bounds in~(\ref{eq:gap-lower-upper}) demonstrate
that taking $\sigma^{2}$ too small or too large causes $\gamma_{P}$
to decrease. The lower bound in (\ref{eq:gap-lower-upper}) is maximized
by taking $\sigma^{2}=1/\left(2\cdot L\cdot d\right)$, while the
rate at which the upper bound decreases with $d$ is also minimized,
among polynomial scalings, by scaling $\sigma^{2}$ with $d^{-1}$.
Taking $\sigma=\varsigma\cdot L^{-1/2}\cdot d^{-1/2}$ for any constant
$\varsigma>0$, we obtain 
\begin{equation}
C\cdot\varsigma^{2}\cdot\exp\left(-2\cdot\varsigma^{2}\right)\cdot\frac{m}{L}\cdot\dfrac{1}{d}\leqslant\gamma_{P}\leqslant\frac{\varsigma^{2}}{2}\cdot\dfrac{1}{d},\label{eq:gap-upper-lower-varsigma}
\end{equation}
so the $\mathcal{O}\left(d^{-1}\right)$ dimension dependence is tight.
The lower bound is maximized by taking $\varsigma^{2}=\frac{1}{2}$,
although it is unlikely that this is optimal in practice due to the
results of \citet{Roberts1997}. Similarly, it seems likely that the
optimal value of $C$ is possibly a few orders of magnitude larger. 

We also study the $\ELL$-convergence complexity of the RWM Markov
chain, noting that convergence can initially be \textit{faster} than
that indicated by the spectral gap alone and this turns out to be
crucial to establish our dimension dependence results for $m$-strongly
convex and $L$-smooth potentials. Under the same conditions as Theorem~\ref{thm:intro-gap}
and taking $\sigma=\varsigma\cdot L^{-1/2}\cdot d^{-1/2}$ as above,
we obtain that for at least two types of feasible initial distribution
$\mu$ (see Theorem~\ref{thm:RWM_mixing} and Remarks~\ref{rem:normal-L-initialization-RWM}
and~\ref{rem:bc-acc-warm}) one may take
\[
n\in\mathcal{O}\left(\exp\left(2\cdot\varsigma^{2}\right)\cdot\varsigma^{-2}\cdot\kappa\cdot d\cdot\left\{ \log d+\log\kappa+\log\left(\varepsilon_{{\rm Mix}}^{-1}\right)\right\} \right),
\]
and obtain $\chi^{2}\left(\mu P^{n},\pi\right)\leqslant\varepsilon_{{\rm Mix}}$,
where $\chi^{2}\left(\mu,\nu\right)$ denotes the $\chi^{2}$ divergence
between $\mu$ and $\nu$ and $\kappa:=L/m$ is the condition number.
In contrast, an analysis based only on the spectral gap bound $\gamma_{P}\in\Omega\left(1/\left(\kappa\cdot d\right)\right)$
would suggest a mixing time in $\mathcal{O}\left(d^{2}\kappa\log\kappa\right)$.

In practice, fluctuations of ergodic averages of $f\in\ELL\left(\pi\right)$
are also of interest, and one may consider the \textit{asymptotic
variance}, given by
\[
{\rm var}\left(P,f\right):=\lim_{n\to\infty}n\cdot{\rm var}\left(\frac{1}{n}\sum_{i=1}^{n}f\left(X_{i}\right)\right),
\]
where $X_{0}\sim\pi$. We show in Proposition~\ref{prop:result-avar-RWM}
that with $\sigma=\varsigma\cdot L^{-1/2}\cdot d^{-1/2}$, 
\begin{align*}
{\rm var}\left(P,f\right) & \leqslant10141\cdot\varsigma^{-2}\cdot\exp\left(2\cdot\varsigma^{2}\right)\cdot\kappa\cdot d\cdot\left\Vert f\right\Vert _{2}^{2},\qquad\varsigma>0.
\end{align*}
We also show that linear functions satisfy ${\rm var}\left(P,f\right)\geqslant2\cdot\varsigma^{-2}\cdot d\cdot\left\Vert f\right\Vert _{2}^{2}.$

We also analyze the \textit{preconditioned Crank--Nicolson} (pCN)
Markov chain via essentially analogous theory to the RWM chain, since
it is also a Metropolis Markov chain. For example, we show in Theorem~\ref{thm:pcn-gap}
if $\pi({\rm d}x)\propto\mathcal{N}({\rm d}x;0,\mathsf{C})\exp\left(-\Psi\left(x\right)\right)$
with $\Psi$ convex, $L$-smooth and minimized at $x=0$ then an appropriately
tuned pCN Markov chain's spectral gap satisfies
\[
\gamma_{P}\geqslant3.62784\times10^{-5}\cdot(L\cdot{\rm Tr}(\mathsf{C}))^{-1},
\]
giving dimension-independent bounds when $L\cdot{\rm Tr}(\mathsf{C})$
is bounded independent of dimension.

To prove these results we apply a general result, Theorem~\ref{thm:IP-to-mix},
which requires quantitative lower bounds on the \textit{isoperimetric
profile} of $\pi$ for some metric $\mathsf{d}$, complemented with
a quantitative \textit{close coupling} condition for $P$:
\begin{defn}[Close coupling]
For a metric $\mathsf{d}$ on $\E$ and $\epsilon,\delta>0$, a Markov
kernel $P$ evolving on $\mathsf{E}$ is \textit{$\left(\mathsf{d},\delta,\varepsilon\right)$-close
coupling} if
\[
\mathsf{d}\left(x,y\right)\;\leqslant\delta\Rightarrow\left\Vert P\left(x,\cdot\right)-P\left(y,\cdot\right)\right\Vert _{{\rm TV}}\leqslant1-\varepsilon,\qquad x,y\in\mathsf{E}.
\]
\end{defn}

This is to be contrasted with what is known about the overdamped Langevin
diffusion, which solves the stochastic differential equation
\[
\dif X_{t}=\nabla\log\pi\left(X_{t}\right)\,\dif t+\sqrt{2}\cdot\dif W_{t},
\]
for which knowledge of the isoperimetric profile \textit{alone} can
provide information on its convergence. For example, the overdamped
Langevin diffusion is associated with the classical Dirichlet form
$f\mapsto\pi\left(\left|\nabla f\right|^{2}\right)$; \citep[see, e.g.][Section~4.5]{pavliotis2014stochastic},
and this allows one to deduce Poincaré and log-Sobolev inequalities
in the presence of appropriate isoperimetric inequalities \citep[see, e.g.,][Section~2.2]{milman2012properties}.
The RWM chain may indeed be viewed as a discretization of this diffusion,
but our results do not explicitly compare the diffusion with the Markov
chain; indeed our quantitative bounds are valid in \textit{any} dimension
and for \textit{any} value of $\sigma^{2}$. The additional close
coupling condition required for RWM in fact introduces a penalty in
the convergence bounds, by which convergence degrades as the product
$\delta\cdot\varepsilon$ decreases. To demonstrate close coupling
for Metropolis chains, we show that for $\alpha_{0}:=\inf_{z\in\mathsf{E}}\alpha\left(z\right)$,
with $\alpha$ as in (\ref{eq:alpha_defs}),
\[
\left\Vert P\left(x,\cdot\right)-P\left(y,\cdot\right)\right\Vert _{{\rm TV}}\leqslant\left\Vert Q\left(x,\cdot\right)-Q\left(y,\cdot\right)\right\Vert _{{\rm TV}}+1-\alpha_{0},
\]
and we show that $\alpha_{0}$ can be lower bounded for any $\sigma^{2}$
under the assumption of $L$-smoothness. One may then take $\delta$
such that $\left|x-y\right|\leqslant\delta\Rightarrow\left\Vert Q\left(x,\cdot\right)-Q\left(y,\cdot\right)\right\Vert _{{\rm TV}}\leqslant\frac{1}{2}\cdot\alpha_{0}$
to obtain that $P$ is close coupling with $\varepsilon\geqslant\frac{1}{2}\cdot\alpha_{0}$.
In our analysis, we find that to maximize the spectral gap of $P$
as a function of dimension, it is sufficient to scale $\sigma^{2}$
as $d^{-1}$. Ultimately, one may view the penalty for running an
appropriately tuned RWM instead of Langevin as being of order $d^{-1}$
in terms of the spectral gap.

\subsection{Roadmap \label{subsec:Roadmap}}

In Section~\ref{sec:Conductance-Profile,-Spectral}, we review the
notions of \textit{conductance profile} and \textit{spectral profile}
for Markov chains, and show how these can be used to establish bounds
on the spectral gap and mixing time of the chain.

In Section~\ref{sec:From-Isoperimetric-Profiles}, we introduce notions
of \textit{isoperimetric profiles} of probability measures with respect
to a given metric. We show that when combined with the \textit{close
coupling} condition for an invariant Markov kernel, one can deduce
bounds on the conductance profile of the chain, and hence on the spectral
gap and mixing time. We then give a number of concrete examples in
which the isoperimetric profile can be well-controlled, and discuss
the implications on convergence. See Figure~\ref{fig:secs2-3-map}
for a diagram of these results. We also prove a general close-coupling
result for Metropolis algorithms. 

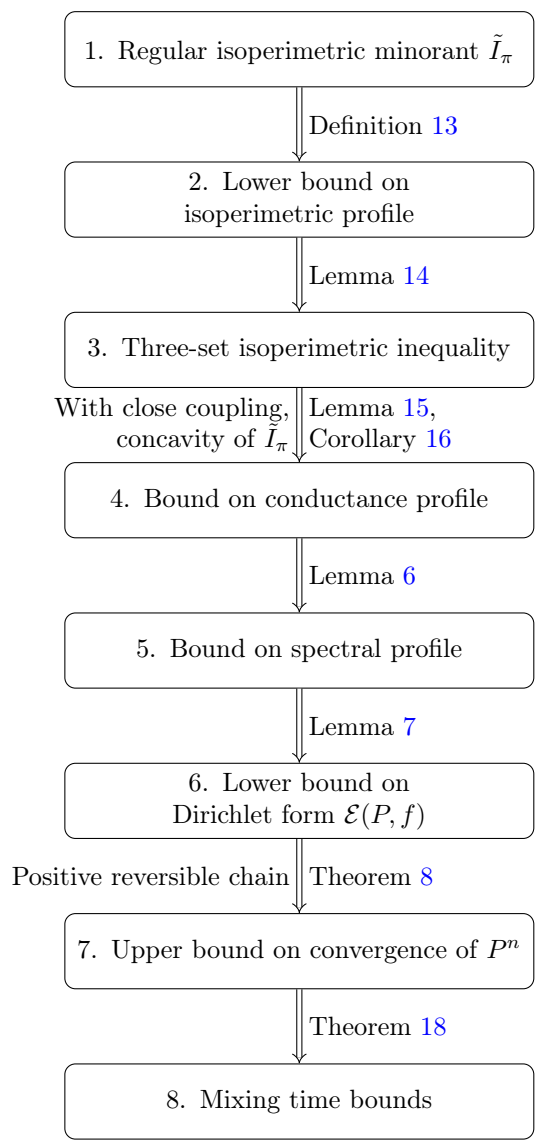
\begin{figure}
\begin{centering}
\begin{tikzpicture}[node distance = 2cm]

\node (start) [block] {1. Regular isoperimetric minorant $\tilde I_\pi$};
\node (block1) [block, below of = start] {2. Lower bound on isoperimetric profile};
\node (block2) [block, below of = block1] {3. Three-set isoperimetric inequality};
\node (block3) [block, below of = block2] {4. Bound on conductance profile};
\node (block4) [block, below of = block3] {5. Bound on spectral profile};
\node (block5) [block, below of = block4] {6. Lower bound on Dirichlet form $\mathcal E(P,f)$};
\node (block6) [block, below of = block5] {7. Upper bound on convergence of $P^n$};
\node (block7) [block, below of = block6] {8. Mixing time bounds};

\draw [imparrow] (start) -- node[anchor=west] {Definition~\ref{def:iso-ineq}} (block1);
\draw [imparrow] (block1) -- node[anchor=west] {Lemma~\ref{lem:iso-to-3set}} (block2);
\draw [imparrow] (block2) -- node[align = right,anchor=east] {With close coupling, \\ concavity of $\tilde I_\pi$} node[align = left,anchor=west] {Lemma~\ref{lem:iso-conductance-A-close-coupling}, \\ Corollary~\ref{cor:iso-couple-conductance}}  (block3);
\draw [imparrow] (block3) -- node[anchor=west] {Lemma~\ref{lem:cond-spec-profile-gap}} (block4);
\draw [imparrow] (block4) -- node[anchor=west] {Lemma~\ref{lem:dirichlet-variance-spectral-profile}} (block5);
\draw [imparrow] (block5) -- node[anchor=west] {Theorem~\ref{thm:CP-to-mix}} node[anchor=east] {Positive reversible chain} (block6);
\draw [imparrow] (block6) -- node[anchor=west] {Theorem~\ref{thm:IP-to-mix}} (block7);

\end{tikzpicture}
\par\end{centering}
\caption{Outline of the results of Sections~\ref{sec:Conductance-Profile,-Spectral},~\ref{sec:From-Isoperimetric-Profiles}}
\label{fig:secs2-3-map}
\end{figure}

In Sections~\ref{sec:Spectral-gap-of-RWM}--\ref{sec:Convergence-and-mixing},
we apply these tools to study the RWM algorithm. Although one can
obtain non-asymptotic bounds for the spectral gap and mixing time
under various isoperimetric and smoothness assumptions using our techniques,
we focus on obtaining concrete results when $U$ is \textit{$m$-strongly
convex} and \textit{$L$-smooth}. In this case, we obtain non-asymptotic
estimates on both the spectral gap and the mixing time. Furthermore,
we show that for this class of densities the dependence of the spectral
gap on dimension is sharp, up to sub-polynomial factors. We also demonstrate
that, when appropriately tuned, the asymptotic variance of a RWM chain
is upper bounded by a linear factor $d$ times the ideal variance,
and that there is a matching linear lower bound for linear functions.

In Section~\ref{sec:Cvg-mix-pcn}, we apply the same tools to the
study of the pCN algorithm for sampling perturbations of Gaussian
measures. We again obtain non-asymptotic estimates on both the spectral
gap and mixing time of the chain.

\subsection{Related work \label{subsec:Related-work}}

This paper develops in a systematic manner a comparatively crude analysis
in a technical report by the authors \citep[Sections~5.3--5.4]{Andrieu2022}.

One of the first attempts to establish directly the existence of a
(right) $\ELL$-spectral gap for RWM on $\mathbb{R}^{d}$ is \citet{miclo2000trous},
where a quantization approach is used to approximate the initial problem,
and results concerning Markov chains on graphs are leveraged. The
authors focus on the scenario $d=1$, although it is argued that the
results could be generalized to multiple dimensions. The assumptions
made on the negative log-density $U$ are less stringent than those
considered here, however no quantitative bounds were provided. Another
contribution in this direction is \citet{jarner-yuen} where a lower
bound on the conductance was obtained for monotone log-concave distributions
on $\mathbb{R}$. The existence of an $\ELL$ spectral gap can also
be established using drift and minorization techniques: for example,
\citet{jarner2000geometric} demonstrate that the RWM chain is ${\rm L}^{1}$-
geometrically ergodic under fairly mild conditions on $\pi$, and
since RWM chains are reversible those conditions also imply $\ELL$-geometric
ergodicity and the existence of a spectral gap \citep{roberts1997geometric,roberts2001geometric}.
However, these techniques typically do not provide accurate quantitative
bounds on the size of the gap; see for instance, \citet{qin2021limitations}. 

The results and the approach we take here are inspired by several
papers. The recent papers \citet{belloni2009computational}, \citet{Dwivedi-Chen-Wainwright-JMLR:v20:19-306}
and \citet{chenJMLR:v21:19-441} are most closely related to our approach.
All three of these papers also consider bounds on the conductance
or conductance profile of the RWM kernel $P$, but \textit{restricted}
to some compact subset $K$ of $\mathbb{R}^{d}$. As a consequence,
they do not provide a positive lower bound on the spectral gap of
the unrestricted $P$. More precisely, \citet{belloni2009computational}
and \citet{Dwivedi-Chen-Wainwright-JMLR:v20:19-306} prove a restricted
variant of Corollary~\ref{cor:iso-couple-conductance} for the conductance,
corresponding to connection 3--4 in Figure~\ref{fig:secs2-3-map}.
\citet{chenJMLR:v21:19-441} prove restricted variants of connections
3--8. \citet{Dwivedi-Chen-Wainwright-JMLR:v20:19-306} and \citet{chenJMLR:v21:19-441}
obtain complexity bounds for convergence of $\mu P^{n}$ to $\pi$
on $\mathbb{R}^{d}$ using the notion of $s$-conductance, which entails
delicate balancing of the desired final error, the size of $K$, properties
of $\mu$ and even $\sigma^{2}$. In particular, we emphasize that
all three prior complexity analyses involve using specific, theoretically-motivated
and typically unknown values of $\sigma^{2}$, so the results do not
cover the arbitrary values of $\sigma^{2}$ used in practice. The
restriction to $K$ in these papers is necessary since the authors
only verify the close coupling condition for $P$ on $K$. In contrast,
we are able to verify this condition globally, and hence there is
no need to consider restrictions. As a result, we can obtain a positive
lower bound on the spectral gap, and the convergence analysis does
not require the same type of fine balancing. In particular, we also
find an improved dependence of the mixing time on the condition number
$\kappa$, in comparison to the dependence in \citet{chenJMLR:v21:19-441}.
We also mention \citet{MATHE2007673}, who proved that the Metropolis
chain with a ball-walk proposal for $\pi$ log-concave with Lipschitz
potential and restricted to a ball has a spectral gap in $\Omega(d^{-2})$;
see also \citet{RUDOLF200911}.

\citet{belloni2009computational} and \citet{Dwivedi-Chen-Wainwright-JMLR:v20:19-306}
use a type of 3-set exponential isoperimetric profile inequality to
infer a bound on the conductance of the chain restricted to $K$,
in the presence of the close coupling condition. The isoperimetric
inequality is verified for (perturbations of) $m$-strongly convex
potentials. In \citet{chenJMLR:v21:19-441}, a Gaussian 3-set isoperimetric
profile inequality is used to infer a bound on the conductance profile
of the restricted chain in the presence of the same coupling condition,
and an isoperimetric profile inequality is verified for strongly convex
potentials. Our main contribution in relation to this part of the
theory is to show that any sufficiently regular isoperimetric profile
implies a corresponding 3-set isoperimetric inequality. In fact, \citet{chenJMLR:v21:19-441}'s
consideration of the Gaussian 3-set isoperimetric inequality and its
implication for rapid convergence far from equilibrium was the main
inspiration for our results relating classical isoperimetric profiles
and conductance profiles more generally. Our subsequent mixing time
results are mostly direct consequences of the relationships between
the conductance profile, spectral profile and $\ELL$-convergence,
as developed by \citet{goel2006mixing}. 

\citet{hairer2014spectral} show the existence and stability of the
spectral gap of pCN as $d\to\infty$ under quite general conditions,
but the bounds so obtained are understood to be somewhat loose numerically,
and their dependence on the various parameters of the target measure
is implicit. Here, we make more restrictive assumptions on the target
measure, which allows us to obtain bounds which are more interpretable
and perhaps sharper.

\subsection{Notation}

Notation is collected for convenience in Appendix~\ref{sec:Notation}.

\section{Conductance profile, spectral profile, and mixing time bounds\label{sec:Conductance-Profile,-Spectral}}

The spectral gap $\gamma_{P}$ of a $\pi$-reversible Markov kernel
$P$ provides important information on the convergence of the chain.
Indeed, for any $n\in\mathbb{N}_{0}$,
\begin{equation}
\left\Vert P^{n}f\right\Vert _{2}\leqslant\left\Vert f\right\Vert _{2}\cdot\left(1-\gamma_{P}\right)^{n},\qquad f\in\ELL_{0}\left(\pi\right),\label{eq:gap-convergence}
\end{equation}
and the factor $\left(1-\gamma_{P}\right)$ cannot be reduced in general,
motivating quantitative lower bounds on $\gamma_{P}$. By taking $f={\rm d}\mu/{\rm d}\pi-1$
in (\ref{eq:gap-convergence}) we may deduce bounds on $\chi^{2}\left(\mu P^{n},\pi\right)$,
the chi-squared divergence between $\mu P^{n}$ and $\pi$, and thereby
upper bound mixing times. However, using only this bound can give
very conservative bounds when $\chi^{2}\left(\mu,\pi\right)$ is large,
and so we will use more refined techniques to control the convergence
behaviour of the chain when it is far from equilibrium. In particular,
we make use of the \textit{spectral profile} \citep{goel2006mixing}
and \textit{conductance profile} of the Markov chain \citep{lovasz1999faster,morris2005evolving}.
These techniques are able to capture the following phenomenon: many
Markov chains, when far from equilibrium, are able to mix at \textit{faster}
than exponential rates, or equally, that sets of small measure in
the state space are comparatively easier to escape from. Moreover,
these techniques are capable of providing greatly-improved bounds
on mixing times, and in some cases, nearly-optimal bounds \citep[see, e.g.,][]{kozma2007precision}.
\begin{defn}[Conductance and conductance profile]
\label{def:conductance-profile}The \textit{conductance profile}
of a $\pi$-invariant Markov kernel $P$ is
\[
\Phi_{P}\left(v\right):=\inf\left\{ \frac{\left(\pi\otimes P\right)\left(A\times A^{\complement}\right)}{\pi\left(A\right)}:A\in\mathscr{E},0<\pi\left(A\right)\leqslant v\right\} ,\qquad v\in\left(0,\frac{1}{2}\right].
\]
The \textit{conductance} of $P$ is $\Phi_{P}^{*}:=\Phi_{P}\left(\frac{1}{2}\right)$.
\end{defn}

\begin{defn}[Spectral profile]
Let $P$ be a $\pi$-invariant Markov kernel, then we define
\[
C_{0}^{+}\left(A\right):=\left\{ g:\mathsf{E}\to\mathbb{R}\;\mid\;{\rm supp}\,g\subseteq A,g\geqslant0,g\neq\text{const. \ensuremath{\pi}-a.s.}\right\} ,\qquad A\in\mathscr{E},
\]
where $\mathrm{supp}\,g$ is the closure of $\left\{ x\in\E:\left|g\left(x\right)\right|>0\right\} $,
and
\[
\lambda_{P}\left(A\right):=\inf_{g\in C_{0}^{+}\left(A\right)}\frac{\mathcal{E}\left(P,g\right)}{{\rm Var}_{\pi}\left(g\right)},\qquad A\in\mathscr{E},\pi(A)>0.
\]
The \textit{spectral profile} of $P$ is
\[
\Lambda_{P}\left(v\right):=\inf\left\{ \lambda_{P}\left(A\right):A\in\mathscr{E},0<\pi\left(A\right)\leqslant v\right\} ,\qquad v>0.
\]
\end{defn}

We note that for all $v>0$ and $\pi$-reversible $P$, we have that
$\Lambda_{P}\left(v\right)\geqslant{\rm Gap}_{{\rm R}}\left(P\right)\geqslant\gamma_{P}$.
To proceed from here, we first use a Cheeger-type argument to bound
the spectral profile using the conductance profile. The statement
and proof of Lemma~\ref{lem:cond-spec-profile-gap} are very similar
to \citet[Lemma~12]{chenJMLR:v21:19-441}, with one difference being
that we do not restrict the state space. The proof can be found in
Appendix~\ref{sec:Proofs-of-auxiliary}. We also recall Cheeger's
inequalities.
\begin{lem}[{\citealt[Theorem~3.5; Cheeger's inequalities]{lawler1988bounds}}]
\label{lem:lawlersokal}If $P$ is a $\pi$-reversible Markov kernel,
then
\[
\frac{1}{2}\cdot\left[\Phi_{P}^{*}\right]^{2}\leqslant\GapR\left(P\right)\leqslant2\Phi_{P}^{*}.
\]
\end{lem}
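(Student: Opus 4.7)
The plan is to prove the two bounds separately, using the Rayleigh characterisation
\[
\GapR(P) = \inf_{g \in \ELL_{0}(\pi),\, g \neq 0} \frac{\calE(P, g)}{\|g\|_{2}^{2}}
\]
recalled in the preliminaries, together with the reversibility identity $\calE(P, g) = \tfrac{1}{2} \int\!\!\int (g(x) - g(y))^{2} \, \pi(\dif x) P(x, \dif y)$.

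The upper bound is the easier direction: I would test the Rayleigh quotient at the centred indicator $g_{A} := \mathbf{1}_{A} - \pi(A)$ for each $A \in \mathscr{E}$ with $0 < \pi(A) \leqslant 1/2$. Direct computation gives $\|g_{A}\|_{2}^{2} = \pi(A)\pi(A^{\complement})$, while the reversibility identity and the fact that $(g_{A}(x) - g_{A}(y))^{2} = \mathbf{1}_{A \times A^{\complement}}(x,y) + \mathbf{1}_{A^{\complement} \times A}(x,y)$ collapse the Dirichlet form to $\calE(P, g_{A}) = (\pi \otimes P)(A \times A^{\complement})$. Dividing, using $\pi(A^{\complement}) \geqslant 1/2$, and infimising over admissible $A$ then produces the desired upper bound.

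The lower bound is the classical Cheeger inequality, whose proof I would structure in three steps. \emph{Median truncation:} given $g \in \ELL_{0}(\pi)$, let $c$ be a median of $g$ and split $g - c = g_{+} - g_{-}$ with $g_{\pm} \geqslant 0$, disjoint supports, and $\pi(g_{\pm} > 0) \leqslant 1/2$. Translation-invariance of $\calE(P, \cdot)$, the identity $\|g\|_{2}^{2} \leqslant \|g - c\|_{2}^{2} = \|g_{+}\|_{2}^{2} + \|g_{-}\|_{2}^{2}$, and the fact that $\langle g_{+}, Pg_{-}\rangle \geqslant 0 = \langle g_{+}, g_{-}\rangle$ (from nonnegativity of $P$ and of $g_{\pm}$, plus disjoint supports) together give $\calE(P, g) \geqslant \calE(P, g_{+}) + \calE(P, g_{-})$, so it suffices to prove $\calE(P, h) \geqslant \tfrac{1}{2}[\Phi_{P}^{*}]^{2} \|h\|_{2}^{2}$ for a single $h \geqslant 0$ with $\pi(h > 0) \leqslant 1/2$. \emph{Cauchy--Schwarz:} factorising $h^{2}(x) - h^{2}(y) = (h(x) - h(y))(h(x) + h(y))$ and applying Cauchy--Schwarz against $\pi(\dif x) P(x, \dif y)$ give
\[
\left(\int\!\!\int |h^{2}(x) - h^{2}(y)| \, \pi(\dif x) P(x, \dif y)\right)^{2} \leqslant 2 \calE(P, h) \cdot \int\!\!\int (h(x) + h(y))^{2} \, \pi(\dif x) P(x, \dif y),
\]
and the right-most factor is bounded by $4\|h\|_{2}^{2}$ using $\pi$-invariance of $P$ together with $(a+b)^{2} \leqslant 2(a^{2}+b^{2})$.

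The main obstacle is the matching \emph{co-area lower bound} that makes the left-hand side above comparable to $\Phi_{P}^{*} \|h\|_{2}^{2}$. Writing $\|h\|_{2}^{2} = \int_{0}^{\infty} \pi(A_{t}) \, \dif t$ with $A_{t} := \{h^{2} > t\}$ by Fubini--Tonelli, applying $(\pi \otimes P)(A_{t} \times A_{t}^{\complement}) \geqslant \Phi_{P}^{*} \pi(A_{t})$ at each level (valid since $\pi(A_{t}) \leqslant \pi(h > 0) \leqslant 1/2$), integrating in $t$, and invoking reversibility to combine $A_{t} \times A_{t}^{\complement}$ with $A_{t}^{\complement} \times A_{t}$ yields $\int\!\!\int |h^{2}(x) - h^{2}(y)| \, \pi(\dif x) P(x, \dif y) \geqslant 2 \Phi_{P}^{*} \|h\|_{2}^{2}$. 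Substituting into the Cauchy--Schwarz estimate and rearranging delivers $\calE(P, h) \geqslant \tfrac{1}{2}[\Phi_{P}^{*}]^{2} \|h\|_{2}^{2}$, completing the argument. The delicate points are measurability of the level sets across all $t \geqslant 0$, the inheritance of $\pi(A_{t}) \leqslant 1/2$ from the support constraint on $h$, and the decomposition step that reduces a general $\ELL_{0}(\pi)$ test function to a single-signed one without sacrificing the Dirichlet form.
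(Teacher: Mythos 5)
The paper does not actually prove this lemma --- it is quoted from \citet[Theorem~3.5]{lawler1988bounds} --- so there is no internal proof to compare against; the closest thing is the appendix proof of Lemma~\ref{lem:cond-spec-profile-gap}, which runs the same Cauchy--Schwarz-plus-co-area machinery for the spectral profile. Your lower-bound argument is correct and is precisely that standard machinery, augmented by the median decomposition $g-c=g_{+}-g_{-}$ needed to pass from functions of one sign with small support to general mean-zero $g$; the key inequalities ($\calE(P,g)\geqslant\calE(P,g_{+})+\calE(P,g_{-})$ via $\langle g_{+},Pg_{-}\rangle\geqslant0$, the level-set bound $(\pi\otimes P)(A_{t}\times A_{t}^{\complement})\geqslant\Phi_{P}^{*}\cdot\pi(A_{t})$, and the final rearrangement giving the constant $\tfrac12$) all check out.

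The upper bound, however, has a genuine gap. Your test-function computation gives $\GapR(P)\leqslant\calE(P,g_{A})/\|g_{A}\|_{2}^{2}=(\pi\otimes P)(A\times A^{\complement})/\bigl(\pi(A)\,\pi(A^{\complement})\bigr)$, and the step ``using $\pi(A^{\complement})\geqslant\tfrac12$'' goes the wrong way: it yields $1/\bigl(\pi(A)\pi(A^{\complement})\bigr)\leqslant2/\pi(A)$, hence only $\GapR(P)\leqslant2\cdot\Phi_{P}^{*}$. To land on $\Phi_{P}^{*}$ itself you would need $\pi(A^{\complement})\geqslant1$. This cannot be repaired: with the paper's normalization of conductance (dividing by $\pi(A)$ rather than by $\pi(A)\pi(A^{\complement})$ as in Lawler--Sokal), the inequality $\GapR(P)\leqslant\Phi_{P}^{*}$ is false in general --- for the two-state kernel $P(x,\cdot)=\tfrac12\delta_{x}+\tfrac12\delta_{x^{\complement}}$ with uniform $\pi$ one has $\GapR(P)=1$ but $\Phi_{P}^{*}=\tfrac12$. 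So the defect lies in the statement as transcribed rather than in your strategy; your argument proves the correct form $\tfrac12\cdot[\Phi_{P}^{*}]^{2}\leqslant\GapR(P)\leqslant2\cdot\Phi_{P}^{*}$, which is all the paper ever needs (only the factor-independent dimension scaling of the upper bound is used, in Theorem~\ref{thm:upper-bound-gap}).
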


\begin{lem}
\label{lem:cond-spec-profile-gap}If $P$ is a $\pi$-reversible Markov
kernel, then
\[
\Lambda_{P}\left(v\right)\geqslant\begin{cases}
\frac{1}{2}\cdot\Phi_{P}\left(v\right)^{2} & 0<v\leqslant\frac{1}{2},\\
\frac{1}{2}\cdot\left[\Phi_{P}^{*}\right]^{2} & v>\frac{1}{2}.
\end{cases}
\]
\end{lem}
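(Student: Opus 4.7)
The plan is to handle the regimes $v \leq \frac{1}{2}$ and $v > \frac{1}{2}$ separately, with the latter being essentially immediate. For $v > \frac{1}{2}$, the remark preceding the lemma that $\Lambda_P(v) \geq \GapR(P)$ for every $v > 0$, combined with the lower half of Cheeger's inequality (Lemma~\ref{lem:lawlersokal}), directly gives $\Lambda_P(v) \geq \GapR(P) \geq \frac{1}{2} \cdot [\Phi_P^{\ast}]^2$.

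The substantive case $0 < v \leq \frac{1}{2}$ I would handle by running the classical Cheeger argument at the level of super-level sets. Fix $A \in \mathscr{E}$ with $\pi(A) \leq v$ and $g \in C_0^+(A)$, and for $t \geq 0$ define $A_t := \{x \in \E : g^2(x) > t\}$. Since $A_t \subseteq A$, we have $\pi(A_t) \leq v$, so each $A_t$ is admissible for $\Phi_P(v)$, yielding $(\pi \otimes P)(A_t \times A_t^{\complement}) \geq \Phi_P(v) \cdot \pi(A_t)$. The layer-cake identity $|g^2(x) - g^2(y)| = \int_0^\infty |\mathbf{1}_{A_t}(x) - \mathbf{1}_{A_t}(y)|\,\dif t$, integrated against $\pi \otimes P$ and symmetrised using $\pi$-reversibility, gives
\[
\iint |g^2(x) - g^2(y)|\,\pi(\dif x) P(x,\dif y) = 2 \int_0^\infty (\pi \otimes P)(A_t \times A_t^{\complement})\,\dif t \geq 2 \cdot \Phi_P(v) \cdot \Ebb_\pi[g^2],
\]
where the final step uses $\int_0^\infty \pi(A_t)\,\dif t = \Ebb_\pi[g^2]$.

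To convert this into a lower bound on $\calE(P,g)$, I would apply the Lawler--Sokal Cauchy--Schwarz trick to the factorisation $|g^2(x) - g^2(y)| = |g(x) - g(y)| \cdot |g(x) + g(y)|$: Cauchy--Schwarz against $\pi \otimes P$ extracts $\sqrt{2 \calE(P,g)}$ from the difference factor, while the sum factor contributes at most $2\sqrt{\Ebb_\pi[g^2]}$ via the pointwise bound $(a+b)^2 \leq 2(a^2+b^2)$ together with the $\pi$-invariance of $P$. Combining with the previous display, squaring, and rearranging yields
\[
\calE(P,g) \geq \frac{1}{2} \cdot \Phi_P(v)^2 \cdot \Ebb_\pi[g^2] \geq \frac{1}{2} \cdot \Phi_P(v)^2 \cdot \Var_\pi(g),
\]
and taking the infimum over $g \in C_0^+(A)$ followed by the infimum over admissible $A$ produces $\Lambda_P(v) \geq \frac{1}{2} \cdot \Phi_P(v)^2$.

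The only mild subtlety is the co-area manipulation and the accompanying use of reversibility to rewrite $\iint |\mathbf{1}_{A_t}(x) - \mathbf{1}_{A_t}(y)|\,\pi(\dif x) P(x,\dif y)$ as $2 \cdot (\pi \otimes P)(A_t \times A_t^{\complement})$; beyond that, the argument is a routine chaining of Cauchy--Schwarz with the definitions of $\calE$, $\Phi_P$, and $\Var_\pi$, and I do not anticipate a genuine obstacle.
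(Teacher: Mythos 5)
Your proposal is correct and follows essentially the same route as the paper's proof: a layer-cake/co-area argument over the super-level sets of $g^{2}$ (which lie in $A$ and hence are admissible for the conductance profile), symmetrised via reversibility, followed by the Lawler--Sokal Cauchy--Schwarz factorisation $\left|g^{2}\left(x\right)-g^{2}\left(y\right)\right|=\left|g\left(x\right)-g\left(y\right)\right|\cdot\left|g\left(x\right)+g\left(y\right)\right|$ and the bound $\pi\left(g^{2}\right)\geqslant\mathrm{Var}_{\pi}\left(g\right)$. The $v>\frac{1}{2}$ case is handled identically via $\Lambda_{P}\left(v\right)\geqslant\mathrm{Gap}_{\mathrm{R}}\left(P\right)$ and Cheeger's inequality.
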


We will make use of the following lower bound on the Dirichlet form
in terms of the spectral profile.
\begin{lem}[{\citealt[Lemma~2.1]{goel2006mixing}}]
\label{lem:dirichlet-variance-spectral-profile}For $g\in\ELL\left(\pi\right)$
non-negative and not constant $\pi$-a.s.,
\[
\mathcal{E}\left(P,g\right)\geqslant{\rm Var}_{\pi}\left(g\right)\cdot\frac{1}{2}\cdot\Lambda_{P}\left(4\cdot\frac{\left[\pi\left(g\right)\right]^{2}}{{\rm Var}_{\pi}\left(g\right)}\right).
\]
\end{lem}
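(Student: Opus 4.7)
The plan is to apply the definition of the spectral profile $\Lambda_{P}$ not to $g$ itself but to the truncation $f:=(g-c)_{+}$ for a threshold $c>0$ to be chosen. Two structural facts underpin the argument. First, the map $t\mapsto(t-c)_{+}$ is $1$-Lipschitz, so after rewriting $\calE(P,\cdot)$ via the reversibilisation $\bar P:=(P+P^{*})/2$ in its symmetric form $\calE(P,h)=\tfrac{1}{2}\iint(h(x)-h(y))^{2}\,\pi(\dif x)\,\bar P(x,\dif y)$, one obtains $\calE(P,f)\leq\calE(P,g)$. Second, $\{f>0\}=\{g>c\}$, so whenever $f$ is nonconstant it qualifies as a test function in $C_{0}^{+}(\{g>c\})$.

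Write $m:=\pi(g)$ and $V:=\Var_{\pi}(g)$, both strictly positive by nonconstancy of the nonnegative $g$, and choose $c:=V/(4m)$. If one had $g\leq c$ $\pi$-almost surely, then $\pi(g^{2})\leq c\pi(g)=V/4$, contradicting $\pi(g^{2})=V+m^{2}\geq V$; hence $\pi(g>c)>0$ and $f$ is a valid test function. Markov's inequality yields $\pi(g>c)\leq m/c=4m^{2}/V$, so by monotonicity of $\Lambda_{P}$ in its argument,
\[
\calE(P,g)\;\geq\;\calE(P,f)\;\geq\;\lambda_{P}(\{g>c\})\,\Var_{\pi}(f)\;\geq\;\Lambda_{P}\!\left(4m^{2}/V\right)\Var_{\pi}(f).
\]

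The main remaining step, and the principal obstacle, is the quantitative lower bound $\Var_{\pi}(f)\geq V/2$. The key observation is the pointwise identity
\[
g^{2}-f^{2}=g^{2}\mathbf{1}_{\{g\leq c\}}+(2cg-c^{2})\mathbf{1}_{\{g>c\}};
\]
bounding $g^{2}\leq cg$ on $\{g\leq c\}$ (which holds since $0\leq g\leq c$ there), dropping the nonpositive $-c^{2}$ term on $\{g>c\}$, and integrating gives $\pi(g^{2})-\pi(f^{2})\leq 2cm=V/2$, hence $\pi(f^{2})\geq V/2+m^{2}$. Since $0\leq\pi(f)\leq\pi(g)=m$, one obtains $\Var_{\pi}(f)=\pi(f^{2})-[\pi(f)]^{2}\geq V/2$ and the claimed inequality follows. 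The specific choice $c=V/(4m)$ is what balances the Markov bound on the support size against the variance lower bound so that the constant $\tfrac{1}{2}$ emerges cleanly; a larger $c$ would degrade the variance estimate, while a smaller $c$ would weaken Markov's bound beyond the target threshold $4m^{2}/V$.
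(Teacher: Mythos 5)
Your proof is correct, and it is essentially the standard argument behind the cited result of \citet[Lemma~2.1]{goel2006mixing}, which the paper quotes without proof: truncate at $c=\Var_{\pi}(g)/(4\,\pi(g))$, use the $1$-Lipschitz contraction of the Dirichlet form under $t\mapsto(t-c)_{+}$, bound $\pi(g>c)$ by Markov's inequality, and establish $\Var_{\pi}\bigl((g-c)_{+}\bigr)\geqslant\tfrac{1}{2}\Var_{\pi}(g)$ exactly as you do. The only caveat is cosmetic rather than substantive: since the paper defines $\mathrm{supp}\,f$ as the topological closure of $\{f>0\}$, you should formally take $A=\overline{\{g>c\}}$ (or read the support measure-theoretically) when placing $f$ in $C_{0}^{+}(A)$ — a wrinkle shared by the paper's own uses of the spectral profile and not a defect of your argument.
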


Our final result in this section shows how the conductance profile
can be used to deduce bounds on convergence of $P$. We build on previous
work, particularly \citet{goel2006mixing} in the discrete setting
and \citet{chenJMLR:v21:19-441} on general state spaces with `restricted'
conductance profiles.
\begin{thm}
\label{thm:CP-to-mix}Let $P$ be a positive, $\pi$-reversible Markov
kernel with $\Phi_{P}^{*}>0$, $\mu\ll\pi$ a probability measure,
and $\varepsilon_{{\rm Mix}}\in\left(0,8\right)$. To ensure $\chi^{2}(\mu P^{n},\pi)\leqslant\varepsilon_{{\rm Mix}}$,
it suffices to take 
\[
n\geqslant2+4\cdot\int_{\min\left\{ 4\cdot u_{0}^{-1},1/2\right\} }^{1/2}\frac{1}{v\cdot\Phi_{P}\left(v\right)^{2}}\,{\rm d}v+\left[\Phi_{P}^{*}\right]^{-2}\cdot\log\left(\max\left\{ \frac{\min\left\{ u_{0},8\right\} }{\varepsilon_{{\rm Mix}}},1\right\} \right),
\]
where $u_{0}=\chi^{2}(\mu,\pi)$.
\end{thm}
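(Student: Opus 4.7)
The plan is to track the squared $\ELL(\pi)$-norm $u_n := \|P^n(\dif\mu/\dif\pi - 1)\|_2^2 = \chi^2(\mu P^n, \pi)$ and bound its decay in two regimes separated by the threshold $u = 8$: a far-from-equilibrium phase ($u_n \geqslant 8$) where the spectral profile drives rapid decrease, and a near-equilibrium phase ($u_n \leqslant 8$) where the uniform spectral gap takes over. Writing $h := \dif\mu/\dif\pi$ and $g_n := P^n h$, we have $g_n \geqslant 0$, $\pi(g_n) = 1$ and $u_n = \Var_{\pi}(g_n)$, so working with $g_n - 1 \in \ELL_0(\pi)$ and working with $u_n$ are equivalent up to an affine shift.

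I would first establish the one-step decrement $u_{n+1} \leqslant u_n - \calE(P, g_n)$. Since $P$ is positive and self-adjoint with $0 \leqslant P \leqslant \Id$, we have $P^{2} \leqslant P$, which yields $\|Pf\|_2^{2} \leqslant \langle Pf, f\rangle = \|f\|_2^{2} - \calE(P, f)$; applied to $f = g_n - 1$ together with $\calE(P, g_n - 1) = \calE(P, g_n)$ this gives the claim. Lemma~\ref{lem:dirichlet-variance-spectral-profile} applied to the nonnegative, non-constant $g_n$ (the case $g_n \equiv 1$ being trivial) then yields $\calE(P, g_n) \geqslant (u_n / 2)\,\Lambda_P(4/u_n)$, which in the far regime $u_n \geqslant 8$ combines with Lemma~\ref{lem:cond-spec-profile-gap} to give the conductance-profile decrement $u_n - u_{n+1} \geqslant (u_n / 4)\,\Phi_P(4/u_n)^{2}$.

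The main technical step is converting this one-step recursion into an integral bound. Since $\Phi_P$ is non-increasing, the map $u \mapsto 1/(u\,\Phi_P(4/u)^{2})$ is non-increasing on $[u_{n+1}, u_n]$, so
$$\int_{u_{n+1}}^{u_n} \frac{\dif u}{u\,\Phi_P(4/u)^{2}} \;\geqslant\; \frac{u_n - u_{n+1}}{u_n\,\Phi_P(4/u_n)^{2}} \;\geqslant\; \frac{1}{4}.$$
Telescoping over the indices with $u_n \geqslant 8$, substituting $v = 4/u$, and allowing one additional iteration to cross the threshold yields $N_1 \leqslant 1 + 4\int_{\min\{4/u_0,\,1/2\}}^{1/2} \dif v / (v\,\Phi_P(v)^{2})$ steps to reach $u_{N_1} \leqslant 8$; the $\min$ in the lower limit automatically handles the edge case $u_0 \leqslant 8$, where $N_1 = 0$.

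In the near regime $u_n \leqslant 8$, positivity and reversibility give $\|Pf\|_2 \leqslant (1 - \gamma_P)\|f\|_2$ on $\ELL_0(\pi)$, hence $u_{n+1} \leqslant (1 - \gamma_P)^{2}\, u_n$; combining this with Cheeger's inequality (Lemma~\ref{lem:lawlersokal}) $\gamma_P = \GapR(P) \geqslant [\Phi_P^{*}]^{2}/2$ and $-\log(1-x) \geqslant x$ shows that a further $N_2 \leqslant 1 + [\Phi_P^{*}]^{-2}\log(\max\{\min\{u_0, 8\}/\varepsilon_{{\rm Mix}},\,1\})$ iterations bring $u_n$ below $\varepsilon_{{\rm Mix}}$, with the $\max$ with $1$ handling the trivial case $u_0 \leqslant \varepsilon_{{\rm Mix}}$. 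Summing $N_1 + N_2$ produces the stated bound, with the additive $+2$ absorbing one ceiling per phase. The main obstacle is the discrete-to-integral passage just described, specifically the monotonicity check for $1/(u\,\Phi_P(4/u)^{2})$ and the careful tracking of the $\min$/$\max$ structure across the two regimes; the other ingredients (positivity decrement, spectral-profile bound, Cheeger comparison) are direct applications of the tools already assembled in this section.
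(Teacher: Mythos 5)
Your proposal is correct and follows essentially the same route as the paper: the one-step decrement via positivity ($P^{2}\preceq P$), the spectral-profile lower bound of Lemma~\ref{lem:dirichlet-variance-spectral-profile} combined with Lemma~\ref{lem:cond-spec-profile-gap}, the discrete-to-integral telescoping in the regime $u_n\geqslant 8$, and the spectral-gap/Cheeger argument once $u_n\leqslant 8$. The only cosmetic difference is that you substitute the conductance profile before checking monotonicity of the integrand, whereas the paper works with $L_P(\eta)=\tfrac12\eta\Lambda_P(4\eta^{-1})$ first and passes to $\Phi_P$ afterwards.
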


\begin{proof}
Writing $h=\frac{{\rm d}\mu}{{\rm d}\pi}$ and $u_{n}:={\rm Var}_{\pi}\left(P^{n}h\right)=\chi^{2}\left(\mu P^{n},\pi\right)$,
compute that
\[
u_{n}-u_{n+1}=\mathcal{E}\left(P^{2},P^{n}h\right)\geqslant\mathcal{E}\left(P,P^{n}h\right)\geqslant u_{n}\cdot\frac{1}{2}\cdot\Lambda_{P}\left(4\cdot u_{n}^{-1}\right),
\]
where we have used the positivity of $P$ to bound $\mathcal{E}\left(P^{2},f\right)\geqslant\mathcal{E}\left(P,f\right)$,
Lemma~\ref{lem:dirichlet-variance-spectral-profile}, and recalled
that $\pi\left(P^{n}h\right)=1$ for all $n$. Defining $L_{P}\left(\eta\right):=\frac{1}{2}\cdot\eta\cdot\Lambda_{P}\left(4\cdot\eta^{-1}\right)$
for $\eta>0$, it thus holds that $u_{n}-u_{n+1}\geqslant L_{P}\left(u_{n}\right)$.

We now distinguish between whether $u_{0}$ is greater or smaller
than $8$, noting that in the latter case, using the spectral gap
directly allows for tighter control of the increment $u_{n}-u_{n+1}$.
Supposing that $u_{0}\geqslant8$, we will first estimate how long
it takes for $u_{n}$ to drop below $8$. Recalling that $\Lambda_{P}$
is a decreasing function, it is straightforward to see that $L_{P}$
is an increasing function, and hence measurable. Additionally, since
$\Lambda_{P}$ is bounded below by $\gamma_{P}>0$, it follows that
$L_{P}$ is bounded away from $0$ on intervals not containing $0$.
Assuming that both $u_{n}$, $u_{n+1}$ are at least $8$, we can
then write
\[
\int_{u_{n+1}}^{u_{n}}\frac{{\rm d}\eta}{L_{P}\left(\eta\right)}\geqslant\frac{u_{n}-u_{n+1}}{L_{P}\left(u_{n}\right)}\geqslant1.
\]
Moreover, if $u_{0}\geqslant u_{1}\geqslant\cdots\geqslant u_{n}\geqslant8$,
then we may sum up these inequalities to see that
\[
\int_{8}^{u_{0}}\frac{{\rm d}\eta}{L_{P}\left(\eta\right)}\geqslant\int_{u_{n}}^{u_{0}}\frac{{\rm d}\eta}{L_{P}\left(\eta\right)}\geqslant n.
\]
In particular, for $n\geqslant1+\int_{8}^{u_{0}}\frac{{\rm d}\eta}{L_{P}\left(\eta\right)}$,
it must hold that $u_{n}<8$. Now, recall that for $\eta\geqslant8$,
we can bound
\[
L_{P}\left(\eta\right)=\frac{1}{2}\cdot\eta\cdot\Lambda_{P}\left(4\cdot\eta^{-1}\right)\geqslant\frac{1}{2}\cdot\eta\cdot\left\{ \frac{1}{2}\cdot\Phi_{P}\left(4\cdot\eta^{-1}\right)^{2}\right\} =\frac{\Phi_{P}\left(4\cdot\eta^{-1}\right)^{2}}{4\cdot\eta^{-1}}.
\]
We then compute that
\[
\int_{8}^{u_{0}}\frac{{\rm d}\eta}{L_{P}\left(\eta\right)}\leqslant\int_{8}^{u_{0}}\frac{4\cdot\eta^{-1}}{\Phi_{P}\left(4\cdot\eta^{-1}\right)^{2}}\,{\rm d}\eta=4\cdot\int_{4\cdot u_{0}^{-1}}^{1/2}\frac{1}{v\cdot\Phi_{P}\left(v\right)^{2}}\,{\rm d}v,
\]
noting that $\Phi_{P}$ is monotone, hence measurable, and bounded
below by $\Phi_{P}\left(\frac{1}{2}\right)>0$, hence the integral
exists.

For $u_{0}<8$, we control the decay of $u_{n}$ more tightly by using
the spectral gap of $P$, $\gamma_{P}$. We obtain
\begin{align*}
u_{n}-u_{n+1} & =\mathcal{E}\left(P^{2},P^{n}h\right)\geqslant\left(1-\left(1-\gamma_{P}\right)^{2}\right)\cdot{\rm Var}_{\pi}\left(P^{n}h\right)\\
\implies\quad u_{n+1} & \leqslant\left(1-\gamma_{P}\right)^{2}\cdot u_{n}
\end{align*}
and thus that 
\[
u_{n}\leqslant\left(1-\gamma_{P}\right)^{2\cdot n}\cdot u_{0}\leqslant\exp\left(-2\cdot\gamma_{P}\cdot n\right)\cdot u_{0}.
\]
One can then deduce that for $n\geqslant1+\frac{1}{2}\cdot\gamma_{P}^{-1}\cdot\log\left(\frac{u_{0}}{\varepsilon_{{\rm Mix}}}\right)$,
$u_{n}\leqslant\varepsilon_{{\rm Mix}}$. By Lemma~\ref{lem:lawlersokal},
we recall that $\gamma_{P}\geqslant\frac{1}{2}\cdot\left[\Phi_{P}^{*}\right]^{2}$.
The result follows by assembling the various cases.
\end{proof}
We note a similarity between the consequences of the spectral profile
and the so-called `super-Poincaré' inequalities of \citet{wang2000functional}.
See Proposition~\ref{prop:spec-to-super} in Appendix~\ref{sec:Proofs-of-auxiliary}
for some details on this connection, which may be known among experts
but does not appear to have been explicitly documented. It is well
known that stronger functional inequalities than the Poincaré inequality
allow improved dependence on dimension/initialization in Markov chain
mixing time results \citep[see, e.g.,][]{DIACONIS199820} and one
perspective on what we pursue in the sequel is that one may combine
bounds on the isoperimetric profile with the close coupling condition
to deduce bounds on the conductance profile and hence spectral profile,
which contains comparable information to functional inequalities like
the super-Poincaré inequality. Theorem~\ref{thm:CP-to-mix} demonstrates
how this functional inequality can provide sharper mixing time bounds
than those based on the spectral gap alone.

\section{From isoperimetric profiles and close coupling to mixing time bounds\label{sec:From-Isoperimetric-Profiles}}

\subsection{General results}

From this point onwards, the following assumption is in force. All
statements are made with respect to a given metric $\mathsf{d}$ on
$\E$, the dependence on which may be suppressed when no ambiguity
can result.
\begin{assumption}
\label{assu:pi-density-lebesgue}The probability distribution $\pi$
on $\E=\mathbb{R}^{d}$ has a positive density w.r.t. Lebesgue, given
by $\pi\propto\exp\left(-U\right)$, for some potential $U:\mathbb{R}^{d}\to\R$. 
\end{assumption}

\begin{defn}[Three-set isoperimetric inequality]
\label{def:three-set-iso-ineq}A probability measure $\pi$ satisfies
a \emph{three-set isoperimetric inequality} with metric $\mathsf{d}$
and function $F:\left(0,\frac{1}{2}\right]\to\left[0,\infty\right)$
if for all measurable partitions of the state space $\mathsf{E}=S_{1}\sqcup S_{2}\sqcup S_{3}$
with $\pi(S_{1}),\pi(S_{2})>0$,
\begin{equation}
\pi\left(S_{3}\right)\geqslant\mathsf{d}\left(S_{1},S_{2}\right)\cdot F\left(\min\left\{ \pi\left(S_{1}\right),\pi\left(S_{2}\right)\right\} \right).\label{eq:3-set-iso}
\end{equation}
\end{defn}

\begin{defn}[Isoperimetric Profile]
\label{def:isop_prof}For $A\in\mathscr{E}$ and $r\geqslant0$,
let $A_{r}:=\left\{ x\in\E:\mathsf{d}\left(x,A\right)<r\right\} $,
and define the \textit{Minkowski content} of $A$ under $\pi$ with
respect to $\mathsf{d}$ by 
\[
\pi^{+}\left(A\right)=\lim\inf_{r\to0^{+}}\frac{\pi\left(A_{r}\right)-\pi\left(A\right)}{r}.
\]
The \textit{isoperimetric profile of} $\pi$ with respect to the metric
$\mathsf{d}$ is
\begin{equation}
I_{\pi}\left(p\right):=\inf\left\{ \pi^{+}\left(A\right):A\in\mathscr{E},\pi\left(A\right)=p\right\} ,\qquad p\in\left(0,1\right).\label{eq:iso-profile-def}
\end{equation}
\end{defn}

We note briefly that the isoperimetric profile can be controlled explicitly
in many cases of interest; examples to this effect are provided in
Section~\ref{subsec:Examples-of-Isoperimetric}. The following is
a special case of a non-trivial result for distributions defined on
Riemannian manifolds which is the product of extensive research by
several authors, and holds specifically for $\mathsf{d}$ being the
natural metric induced by the given Riemannian structure; we recall
it here to emphasize that the notion of regularity on the isoperimetric
profile that we assume is reasonable. 
\begin{lem}[{\citealt[Theorem~1.8]{milman2009role}}]
If $U$ is convex and twice-continuously differentiable, then $I_{\pi}$
is symmetric about $\frac{1}{2}$ and concave. 
\end{lem}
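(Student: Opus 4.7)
The result splits into symmetry of $I_\pi$ about $1/2$ and concavity, of which the first is the easier part. For any $A \in \mathscr{E}$ with $\pi(A) = p$, the enlargements $A_r \setminus A$ and $(A^\complement)_r \setminus A^\complement$ both concentrate around the common topological boundary of $A$ and $A^\complement$ as $r \to 0^+$, so for sufficiently regular $A$ one has $\pi^+(A) = \pi^+(A^\complement)$. After verifying by an approximation argument that the infimum defining $I_\pi$ is approached over such regular sets, complementation yields $I_\pi(p) = I_\pi(1-p)$ for all $p \in (0,1)$.

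For concavity, I would pursue the \emph{localization}, or \emph{needle-decomposition}, strategy originating with Payne--Weinberger and Kannan--Lov\'asz--Simonovits, and extended to the Riemannian setting by Klartag. Given a candidate set $A$ with $\pi(A) = p$, one decomposes $\pi$ as a mixture of one-dimensional conditional measures along a family of geodesic ``needles'' chosen to preserve the prescribed mass split across the decomposition; crucially, because $U$ is convex, each conditional density along a needle is log-concave on $\R$. In this way, the $d$-dimensional isoperimetric problem reduces to a family of one-dimensional problems for log-concave measures.

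In one dimension, for a log-concave probability measure $\mu$ on $\R$ with density $\rho = e^{-V}$ and distribution function $F$, the isoperimetric minimisers are half-lines, so (up to reflection) $I_\mu(p) = \rho(F^{-1}(p))$. A direct computation then yields
\[
I_\mu''(p) = -\frac{V''(F^{-1}(p))}{\rho(F^{-1}(p))},
\]
which is nonpositive exactly because $V$ is convex. Combined with the needle decomposition, this one-dimensional concavity transfers back to the multi-dimensional profile $I_\pi$.

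The principal obstacle is the rigorous execution of the localization step: one must show that the infimum defining $I_\pi$ is actually controlled by the one-dimensional profiles along a suitably chosen needle family, and that the passage to multi-dimensional infima does not destroy concavity (infima of concave functions are generally not concave). Milman's approach circumvents this difficulty by working under the $CD(0, \infty)$ Bakry--\'Emery curvature condition supplied by convexity of $U$, and invoking sharp comparisons against a one-parameter family of model spaces whose envelope remains concave by construction; the twice-continuous differentiability hypothesis enters in precisely this Riemannian comparison step.
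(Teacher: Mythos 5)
The paper does not prove this lemma: it is imported verbatim from \citet[Theorem~1.8]{milman2009role}, and the surrounding text explicitly flags it as ``a non-trivial result\dots{} the product of extensive research by several authors,'' quoted only to justify the regularity assumed of isoperimetric minorants elsewhere. So there is no in-paper argument to compare against, and your sketch has to stand on its own. The symmetry claim and the one-dimensional computation are fine at sketch level: for log-concave $\mu=e^{-V}\,\dif x$ on $\R$ half-lines are minimizers by Bobkov's theorem, and indeed $\frac{\dif^{2}}{\dif p^{2}}\,\rho\left(F^{-1}\left(p\right)\right)=-V''\left(F^{-1}\left(p\right)\right)/\rho\left(F^{-1}\left(p\right)\right)\leqslant0$.

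The concavity argument, however, has a genuine gap which you half-acknowledge and then do not close. Needle decomposition reduces the $d$-dimensional isoperimetric problem to a family of one-dimensional ones and therefore yields lower bounds of the form $I_{\pi}\geqslant\inf_{\text{needles}}I_{\mu_{\text{needle}}}$; since an infimum of concave functions is not concave, this machinery proves comparison inequalities (e.g.\ exponential or Gaussian-type minorants, as in KLS-style results) but cannot by itself deliver concavity of $I_{\pi}$. Your closing appeal to ``sharp comparisons against a one-parameter family of model spaces whose envelope remains concave'' describes Milman's comparison theorems, not the proof of his Theorem~1.8. The actual route to concavity (due to Sternberg--Zumbrun, Kuwert, Bayle and Morgan--Johnson, as assembled by Milman) is entirely different: one uses geometric measure theory to produce, for each $p$, an isoperimetric minimizer $A$ with $\pi\left(A\right)=p$ and sufficiently regular boundary, and then computes the second variation of the weighted perimeter along the normal flow of $\partial A$; the Bakry--\'Emery curvature term supplied by $\nabla^{2}U\succeq0$ makes this second variation nonpositive, giving $I_{\pi}''\left(p\right)\leqslant0$ in the barrier/viscosity sense and hence concavity. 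The twice-continuous differentiability of $U$ enters precisely in this regularity and second-variation step, not in any needle decomposition; as written, your proposal identifies the right one-dimensional phenomenon but supplies no valid mechanism for transferring it to dimension $d$.
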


\begin{defn}
\label{def:iso-ineq}We say that $\tilde{I}_{\pi}:\left(0,1\right)\to\left(0,\infty\right)$
is an \textit{isoperimetric minorant of $\pi$} if $\tilde{I}_{\pi}\leqslant I_{\pi}$
pointwise. We furthermore say that $\tilde{I}_{\pi}$ is \textit{regular}
if it is symmetric about $\frac{1}{2}$, continuous, and monotone
increasing on $\left(0,\frac{1}{2}\right]$.
\end{defn}

To begin with, we show that the existence of a regular isoperimetric
minorant is equivalent to the existence of a corresponding three-set
isoperimetric inequality.
\begin{lem}
\label{lem:iso-to-3set}$\pi$ has a regular isoperimetric minorant
$\tilde{I}_{\pi}$ w.r.t. the metric $\mathsf{d}$ $\iff$ $\pi$
satisfies a three-set isoperimetric inequality with metric $\mathsf{d}$
and function $F=\tilde{I}_{\pi}$ on $(0,\frac{1}{2}]$.
\end{lem}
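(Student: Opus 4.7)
The plan is to reduce the three-set inequality to a direct application of the isoperimetric profile by ``growing'' the smaller of $S_1$ and $S_2$ along the metric $\mathsf{d}$. Without loss of generality, assume $\pi(S_1) \leq \pi(S_2)$, which forces $\pi(S_1) \leq 1/2$. We may also assume $\pi(S_1) > 0$ and $\mathsf{d}(S_1,S_2) > 0$, as the inequality is trivial otherwise.

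Define $g(r) := \pi((S_1)_r)$ for $r \geq 0$, where $(S_1)_r = \{x : \mathsf{d}(x,S_1) \leq r\}$. The function $g$ is non-decreasing with $g(0) = \pi(S_1)$. Moreover, for any $r < \mathsf{d}(S_1, S_2)$, every $x \in S_2$ satisfies $\mathsf{d}(x,S_1) \geq \mathsf{d}(S_1,S_2) > r$, so $(S_1)_r \cap S_2 = \emptyset$, giving $(S_1)_r \subseteq S_1 \cup S_3$ and hence
\[
\pi(S_1) \;\leq\; g(r) \;\leq\; \pi(S_1) + \pi(S_3) \;\leq\; 1 - \pi(S_2) \;\leq\; 1 - \pi(S_1).
\]

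Next I would relate the growth of $g$ to the isoperimetric profile. From the triangle inequality, $((S_1)_r)_h \subseteq (S_1)_{r+h}$, so for every $r \geq 0$,
\[
\liminf_{h \to 0^+} \frac{g(r+h) - g(r)}{h} \;\geq\; \pi^+\!\left((S_1)_r\right) \;\geq\; I_\pi(g(r)) \;\geq\; \tilde{I}_\pi(g(r)).
\]
Combining with the bounds $\pi(S_1) \leq g(r) \leq 1-\pi(S_1)$ for $r < \mathsf{d}(S_1,S_2)$, and using the fact that $\tilde I_\pi$ is symmetric about $1/2$ and monotone increasing on $(0,1/2]$ (hence unimodal with maximum at $1/2$), we obtain the uniform lower bound $\tilde I_\pi(g(r)) \geq \tilde I_\pi(\pi(S_1))$ on this interval.

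Finally I would integrate. Because $g$ is non-decreasing, it is differentiable a.e., and the classical inequality $g(r) - g(0) \geq \int_0^r g'(t)\,\dif t$ applies; at points of differentiability $g'(t)$ majorises the $\liminf$ above, so
\[
g(r) - \pi(S_1) \;\geq\; \int_0^r \tilde I_\pi(g(t))\,\dif t \;\geq\; r \cdot \tilde I_\pi(\pi(S_1)).
\]
Letting $r \uparrow \mathsf{d}(S_1,S_2)$ and substituting $g(r) \leq \pi(S_1) + \pi(S_3)$ yields exactly $\pi(S_3) \geq \mathsf{d}(S_1,S_2) \cdot \tilde I_\pi(\min\{\pi(S_1),\pi(S_2)\})$, as required.

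The main subtlety is the integration step: the Minkowski content is only a $\liminf$-type lower bound, and $g$ is not a priori absolutely continuous, so one needs to invoke the bounded-variation inequality for non-decreasing functions rather than the fundamental theorem of calculus. The regularity hypotheses on $\tilde I_\pi$ (symmetry, monotonicity, continuity) are used precisely to collapse the pointwise differential inequality $g'(t) \geq \tilde I_\pi(g(t))$ into a constant lower bound along the trajectory, avoiding any delicate ODE comparison argument.
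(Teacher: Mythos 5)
Your proof is correct, but it takes a genuinely different route from the one in the paper. The paper argues functionally: it invokes the Bobkov--Houdr\'e co-area inequality $\pi\left(\left|\nabla f\right|\right)\geqslant\int_{0}^{1}I_{\pi}\left(\pi\left(f>t\right)\right)\,{\rm d}t$ for the truncated distance function $f=\min\left\{ 1,\mathsf{d}\left(S_{1},\cdot\right)/\mathsf{d}\left(S_{1},S_{2}\right)\right\}$, bounds $\pi\left(\left|\nabla f\right|\right)\leqslant\pi\left(S_{3}\right)\cdot\mathsf{d}\left(S_{1},S_{2}\right)^{-1}$, and then lower bounds the level-set integral by a case analysis (splitting on whether $\max\left\{ \pi\left(S_{1}\right),\pi\left(S_{2}\right)\right\} \geqslant\frac{1}{2}$ and, in the harder case, splitting the integral at a median of $f$). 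You instead run the classical enlargement argument: grow $S_{1}$ through its $r$-neighbourhoods, convert the Minkowski-content lower bound into the a.e.\ differential inequality $g'\left(r\right)\geqslant\tilde{I}_{\pi}\left(g\left(r\right)\right)$, and integrate using only the one-sided inequality $g\left(r\right)-g\left(0\right)\geqslant\int_{0}^{r}g'$ valid for monotone functions. Your sandwich $\pi\left(S_{1}\right)\leqslant g\left(r\right)\leqslant1-\pi\left(S_{1}\right)$ combined with symmetry and unimodality of $\tilde{I}_{\pi}$ collapses the case analysis that the paper needs, and you avoid citing the co-area inequality at the cost of the (correctly flagged) measure-theoretic care around a.e.\ differentiability. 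All the steps check out: $\left(\left(S_{1}\right)_{r}\right)_{h}\subseteq\left(S_{1}\right)_{r+h}$ gives the comparison of the forward difference quotient with $\pi^{+}\left(\left(S_{1}\right)_{r}\right)$, and $g\left(r\right)\in\left(0,1\right)$ on the relevant range so $I_{\pi}$ and $\tilde{I}_{\pi}$ are defined there. The only pedantic point is that $g\left(0\right)=\pi\left(\left(S_{1}\right)_{0}\right)$ is the measure of the metric closure of $S_{1}$, which may exceed $\pi\left(S_{1}\right)$; this only strengthens your final inequality, so nothing breaks.
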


\begin{proof}
$(\Leftarrow)$ Following \citet[Section~2]{bobkov1997some}, we may
consider only closed sets $A$ in (\ref{eq:iso-profile-def}). For
arbitrary closed $A\in\mathscr{E}$ with $\pi(A)\in(0,1)$, for $r>0$
let $A_{r}$ be as defined in Definition~\ref{def:isop_prof}. We
may take $S_{1}=A$, $S_{3}=A_{r}\setminus A=\{x\in\mathsf{E}\setminus A:\mathsf{d}(x,A)<r\}$
and $S_{2}=\mathsf{E}\setminus A_{r}$. From Definition~\ref{def:three-set-iso-ineq},
\[
\pi(A_{r})-\pi(A)\geqslant rF\left(\min\left\{ \pi(A),\pi(\mathsf{E}\setminus A_{r})\right\} \right),
\]
from which we obtain
\[
\pi^{+}(A)=\lim\inf_{r\to0^{+}}\frac{\pi(A_{r})-\pi(A)}{r}\geqslant F\left(\min\left\{ \pi(A),\pi(A^{\complement})\right\} \right),
\]
since for closed $A$, $\lim_{r\to0^{+}}\pi(\mathsf{E}\setminus A_{r})=1-\lim_{r\to0^{+}}\pi(A_{r})=\pi(A^{\complement})$.
Hence, $\tilde{I}_{\pi}(t)=F(\min\{t,1-t\})$ is an isoperimetric
minorant, symmetric on $(0,1)$, continuous and monotone increasing
on $(0,\frac{1}{2}]$ and hence regular.

$(\Rightarrow)$ Following \citet[Theorem~4.1 and Remark~4.2]{bobkov1997some},
for any Lipschitz $f:\mathsf{E}\to\left[0,1\right]$ one may write
\[
\pi\left(\left|\nabla f\right|\right)\geqslant\int_{0}^{1}\pi^{+}\left(f>t\right)\,{\rm d}t\geqslant\int_{0}^{1}I_{\pi}\left(\pi\left(f>t\right)\right)\,{\rm d}t\geqslant\int_{0}^{1}\tilde{I}_{\pi}\left(\pi\left(f>t\right)\right)\,{\rm d}t,
\]
where we have written $\left(f>t\right)$ for the set $\left\{ x\in\mathsf{E}:f\left(x\right)>t\right\} $,
and
\[
\left|\nabla f\left(x\right)\right|:=\underset{\mathsf{d}\left(x,y\right)\to0^{+}}{\lim\sup}\frac{\left|f\left(x\right)-f\left(y\right)\right|}{\mathsf{d}\left(x,y\right)}\in\left[0,\infty\right],
\]
is the modulus of the gradient of $f$. Now let $\E=S_{1}\sqcup S_{2}\sqcup S_{3}$
with $\pi(S_{1}),\pi(S_{2})>0$. If $\mathsf{d}(S_{1},S_{2})=0$ then
(\ref{eq:3-set-iso}) holds trivially, so henceforth we assume $\mathsf{d}(S_{1},S_{2})>0$.
Following the construction in the proof of \citet[Proposition~1.7]{ledoux2001concentration},
we define $f:\E\to\left[0,1\right]$ by $f(x):=\min\left\{ 1,\frac{\mathsf{d}\left(S_{1},x\right)}{\mathsf{d}\left(S_{1},S_{2}\right)}\right\} $.
This function is $\mathsf{d}\left(S_{1},S_{2}\right)^{-1}$-Lipschitz
on $S_{3}$ and flat elsewhere. It thus holds for this $f$ that
\[
\pi\left(\left|\nabla f\right|\right)\leqslant\pi\left(S_{3}\right)\cdot\mathsf{d}\left(S_{1},S_{2}\right)^{-1}\quad\implies\quad\pi\left(S_{3}\right)\geqslant\mathsf{d}\left(S_{1},S_{2}\right)\cdot\pi\left(\left|\nabla f\right|\right).
\]
We now seek a lower bound on $\pi\left(\left|\nabla f\right|\right)$,
for which we will make use of the isoperimetric profile. Observe that
for $t\in\left[0,1\right)$, it holds that
\begin{align*}
\left\{ x\in\E:f\left(x\right)>t\right\}  & =\left\{ x\in S_{2}:f\left(x\right)>t\right\} \sqcup\left\{ x\in S_{3}:f\left(x\right)>t\right\} \\
 & =S_{2}\sqcup\left\{ x\in S_{3}:f\left(x\right)>t\right\} .
\end{align*}
and hence that $\pi\left(f>t\right)\in\left[\pi\left(S_{2}\right),\pi\left(S_{2}\sqcup S_{3}\right)\right]=\left[\pi\left(S_{2}\right),\pi\left(S_{1}^{\complement}\right)\right]$.

Suppose now that $\max\left\{ \pi\left(S_{1}\right),\pi\left(S_{2}\right)\right\} \geqslant\frac{1}{2}$,
and without loss of generality that $\pi\left(S_{1}\right)\geqslant\frac{1}{2}\geqslant\pi\left(S_{2}\right)$.
It then follows for $t\in\left(0,1\right)$ that $\pi\left(f>t\right)\in\left[\pi\left(S_{2}\right),\pi\left(S_{1}^{\complement}\right)\right]\subseteq\left[0,\frac{1}{2}\right]$.
By monotonicity of $\tilde{I}_{\pi}$ on $\left(0,\frac{1}{2}\right]$,
it then holds that $\tilde{I}_{\pi}\left(\pi\left(f>t\right)\right)\geqslant\tilde{I}_{\pi}\left(\pi\left(S_{2}\right)\right)$
and thus that $\pi\left(\left|\nabla f\right|\right)\geqslant\tilde{I}_{\pi}\left(\pi\left(S_{2}\right)\right)$,
from which it follows that
\[
\pi\left(S_{3}\right)\geqslant\mathsf{d}\left(S_{1},S_{2}\right)\cdot\tilde{I}_{\pi}\left(\pi\left(S_{2}\right)\right)=\mathsf{d}\left(S_{1},S_{2}\right)\cdot\tilde{I}_{\pi}\left(\min\left\{ \pi\left(S_{1}\right),\pi\left(S_{2}\right)\right\} \right).
\]
On the contrary, suppose that $\max\left\{ \pi\left(S_{1}\right),\pi\left(S_{2}\right)\right\} <\frac{1}{2}$.
It then holds that any median $\nu$ of $f$ under $\pi$ lies in
$\left(0,1\right)$, so one can write that
\[
t\in\left[0,\nu\right]\implies\pi\left(f>t\right)\geqslant\frac{1}{2},\qquad t\in\left(\nu,1\right]\implies\pi\left(f>t\right)\leqslant\frac{1}{2}.
\]
Letting $\nu$ be such a median, observe that $t\mapsto\tilde{I}_{\pi}\left(\pi\left(f>t\right)\right)$
is increasing on $\left[0,\nu\right]$ and decreasing on $\left[\nu,1\right]$.
In particular, 
\[
\tilde{I}_{\pi}\left(\pi\left(f>t\right)\right)\geqslant\tilde{I}_{\pi}\left(\pi\left(f>0\right)\right)=\tilde{I}_{\pi}\left(\pi\left(S_{1}^{\complement}\right)\right),\qquad t\in\left[0,\nu\right],
\]
making use of the fact that $\pi\ll{\rm Leb}$. Similarly, 
\[
\tilde{I}_{\pi}\left(\pi\left(f>t\right)\right)\geqslant\tilde{I}_{\pi}\left(\pi\left(f>u\right)\right),\qquad\nu<t\leqslant u\leqslant1,
\]
and therefore
\[
\tilde{I}_{\pi}\left(\pi\left(f>t\right)\right)\geqslant\lim_{u\to1^{-}}\tilde{I}_{\pi}\left(\pi\left(f>u\right)\right)=\tilde{I}_{\pi}\left(\lim_{u\to1^{-}}\pi\left(f>u\right)\right)\geqslant\tilde{I}_{\pi}\left(\pi\left(S_{2}\right)\right),
\]
taking limits in $u$ and applying continuity of $\tilde{I}_{\pi}$.

We thus decompose
\begin{align*}
\int_{0}^{1}\tilde{I}_{\pi}\left(\pi\left(f>t\right)\right)\,{\rm d}t & =\int_{0}^{\nu}\tilde{I}_{\pi}\left(\pi\left(f>t\right)\right)\,{\rm d}t+\int_{\nu}^{1}\tilde{I}_{\pi}\left(\pi\left(f>t\right)\right)\,{\rm d}t\\
 & \geqslant\int_{0}^{\nu}\tilde{I}_{\pi}\left(\pi\left(S_{1}^{\complement}\right)\right)\,{\rm d}t+\int_{\nu}^{1}\tilde{I}_{\pi}\left(\left(\pi\left(S_{2}\right)\right)\right)\,{\rm d}t\\
 & =\nu\cdot\tilde{I}_{\pi}\left(\pi\left(S_{1}^{\complement}\right)\right)+\left(1-\nu\right)\cdot\tilde{I}_{\pi}\left(\pi\left(S_{2}\right)\right)\\
 & \geqslant\min\left\{ \tilde{I}_{\pi}\left(\pi\left(S_{1}^{\complement}\right)\right),\tilde{I}_{\pi}\left(\pi\left(S_{2}\right)\right)\right\} \\
 & =\min\left\{ \tilde{I}_{\pi}\left(\pi\left(S_{1}\right)\right),\tilde{I}_{\pi}\left(\pi\left(S_{2}\right)\right)\right\} ,
\end{align*}
from which we may conclude.
\end{proof}
We now show that given a three-set isoperimetric inequality with a
monotone increasing $F$, together with the \textit{\emph{close coupling}}\textit{
}\textit{\emph{assumption}} on the Markov kernel, one may deduce a
lower bound on the conductance of any set for that Markov kernel.
The proof, housed in Appendix~\ref{sec:Proofs-of-auxiliary}, follows
closely that of \citet[Lemma~6]{Dwivedi-Chen-Wainwright-JMLR:v20:19-306},
which itself is based on several earlier works.
\begin{lem}
\label{lem:iso-conductance-A-close-coupling}Suppose that $\pi$ satisfies
a three-set isoperimetric inequality with metric $\mathsf{d}$ and
function $F$ monotone increasing on $\left(0,\frac{1}{2}\right]$.
Let $P$ be a $\left(\mathsf{d},\delta,\varepsilon\right)$-close
coupling, $\pi$-invariant Markov kernel. Then for any $A\in\mathscr{E}$
with $0<\pi(A)\leqslant\frac{1}{2}$,
\[
\frac{\left(\pi\otimes P\right)\left(A\times A^{\complement}\right)}{\pi\left(A\right)}\geqslant\sup_{\theta\in\left(0,1\right)}\min\left\{ \frac{1}{2}\cdot\left(1-\theta\right)\cdot\varepsilon,\frac{1}{4}\cdot\varepsilon\cdot\delta\cdot\theta\cdot\left(F/{\rm id}\right)\left(\theta\cdot\pi\left(A\right)\right)\right\} ,
\]
where $F/{\rm id}$ is the function $x\mapsto F(x)/x$.
\end{lem}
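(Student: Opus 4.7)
The plan is to carry out the Lov\'asz--Simonovits-style case analysis of \citet{Dwivedi-Chen-Wainwright-JMLR:v20:19-306}, adapted to the three-set isoperimetric input of Definition~\ref{def:three-set-iso-ineq}. Fix $A\in\mathscr{E}$ with $\pi(A)\leqslant 1/2$ and $\theta\in[0,1]$, and introduce the two ``sticky'' sets
\[
A_1 := \{x\in A : P(x, A^{\complement}) < \varepsilon/2\}, \qquad A_2 := \{x \in A^{\complement} : P(x, A) < \varepsilon/2\},
\]
collecting points reluctant to cross the boundary of $A$. I would then split on whether $\min\{\pi(A_1),\pi(A_2)\}$ lies below or above $\theta\pi(A)$.

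In the small-sticky-set regime, suppose first $\pi(A_1)\leqslant\theta\pi(A)$: on $A\setminus A_1$ the integrand $P(\cdot,A^{\complement})$ is at least $\varepsilon/2$, so $(\pi\otimes P)(A\times A^{\complement})\geqslant (\varepsilon/2)(1-\theta)\pi(A)$. If instead $\pi(A_2)\leqslant\theta\pi(A)$, I would appeal to $\pi$-invariance, which yields the flow identity $(\pi\otimes P)(A\times A^{\complement}) = (\pi\otimes P)(A^{\complement}\times A)$, run the symmetric argument on $A^{\complement}\setminus A_2$, and use $\pi(A^{\complement})\geqslant\pi(A)$ to absorb the asymmetry. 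Dividing by $\pi(A)$ in either sub-case produces the first term $\tfrac{1}{2}(1-\theta)\varepsilon$ of the minimum.

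In the large-sticky-set regime (both $\pi(A_i) > \theta\pi(A)$), the close-coupling hypothesis supplies the key geometric input: for any $x\in A_1$ and $y\in A_2$,
\[
\|P(x,\cdot)-P(y,\cdot)\|_{\mathrm{TV}} \;\geqslant\; |P(x,A)-P(y,A)| \;>\; (1-\varepsilon/2) - \varepsilon/2 \;=\; 1-\varepsilon,
\]
and the $(\mathsf{d},\delta,\varepsilon)$-close coupling property then forces $\mathsf{d}(x,y)>\delta$, so $\mathsf{d}(A_1,A_2)\geqslant\delta$. Applying the three-set isoperimetric inequality to the partition $(A_1, A_2, S_3)$ with $S_3:=(A_1\cup A_2)^{\complement}$ gives $\pi(S_3)\geqslant \delta\cdot F(\min\{\pi(A_1),\pi(A_2)\})\geqslant \delta\cdot F(\theta\pi(A))$ by monotonicity of $F$ (noting $\theta\pi(A)\leqslant 1/2$). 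Symmetrising via invariance as
\[
(\pi\otimes P)(A\times A^{\complement}) \;=\; \tfrac{1}{2}\!\int_A P(\cdot,A^{\complement})\,\dif\pi + \tfrac{1}{2}\!\int_{A^{\complement}} P(\cdot,A)\,\dif\pi
\]
and bounding each integrand by $\varepsilon/2$ on the non-sticky pieces yields $(\pi\otimes P)(A\times A^{\complement})\geqslant (\varepsilon/4)\pi(S_3)\geqslant (\varepsilon\delta/4)F(\theta\pi(A))$; dividing by $\pi(A)$ and rewriting $F(\theta\pi(A))/\pi(A)=\theta\,(F/\mathrm{id})(\theta\pi(A))$ produces the second term of the minimum.

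The main obstacle is bookkeeping rather than genuine analytical difficulty: the sticky-set threshold $\varepsilon/2$ must be calibrated so that the TV lower bound exceeds $1-\varepsilon$ strictly, and $\pi$-invariance must be used to interchange the two halves of the cut when only one sticky set is small (since $F$ needs to see $\min\{\pi(A_1),\pi(A_2)\}$ rather than either one alone). Once these pieces are assembled, the two cases line up precisely with the two entries of the $\min$, and the stated $\sup_\theta\min\{\cdots\}$ is just the strongest conclusion obtained by optimising $\theta$.
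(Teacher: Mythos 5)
Your proposal is correct and follows essentially the same route as the paper's proof: the same ``sticky'' sets $S_{1},S_{2}$, the same dichotomy on their masses relative to $\theta\cdot\pi\left(A\right)$, the flow identity $\left(\pi\otimes P\right)\left(A\times A^{\complement}\right)=\left(\pi\otimes P\right)\left(A^{\complement}\times A\right)$ from $\pi$-invariance, and the combination of close coupling with the three-set inequality in the large-sticky-set case. The only (immaterial) difference is that the paper thresholds $\pi\left(S_{2}\right)$ against $\theta\cdot\pi\left(A^{\complement}\right)$ rather than $\theta\cdot\pi\left(A\right)$, which you compensate for correctly via $\pi\left(A^{\complement}\right)\geqslant\pi\left(A\right)$.
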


Since $\tilde{I}_{\pi}/{\rm id}$ is decreasing for a concave, regular
isoperimetric minorant $\tilde{I}_{\pi}$, we obtain the following
bounds on the conductance profile by combining Lemma~\ref{lem:iso-to-3set},
Lemma~\ref{lem:iso-conductance-A-close-coupling} and Definition~\ref{def:conductance-profile}.
We emphasize that concavity of $\tilde{I}_{\pi}$ is crucial for obtaining
non-zero lower bounds. Considering functions on $(0,\frac{1}{2}]$
of the form $p\mapsto c\cdot p^{k}$ the critical case is $k=1$ and
any $k>1$ implies only a conductance profile lower bound of $0$;
our examples give isoperimetric minorants of the form $p\mapsto c\cdot p\cdot\log\left(\frac{1}{p}\right)^{r}$
on $(0,\frac{1}{2}]$ for $r\in[0,1]$. Additionally, it is well-known
that the uniform measure on the sphere $\mathbb{S}^{d}\subset\mathbb{R}^{d+1}$
satisfies $I\left(p\right)\asymp p^{\frac{d-1}{d}}$; see e.g. Section~9
of \citet{bobkov1997some}.
\begin{cor}
\label{cor:iso-couple-conductance}Suppose that $\tilde{I}_{\pi}$
is a regular, concave isoperimetric minorant of $\pi$ w.r.t. the
metric $\mathsf{d}$. Let $P$ be a $(\mathsf{d},\delta,\varepsilon)$-close
coupling, $\pi$-invariant Markov kernel. Then for any $v\in\left(0,\frac{1}{2}\right]$,
\begin{align*}
\Phi_{P}\left(v\right) & \geqslant\sup_{\theta\in\left[0,1\right]}\min\left\{ \frac{1}{2}\cdot\left(1-\theta\right)\cdot\varepsilon,\frac{1}{4}\cdot\varepsilon\cdot\delta\cdot\theta\cdot\left(\tilde{I}_{\pi}/{\rm id}\right)\left(\theta\cdot v\right)\right\} \\
 & \geqslant\frac{1}{4}\cdot\varepsilon\cdot\min\left\{ 1,\frac{1}{2}\cdot\delta\cdot\frac{\tilde{I}_{\pi}\left(\frac{1}{2}\cdot v\right)}{\frac{1}{2}\cdot v}\right\} .
\end{align*}
\end{cor}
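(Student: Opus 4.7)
The plan is to chain together the three tools developed just above, namely Lemma~\ref{lem:iso-to-3set}, Lemma~\ref{lem:iso-conductance-A-close-coupling}, and the definition of $\Phi_P$, and then control how the bound varies with $\pi(A)$ by invoking the concavity hypothesis on $\tilde{I}_\pi$. There is no substantially new idea required; the work is in checking that the hypotheses of each lemma are met and in converting a per-set bound into a bound that only depends on the mass threshold $v$.

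First I would apply Lemma~\ref{lem:iso-to-3set}: since $\tilde{I}_\pi$ is a regular isoperimetric minorant (symmetric, continuous, monotone increasing on $(0,\tfrac{1}{2}]$), $\pi$ satisfies a three-set isoperimetric inequality with metric $\mathsf{d}$ and function $F=\tilde{I}_\pi$ on $(0,\tfrac{1}{2}]$. Regularity in particular supplies the monotonicity of $F$ needed to invoke Lemma~\ref{lem:iso-conductance-A-close-coupling}, which, for any $A$ with $\pi(A)\leqslant\tfrac{1}{2}$, yields
\[
\frac{(\pi\otimes P)(A\times A^{\complement})}{\pi(A)}\;\geqslant\;\sup_{\theta\in[0,1]}\min\!\left\{\tfrac{1}{2}(1-\theta)\varepsilon,\;\tfrac{1}{4}\varepsilon\delta\theta\cdot(\tilde{I}_\pi/\mathrm{id})(\theta\pi(A))\right\}.
\]

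Next, I need to take the infimum over all such $A$ with $\pi(A)\leqslant v$ and turn this into a clean bound in $v$. The first term inside the $\min$ does not depend on $A$, so only the second term matters. The key point, which is the crux of the argument, is that $p\mapsto\tilde{I}_\pi(p)/p$ is \emph{decreasing} on $(0,\tfrac{1}{2}]$: indeed, for a concave function $\tilde{I}_\pi$ that is monotone increasing on $(0,\tfrac{1}{2}]$, the limit $\tilde{I}_\pi(0^+)$ exists in $[0,\infty)$, and for $0<y<x$ concavity gives $\tilde{I}_\pi(y)\geqslant\frac{y}{x}\tilde{I}_\pi(x)+(1-\frac{y}{x})\tilde{I}_\pi(0^+)\geqslant\frac{y}{x}\tilde{I}_\pi(x)$, whence $\tilde{I}_\pi(y)/y\geqslant\tilde{I}_\pi(x)/x$. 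Thus for $\pi(A)\leqslant v$ and any $\theta\in[0,1]$,
\[
(\tilde{I}_\pi/\mathrm{id})(\theta\pi(A))\;\geqslant\;(\tilde{I}_\pi/\mathrm{id})(\theta v),
\]
and lower-bounding the $\min$ inside the $\sup_\theta$ term-by-term yields the first displayed inequality of the corollary.

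Finally, the second displayed inequality follows by plugging in the specific choice $\theta=\tfrac{1}{2}$ into the supremum, which gives
\[
\min\!\left\{\tfrac{1}{4}\varepsilon,\;\tfrac{1}{8}\varepsilon\delta\cdot(\tilde{I}_\pi/\mathrm{id})(v/2)\right\}=\tfrac{1}{4}\varepsilon\cdot\min\!\left\{1,\;\tfrac{1}{2}\delta\cdot\frac{\tilde{I}_\pi(v/2)}{v/2}\right\}.
\]

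The only step that is not mechanical is verifying that $\tilde{I}_\pi/\mathrm{id}$ is decreasing, and this is where concavity (plus nonnegativity at $0^+$, which is automatic from $\tilde{I}_\pi>0$ and monotonicity) is essential. The authors already flag that this concavity hypothesis is crucial, noting that even the natural scaling $p\mapsto c\cdot p^k$ with $k>1$ would collapse the lower bound to zero; this is the main obstacle one has to be careful about, and it is exactly what the hypothesis of regular \emph{and concave} minorant is designed to circumvent.
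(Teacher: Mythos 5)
Your proposal is correct and follows exactly the route the paper intends: the corollary is stated in the text as an immediate consequence of combining Lemma~\ref{lem:iso-to-3set}, Lemma~\ref{lem:iso-conductance-A-close-coupling} and Definition~\ref{def:conductance-profile}, using that $\tilde{I}_{\pi}/\mathrm{id}$ is decreasing for a concave regular minorant. You have simply written out the details (the monotonicity of $\tilde{I}_{\pi}/\mathrm{id}$ via concavity and nonnegativity at $0^{+}$, the uniformity over $A$ with $\pi(A)\leqslant v$, and the choice $\theta=\tfrac{1}{2}$), all of which are accurate.
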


\begin{rem}
For obtaining tighter bounds on the conductance when $\delta\cdot\tilde{I}_{\pi}\left(\frac{1}{2}\right)$
is sufficiently small, one can take $\theta=1-\delta\cdot\tilde{I}_{\pi}\left(\frac{1}{2}\right)$
to see that
\[
\Phi_{P}^{*}\geqslant\frac{1}{2}\cdot\varepsilon\cdot\delta\cdot\tilde{I}_{\pi}\left(\frac{1}{2}\cdot\left(1-\delta\cdot\tilde{I}_{\pi}\left(\frac{1}{2}\right)\right)\right)\to\frac{1}{2}\cdot\varepsilon\cdot\delta\cdot\tilde{I}_{\pi}\left(\frac{1}{2}\right),
\]
as $\delta\cdot\tilde{I}_{\pi}\left(\frac{1}{2}\right)\to0^{+}$,
which is a slight improvement on the non-asymptotic bound $\Phi_{P}^{*}=\Phi_{P}\left(\frac{1}{2}\right)\geqslant\frac{1}{2}\cdot\varepsilon\cdot\delta\cdot\tilde{I}_{\pi}\left(\frac{1}{4}\right)$.
\end{rem}

The following theorem is the culmination of this and the preceding
section; see Figure~\ref{fig:secs2-3-map}. The proof is in Appendix~\ref{sec:Proofs-of-auxiliary}
For the mixing time bound, $v_{*}^{-1}$ will typically increase quite
rapidly as $\delta$ decreases. The bounds suggest three-stage behaviour
when $v_{*}<\frac{1}{2}$, recalling that $u_{0}=\chi^{2}(\mu,\pi)$.
The first term is active when $u_{0}>4\cdot v_{*}^{-1}>8$, either
because $u_{0}$ is extremely large or $\delta$ is large, while the
second term is active when $u_{0}>8$ and the third term is active
when $u_{0}>\varepsilon_{{\rm Mix}}$. Of course, if $u_{0}\leqslant\varepsilon_{{\rm Mix}}$,
then one may take $n=0$.
\begin{thm}
\label{thm:IP-to-mix}Let $\pi$ have a regular, concave isoperimetric
minorant $\tilde{I}_{\pi}$ w.r.t. the metric $\mathsf{d}$, and $P$
be a $\left(\mathsf{d},\delta,\varepsilon\right)$-close coupling,
$\pi$-reversible, positive Markov kernel. Then
\begin{enumerate}
\item For $v\in\left(0,\frac{1}{2}\right]$, $\Phi_{P}\left(v\right)\geqslant\frac{1}{4}\cdot\varepsilon\cdot\min\left\{ 1,\frac{1}{2}\cdot\delta\cdot\left(\tilde{I}_{\pi}/{\rm id}\right)\left(\frac{1}{2}\cdot v\right)\right\} $,
\item $\Phi_{P}^{*}\geqslant\frac{1}{4}\cdot\varepsilon\cdot\min\left\{ 1,2\cdot\delta\cdot\tilde{I}_{\pi}\left(\frac{1}{4}\right)\right\} $,
\item ${\rm \gamma}_{P}\geqslant\frac{1}{2}\cdot\left[\Phi_{P}^{*}\right]^{2}\geqslant2^{-5}\cdot\varepsilon^{2}\cdot\min\left\{ 1,4\cdot\delta^{2}\cdot\tilde{I}_{\pi}\left(\frac{1}{4}\right)^{2}\right\} .$
\end{enumerate}
Furthermore, let $\varepsilon_{{\rm Mix}}\in\left(0,8\right)$, $\mu\ll\pi$
be a probability measure and $u_{0}:=\chi^{2}(\mu,\pi)$. To ensure
that $\chi^{2}(\mu P^{n},\pi)\leqslant\varepsilon_{{\rm Mix}}$, it
suffices to take 
\begin{align*}
n & \geqslant2+2^{6}\cdot\varepsilon^{-2}\cdot\max\left\{ \log\left(\frac{u_{0}}{4\cdot v_{*}^{-1}}\right),0\right\} \\
 & +2^{8}\cdot\varepsilon^{-2}\cdot\delta^{-2}\cdot\int_{\max\left\{ \min\left\{ 2\cdot u_{0}^{-1},1/4\right\} ,v_{*}/2\right\} }^{1/4}\frac{\xi}{\tilde{I}_{\pi}\left(\xi\right)^{2}}\,{\rm d}\xi\\
 & +2^{4}\cdot\max\left\{ 1,2^{-2}\cdot\delta^{-2}\cdot\tilde{I}_{\pi}\left(\frac{1}{4}\right)^{-2}\right\} \cdot\varepsilon^{-2}\cdot\log\left(\max\left\{ \frac{\min\left\{ u_{0},8\right\} }{\varepsilon_{{\rm Mix}}},1\right\} \right).
\end{align*}
where
\begin{equation}
v_{*}:=\min\left\{ \frac{1}{2},\max\left\{ 0,\sup\left\{ v>0:1\leqslant\frac{1}{2}\cdot\delta\cdot\frac{\tilde{I}_{\pi}\left(\frac{1}{2}\cdot v\right)}{\frac{1}{2}\cdot v}\right\} \right\} \right\} .\label{eq:v*_defn}
\end{equation}

\end{thm}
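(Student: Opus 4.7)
The plan is to assemble the pieces already established: Items~1--3 are short consequences of Corollary~\ref{cor:iso-couple-conductance} together with Cheeger's inequality (Lemma~\ref{lem:lawlersokal}), while the mixing time bound requires feeding the resulting conductance profile lower bound into Theorem~\ref{thm:CP-to-mix} and carrying out the integral cleanly.

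For Item~1, I would apply the second inequality of Corollary~\ref{cor:iso-couple-conductance} directly, which is already in the form stated. Item~2 then follows by evaluating at $v=\tfrac{1}{2}$ and observing that $\tilde I_\pi(\tfrac{1}{4})/(\tfrac{1}{4}) = 4\,\tilde I_\pi(\tfrac{1}{4})$, so $\Phi_P^{\ast} \geqslant \tfrac{1}{4}\varepsilon\min\{1, 2\delta\tilde I_\pi(\tfrac{1}{4})\}$. Item~3 is immediate from Cheeger's inequality $\gamma_P \geqslant \tfrac{1}{2}[\Phi_P^{\ast}]^2$ combined with Item~2.

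For the mixing time bound, the key observation is that since $\tilde I_\pi$ is concave and regular with $\tilde I_\pi(0^+)=0$, the ratio $\tilde I_\pi(\cdot)/\mathrm{id}$ is non-increasing on $(0,\tfrac{1}{2}]$. Hence $v\mapsto \tfrac{1}{2}\delta \cdot \tilde I_\pi(v/2)/(v/2)$ is non-increasing on $(0,1]$, and the quantity $v_{\ast}$ from (\ref{eq:v*_defn}) is precisely the threshold separating the two branches of the minimum appearing in Item~1. Concretely, for $v\leqslant v_{\ast}$ we obtain $\Phi_P(v)\geqslant \varepsilon/4$, while for $v>v_{\ast}$ we have $\Phi_P(v)\geqslant \tfrac{\varepsilon\delta}{8}\cdot\tilde I_\pi(v/2)/(v/2)$. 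Plugging these into the integral $4\int_{\min\{4u_0^{-1},1/2\}}^{1/2} [v\,\Phi_P(v)^2]^{-1}\,\mathrm{d}v$ from Theorem~\ref{thm:CP-to-mix} and splitting at $v_{\ast}$ (when applicable): the lower segment contributes $4\cdot 16\varepsilon^{-2}\log(v_{\ast}\cdot u_0/4)$ when $u_0>4v_{\ast}^{-1}$, which yields the $2^6\varepsilon^{-2}\max\{\log(u_0/(4v_\ast^{-1})),0\}$ term; the upper segment, after the change of variables $\xi=v/2$, gives $2^8\varepsilon^{-2}\delta^{-2}\int_{\cdot}^{1/4} \xi/\tilde I_\pi(\xi)^2\,\mathrm{d}\xi$ with lower limit $\max\{\min\{2u_0^{-1},1/4\},v_\ast/2\}$ handling both cases uniformly. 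Finally, the $[\Phi_P^{\ast}]^{-2}\log(\cdot)$ term in Theorem~\ref{thm:CP-to-mix} is bounded using Item~2, giving the prefactor $2^4\varepsilon^{-2}\max\{1,2^{-2}\delta^{-2}\tilde I_\pi(\tfrac{1}{4})^{-2}\}$, and the leading constant $2$ is inherited directly.

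The main obstacle is really just bookkeeping: ensuring that the case split on $v_\ast$ versus $\min\{4u_0^{-1},1/2\}$ is handled uniformly so that one displayed bound captures all regimes (large $u_0$ with non-trivial $v_\ast$; $u_0$ small enough that only the spectral-gap-driven term matters; $v_\ast=\tfrac{1}{2}$, which kills the integral term entirely). Writing the lower limit as $\max\{\min\{2u_0^{-1},1/4\},v_\ast/2\}$ and using $\max\{\log(\cdot),0\}$ for the first logarithm absorbs these cases, and concavity/monotonicity of $\tilde I_\pi$ (which makes $\xi/\tilde I_\pi(\xi)^2$ integrable near zero in well-behaved examples) ensures the integral is well-defined on the relevant sub-interval.
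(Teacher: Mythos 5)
Your proposal is correct and follows essentially the same route as the paper: Corollary~\ref{cor:iso-couple-conductance} for the conductance profile, Lemma~\ref{lem:lawlersokal} for the gap, then Theorem~\ref{thm:CP-to-mix} with the integral split at $v_*$ and the substitution $\xi=v/2$. The only cosmetic point is that Item~3 uses Cheeger's inequality for $\mathrm{Gap_R}(P)$ and then the positivity hypothesis to identify $\gamma_P=\mathrm{Gap_R}(P)$, which you have available but should invoke explicitly.
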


We conclude our general results with a result that can be used to
establish the close-coupling condition for Metropolis kernels, which
we will use in the sequel to analyze both the RWM and pCN Markov chains.
\begin{lem}
\label{lem:met-tv-bound}Let $Q$ be a $\nu$-reversible Markov kernel
where $\nu\gg\pi$ is a $\sigma$-finite measure, $P$ be the $\pi$-reversible
Metropolis kernel with proposal $Q$ and $\alpha_{0}:=\inf_{x\in\mathsf{E}}\alpha\left(x\right)$.
Then 
\[
\left\Vert P\left(x,\cdot\right)-P\left(y,\cdot\right)\right\Vert _{{\rm TV}}\leqslant\left\Vert Q\left(x,\cdot\right)-Q\left(y,\cdot\right)\right\Vert _{{\rm TV}}+1-\alpha_{0},\qquad x,y\in\mathsf{E}.
\]
\end{lem}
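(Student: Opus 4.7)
I would prove the bound via a coupling argument: it suffices to exhibit a coupling $(X, Y)$ with $X \sim P(x, \cdot)$ and $Y \sim P(y, \cdot)$ such that $\mathbb{P}(X \neq Y) \leqslant \|Q(x, \cdot) - Q(y, \cdot)\|_{{\rm TV}} + 1 - \alpha_0$, since $\|P(x, \cdot) - P(y, \cdot)\|_{{\rm TV}}$ is the infimum of such probabilities over all couplings. The case $x = y$ being trivial, I assume $x \neq y$ throughout.

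Take $(X', Y')$ to be a maximal coupling of $Q(x, \cdot)$ and $Q(y, \cdot)$, so that $\mathbb{P}(X' \neq Y') = \|Q(x, \cdot) - Q(y, \cdot)\|_{{\rm TV}}$ and the restriction of the joint law to $\{X' = Y'\}$ has marginal $Q(x, \cdot) \wedge Q(y, \cdot)$. Draw an independent $U \sim {\rm Unif}[0, 1]$ and use this common $U$ to drive both accept/reject steps: set $X = X'$ if $U \leqslant \alpha(x, X')$ and $X = x$ otherwise, and similarly $Y = Y'$ or $Y = y$ according to whether $U \leqslant \alpha(y, Y')$. By construction, $X \sim P(x, \cdot)$ and $Y \sim P(y, \cdot)$. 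Splitting $\mathbb{P}(X \neq Y)$ by whether the proposals coincide, the contribution from $\{X' \neq Y'\}$ is at most $\|Q(x, \cdot) - Q(y, \cdot)\|_{{\rm TV}}$, while on $\{X' = Y' = z\}$ the common-uniform coupling of the two Bernoulli acceptance decisions gives $\mathbb{P}(X \neq Y \mid X' = Y' = z) = 1 - \min\{\alpha(x, z), \alpha(y, z)\}$ (using $x \neq y$).

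The crux is then to lower-bound $\int \min\{\alpha(x, z), \alpha(y, z)\} \, d(Q(x, \cdot) \wedge Q(y, \cdot))(z)$ by $\alpha_0 - \|Q(x, \cdot) - Q(y, \cdot)\|_{{\rm TV}}$. Without loss of generality assume $\varpi(x) \geqslant \varpi(y)$, so that $\min\{\alpha(x, z), \alpha(y, z)\} = \alpha(x, z)$ identically. Using the decomposition $Q(x, \cdot) = Q(x, \cdot) \wedge Q(y, \cdot) + (Q(x, \cdot) - Q(y, \cdot))_+$, whose positive-part excess has total mass $\|Q(x, \cdot) - Q(y, \cdot)\|_{{\rm TV}}$, together with the pointwise bound $\alpha(x, \cdot) \leqslant 1$, one obtains
\[
\int \alpha(x, z) \, d(Q(x, \cdot) \wedge Q(y, \cdot))(z) \geqslant \alpha(x) - \|Q(x, \cdot) - Q(y, \cdot)\|_{{\rm TV}} \geqslant \alpha_0 - \|Q(x, \cdot) - Q(y, \cdot)\|_{{\rm TV}}.
\]
Assembling the contributions gives $\mathbb{P}(X \neq Y) \leqslant \|Q(x, \cdot) - Q(y, \cdot)\|_{{\rm TV}} + 1 - \alpha_0$, as required. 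The key subtlety is the reduction to $\varpi(x) \geqslant \varpi(y)$: this trades an unavailable pointwise lower bound on $\min\{\alpha(x, z), \alpha(y, z)\}$ for the integrated identity $\int \alpha(x, z) \, dQ(x, \cdot)(z) = \alpha(x) \geqslant \alpha_0$, which is immediate from the definition. A naive triangle-inequality approach that separately bounds $\|P(x, \cdot) - Q(x, \cdot)\|_{{\rm TV}}$ and $\|Q(y, \cdot) - P(y, \cdot)\|_{{\rm TV}}$ only recovers $2(1 - \alpha_0)$, which is why the coupling-based route with the common uniform is needed.
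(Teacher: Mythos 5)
Your proof is correct and follows essentially the same route as the paper's: a maximal coupling of the proposals $Q(x,\cdot)$ and $Q(y,\cdot)$ combined with a common uniform driving both acceptance steps, together with the reduction to $\varpi(x)\geqslant\varpi(y)$. The only cosmetic difference is in the final bookkeeping — the paper extracts the bound $\mathbb{P}(X=Y)\geqslant\alpha(x)-\left\Vert Q(x,\cdot)-Q(y,\cdot)\right\Vert_{\mathrm{TV}}$ via the elementary inequality $\mathbb{P}(A\cap B)\geqslant\mathbb{P}(A)+\mathbb{P}(B)-1$, whereas you obtain the same quantity by integrating $\alpha(x,\cdot)$ against the overlap measure and using the decomposition of $Q(x,\cdot)$ into the overlap plus its positive excess (and your asserted equality for $\mathbb{P}(X\neq Y\mid X'=Y'=z)$ should strictly be an inequality when $z\in\{x,y\}$, but only that direction is used).
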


\begin{proof}
Let $x,y\in\mathsf{E}$ be arbitrary. We write $\varpi=\frac{\mathrm{d}\pi}{\mathrm{d}\nu}$.
We construct a specific coupling of $\left(X^{'},Y^{'}\right)$ such
that $X^{'}\sim P\left(x,\cdot\right)$ and $Y^{'}\sim P\left(y,\cdot\right)$.
Without loss of generality, we may assume that $\varpi\left(x\right)\geqslant\varpi\left(y\right)$.
Let $\left(W_{x},W_{y}\right)$ be drawn from a maximal coupling of
$Q\left(x,\cdot\right)$ and $Q\left(y,\cdot\right)$, so that
\[
\mathbb{P}\left(W_{x}=W_{y}\right)=1-\left\Vert Q\left(x,\cdot\right)-Q\left(y,\cdot\right)\right\Vert _{{\rm TV}}.
\]
With $\mathcal{U}\sim{\rm Unif}\left(0,1\right)$, we complete the
specification of the distribution of $\left(X^{'},Y^{'}\right)$ via
\[
X^{'}\mid\left\{ W_{x}=w_{x},W_{y}=w_{y},\mathcal{U}=u\right\} =\begin{cases}
w_{x} & u\leqslant\varpi\left(w_{x}\right)/\varpi\left(x\right),\\
x & u>\varpi\left(w_{x}\right)/\varpi\left(x\right),
\end{cases}
\]
and
\[
Y^{'}\mid\left\{ W_{x}=w_{x},W_{y}=w_{y},\mathcal{U}=u\right\} =\begin{cases}
w_{y} & u\leqslant\varpi\left(w_{y}\right)/\varpi\left(y\right),\\
y & u>\varpi\left(w_{y}\right)/\varpi\left(y\right).
\end{cases}
\]
On the event $\left\{ W_{x}=W_{y}\right\} \cap\left\{ X^{'}=W_{x}\right\} $,
we have $X^{'}=Y^{'}=W_{x}$ since $\varpi\left(x\right)\geqslant\varpi\left(y\right)$.
Hence, 
\begin{align*}
\mathbb{P}\left(X^{'}=Y^{'}\right) & \geqslant\mathbb{P}\left(W_{x}=W_{y},X^{'}=W_{x}\right)\\
 & \geqslant\mathbb{P}\left(W_{x}=W_{y}\right)+\mathbb{\mathbb{P}}\left(X^{'}=W_{x}\right)-1\\
 & =\left(1-\left\Vert Q\left(x,\cdot\right)-Q\left(y,\cdot\right)\right\Vert _{{\rm TV}}\right)+\mathbb{\mathbb{P}}\left(X^{'}=W_{x}\right)-1\\
 & \geqslant\left(1-\left\Vert Q\left(x,\cdot\right)-Q\left(y,\cdot\right)\right\Vert _{{\rm TV}}\right)+\alpha_{0}-1\\
 & =\alpha_{0}-\left\Vert Q\left(x,\cdot\right)-Q\left(y,\cdot\right)\right\Vert _{{\rm TV}}.
\end{align*}
We conclude by the coupling inequality: $\left\Vert P\left(x,\cdot\right)-P\left(y,\cdot\right)\right\Vert _{{\rm TV}}\leqslant\mathbb{P}\left(X^{'}\neq Y^{'}\right)$.
\end{proof}

\subsection{\label{subsec:Examples-of-Isoperimetric}Examples of isoperimetric
profiles}

In this section, we provide some concrete examples of probability
measures for which the isoperimetric profile admits regular, concave
and tractable isoperimetric profiles or minorants. We first describe
explicitly the profiles associated to specific measures, and then
describe some general results which hold over well-behaved families
of measures. Most results in this subsection are not originally ours,
and are included in order to provide context on the breadth of applicability
of our main results.

\subsubsection{Specific examples}
\begin{example}
Let $\varphi_{\gamma}$, $\Phi_{\gamma}$ denote the PDF and CDF respectively
of the one-dimensional standard Gaussian measure. For any $d\in\mathbb{N}$,
the isoperimetric profile of $\pi=\mathcal{N}\left(0,I_{d}\right)$
w.r.t. $\left|\cdot\right|$ satisfies $I_{\pi}\left(p\right)=\left(\varphi_{\gamma}\circ\Phi_{\gamma}^{-1}\right)\left(p\right)$
\citep{borell1975brunn,sudakov1978extremal}, which satisfies
\[
\lim_{p\to0^{+}}\frac{I_{\pi}\left(p\right)}{p\cdot\left(\log\frac{1}{p}\right)^{1/2}}=\sqrt{2},
\]
see, e.g., \citet{Barthe2000}. Observing that $I_{\pi}$ is concave
and regular, one can deduce
\[
I_{\pi}\left(p\right)\geqslant2\cdot I_{\pi}\left(\frac{1}{2}\right)\cdot\min\left\{ p,1-p\right\} =\left(\frac{2}{\pi}\right)^{1/2}\cdot\min\left\{ p,1-p\right\} .
\]
\end{example}

\begin{example}
For the Laplace measure $\pi\left({\rm d}x\right)\propto\exp\left(-\left|x\right|\right)\,{\rm d}x$
in one dimension, the isoperimetric profile w.r.t. $\left|\cdot\right|$
is given by $I_{\pi}\left(p\right)=\min\left\{ p,1-p\right\} $ \citep[see, e.g.,][]{bobkov1999isoperimetric}.
\end{example}

\begin{example}
For the Subbotin measure $\pi\left({\rm d}x\right)\propto\exp\left(-\left|x\right|^{\alpha}\right)\,{\rm d}x$
in one dimension, with $\alpha\in\left(1,2\right)$, it holds that
the isoperimetric profile w.r.t. $\left|\cdot\right|$ can be bounded
from below for $p\in\left(0,\frac{1}{2}\right]$ as $I_{\pi}\left(p\right)\geqslant K\left(\alpha\right)\cdot p\cdot\left(\log\frac{1}{p}\right)^{1-1/\alpha}$
for some $K\left(\alpha\right)>0$ \citep[see, e.g.,][]{latala2000between,barthe2006interpolated}.
\end{example}

\subsubsection{Functional inequalities}

Many analyses of MCMC algorithms restrict to considering strongly
log-concave targets $\pi$, i.e. $U$ is strongly convex, to obtain
quantitative bounds. This means that for potentials with inhomogeneous
local convexity properties, but good global isoperimetric properties
(e.g. strongly convex in the tails, weakly convex in the center of
the space), complexity bounds can be somewhat pessimistic. In this
subsection, we give some examples of how to estimate isoperimetric
profiles when $\pi$ is only log-concave, given additional quantitative
information about functional inequalities which they satisfy.
\begin{example}
\label{exa:convex-poincare-to-profile}For any log-concave $\pi$,
there exists $\gamma_{\pi}>0$ such that
\[
\forall f\,\text{locally Lipschitz,}\quad\pi\left(\left|\nabla f\right|^{2}\right)\geqslant\gamma_{\pi}\cdot{\rm Var}_{\pi}\left(f\right),
\]
that is, $\pi$ satisfies a Poincaré inequality \citep[see, e.g.,][Theorem~4.6.3]{bakry2014analysis}.
It is also known that Poincaré inequalities can be translated into
${\rm L}^{1}$-Poincaré inequalities, with explicit control of the
constants \citep{cattiaux2020poincare}. Finally, ${\rm L}^{1}$-Poincaré
inequalities are equivalent to isoperimetric inequalities with respect
to $\left|\cdot\right|$ of the form $I_{\pi}\left(p\right)\geqslant c\cdot\min\left\{ p,1-p\right\} $,
with the same constant \citep[see, e.g.,][]{kolesnikov2007modified}.
Combining these facts, if $\pi$ is log-concave, one can deduce that
\[
I_{\pi}\left(p\right)\geqslant\frac{1}{6}\cdot\gamma_{\pi}^{1/2}\cdot\min\left\{ p,1-p\right\} .
\]
\end{example}

\begin{example}
Consider log-concave $\pi$ satisfying a logarithmic Sobolev inequality
with constant $\lambda_{\pi}$, namely,
\[
\forall f\,\text{locally Lipschitz,}\quad\pi\left(\left|\nabla f\right|^{2}\right)\geqslant\lambda_{\pi}\cdot{\rm Ent}_{\pi}\left(f^{2}\right),
\]
where for positive $f$, ${\rm Ent}_{\pi}\left(f\right):=\pi\left(f\cdot\log f\right)-\pi\left(f\right)\cdot\log\pi\left(f\right)$.
A result of \citet{ledoux2011concentration} then gives the following
bound for the isoperimetric profile w.r.t. $\left|\cdot\right|$,
\[
I_{\pi}\left(p\right)\geqslant\frac{1}{34}\cdot\lambda_{\pi}^{1/2}\cdot p\cdot\left(\log\frac{1}{p}\right)^{1/2},\qquad p\in\left(0,\frac{1}{2}\right].
\]
\end{example}

\begin{example}
\label{exa:q-log-sob}For log-concave $\pi$ satisfying a $q$-log-Sobolev
inequality \citep[see][]{bobkov2005entropy}, i.e.
\[
\forall f\,\text{locally Lipschitz,}\quad D\cdot{\rm Ent}_{\pi}\left(\left|f\right|^{q}\right)\leqslant\pi\left(\left|\nabla f\right|_{q}^{q}\right),
\]
a result of \citet{milman2009rolefunciso} establishes that for the
isoperimetric profile w.r.t. $\left|\cdot\right|$, 
\[
I_{\pi}\left(p\right)\geqslant c_{q}\cdot D\cdot p\cdot\left(\log\frac{1}{p}\right)^{1/q}\quad\text{for }p\in\big(0,1/2\big],
\]
where $c_{q}>0$ is universal. For $q=2$, this entails the standard
log-Sobolev inequality; for $q\in\left[1,2\right)$, the assumption
becomes stronger and corresponds to potentials which have tail behaviour
like $U\left(x\right)\sim\left|x\right|^{q_{*}}$, where $q^{-1}+q_{*}^{-1}=1$.
For $q=1$, the assumption is yet stronger and corresponds intuitively
to potentials which have tail behaviour like $U\left(x\right)\sim\exp\left(c\cdot\left|x\right|\right)$
for some $c>0$.
\end{example}

\subsubsection{Transfer principles}

Another practical aspect of working with isoperimetric profiles is
that they are often preserved under suitably regular perturbations,
some of which we detail here. These transfer principles can be used
to accommodate potentials that are not convex. The first of these
concerns the transfer of isoperimetric properties under Lipschitz
transport; related statements are made in \citet{barthe2001levels}.
\begin{lem}
\label{lem:transport-isoperimetry}For $i=1,2$, let $\mu_{i}$ be
a probability measure on the metric space $\left(\E_{i},\mathsf{d}_{i}\right)$.
Suppose that these measures are related through transport as
\[
\mu_{2}=T_{\#}\mu_{1},
\]
where $T:\E_{1}\to\E_{2}$ is a Lipschitz bijection. Then, for $\tilde{I}$
any isoperimetric minorant of $\mu_{1}$ w.r.t. $\mathsf{d}_{1}$,
it holds that $\left|T\right|_{{\rm Lip}}^{-1}\cdot\tilde{I}$ is
an isoperimetric minorant of $\mu_{2}$ w.r.t. the metric $\mathsf{d}_{2}$.
In particular, if $T$ is also an isometry, then $\mu_{1}$ and $\mu_{2}$
have identical isoperimetric profiles w.r.t. their respective metrics.
\end{lem}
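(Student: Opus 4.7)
The plan is to transfer isoperimetric bounds by pulling sets back through $T$ and comparing $r$-neighborhoods via the Lipschitz constant $L := |T|_{\mathrm{Lip}}$. Fix $p \in (0,1)$ and an arbitrary measurable $A_2 \subseteq \E_2$ with $\mu_2(A_2) = p$; set $A_1 := T^{-1}(A_2)$, so that by the pushforward identity $\mu_1(A_1) = \mu_2(A_2) = p$. I will compare $\mu_2^+(A_2)$ to $\mu_1^+(A_1)$ and then invoke the assumed minorant on $\mu_1$.

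The key geometric step is to show that for every $r > 0$,
\begin{equation*}
T\bigl((A_1)_{r/L}\bigr) \subseteq (A_2)_r.
\end{equation*}
Indeed, if $x \in (A_1)_{r/L}$, then $\mathsf{d}_1(x, A_1) \leqslant r/L$, and the Lipschitz property gives
\begin{equation*}
\mathsf{d}_2\bigl(T(x), A_2\bigr) = \mathsf{d}_2\bigl(T(x), T(A_1)\bigr) \leqslant L \cdot \mathsf{d}_1(x, A_1) \leqslant r,
\end{equation*}
using bijectivity of $T$ to identify $T(A_1) = A_2$. Since $T$ is a bijection, the pushforward yields $\mu_2\bigl(T((A_1)_{r/L})\bigr) = \mu_1\bigl((A_1)_{r/L}\bigr)$, and combined with the inclusion above,
\begin{equation*}
\mu_2\bigl((A_2)_r\bigr) \geqslant \mu_1\bigl((A_1)_{r/L}\bigr).
\end{equation*}
Subtracting $\mu_2(A_2) = \mu_1(A_1)$, dividing by $r$, and writing $r = L \cdot (r/L)$ gives
\begin{equation*}
\frac{\mu_2((A_2)_r) - \mu_2(A_2)}{r} \;\geqslant\; \frac{1}{L} \cdot \frac{\mu_1((A_1)_{r/L}) - \mu_1(A_1)}{r/L}.
\end{equation*}
Taking $\liminf_{r \to 0^+}$ (noting $r/L \to 0^+$ as well) yields $\mu_2^+(A_2) \geqslant L^{-1} \cdot \mu_1^+(A_1) \geqslant L^{-1} \cdot I_{\mu_1}(p) \geqslant L^{-1} \cdot \tilde I(p)$. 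Taking the infimum over $A_2$ with $\mu_2(A_2) = p$ yields $I_{\mu_2}(p) \geqslant L^{-1} \cdot \tilde I(p)$, establishing that $L^{-1} \cdot \tilde I$ is an isoperimetric minorant of $\mu_2$.

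For the isometry claim, apply the same argument to $T$ and then to $T^{-1}$ (an isometry is automatically a bijection onto its image, and both directions have Lipschitz constant $1$) to obtain $I_{\mu_2} \geqslant I_{\mu_1}$ and $I_{\mu_1} \geqslant I_{\mu_2}$, hence equality. There is no substantive obstacle beyond bookkeeping; the one point requiring mild care is that the Minkowski content is defined via $\liminf$, which is why I work directly with the inequality on $\mu_2((A_2)_r)$ before passing to the limit rather than manipulating $\mu_2^+$ abstractly.
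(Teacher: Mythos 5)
Your proof is correct and follows essentially the same route as the paper's: pull the set back through $T$, establish the inclusion $T\bigl((T^{-1}A)_{r/|T|_{\mathrm{Lip}}}\bigr)\subseteq A_{r}$, transfer measures via the pushforward identity, and pass to the $\liminf$ before invoking the minorant; the isometry claim is likewise handled by applying the one-sided bound in both directions with the profile itself as minorant. No gaps relative to the paper's own argument.
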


\begin{proof}
Let $A\subseteq\mathsf{E}_{2}$ be measurable and $A_{r}=\left\{ x\in\E_{2}:\mathsf{d}_{2}\left(x,A\right)\leqslant r\right\} $.
Write $B:=T^{-1}\left(A\right)$, and compute that
\begin{align*}
A_{r} & =\left\{ x\in\E_{2}:\mathsf{d}_{2}\left(x,A\right)\leqslant r\right\} \\
 & =\left\{ x\in\mathcal{\E}_{2}:\mathsf{d_{2}}\left(T\left(T^{-1}\left(x\right)\right),T\left(B\right)\right)\leqslant r\right\} \\
 & \supseteq\left\{ T\left(y\right)\in\E_{2}:\mathsf{d}_{1}\left(y,B\right)\leqslant\left|T\right|_{{\rm Lip}}^{-1}\cdot r\right\} \\
 & =T\left(B_{\left|T\right|_{{\rm Lip}}^{-1}\cdot r}\right),
\end{align*}
where $B_{s}:=\left\{ y\in\E_{1}:\mathsf{d}_{1}\left(y,B\right)\leqslant s\right\} $.
Then
\[
\frac{\mu_{2}\left(A_{r}\right)-\mu_{2}\left(A\right)}{r}\geqslant\frac{\mu_{2}\left(T\left(B_{\left|T\right|_{{\rm Lip}}^{-1}\cdot r}\right)\right)-\mu_{2}\left(T\left(B\right)\right)}{r}=\frac{\mu_{1}\left(B_{\left|T\right|_{{\rm Lip}}^{-1}\cdot r}\right)-\mu_{1}\left(B\right)}{r},
\]
whence it follows that
\begin{align*}
\mu_{2,\mathsf{d}_{2}}^{+}\left(A\right) & =\lim\inf_{r\to0^{+}}\frac{\mu_{2}\left(A_{r}\right)-\mu_{2}\left(A\right)}{r}\\
 & \geqslant\lim\inf_{r\to0^{+}}\frac{\mu_{1}\left(B_{\left|T\right|_{{\rm Lip}}^{-1}\cdot r}\right)-\mu_{1}\left(B\right)}{r}\\
 & \geqslant\lim\inf_{s\to0^{+}}\frac{\mu_{1}\left(B_{s}\right)-\mu_{1}\left(B\right)}{\left|T\right|_{{\rm Lip}}\cdot s}\\
 & =\left|T\right|_{{\rm Lip}}^{-1}\cdot\mu_{1,\mathsf{d}_{1}}^{+}\left(B\right).
\end{align*}
By definition of isoperimetric minorants and $\mu_{2}$, it holds
that $\mu_{1,\mathsf{d}_{1}}^{+}\left(B\right)\geqslant\tilde{I}\left(\mu_{1}\left(B\right)\right)=\tilde{I}\left(\mu_{2}\left(A\right)\right)$,
so the first result follows. For the second result, note that bijective
isometries from $\E_{1}$ to $\E_{2}$ satisfy $\left|T\right|_{\mathrm{Lip}}=\left|T^{-1}\right|_{{\rm \mathrm{Lip}}}=1$
(noting that these Lipschitz norms are technically defined on different
spaces), and that the isoperimetric profile is always an isoperimetric
minorant. Applying the first result in both directions allows us to
conclude.
\end{proof}
The following result will be used frequently in the sequel.
\begin{lem}
\label{lem:Bounds-on-isoperimetric}For an $m$-strongly convex potential
$U$, with $\mathsf{d}=\left|\cdot\right|$, we have
\[
I_{\pi}\left(p\right)\geqslant m^{1/2}\cdot\left(\varphi_{\gamma}\circ\Phi_{\gamma}^{-1}\right)\left(p\right)\geqslant C_{\ell}\cdot m^{1/2}\cdot\min\left\{ p,1-p\right\} \cdot\left(\log\frac{1}{\min\left\{ p,1-p\right\} }\right)^{1/2},
\]
where $C_{\ell}:=\left(\frac{2}{\pi\cdot\log\left(2\right)}\right)^{1/2}\geqslant0.958357$.
Moreover, if we let $\tilde{I}_{\pi}\left(p\right):=m^{1/2}\cdot\left(\varphi_{\gamma}\circ\Phi_{\gamma}^{-1}\right)\left(p\right)$,
which is regular and concave, then 
\[
\tilde{I}_{\pi}\left(\frac{1}{4}\right)=m^{1/2}\cdot C_{{\rm \gamma}},
\]
where $C_{{\rm \gamma}}:=\left(\varphi\circ\Phi^{-1}\right)\left(\frac{1}{4}\right)\geqslant0.3177765$.
\end{lem}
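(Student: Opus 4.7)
The proof has two substantive parts: establishing the Gaussian lower bound $I_\pi(p) \geq m^{1/2}(\varphi_\gamma \circ \Phi_\gamma^{-1})(p)$, and then the elementary comparison of $(\varphi_\gamma \circ \Phi_\gamma^{-1})(p)$ with $p\sqrt{\log(1/p)}$; the regularity, concavity, and numerical claims then follow by routine verification.

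For the first inequality, I would invoke Caffarelli's contraction theorem, which states that for $m$-strongly log-concave $\pi$ on $\mathbb{R}^d$, the Brenier optimal transport map $T$ sending the standard Gaussian $\gamma_d$ to $\pi$ is $m^{-1/2}$-Lipschitz. Combined with Lemma~\ref{lem:transport-isoperimetry} applied to $\mu_1 = \gamma_d$, $\mu_2 = \pi$, and $T$, this gives
\[
I_\pi(p) \geq |T|_{\mathrm{Lip}}^{-1} \cdot I_{\gamma_d}(p) = m^{1/2} \cdot (\varphi_\gamma \circ \Phi_\gamma^{-1})(p),
\]
where the last equality is the Borell--Sudakov--Tsirelson Gaussian isoperimetric inequality recalled in the first example of Section~\ref{subsec:Examples-of-Isoperimetric}. (This conclusion is also a consequence of the Bakry--\'Emery $\mathrm{CD}(m, \infty)$ framework.)

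For the second inequality, write $G := \varphi_\gamma \circ \Phi_\gamma^{-1}$; by symmetry of $G$ about $1/2$, it suffices to show $G(p) \geq C_\ell \cdot p \cdot \sqrt{\log(1/p)}$ for $p \in (0, 1/2]$. I would study the ratio $\psi(p) := G(p)/(p\sqrt{\log(1/p)})$, using $G'(p) = -\Phi_\gamma^{-1}(p)$ to compute
\[
\frac{\psi'(p)}{\psi(p)} = -\frac{\Phi_\gamma^{-1}(p)}{G(p)} - \frac{1}{p} + \frac{1}{2p\log(1/p)}.
\]
The classical asymptotic $G(p) \sim p\sqrt{2\log(1/p)}$ as $p \to 0^+$ yields $\lim_{p \to 0^+}\psi(p) = \sqrt{2}$, while direct evaluation gives $\psi(1/2) = (1/\sqrt{2\pi})/((1/2)\sqrt{\log 2}) = C_\ell$. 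The strategy is to show $\psi' \leq 0$ on $(0, 1/2]$, so that $\psi$ attains its minimum $C_\ell$ at $p = 1/2$; this reduces to a single-variable inequality in $x := \Phi_\gamma^{-1}(p) \leq 0$, provable via standard Mill's ratio bounds on $\Phi_\gamma$.

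For the remaining claims, $\tilde{I}_\pi = m^{1/2} G$ is continuous, symmetric about $1/2$, monotone increasing on $(0, 1/2]$ (since $\tilde{I}_\pi'(p) = -m^{1/2}\Phi_\gamma^{-1}(p) \geq 0$ there), and concave (from $G \cdot G'' \equiv -1$, which forces $G'' < 0$), so it is both regular and concave. The value $\tilde{I}_\pi(1/4) = m^{1/2} C_\gamma$ is immediate from the definition of $C_\gamma$, and the numerical estimates $C_\ell \geq 0.958357$, $C_\gamma \geq 0.3177765$ follow from direct computation (with $\Phi_\gamma^{-1}(1/4) \approx -0.6745$). The main obstacle is rigorously verifying that $\psi$ is non-increasing on $(0, 1/2]$: the expression for $\psi'/\psi$ combines an elementary rational part in $p$ with the non-elementary term $\Phi_\gamma^{-1}(p)/G(p)$, so pinning down its sign throughout the interval requires care. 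An alternative route would be to analyze $f(p) := G(p)^2 - C_\ell^2 p^2 \log(1/p)$, exploiting the simpler identity $(G^2)'' = 2(G')^2 - 2$ that follows from $GG'' = -1$, and showing that $f \geq 0$ with boundary zeros at $p = 0$ and $p = 1/2$.
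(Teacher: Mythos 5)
Your proposal matches the paper's proof essentially step for step: the first inequality is obtained exactly as you describe, via Caffarelli's contraction theorem combined with Lemma~\ref{lem:transport-isoperimetry} and the Borell--Sudakov--Tsirelson profile of the standard Gaussian, and the second inequality likewise comes from showing that $p\mapsto\left(\varphi_{\gamma}\circ\Phi_{\gamma}^{-1}\right)\left(p\right)/\bigl(p\cdot\left(\log\frac{1}{p}\right)^{1/2}\bigr)$ is minimized at $p=\frac{1}{2}$. The "main obstacle" you flag is not one relative to the paper, which justifies that minimization claim only by reference to a plot (Figure~\ref{fig:tilde_I}); your sketched derivative/Mill's-ratio argument would in fact be more rigorous than the published proof.
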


\begin{proof}
By the contraction principle of \citet{caffarelli2000monotonicity},
it is known that if one takes $\mu=\mathcal{N}\left(0,m^{-1}\cdot I_{d}\right)$,
then there exists a 1-Lipschitz mapping $T$ which pushes $\mu$ onto
$\pi$, and hence a $m^{-1/2}$-Lipschitz mapping which pushes $\mathcal{N}\left(0,I_{d}\right)$
onto $\pi$. By Lemma~\ref{lem:transport-isoperimetry}, the first
inequality follows. For the second inequality, observe that the function
\[
g(p)=\frac{\left(\varphi_{\gamma}\circ\Phi_{\gamma}^{-1}\right)\left(p\right)}{p\cdot\left(\log\left(\frac{1}{p}\right)\right)^{1/2}},
\]
is minimized on $(0,\frac{1}{2}]$ at $\frac{1}{2}$; see Figure~\ref{fig:tilde_I},
and defining $C_{\ell}=g\left(\frac{1}{2}\right)$ gives the second
inequality. The final claim is a direct computation.
\end{proof}
Isoperimetric profiles also transfer under bounded changes of measure
in the style of, e.g., \citet{Holley1987}. We provide the following
result that demonstrates this.
\begin{prop}
\label{exa:holley-stroock}Let $\mu$ be a probability measure on
$\mathbb{R}^{d}$ with positive density with respect to Lebesgue.
Suppose $\tilde{I}_{\mu}$ is a regular isoperimetric minorant of
$\mu$ w.r.t. a metric $\mathsf{d}$, and $\nu$ is a probability
measure equivalent to $\mu$ with $\frac{{\rm d}\nu}{{\rm d}\mu}\in\left[c_{1},c_{2}\right]$
for some $0<c_{1}\leqslant c_{2}<\infty$. Then $p\mapsto c_{1}\cdot I_{\mu}\left(c_{2}^{-1}\cdot\min\left\{ p,1-p\right\} \right)$
for $p\in(0,1)$ is a regular isoperimetric minorant of $\nu$ w.r.t.
$\mathsf{d}$.
\end{prop}

\begin{proof}
We first deduce that $\mu$ satisfies a 3-set isoperimetric inequality
with function $F_{\mu}(t)=\tilde{I}_{\mu}(t)$ for $t\in(0,\frac{1}{2}]$
by Lemma~\ref{lem:iso-to-3set}. Then, for any measurable partition
$\mathsf{E}=S_{1}\sqcup S_{2}\sqcup S_{3}$ we have 
\begin{align*}
\nu(S_{3}) & \geqslant c_{1}\mu(S_{3})\\
 & \geqslant c_{1}\mathsf{d}(S_{1},S_{2})F_{\mu}(\min\{\mu(S_{1}),\mu(S_{2})\})\\
 & \geqslant c_{1}\mathsf{d}(S_{1},S_{2})F_{\mu}(c_{2}^{-1}\min\{\nu(S_{1}),\nu(S_{2})\}),
\end{align*}
so $\nu$ satisfies a 3-set isoperimetric inequality with metric $\mathsf{d}$
and function $F_{\nu}(t)=c_{1}F_{\mu}(c_{2}^{-1}t)$. We conclude
by Lemma~\ref{lem:iso-to-3set}.
\end{proof}
\begin{rem}
Proposition~\ref{exa:holley-stroock} provides\textcolor{red}{{} }an
improvement of \citet[Lemma 2]{belloni2009computational}, who (essentially)
work in the setting of probability measures whose density can be expressed
as
\[
\pi\left(x\right)=\exp\left(-\frac{1}{2}\left|x\right|^{2}-V\left(x\right)-\xi\left(x\right)\right),
\]
where $V$ is convex, and $\xi$ is uniformly bounded above and below.
They deduce a `pseudo'-three-set isoperimetric inequality of the form
\[
\pi\left(S_{3}\right)\geqslant c\cdot\exp\left(-{\rm Osc}\left(\xi\right)\right)\cdot t\cdot\exp\left(-\frac{1}{2}\cdot t^{2}\right)\cdot\min\left\{ \pi\left(S_{1}\right),\pi\left(S_{2}\right)\right\} ,
\]
where ${\rm Osc}\left(\xi\right):={\rm ess}\sup\xi-{\rm ess}\inf\xi$,
$t=\mathsf{d}\left(S_{1},S_{2}\right)$, $\mathsf{d}=\left|\cdot\right|$
, and $c>0$ is an explicit constant.

In fact, using Caffarelli's result, one can see that $\exp\left(-\frac{1}{2}\left|x\right|^{2}-V\left(x\right)\right)\,{\rm d}x$
will admit $\varphi_{\gamma}\circ\Phi_{\gamma}^{-1}$ as an isoperimetric
minorant, and so combining this with Proposition~\ref{exa:holley-stroock},
one obtains that $\pi$ satisfies the inequality
\[
\pi\left(S_{3}\right)\geqslant\exp\left(-{\rm Osc}\left(\xi\right)\right)\cdot\mathsf{d}\left(S_{1},S_{2}\right)\cdot\left(\varphi_{\gamma}\circ\Phi_{\gamma}^{-1}\right)\left(\min\left\{ \pi\left(S_{1}\right),\pi\left(S_{2}\right)\right\} \right),
\]
which relates to a result of \citet{bobkov2010perturbations} (which
is stronger, but only valid in dimension $1$). Combining this observation
with the spectral profile approach and the other calculations of \citet{belloni2009computational},
it seems likely that one could improve the dimension-dependence of
their results. We do not pursue this claim further in this work.
\end{rem}

\begin{example}
It is known that under suitable convexity assumptions that isoperimetric
profiles `almost' tensorize, i.e. that the isoperimetric profile of
$\pi^{\otimes n}$ satisfies $\inf_{n\geqslant1}I_{\pi^{\otimes n}}\geqslant c\cdot I_{\pi}$
for some constant $c>0$. In particular, the isoperimetric profile
of product measures can be lower-bounded independently of dimension.
We refer the reader to \citet{bobkov1997isoperimetric,roberto2010isoperimetry}
for details.
\end{example}

\begin{figure}
\begin{centering}
\includegraphics[width=0.5\textwidth]{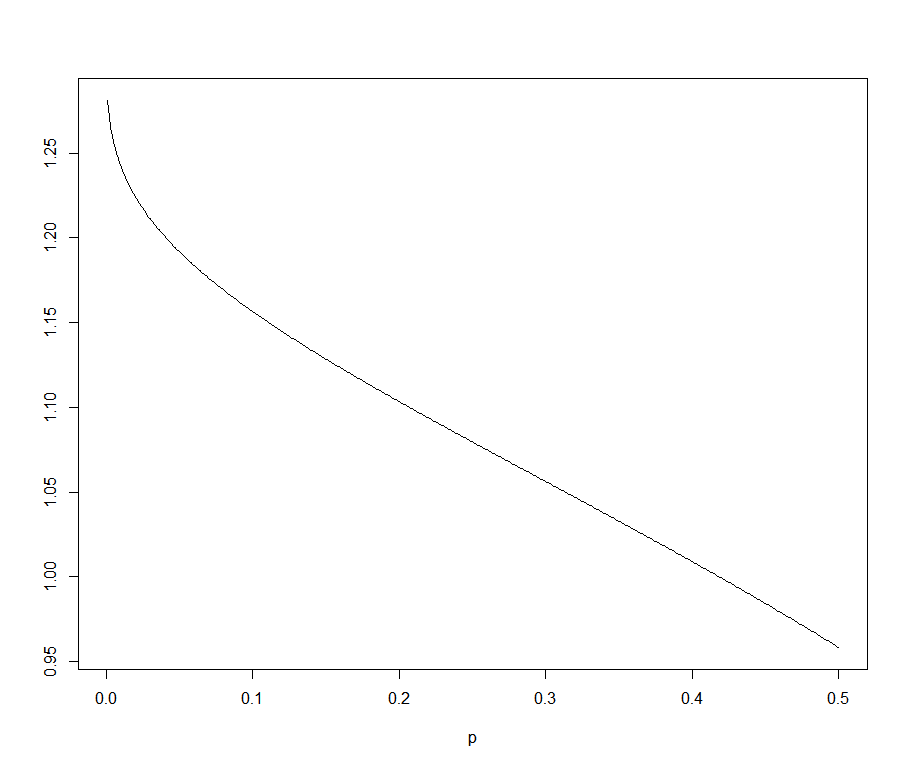}
\par\end{centering}
\caption{A plot of $p\protect\mapsto\frac{\left(\varphi_{\gamma}\circ\Phi_{\gamma}^{-1}\right)\left(p\right)}{p\cdot\left[\log\left(\frac{1}{p}\right)\right]^{1/2}}$.}
\label{fig:tilde_I}

\end{figure}

\subsection{From spectral profile to mixing times: examples}

In this section, we describe how to combine an isoperimetric profile
for $\pi$ with the close coupling condition assumption on $P$ to
estimate mixing times for the chain. Write $u_{0}=\chi^{2}(\mu,\pi)$
for the initial $\chi^{2}$-divergence. 

In all of our examples, our isoperimetric minorants take the form
$\tilde{I}_{\pi}\left(p\right)=c\cdot p\cdot\log\left(\frac{1}{p}\right)^{r}$
for $p\in\left(0,\frac{1}{2}\right]$, with $r\in\left[0,1\right]$.
We briefly recap how the $r$ parameter maps onto simple assumptions,
before providing explicit calculations:
\begin{itemize}
\item $r=0$ is `exponential-type' isoperimetry, which holds for any log-concave
measure and corresponds roughly to potentials which have a tail growth
of order or faster than $\left|x\right|$.
\item $r=\frac{1}{2}$ is `Gaussian-type' isoperimetry, which holds for
any log-concave measure with sub-Gaussian tails.
\item $r\in\left[0,\frac{1}{2}\right]$ corresponds to `intermediate' isoperimetry,
and roughly corresponds to potentials which have tail growth $U\left(x\right)\sim\left|x\right|^{\frac{1}{1-r}}\in\left(\left|x\right|^{1},\left|x\right|^{2}\right)$.
\item $r\in\left[\frac{1}{2},1\right]$ corresponds to `light-tailed' isoperimetry,
and roughly corresponds to potentials which have tail growth $U\left(x\right)\sim\left|x\right|^{\frac{1}{1-r}}\gg\left|x\right|^{2}$,
with appropriate modifications for the case $r=1$ as in Example \ref{exa:q-log-sob}.
\end{itemize}
Now, to compute: recall by Theorem~\ref{thm:IP-to-mix} that in order
to ensure that $\chi^{2}(\mu P^{n},\pi)\leqslant\varepsilon_{{\rm Mix}}\leqslant8$,
it suffices to take

\begin{align*}
n & \geqslant2+2^{6}\cdot\varepsilon^{-2}\cdot\max\left\{ \log\left(\frac{u_{0}}{4\cdot v_{*}^{-1}}\right),0\right\} \\
 & +2^{8}\cdot\varepsilon^{-2}\cdot\delta^{-2}\cdot\int_{\max\left\{ \min\left\{ 2\cdot u_{0}^{-1},1/4\right\} ,v_{*}/2\right\} }^{1/4}\frac{\xi}{\tilde{I}_{\pi}\left(\xi\right)^{2}}\,{\rm d}\xi.\\
 & +2^{4}\cdot\max\left\{ 1,2^{-2}\cdot\delta^{-2}\cdot\tilde{I}_{\pi}\left(\frac{1}{4}\right)^{-2}\right\} \cdot\varepsilon^{-2}\cdot\log\left(\max\left\{ \frac{\min\left\{ u_{0},8\right\} }{\varepsilon_{{\rm Mix}}},1\right\} \right).
\end{align*}
Assuming for now that $u_{0}\geqslant8$ is intermediate, and $v_{*}\leqslant4\cdot u_{0}^{-1}$
is small, with $v_{*}$ given in (\ref{eq:v*_defn}), we focus on
the value of the middle integral,
\[
\int_{2\cdot u_{0}^{-1}}^{1/4}\frac{\xi}{\tilde{I}_{\pi}\left(\xi\right)^{2}}\,{\rm d}\xi=c^{-2}\cdot\int_{2\cdot u_{0}^{-1}}^{1/4}\frac{1}{\xi\cdot\log\left(\frac{1}{\xi}\right)^{2r}}\,{\rm d}\xi=c^{-2}\cdot\int_{\log\left(4\right)}^{\log\left(u_{0}/2\right)}\frac{{\rm d}u}{u^{2r}}.
\]
There is now a trichotomy of behaviours based upon the relative positions
of $r$ and $\frac{1}{2}$ (recalling that in practice, the parameter
$\varepsilon$ is typically of constant order):
\begin{itemize}
\item If $r\in\left[\frac{1}{2},1\right]$, then the inner integral evaluates
to $\frac{1}{2r-1}\cdot\left(\log\left(4\right)^{-\left(2r-1\right)}-\log\left(u_{0}/2\right)^{-\left(2r-1\right)}\right)\in\mathcal{O}\left(1\right)$.
The total mixing time then scales roughly like
\[
\max\left\{ \log\left(\frac{u_{0}}{4\cdot v_{*}^{-1}}\right),0\right\} +\delta^{-2}\cdot\left\{ {\bf 1}_{u_{0}>8}+\log\frac{1}{\varepsilon_{{\rm Mix}}}\right\} .
\]
\item If $r=\frac{1}{2}$, then the inner integral evaluates to $\log\left(\frac{\log\left(u_{0}/2\right)}{\log\left(4\right)}\right)\in\mathcal{O}\left(\log\log u_{0}\right)$.
The total mixing time then scales roughly like
\[
\max\left\{ \log\left(\frac{u_{0}}{4\cdot v_{*}^{-1}}\right),0\right\} +\delta^{-2}\cdot\left\{ \log\min\left\{ \log u_{0},v_{*}^{-1}\right\} \cdot{\bf 1}_{u_{0}>8}+\log\frac{1}{\varepsilon_{{\rm Mix}}}\right\} .
\]
\item If $r\in\left[0,\frac{1}{2}\right]$, then the inner integral evaluates
to $\frac{1}{1-2r}\cdot\left(\log\left(u_{0}/2\right)^{1-2r}-\log\left(4\right)^{1-2r}\right)\in\mathcal{O}\left(\log\left(u_{0}\right)^{1-2r}\right)$.
The total mixing time then scales roughly like
\[
\max\left\{ \log\left(\frac{u_{0}}{4\cdot v_{*}^{-1}}\right),0\right\} +\delta^{-2}\cdot\left\{ \left(\min\left\{ \log u_{0},v_{*}^{-1}\right\} \right)^{1-2r}\cdot{\bf 1}_{u_{0}>8}+\log\frac{1}{\varepsilon_{{\rm Mix}}}\right\} .
\]
In particular, when $r=0$, one obtains the same $\mathcal{O}\big(\log\left(u_{0}\right)\big)$
dependence implied by a standard spectral gap approach, i.e. the spectral
profile provides no strict benefit. Indeed for $\delta$ sufficiently
small one checks that $v_{*}=0$, and so the first term vanishes entirely.
Moreover, the second and third terms are qualitatively identical,
and thus there is no `three-phase' behaviour, but instead only \textit{one}
phase.
\end{itemize}
Recalling that $v_{*}^{-1}$ grows rapidly as $\delta$ decreases
we see that the effect of $\delta^{-2}$ is amplified for large $u_{0}$
or $v_{*}^{-1}$ for $r\leqslant1/2$, roughly corresponding to distributions
with tails heavier than Gaussians, and that this is not the case for
lighter tailed distributions i.e. $r>1/2$. 

\section{Spectral gap of RWM in high dimensions\label{sec:Spectral-gap-of-RWM}}

In Sections~\ref{sec:Spectral-gap-of-RWM}--\ref{sec:Convergence-and-mixing},
we denote by $P$ the RWM kernel defined by (\ref{eq:metropolis-kernel})
with $Q$ the Gaussian proposal kernel defined for a fixed but arbitrary
$\sigma>0$ by
\[
Q\left(x,A\right)=\int{\bf 1}_{A}\left(x+\sigma\cdot z\right)\,{\cal N}\left({\rm d}z;0,I_{d}\right),\qquad x\in\mathsf{E},A\in\mathscr{E}.
\]
In Appendix~\ref{sec:Different-proposal-distributions} we discuss
how our analysis can be generalized to other proposal kernels with
independent noise increments for each of the $d$ components. Since
$Q$ is reversible w.r.t. the Lebesgue measure on $\mathbb{R}^{d}$,
we have $\varpi=\pi\propto\exp\left(-U\right)$, following Assumption~\ref{assu:pi-density-lebesgue}.
It is standard to deduce by \citet[Lemma~3.1]{baxendale2005renewal}
that $P$ is a positive Markov kernel for this particular $Q$. We
note the following useful expression
\begin{equation}
\alpha\left(x\right)=\int{\cal N}\left({\rm d}z;0,I_{d}\right)\cdot\min\left\{ 1,\exp\left(-\left(U\left(x+\sigma\cdot z\right)-U\left(x\right)\right)\right)\right\} ,\label{eq:RWM-alpha-x}
\end{equation}
and we also denote $\alpha_{0}=\inf_{x\in\mathsf{E}}\alpha\left(x\right)$.

For the purposes of obtaining explicit bounds and matching negative
results with dimension, we impose the following further assumption
about $\pi$, noting that Assumption~\ref{assu:pi-density-lebesgue}
is already in force. As will be discussed in Section~\ref{subsec:Discussion-of-the-assumptions},
both $m$-strong convexity and $L$-smoothness can be weakened to
obtain explicit bounds on the spectral gap.
\begin{assumption}
\label{assu:target-distribution}For some $0<m\leqslant L$, $U$
is $m$-strongly convex and $L$-smooth:
\[
\frac{m}{2}\cdot\left|h\right|^{2}\leqslant U\left(x+h\right)-U\left(x\right)-\left\langle \nabla U\left(x\right),h\right\rangle \leqslant\frac{L}{2}\cdot\left|h\right|^{2},\qquad x,h\in\E.
\]
We write $\kappa:=L/m$ for the condition number of the target measure.
\end{assumption}

\begin{example}
\label{exa:gaussian-mL}Let $\pi=\mathcal{N}\left(0,\sigma_{0}^{2}\cdot I_{d}\right)$,
so that $U\left(x\right)=\frac{1}{2\cdot\sigma_{0}^{2}}\cdot\left|x\right|^{2}$.
Then
\[
U\left(x+h\right)-U\left(x\right)-\left\langle \nabla U\left(x\right),h\right\rangle =\frac{1}{2\cdot\sigma_{0}^{2}}\cdot\left|h\right|^{2},\qquad x,h\in\E,
\]
so $U$ is $m$-strongly convex and $L$-smooth with $L=m=1/\sigma_{0}^{2}$
and $\kappa=1$.
\end{example}

Another natural class of examples with strongly convex and smooth
potentials comes from considering Bayesian posterior measures for
which the prior is normal, and the log-likelihood is concave with
bounded Hessian.
\begin{example}
\label{exa:bayesian-logistic-regression}Consider the task of Bayesian
logistic regression, taking as prior $\pi_{0}=\mathcal{N}\left(0,\sigma_{0}^{2}\cdot I_{d}\right)$,
and observing covariate-response pairs $\left\{ \left(a_{i},y_{i}\right)\right\} _{i=1}^{N}\subset\mathbb{R}^{d}\times\left\{ 0,1\right\} $.
The potential corresponding to the posterior measure is then given
up to an additive constant by
\[
U\left(x\right)=\frac{1}{2\cdot\sigma_{0}^{2}}\cdot\left|x\right|^{2}+\sum_{i=1}^{N}\left\{ \log\left(1+\exp\left(-\left\langle a_{i},x\right\rangle \right)\right)-y_{i}\cdot\left\langle a_{i},x\right\rangle \right\} .
\]
Writing $A$ for the $N\times d$ matrix with rows given by the $\left\{ a_{i}\right\} $,
one can check that $U$ is $m$-strongly convex and $L$-smooth with
$m\geqslant\frac{1}{\sigma_{0}^{2}}$ and $L\leqslant\frac{1}{\sigma_{0}^{2}}+\frac{1}{4}\cdot\lambda_{{\rm Max}}\left(AA^{\top}\right)$,
$\lambda_{{\rm Max}}$ denoting the largest eigenvalue of a symmetric
matrix. This gives $\kappa\leqslant1+\frac{1}{4}\cdot\sigma_{0}^{2}\cdot\lambda_{{\rm Max}}\left(AA^{\top}\right)$.
\end{example}

\subsection{Lower bound on the conductance and spectral gap}

The following is a general result, depending \textit{only} on macroscopic
properties of the target measure: if the measure has good isoperimetry,
and the acceptance rate is lower bounded, then the chain will equilibrate
at a rate commensurate with these properties (roughly in accordance
with the convergence of the overdamped Langevin diffusion with the
same target measure), attenuated only by the choice of $\sigma$.
\begin{thm}
\label{thm:gaussian-rwm-conductance-general}Let $\pi$ admit a regular,
concave isoperimetric minorant $\tilde{I}_{\pi}$ w.r.t. $\left|\cdot\right|$.
Then
\[
\Phi_{P}^{*}\geqslant2^{-3}\cdot\alpha_{0}\cdot\min\left\{ 1,2\cdot\alpha_{0}\cdot\sigma\cdot\tilde{I}_{\pi}\left(\frac{1}{4}\right)\right\} ,
\]
and
\[
\gamma_{P}\geqslant2^{-7}\cdot\alpha_{0}^{2}\cdot\min\left\{ 1,4\cdot\alpha_{0}^{2}\cdot\sigma^{2}\cdot\tilde{I}_{\pi}\left(\frac{1}{4}\right)^{2}\right\} .
\]
\end{thm}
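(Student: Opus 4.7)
The plan is to reduce the statement to an application of Theorem~\ref{thm:IP-to-mix} by verifying a suitable close coupling condition for the RWM kernel $P$ with respect to the Euclidean metric $|\cdot|$, with $\varepsilon$ and $\delta$ expressed in terms of $\alpha_{0}$ and $\sigma$.

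The first step is to apply Lemma~\ref{lem:met-tv-bound}, which reduces bounding $\|P(x,\cdot)-P(y,\cdot)\|_{{\rm TV}}$ to bounding $\|Q(x,\cdot)-Q(y,\cdot)\|_{{\rm TV}}$, with an additive penalty of $1-\alpha_{0}$. Since $Q(x,\cdot)=\mathcal{N}(x,\sigma^{2}\cdot I_{d})$ and $Q(y,\cdot)=\mathcal{N}(y,\sigma^{2}\cdot I_{d})$ share a covariance structure, the KL divergence between them is $|x-y|^{2}/(2\sigma^{2})$, and Pinsker's inequality yields
\[
\|Q(x,\cdot)-Q(y,\cdot)\|_{{\rm TV}}\leqslant\frac{|x-y|}{2\sigma}.
\]

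The second step is to choose $\delta$ such that the Gaussian contribution to the TV bound is at most $\alpha_{0}/2$: taking $\delta:=\alpha_{0}\cdot\sigma$, whenever $|x-y|\leqslant\delta$ we obtain
\[
\|P(x,\cdot)-P(y,\cdot)\|_{{\rm TV}}\leqslant\tfrac{1}{2}\cdot\alpha_{0}+1-\alpha_{0}=1-\tfrac{1}{2}\cdot\alpha_{0},
\]
so $P$ is $(|\cdot|,\alpha_{0}\cdot\sigma,\alpha_{0}/2)$-close coupling.

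The third step is to feed these values into Theorem~\ref{thm:IP-to-mix}, noting that $P$ is positive $\pi$-reversible (by the reference to \citet{baxendale2005renewal} recalled at the start of this section), and that $\tilde{I}_{\pi}$ is regular and concave by hypothesis. Substituting $\varepsilon=\alpha_{0}/2$ and $\delta=\alpha_{0}\cdot\sigma$ into parts~(2) and~(3) of Theorem~\ref{thm:IP-to-mix} yields the claimed bounds on $\Phi_{P}^{*}$ and $\gamma_{P}$ directly, with no further analysis required. There is no serious obstacle in this argument; the key conceptual step is recognising that the close-coupling condition is exactly the mechanism by which Gaussian proposals and a global acceptance lower bound combine into a conductance bound, and the scaling constants drop out cleanly from the Pinsker estimate.
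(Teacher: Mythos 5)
Your proposal is correct and follows essentially the same route as the paper: the paper's proof simply invokes Lemma~\ref{lem:close-coupling} (itself proved via Lemma~\ref{lem:met-tv-bound} and the Pinsker bound of Lemma~\ref{lem:RWM-continuity-proposal}, exactly as you re-derive inline) to get the $\left(\left|\cdot\right|,\alpha_{0}\cdot\sigma,\frac{1}{2}\cdot\alpha_{0}\right)$-close-coupling property, and then applies Theorem~\ref{thm:IP-to-mix}. Your constant bookkeeping ($\varepsilon=\alpha_{0}/2$, $\delta=\alpha_{0}\cdot\sigma$ substituted into parts~(2) and~(3)) reproduces the stated bounds exactly.
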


\begin{proof}
By Lemma~\ref{lem:close-coupling}, $P$ is $\left(\left|\cdot\right|,\alpha_{0}\cdot\sigma,\frac{1}{2}\cdot\alpha_{0}\right)$-close-coupling,
and we conclude from Theorem~\ref{thm:IP-to-mix}.
\end{proof}
\begin{cor}
\label{cor:rwm-conductance-lower-Lm}Under Assumption~\ref{assu:target-distribution},
let $\sigma=\varsigma\cdot L^{-1/2}\cdot d^{-1/2}$ for any $\varsigma>0$,
and $C_{\gamma}$ as in Lemma~\ref{lem:Bounds-on-isoperimetric}.
Then
\[
\Phi_{P}^{*}\geqslant2^{-4}\cdot C_{\gamma}\cdot\varsigma\cdot\exp\left(-\varsigma^{2}\right)\cdot d^{-1/2}\cdot\left(\frac{m}{L}\right)^{1/2},
\]
and 
\[
\gamma_{P}\geqslant2^{-9}\cdot C_{\gamma}^{2}\cdot\varsigma^{2}\cdot\exp\left(-2\cdot\varsigma^{2}\right)\cdot d^{-1}\cdot\frac{m}{L}.
\]
\end{cor}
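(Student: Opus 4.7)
\textbf{Plan for the proof of Corollary~\ref{cor:rwm-conductance-lower-Lm}.} The corollary is a direct specialization of Theorem~\ref{thm:gaussian-rwm-conductance-general} under Assumption~\ref{assu:target-distribution}, so the plan is to supply the two ingredients that the theorem asks for: a concrete regular, concave isoperimetric minorant evaluated at $\tfrac14$, and a dimension-aware lower bound on $\alpha_0$. The first is immediate: by Lemma~\ref{lem:Bounds-on-isoperimetric}, Caffarelli's contraction principle provides $\tilde{I}_{\pi}(p) = m^{1/2}(\varphi_\gamma\circ\Phi_\gamma^{-1})(p)$ as a regular concave minorant of $I_\pi$ w.r.t.\ $|\cdot|$, with $\tilde{I}_\pi(\tfrac14) = m^{1/2}C_\gamma$. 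All remaining work is in obtaining $\alpha_0$ and tracking constants through Theorem~\ref{thm:gaussian-rwm-conductance-general}.

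The main technical step is the lower bound $\alpha_0 \geqslant \tfrac{1}{2}\exp(-\varsigma^2/2)$. I would prove this by the standard symmetrization trick using (\ref{eq:RWM-alpha-x}) and $L$-smoothness. Because $\mathcal{N}(\cdot;0,I_d)$ is symmetric, for any $x$,
\[
2\alpha(x) = \int\bigl[\min\{1,e^{-f(z)}\} + \min\{1,e^{-f(-z)}\}\bigr]\,\mathcal{N}(\mathrm{d}z;0,I_d),
\]
with $f(z):=U(x+\sigma z)-U(x)$. The $L$-smooth upper bound gives $f(z)+f(-z)\leqslant L\sigma^2|z|^2$, so at least one of $f(\pm z)$ is at most $L\sigma^2|z|^2/2$, and the corresponding $\min\{1,e^{-\cdot}\}$ is at least $\exp(-L\sigma^2|z|^2/2)$. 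Integrating against the standard Gaussian yields
\[
2\alpha(x) \geqslant \int e^{-L\sigma^2|z|^2/2}\,\mathcal{N}(\mathrm{d}z;0,I_d) = (1+L\sigma^2)^{-d/2}.
\]
Substituting $\sigma = \varsigma L^{-1/2} d^{-1/2}$ gives $L\sigma^2 = \varsigma^2/d$, and the elementary bound $(1+\varsigma^2/d)^{d} \leqslant e^{\varsigma^2}$ gives $\alpha_0 \geqslant \tfrac12 e^{-\varsigma^2/2}$. (This is effectively what is established en route to Lemma~\ref{lem:close-coupling}, so in the final writeup I would simply cite it.)

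With both ingredients in hand, I plug into the bound of Theorem~\ref{thm:gaussian-rwm-conductance-general}, compute $2\alpha_0\sigma\tilde{I}_\pi(\tfrac14) = 2\alpha_0\cdot \varsigma C_\gamma (m/L)^{1/2} d^{-1/2}$, and split on the $\min$. In the typical regime this quantity is $\leqslant 1$ (certainly once $d$ is moderately large), so $\Phi_P^* \geqslant 2^{-2}\alpha_0^2\sigma C_\gamma m^{1/2}$; inserting $\alpha_0^2 \geqslant \tfrac14 e^{-\varsigma^2}$ yields exactly $2^{-4}C_\gamma\varsigma \exp(-\varsigma^2)(m/L)^{1/2}d^{-1/2}$. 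When instead the $\min$ equals $1$, one has $\Phi_P^* \geqslant 2^{-3}\alpha_0 \geqslant 2^{-4}e^{-\varsigma^2/2}$, which dominates the target because $e^{\varsigma^2/2} \geqslant \varsigma$ for all $\varsigma > 0$ (the ratio $e^{\varsigma^2/2}/\varsigma$ is minimized at $\varsigma=1$ with value $\sqrt{e}>1$) and $C_\gamma,(m/L)^{1/2},d^{-1/2}\leqslant 1$. The bound on $\gamma_P$ then follows either from Cheeger's inequality (Lemma~\ref{lem:lawlersokal}, together with positivity of $P$) applied to the conductance bound, or equivalently from the second conclusion of Theorem~\ref{thm:gaussian-rwm-conductance-general} by the same two-case argument. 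The only real obstacle is the $\alpha_0$ bound; handling the $\min$ and the scalar bookkeeping is routine once the right symmetrization is identified.
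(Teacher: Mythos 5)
Your proposal is correct and follows essentially the same route as the paper: specialize Theorem~\ref{thm:gaussian-rwm-conductance-general} using $\tilde{I}_{\pi}(\tfrac14)=m^{1/2}C_{\gamma}$ from Lemma~\ref{lem:Bounds-on-isoperimetric} and the bound $\alpha_{0}\geqslant\tfrac12\exp(-\varsigma^{2}/2)$ (which is the paper's Corollary~\ref{cor:normal-accept-x}, obtained by the same symmetrization you describe, with Jensen in place of your exact Gaussian integral). The only cosmetic difference is that the paper notes the second argument of the $\min$ is always below $1$ (since $\varsigma e^{-\varsigma^{2}/2}\leqslant e^{-1/2}$ and $C_{\gamma},(m/L)^{1/2},d^{-1/2}\leqslant1$), whereas you treat both branches and verify the other case dominates; both are valid.
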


\begin{proof}
By Corollary~\ref{cor:normal-accept-x}, $\alpha_{0}\geqslant\frac{1}{2}\cdot\exp\left(-\frac{1}{2}\cdot\varsigma^{2}\right)$
and we can take $\tilde{I}_{\pi}\left(\frac{1}{4}\right)\geqslant m^{1/2}\cdot C_{\gamma}$
by Lemma~\ref{lem:Bounds-on-isoperimetric}. Since 
\[
C_{\gamma}\cdot\exp\left(-\frac{1}{2}\cdot\varsigma^{2}\right)\cdot\varsigma\cdot L^{-1/2}\cdot d^{-1/2}\cdot m^{1/2}\leqslant C_{\gamma}\cdot\mathrm{e}^{-1/2}\cdot1<1,
\]
for any $d\in\mathbb{N}$ and $\varsigma>0$ (noting that $m\leqslant L$),
we deduce that
\begin{align*}
\Phi_{P}^{*} & \geqslant2^{-3}\cdot\alpha_{0}\cdot\min\left\{ 1,2\cdot\alpha_{0}\cdot\sigma\cdot\tilde{I}_{\pi}\left(\frac{1}{4}\right)\right\} \\
 & \geqslant2^{-4}\cdot\exp\left(-\frac{1}{2}\cdot\varsigma^{2}\right)\cdot\min\left\{ 1,\exp\left(-\frac{1}{2}\cdot\varsigma^{2}\right)\cdot\varsigma\cdot L^{-1/2}\cdot d^{-1/2}\cdot m^{1/2}\cdot C_{\gamma}\right\} \\
 & =2^{-4}\cdot C_{\gamma}\cdot\varsigma\cdot\exp\left(-\varsigma^{2}\right)\cdot d^{-1/2}\cdot\left(\frac{m}{L}\right)^{1/2},
\end{align*}
and the lower bound on $\gamma_{P}$ follows similarly.
\end{proof}
\begin{rem}
\label{rem:optimal-varsigma}We find that $\Phi_{P}^{*}\in\Omega\left(d^{-1/2}\right)$,
and $\gamma_{P}\in\Omega\left(d^{-1}\right)$. Fixing $\varsigma$,
we obtain (non-asymptotically) that
\begin{align*}
\Phi_{P}^{*}\cdot d^{1/2} & \geqslant2^{-4}\cdot C_{\gamma}\cdot\varsigma\cdot\exp\left(-\varsigma^{2}\right)\cdot\left(\frac{m}{L}\right)^{1/2}\\
 & \geqslant0.019861\cdot\varsigma\cdot\exp\left(-\varsigma^{2}\right)\cdot\left(\frac{m}{L}\right)^{1/2}.
\end{align*}
This lower bound is maximized in $\varsigma$ by taking $\varsigma^{2}=\frac{1}{2}$,
which yields
\[
\Phi_{P}^{*}\cdot d^{1/2}\geqslant0.008518\cdot\left(\frac{m}{L}\right)^{1/2}
\]
This particular bound-maximizing value of $\varsigma$ is likely an
artifact of the proof technique; optimal scaling results of \citet{roberts2001optimal}
suggest that $\varsigma\approx2.38$ is optimal in high dimensions
when $\pi=\mathcal{N}(0,I_{d})$ , although they do not provide a
bound on the conductance or spectral gap of the associated Markov
kernel. Similarly, taking $\varsigma^{2}=\frac{1}{2}$ leads to the
following bound for the spectral gap:
\[
\gamma_{P}\cdot d\geqslant3.62784\times10^{-5}\cdot\frac{m}{L}.
\]
\end{rem}

The following lemma gives a useful bound on the total variation distance
between the proposals; the proof can be found in Appendix~\ref{sec:Proofs-of-auxiliary}.
\begin{lem}
\label{lem:RWM-continuity-proposal}If $v>0$ and $x,y\in\mathsf{E}$
satisfy $\left|x-y\right|\leqslant v\cdot\sigma$,
\[
\left\Vert Q\left(x,\cdot\right)-Q\left(y,\cdot\right)\right\Vert _{{\rm TV}}\leqslant\frac{1}{2}\cdot v.
\]
\end{lem}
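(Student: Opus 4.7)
The plan is to use the fact that $Q(x,\cdot) = \mathcal{N}(x, \sigma^2 I_d)$ and $Q(y,\cdot) = \mathcal{N}(y, \sigma^2 I_d)$ are two Gaussians with identical covariance, so the total variation distance between them depends only on the distance $|x-y|$ scaled by $\sigma$. I would route through the Kullback--Leibler divergence and Pinsker's inequality, since the KL divergence between Gaussians with identical covariance has a simple closed form.

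Concretely, first I would invoke the standard formula
\[
\mathrm{KL}\bigl(Q(x,\cdot)\,\|\,Q(y,\cdot)\bigr) = \tfrac{1}{2}(x-y)^{\top}(\sigma^2 I_d)^{-1}(x-y) = \frac{|x-y|^2}{2\sigma^2}.
\]
Then, by Pinsker's inequality,
\[
\left\Vert Q(x,\cdot) - Q(y,\cdot)\right\Vert_{\mathrm{TV}} \leqslant \sqrt{\tfrac{1}{2}\mathrm{KL}\bigl(Q(x,\cdot)\,\|\,Q(y,\cdot)\bigr)} = \frac{|x-y|}{2\sigma}.
\]
Under the hypothesis $|x-y| \leqslant v\cdot\sigma$, this yields directly $\|Q(x,\cdot)-Q(y,\cdot)\|_{\mathrm{TV}} \leqslant v/2$, as required.

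There is no real obstacle here — both ingredients are textbook. An alternative route, avoiding Pinsker's inequality, would be to reduce to one dimension using rotational invariance: the TV distance between $\mathcal{N}(x,\sigma^2 I_d)$ and $\mathcal{N}(y,\sigma^2 I_d)$ equals that between $\mathcal{N}(0,\sigma^2)$ and $\mathcal{N}(|x-y|,\sigma^2)$, which is $2\Phi_\gamma(|x-y|/(2\sigma))-1$; since $\Phi_\gamma(t)-\Phi_\gamma(0) \leqslant t\cdot\varphi_\gamma(0) = t/\sqrt{2\pi}$, one obtains the slightly sharper bound $|x-y|/(\sigma\sqrt{2\pi})$, which in particular dominates $v/2$. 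Either derivation gives the claimed constant; the Pinsker version is the most compact.
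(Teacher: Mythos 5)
Your proof is correct and follows exactly the paper's own route: compute $\mathrm{KL}\left(Q\left(x,\cdot\right),Q\left(y,\cdot\right)\right)=\frac{1}{2\cdot\sigma^{2}}\cdot\left|x-y\right|^{2}$ and apply Pinsker's inequality to get $\left\Vert Q\left(x,\cdot\right)-Q\left(y,\cdot\right)\right\Vert _{{\rm TV}}\leqslant\frac{1}{2\cdot\sigma}\cdot\left|x-y\right|\leqslant\frac{1}{2}\cdot v$. The alternative exact-Gaussian-TV remark is a nice bonus but not needed.
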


The following is a key lemma establishing the close coupling condition
for $P$.
\begin{lem}
\label{lem:close-coupling}$P$ is $\left(\left|\cdot\right|,\alpha_{0}\cdot\sigma,\frac{1}{2}\cdot\alpha_{0}\right)$-close-coupling.
\end{lem}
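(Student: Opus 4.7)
The plan is a direct combination of the two general results already established in the excerpt. The definition of close coupling requires that for any pair $x,y \in \mathsf{E}$ with $|x-y| \leqslant \alpha_0 \cdot \sigma$, we bound $\|P(x,\cdot) - P(y,\cdot)\|_{\rm TV} \leqslant 1 - \tfrac{1}{2}\alpha_0$.

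First I would invoke Lemma~\ref{lem:met-tv-bound}, the general Metropolis total-variation inequality applied to the Gaussian proposal $Q$ (which is reversible with respect to Lebesgue measure, so the lemma's hypothesis is met). This reduces the problem to controlling $\|Q(x,\cdot) - Q(y,\cdot)\|_{\rm TV}$ uniformly over pairs $x,y$ within distance $\alpha_0 \cdot \sigma$.

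Next I would apply Lemma~\ref{lem:RWM-continuity-proposal} with the choice $v = \alpha_0 \in (0,1]$, which yields
\[
\|Q(x,\cdot) - Q(y,\cdot)\|_{\rm TV} \leqslant \tfrac{1}{2} \cdot \alpha_0
\]
whenever $|x-y| \leqslant \alpha_0 \cdot \sigma$. Combining this with the bound from Lemma~\ref{lem:met-tv-bound} gives
\[
\|P(x,\cdot) - P(y,\cdot)\|_{\rm TV} \leqslant \tfrac{1}{2} \cdot \alpha_0 + 1 - \alpha_0 = 1 - \tfrac{1}{2} \cdot \alpha_0,
\]
which is exactly the close-coupling conclusion with $\delta = \alpha_0 \cdot \sigma$ and $\varepsilon = \tfrac{1}{2} \cdot \alpha_0$.

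There is no real obstacle here; the whole content of the lemma has been packaged into the two preceding auxiliary results, and the proof is essentially a one-line substitution. The only thing to note in passing is that the choice of the specific splitting (using half of $\alpha_0$ on each of the proposal-continuity side and the rejection side) is what makes the two factors in $\delta \cdot \varepsilon = \tfrac{1}{2} \cdot \alpha_0^2 \cdot \sigma$ balance cleanly, which in turn is why the downstream bounds in Corollary~\ref{cor:rwm-conductance-lower-Lm} come out with a factor of $\alpha_0^2$.
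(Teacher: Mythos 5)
Your proposal is correct and follows exactly the paper's argument: apply Lemma~\ref{lem:RWM-continuity-proposal} with $v=\alpha_0$ to bound $\left\Vert Q\left(x,\cdot\right)-Q\left(y,\cdot\right)\right\Vert_{\rm TV}\leqslant\frac{1}{2}\cdot\alpha_0$ when $\left|x-y\right|\leqslant\alpha_0\cdot\sigma$, then conclude via Lemma~\ref{lem:met-tv-bound}. No gaps.
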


\begin{proof}
Assume $x,y\in\mathsf{E}$ are such that $\left|x-y\right|\leqslant\alpha_{0}\cdot\sigma$.
Then $\left\Vert Q\left(x,\cdot\right)-Q\left(y,\cdot\right)\right\Vert _{{\rm TV}}\leqslant\frac{1}{2}\cdot\alpha_{0}$
by Lemma~\ref{lem:RWM-continuity-proposal}. We conclude by applying
Lemma~\ref{lem:met-tv-bound}.
\end{proof}
To lower bound the acceptance rate for the RWM kernel, we first prove
a result which holds under a general smoothness condition on the potential
$U$, and then obtain the case of $L$-smoothness as a corollary.
\begin{lem}
\label{lem:general-accept-x}Suppose that for some nonnegative, nondecreasing
$\psi$, the potential $U$ satisfies the smoothness bound 
\[
U\left(x+h\right)-U\left(x\right)-\left\langle \nabla U\left(x\right),h\right\rangle \leqslant\psi\left(\left|h\right|\right),\qquad x,h\in\mathsf{E}.
\]
\begin{enumerate}
\item Then for any $\sigma\geqslant0$,
\[
\alpha_{0}\geqslant\frac{1}{2}\cdot\exp\left(-\int{\cal N}\left({\rm d}z;0,I_{d}\right)\cdot\psi\left(\sigma\cdot\left|z\right|\right)\right).
\]
\item Let $\sigma=\varsigma\cdot d^{-1/2}$ where $\varsigma>0$ is arbitrary,
$\psi$ be twice-differentiable, such that for some $c_{0},c_{1}>0$,
\[
\left(\varsigma\cdot\psi^{''}-\psi^{'}\right)\left(t\right)\leqslant c_{0}\cdot\exp\left(c_{1}\cdot t\right).
\]
Then
\[
\alpha_{0}\geqslant\frac{1}{2}\cdot\exp\left(-\psi\left(\varsigma\right)+\mathcal{O}\left(d^{-1}\right)\right)\in\Omega\left(1\right).
\]
\end{enumerate}
\end{lem}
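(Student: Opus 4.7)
For \emph{Part 1}, the strategy is to exploit the $z\mapsto-z$ symmetry of the Gaussian proposal together with the smoothness bound on $U$. Writing $\Delta U(z):=U(x+\sigma z)-U(x)$ and symmetrizing the integrand in~(\ref{eq:RWM-alpha-x}),
\[
\alpha(x) = \tfrac{1}{2}\int\mathcal{N}(\dif z;0,I_{d})\bigl[\min\{1,e^{-\Delta U(z)}\}+\min\{1,e^{-\Delta U(-z)}\}\bigr].
\]
The smoothness assumption applied at $\pm z$ causes the linear gradient terms to cancel, yielding $\Delta U(z)+\Delta U(-z)\leq 2\psi(\sigma|z|)$. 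The combinatorial core is the elementary inequality: for $a,b\in\R$ and $c\geq 0$ with $a+b\leq 2c$,
\[
\min\{1,e^{-a}\}+\min\{1,e^{-b}\}\geq e^{-c},
\]
which follows from a short three-case analysis (either $a$ or $b$ non-positive gives LHS $\geq 1\geq e^{-c}$; otherwise AM--GM yields LHS $\geq 2e^{-(a+b)/2}\geq e^{-c}$). Applying this pointwise with $c=\psi(\sigma|z|)$ gives $\alpha(x)\geq\tfrac{1}{2}\int e^{-\psi(\sigma|z|)}\,\mathcal{N}(\dif z)$ uniformly in $x$, and Jensen's inequality applied to the convex exponential yields the claim of Part~1.

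For \emph{Part 2}, by Part~1 it suffices to show that $\int\mathcal{N}(\dif z;0,I_{d})\,\psi(\sigma|z|)\leq\psi(\varsigma)+\mathcal{O}(d^{-1})$. Set $R:=\sigma|Z|$ for $Z\sim\mathcal{N}(0,I_{d})$; the scaling $\sigma=\varsigma d^{-1/2}$ gives $\Ebb[R^{2}]=\varsigma^{2}$ and $\Var(R^{2})=2\varsigma^{4}/d$, so $R^{2}$ concentrates around $\varsigma^{2}$. Let $H(u):=\psi(\sqrt{u})$ and Taylor-expand about $u=\varsigma^{2}$ with integral remainder; the linear term in $R^{2}-\varsigma^{2}$ has zero mean, leaving
\[
\Ebb[\psi(R)]-\psi(\varsigma) = \Ebb\!\left[(R^{2}-\varsigma^{2})^{2}\int_{0}^{1}(1-t)\,H''\bigl(\varsigma^{2}+t(R^{2}-\varsigma^{2})\bigr)\,\dif t\right].
\]
Using the identity $4u^{3/2}H''(u)=\sqrt{u}\,\psi''(\sqrt{u})-\psi'(\sqrt{u})$ together with the hypothesis $(\varsigma\psi''-\psi')(t)\leq c_{0}e^{c_{1}t}$ and $\psi'\geq 0$ (from monotonicity of $\psi$), one may dominate $|H''(u)|$ by an explicit function with at most exponential growth in $\sqrt{u}$. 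Combined with $\Ebb[(R^{2}-\varsigma^{2})^{2}]=2\varsigma^{4}/d$ and uniform sub-exponential tail bounds for chi-squared variates, a Fubini/Cauchy--Schwarz argument delivers the required $\mathcal{O}(d^{-1})$ remainder.

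The main obstacle is the remainder estimate in Part~2: the integrand $H''$ can grow exponentially in $\sqrt{u}$, and one must integrate it both along the segment from $\varsigma^{2}$ to $R^{2}$ and against the law of $R^{2}$, whose second moment about $\varsigma^{2}$ is only $\mathcal{O}(d^{-1})$. Ensuring no spurious dimension dependence creeps in requires the sub-exponential tail control on $R^{2}$ to dominate the growth of $H''$ uniformly in $d$; this is precisely the role played by the hypothesis $(\varsigma\psi''-\psi')(t)\leq c_{0}e^{c_{1}t}$.
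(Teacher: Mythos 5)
Your proof is correct and follows essentially the same route as the paper's: in Part~1 you symmetrize over $z\mapsto -z$ so the gradient terms cancel and reduce to a pointwise elementary inequality about $\min\{1,e^{-a}\}$ before applying Jensen (the paper packages the same idea as $\min\{1,ab\}\geqslant\min\{1,a\}\min\{1,b\}$ followed by $\min\{1,a\}+\min\{1,a^{-1}\}\geqslant1$), and in Part~2 you Taylor-expand $u\mapsto\psi(u^{1/2})$ about $u=\varsigma^{2}$ exactly as the paper does. Your treatment of the Part~2 remainder (the identity for $H''$, the variance computation for $R^{2}$, and the Cauchy--Schwarz against the sub-exponential moment) is in fact more explicit than the paper's own one-line sketch.
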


\begin{proof}
The proof proceeds by direct calculation. Let $x\in\mathsf{E}$ be
arbitrary. Applying the growth bound assumption, it holds that
\[
U\left(x+\sigma\cdot z\right)-U\left(x\right)\leqslant\left\langle \nabla U\left(x\right),\sigma\cdot z\right\rangle +\psi\left(\sigma\cdot\left|z\right|\right),
\]
and substituting this into (\ref{eq:RWM-alpha-x}) gives
\begin{align*}
\alpha\left(x\right) & \geqslant\int{\cal N}\left({\rm d}z;0,I_{d}\right)\cdot\min\left\{ 1,\exp\left(-\left\langle \nabla U\left(x\right),\sigma\cdot z\right\rangle -\psi\left(\sigma\cdot\left|z\right|\right)\right)\right\} .
\end{align*}
Applying the inequality $\min\left\{ 1,a\cdot b\right\} \geqslant\min\left\{ 1,a\right\} \cdot\min\left\{ 1,b\right\} $
establishes that
\begin{align*}
\alpha\left(x\right) & \geqslant\int{\cal N}\left({\rm d}z;0,I_{d}\right)\cdot\min\left\{ 1,\exp\left(-\left\langle \nabla U\left(x\right),\sigma\cdot z\right\rangle \right)\right\} \cdot\min\left\{ 1,\exp\left(-\psi\left(\sigma\cdot\left|z\right|\right)\right)\right\} \\
 & \geqslant\int{\cal N}\left({\rm d}z;0,I_{d}\right)\cdot\exp\left(-\psi\left(\sigma\cdot\left|z\right|\right)\right)\cdot\min\left\{ 1,\exp\left(-\left\langle \nabla U\left(x\right),\sigma\cdot z\right\rangle \right)\right\} \\
 & =\int{\cal N}\left({\rm d}z;0,I_{d}\right)\cdot\exp\left(-\psi\left(\sigma\cdot\left|z\right|\right)\right)\cdot\min\left\{ 1,\exp\left(+\left\langle \nabla U\left(x\right),\sigma\cdot z\right\rangle \right)\right\} ,
\end{align*}
where the last equality follows from the change of variables $z\mapsto-z$.
Averaging these final two expressions and noting that $\min\left\{ 1,a\right\} +\min\left\{ 1,a^{-1}\right\} \geqslant1$,
it follows that
\[
\alpha\left(x\right)\geqslant\frac{1}{2}\cdot\int{\cal N}\left({\rm d}z;0,I_{d}\right)\cdot\exp\left(-\psi\left(\sigma\cdot\left|z\right|\right)\right).
\]
By the convexity of $\psi\mapsto\exp\left(-\psi\right)$, we may apply
Jensen's inequality to bound this integral from below, and so the
first part follows. 

Finally, take $\sigma=\varsigma\cdot d^{-1/2}$, and Taylor expand
$s\mapsto\psi\left(s^{1/2}\right)$ with a second-order remainder
term around $s=\varsigma^{2}$. Applying the hypothesis on the derivatives
of $\psi$ then yields that for sufficiently large $d$, there exists
$F\left(\varsigma\right)>0$ such that
\[
\int{\cal N}\left({\rm d}z;0,I_{d}\right)\cdot\psi\left(\varsigma\cdot d^{-1/2}\cdot\left|z\right|\right)\leqslant\psi\left(\varsigma\right)+F\left(\varsigma\right)\cdot d^{-1},
\]
and we conclude.
\end{proof}
\begin{cor}
\label{cor:normal-accept-x}Under Assumption~\ref{assu:target-distribution},
let $\sigma=\varsigma\cdot d^{-1/2}\cdot L^{-1/2}$ where $\varsigma>0$
is arbitrary. Then
\[
\alpha_{0}\geqslant\frac{1}{2}\cdot\exp\left(-\frac{1}{2}\cdot\varsigma^{2}\right).
\]
\end{cor}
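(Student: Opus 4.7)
The plan is to invoke part~1 of Lemma~\ref{lem:general-accept-x} with the quadratic smoothness function supplied by Assumption~\ref{assu:target-distribution}, and then evaluate the resulting Gaussian integral explicitly.

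First, I would read off from the $L$-smoothness half of Assumption~\ref{assu:target-distribution} that one may take
\[
\psi(t) := \frac{L}{2} \cdot t^{2}, \qquad t \geqslant 0,
\]
since this is nonnegative, nondecreasing on $[0,\infty)$, and satisfies $U(x+h) - U(x) - \langle \nabla U(x), h\rangle \leqslant \psi(|h|)$ for all $x,h \in \mathsf{E}$. This puts us squarely in the hypotheses of Lemma~\ref{lem:general-accept-x}.

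Next, I would apply part~1 of that lemma and reduce the resulting expression to a second moment of a standard Gaussian:
\[
\int \mathcal{N}(\mathrm{d}z; 0, I_{d}) \cdot \psi(\sigma \cdot |z|) \;=\; \frac{L \cdot \sigma^{2}}{2} \cdot \int \mathcal{N}(\mathrm{d}z; 0, I_{d}) \cdot |z|^{2} \;=\; \frac{L \cdot \sigma^{2} \cdot d}{2}.
\]
Substituting the prescribed scaling $\sigma^{2} = \varsigma^{2} \cdot L^{-1} \cdot d^{-1}$ collapses this to $\varsigma^{2}/2$, whence Lemma~\ref{lem:general-accept-x} yields the stated bound $\alpha_{0} \geqslant \tfrac{1}{2} \cdot \exp(-\tfrac{1}{2} \cdot \varsigma^{2})$.

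There is no real obstacle here: the work has already been done in Lemma~\ref{lem:general-accept-x}, and the corollary is essentially a dimension-free rewriting of its conclusion in the special case where the smoothness envelope is exactly quadratic. The only thing to double-check is that the Jensen step in the proof of Lemma~\ref{lem:general-accept-x} is tight enough in this quadratic case to give the clean constant $\tfrac{1}{2} \cdot \varsigma^{2}$ in the exponent, which it is, since $\psi(\sigma \cdot |z|)$ is linear in $|z|^{2}$ and so Jensen's inequality is applied to a function whose relevant average is computed exactly.
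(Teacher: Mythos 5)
Your proposal is correct and follows exactly the paper's own argument: take $\psi(t)=\tfrac{1}{2}\cdot L\cdot t^{2}$ from $L$-smoothness, invoke part~1 of Lemma~\ref{lem:general-accept-x}, and evaluate the Gaussian second moment to get $\tfrac{1}{2}\cdot L\cdot\sigma^{2}\cdot d=\tfrac{1}{2}\cdot\varsigma^{2}$. No gaps.
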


\begin{proof}
Assumption~\ref{assu:target-distribution} implies that $U$ satisfies
the growth bound
\[
U\left(x+h\right)-U\left(x\right)-\left\langle \nabla U\left(x\right),h\right\rangle \leqslant\psi\left(\left|h\right|\right)
\]
with $\psi:r\mapsto\frac{1}{2}\cdot L\cdot r^{2}$. Applying the first
part of Lemma~\ref{lem:general-accept-x}, we see that 
\begin{align*}
\alpha_{0} & \geqslant\frac{1}{2}\cdot\exp\left(-\int{\cal N}\left({\rm d}z;0,I_{d}\right)\cdot\frac{1}{2}\cdot L\cdot\sigma^{2}\cdot\left|z\right|^{2}\right)\\
 & =\frac{1}{2}\cdot\exp\left(-\frac{1}{2}\cdot L\cdot\sigma^{2}\cdot d\right)\\
 & =\frac{1}{2}\cdot\exp\left(-\frac{1}{2}\cdot\varsigma^{2}\right).\qedhere
\end{align*}
\end{proof}

\subsection{Conductance and spectral gap upper bounds}

To complement our lower bounds, we can show matching upper bounds
with respect to dimension under Assumption~\ref{assu:target-distribution}.
This shows that the the conductance and spectral gap must decrease
at least as fast as $\mathcal{O}\left(d^{-1/2}\right)$ and $\mathcal{O}\left(d^{-1}\right)$
respectively, and that these are the slowest polynomial decays possible.
Hence, we may infer that in terms of optimizing conductance and spectral
gap, $d^{-1}$ is the correct polynomial scaling of $\sigma^{2}$.

We emphasize that the upper bounds are uniform over the class of $m$-strongly
convex and $L$-smooth potentials, indicating that the dimension-dependence
of this particular class of target distributions is well-characterized
by the analysis. This is in contrast to bounds which rely only on
specific examples exhibiting poor performance, as in minimax complexity
analysis: for example, \citet{wu2022minimax} show that the optimal
scaling of step-size with dimension in the Metropolis-adjusted Langevin
algorithm is not uniform in this class.

\subsubsection{Conductance upper bounds}
\begin{thm}
Under Assumption~\ref{assu:target-distribution} and twice continuous-differentiability
of $U$,

\[
\Phi_{P}^{*}\leqslant\min\left\{ 4\cdot L^{1/2}\cdot\sigma,\left(1+m\cdot\sigma^{2}\right)^{-d/2}\right\} .
\]
Hence, among polynomial scalings of $\sigma$, the scaling $\sigma\sim d^{-1/2}$
is optimal with $\Phi_{P}^{*}\sim d^{-1/2}$.
\end{thm}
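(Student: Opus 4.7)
The plan is to prove each term of the minimum by exhibiting a specific test set $A$ whose conductance $(\pi \otimes P)(A \times A^\complement)/\pi(A)$ realizes the corresponding upper bound.

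For $\Phi_P^* \leq 4 L^{1/2} \sigma$, the plan is to take $A = \{x \in \mathsf{E} : \langle v, x\rangle \leq c_v\}$ for any unit vector $v$, where $c_v$ is a median of $\langle v, X\rangle$ under $\pi$ (which exists and gives $\pi(A) = 1/2$ since $\pi$ is absolutely continuous). Using the trivial bound $\alpha(x,y) \leq 1$, the flow $(\pi \otimes P)(A \times A^\complement)$ is dominated by $(\pi \otimes Q)(A \times A^\complement)$. Writing $\rho$ for the marginal density of $\langle v, X\rangle$ under $\pi$ and substituting $t = c_v - s$,
\[
(\pi \otimes Q)(A \times A^\complement) = \int_0^\infty \rho(c_v - t)\bigl(1 - \Phi_\gamma(t/\sigma)\bigr)\, dt \leq \|\rho\|_\infty \cdot \sigma \cdot \frac{1}{\sqrt{2\pi}},
\]
using $\int_0^\infty (1 - \Phi_\gamma(u))\,du = 1/\sqrt{2\pi}$. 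The key input is $\|\rho\|_\infty \leq \sqrt{m/(2\pi)}$: marginals of $m$-strongly log-concave densities are themselves $m$-strongly log-concave, by decomposing $U(x) = \tfrac{m}{2}|x|^2 + V(x)$ with $V$ convex and applying Prékopa's theorem to the residual integral. Any such $\rho$ on $\mathbb{R}$ satisfies $\rho(s) \leq \rho(s_*) \exp(-m(s-s_*)^2/2)$ at its mode, which integrates to $\rho(s_*) \leq \sqrt{m/(2\pi)}$. Dividing by $\pi(A) = 1/2$ and using $m \leq L$ yields the stated bound with substantial slack in the constant $4$.

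For $\Phi_P^* \leq (1+m\sigma^2)^{-d/2}$, the plan is to use shrinking balls $A_\varepsilon = \{x : |x - x_*| \leq \varepsilon\}$ around the unique mode $x_*$ of $\pi$ (whose existence follows from strong convexity). For $\varepsilon$ small, $\pi(A_\varepsilon) \leq 1/2$. Since $\mathbf{1}_{A_\varepsilon^\complement}(x) = 0$ for $x \in A_\varepsilon$, the rejection term contributes no out-flow, so
\[
P(x, A_\varepsilon^\complement) = \int_{A_\varepsilon^\complement} Q(x, dy)\,\alpha(x,y) \leq \alpha(x),
\]
and hence $(\pi \otimes P)(A_\varepsilon \times A_\varepsilon^\complement)/\pi(A_\varepsilon) \leq \mathbb{E}_\pi[\alpha(X) \mid X \in A_\varepsilon]$. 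By twice continuous differentiability of $U$, $\alpha$ is continuous, so this conditional expectation tends to $\alpha(x_*)$ as $\varepsilon \downarrow 0$. Since $\nabla U(x_*) = 0$, $m$-strong convexity gives $U(x_* + \sigma z) - U(x_*) \geq m\sigma^2 |z|^2/2 \geq 0$, so the $\min$ in $\alpha(x_*,\cdot)$ simplifies and
\[
\alpha(x_*) = \mathbb{E}_Z\!\left[e^{-(U(x_* + \sigma Z) - U(x_*))}\right] \leq \mathbb{E}_Z\!\left[e^{-m\sigma^2 |Z|^2/2}\right] = (1+m\sigma^2)^{-d/2},
\]
the final identity being the Gaussian moment generating function for $|Z|^2$ with $Z \sim \mathcal{N}(0, I_d)$.

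The main obstacle is in part (a): one must carefully verify that strong log-concavity with constant $m$ passes to the marginal along an arbitrary direction $v$ without loss. This is handled by the decomposition above, but requires invoking the right quantitative form of Prékopa's theorem. Part (b) is comparatively routine, with twice continuous differentiability of $U$ used only to guarantee continuity of $x \mapsto \alpha(x)$ so that the limit $\varepsilon \downarrow 0$ may be passed through the expectation. The final statement about polynomial optimality then follows by balancing: $\sqrt{L}\sigma$ forces $\sigma \in O(d^{-1/2})$ for $\Phi_P^*$ not to vanish, and the lower bound of Corollary~\ref{cor:rwm-conductance-lower-Lm} matches at $\sigma \sim d^{-1/2}$.
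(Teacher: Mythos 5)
Your second bound is fine: the shrinking-ball argument around the mode $x_*$ is a clean special case of the paper's Proposition~\ref{prop:kappa-upper-big-sigma} (which instead uses the sublevel sets $B_{\rho}=\{x:\sigma^{2}\left|\nabla U(x)\right|^{2}/(1+m\sigma^{2})\leqslant\rho^{2}\}$ and lets $\rho\to0^{+}$, thereby avoiding any appeal to continuity of $\alpha$). Your first bound, however, contains a genuine error at its key step. You claim $\left\Vert \rho\right\Vert _{\infty}\leqslant\sqrt{m/(2\pi)}$ for the $m$-strongly log-concave marginal $\rho$, derived by integrating $\rho(s)\leqslant\rho(s_{*})\exp\left(-m(s-s_{*})^{2}/2\right)$. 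But integrating that inequality gives $1\leqslant\rho(s_{*})\sqrt{2\pi/m}$, i.e.\ $\rho(s_{*})\geqslant\sqrt{m/(2\pi)}$ --- a \emph{lower} bound on the mode value, the opposite of what you need. Indeed no upper bound on $\left\Vert \rho\right\Vert _{\infty}$ can follow from strong log-concavity alone: $\mathcal{N}(0,\epsilon^{2})$ is $m$-strongly log-concave for all small $\epsilon$ yet has unbounded density as $\epsilon\to0$. What controls $\left\Vert \rho\right\Vert _{\infty}$ from above is the $L$-smoothness of the marginal potential, giving $\rho(s)\geqslant\rho(s_{*})\exp\left(-L(s-s_{*})^{2}/2\right)$ and hence $\left\Vert \rho\right\Vert _{\infty}\leqslant\sqrt{L/(2\pi)}$; with this correction your computation yields $\Phi_{P}^{*}\leqslant L^{1/2}\sigma/\pi$, comfortably within the stated constant.

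The catch is that $L$-smoothness of a one-dimensional marginal is precisely the nontrivial preservation result you glossed over. Prékopa's theorem handles preservation of $m$-strong log-concavity, but the upper curvature bound requires the covariance identity $\nabla^{2}V(x_{1})=\Ebb\left[\nabla_{x_{1}}^{2}U\mid x_{1}\right]-\mathbf{Cov}\left[\nabla_{x_{1}}U\mid x_{1}\right]\preceq L\cdot I$, which is the content of the paper's Lemma~\ref{lem:smooth_cvx_pres} and is where the hypothesis of twice continuous differentiability of $U$ actually earns its keep in this half of the theorem (not, as you suggest, in the Prékopa step). So you have misidentified the main obstacle: the missing ingredient is the marginal's smoothness, not its log-concavity. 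With that lemma in hand, your half-space/median construction and the Gaussian tail integral $\int_{0}^{\infty}\left(1-\Phi_{\gamma}(u)\right)\,\mathrm{d}u=1/\sqrt{2\pi}$ give a slightly tidier route than the paper's Chernoff-bound-plus-completing-the-square computation in Proposition~\ref{prop:kappa-upper-small-sigma}, arriving at essentially the same constant.
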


\begin{proof}
The bounds follow from Propositions~\ref{prop:kappa-upper-small-sigma}
and~\ref{prop:kappa-upper-big-sigma}. Now let $\sigma=\varsigma\cdot L^{-1/2}\cdot d^{-\beta}$.
If $\beta\geqslant1/2$, then we obtain $\Phi_{P}^{*}\in\mathcal{O}\left(d^{-\beta}\right)$
and this is maximized by taking $\beta=\frac{1}{2}$. Combined with
Theorem~\ref{thm:gaussian-rwm-conductance-general} we may conclude
that $\Phi_{P}^{*}$ decays as $d^{-1/2}$ as $d\to\infty$ when $\beta=\frac{1}{2}$.
For $\beta<\frac{1}{2}$, we recall that by Proposition~\ref{prop:kappa-upper-big-sigma},
the conductance decays faster than any polynomial in $1/d$, and in
particular, faster than $d^{-1/2}$, from which the claim follows. 
\end{proof}
\begin{lem}
\label{lem:gauss-bd}Under Assumption~\ref{assu:target-distribution},
let $x_{*}$ be the (unique) minimizer of $U$. Then
\[
\left(\frac{m}{L}\right)^{d/2}\cdot\mathcal{N}\left(x;x_{*},L^{-1}\cdot I_{d}\right)\leqslant\pi\left(x\right)\leqslant\left(\frac{L}{m}\right)^{d/2}\cdot\mathcal{N}\left(x;x_{*},m^{-1}\cdot I_{d}\right).
\]
\end{lem}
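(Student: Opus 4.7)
The plan is to exploit the fact that strong convexity plus smoothness give two-sided quadratic bounds on $U$ centered at the minimizer, and then compare numerator and denominator of the normalized density $\pi=\exp(-U)/Z$ against Gaussian integrals.

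First I would observe that $m$-strong convexity of $U$ guarantees the existence of a unique minimizer $x_*$, at which $\nabla U(x_*)=0$. Applying Assumption~\ref{assu:target-distribution} at the point $x_*$ with $h=x-x_*$ then yields the two-sided sandwich
\[
U(x_*)+\tfrac{m}{2}|x-x_*|^2 \;\leqslant\; U(x) \;\leqslant\; U(x_*)+\tfrac{L}{2}|x-x_*|^2, \qquad x\in\E.
\]
Exponentiating gives, pointwise in $x$,
\[
\exp\bigl(-U(x_*)\bigr)\cdot\exp\bigl(-\tfrac{L}{2}|x-x_*|^2\bigr) \;\leqslant\; \exp(-U(x)) \;\leqslant\; \exp\bigl(-U(x_*)\bigr)\cdot\exp\bigl(-\tfrac{m}{2}|x-x_*|^2\bigr).
\]

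Next I would integrate these pointwise inequalities over $\R^d$ to bound the normalizing constant $Z:=\int\exp(-U)\,\dif x$. Using the standard Gaussian normalizing constants $\int\exp(-\frac{\alpha}{2}|y|^2)\,\dif y=(2\pi/\alpha)^{d/2}$ for $\alpha\in\{m,L\}$, this gives
\[
\exp\bigl(-U(x_*)\bigr)\cdot(2\pi/L)^{d/2} \;\leqslant\; Z \;\leqslant\; \exp\bigl(-U(x_*)\bigr)\cdot(2\pi/m)^{d/2}.
\]

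Finally I would combine the bounds. For the upper bound on $\pi(x)$, divide the upper bound on $\exp(-U(x))$ by the lower bound on $Z$; the $\exp(-U(x_*))$ factors cancel, leaving $(L/(2\pi))^{d/2}\exp(-\frac{m}{2}|x-x_*|^2)$, which after multiplying and dividing by $(m/(2\pi))^{d/2}$ equals $(L/m)^{d/2}\mathcal{N}(x;x_*,m^{-1}I_d)$. The lower bound on $\pi(x)$ follows by the symmetric manipulation: divide the lower bound on $\exp(-U(x))$ by the upper bound on $Z$ and rearrange. There is no real obstacle here; the only mild point is to remember that $\nabla U(x_*)=0$ so the linear term in the strong convexity/smoothness inequality vanishes, which is what lets the sandwich be purely quadratic.
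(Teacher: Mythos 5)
Your proposal is correct and follows essentially the same route as the paper: sandwich $U(x)-U(x_*)$ between the two quadratics using $\nabla U(x_*)=0$, exponentiate, integrate to bound the normalizing constant, and divide the appropriate pairs of bounds. No gaps.
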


\begin{proof}
Applying the definitions of $m$-strong convexity and $L$-smoothness,
one sees that
\[
\frac{m}{2}\cdot\left|x-x_{*}\right|^{2}\leqslant U\left(x\right)-U\left(x_{*}\right)-\left\langle \nabla U\left(x_{*}\right),x-x_{*}\right\rangle \leqslant\frac{L}{2}\cdot\left|x-x_{*}\right|^{2},
\]
and since $\nabla U\left(x_{*}\right)=0$, this can be simplified
to 
\[
\frac{m}{2}\cdot\left|x-x_{*}\right|^{2}\leqslant U\left(x\right)-U\left(x_{*}\right)\leqslant\frac{L}{2}\cdot\left|x-x_{*}\right|^{2}.
\]
Recalling that $\pi\left(x\right)=\frac{1}{Z}\cdot\exp\left(-U\left(x\right)\right)$
for some normalizing constant $Z$, this implies that
\[
\left(\frac{2\pi}{L}\right)^{d/2}\cdot\mathcal{N}\left(x;x_{*},L^{-1}\cdot I_{d}\right)\leqslant\pi\left(x\right)\cdot Z\cdot\exp\left(U\left(x_{*}\right)\right)\leqslant\left(\frac{2\pi}{m}\right)^{d/2}\cdot\mathcal{N}\left(x;x_{*},m^{-1}\cdot I_{d}\right).
\]
By integrating the above inequalities over space, one sees that 
\[
\left(\frac{2\pi}{L}\right)^{d/2}\leqslant Z\cdot\exp\left(U\left(x_{*}\right)\right)\leqslant\left(\frac{2\pi}{m}\right)^{d/2},
\]
and substituting these bounds into the preceding display completes
the proof.
\end{proof}
\begin{lem}
\label{lem:smooth_cvx_pres}Under Assumption~\ref{assu:target-distribution}
and twice continuous-differentiability of $U$, Assumption~\ref{assu:target-distribution}
holds for any finite-dimensional marginal of $\pi$.
\end{lem}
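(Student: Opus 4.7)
The plan is to pass from the full potential $U$ on $\R^d$ to the potential $V$ of an arbitrary marginal of $\pi$, and to establish the same two Hessian bounds for $V$ as those assumed for $U$. Since both $m$-strong convexity and $L$-smoothness are preserved under orthogonal changes of variables, it suffices to treat the projection onto the first $k$ coordinates: writing $x = (x_1, x_2) \in \R^k \times \R^{d-k}$, the marginal density is proportional to $\exp(-V(x_1))$, where
\[
V(x_1) = -\log \int_{\R^{d-k}} \exp\bigl(-U(x_1, x_2)\bigr) \, \dif x_2
\]
up to an additive constant. The $m$-strong convexity of $U$ ensures the integral is finite for every $x_1$ and that $V$ is itself smooth.

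For the $m$-strong convexity of $V$, I would decompose $U(x) = \tfrac{m}{2}|x|^2 + \tilde U(x)$, where $\tilde U$ is convex on $\R^d$ by Assumption~\ref{assu:target-distribution}. Substituting yields
\[
V(x_1) = \tfrac{m}{2}|x_1|^2 - \log\int_{\R^{d-k}} \exp\bigl(-\tfrac{m}{2}|x_2|^2 - \tilde U(x_1, x_2)\bigr)\,\dif x_2 + \mathrm{const}.
\]
The integrand is the exponential of a jointly convex function of $(x_1, x_2)$, so Prékopa's theorem (a standard consequence of the Prékopa-Leindler inequality) implies that the integral is log-concave in $x_1$. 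Hence $V - \tfrac{m}{2}|\cdot|^2$ is convex, which is precisely the $m$-strong convexity of $V$.

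For the $L$-smoothness of $V$, I would differentiate twice under the integral sign, justified by the Gaussian sandwich for $\exp(-U)$ from Lemma~\ref{lem:gauss-bd} together with the linear bound $|\nabla U(x)| \leq |\nabla U(x_*)| + L|x - x_*|$ coming from $L$-smoothness. This yields the standard identity
\[
\nabla^2 V(x_1) = \Ebb_{\mu_{x_1}}\bigl[\nabla^2_{11} U(x_1, X_2)\bigr] - \mathrm{Cov}_{\mu_{x_1}}\bigl(\nabla_1 U(x_1, X_2)\bigr),
\]
where $\mu_{x_1}$ denotes the conditional distribution of $X_2$ given $X_1 = x_1$ under $\pi$. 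Since the covariance term is positive semi-definite, $\nabla^2 V(x_1) \preceq \Ebb_{\mu_{x_1}}[\nabla^2_{11} U]$, and $\nabla^2_{11} U$ is a principal submatrix of $\nabla^2 U \preceq L I_d$, so the bound $\nabla^2_{11} U \preceq L I_k$ follows by testing the quadratic form $v \mapsto v^{\top} \nabla^2 U v$ on vectors of the form $(v_1, 0)$. Taking expectations delivers $\nabla^2 V(x_1) \preceq L I_k$.

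The main obstacle is the rigorous justification of differentiation under the integral for the Hessian identity; this is routine once the integrability of $\nabla_1 U$ and $\nabla^2 U$ against $\mu_{x_1}$ is established from the above Gaussian-type decay and polynomial growth bounds. An alternative, purely functional-analytic route for the lower bound would apply the Brascamp-Lieb variance inequality to $\mu_{x_1}$, reducing the problem to showing that the Schur complement of the $(2,2)$-block of $\nabla^2 U$ is $\succeq m I_k$; this gives the same conclusion but duplicates effort given that the Prékopa route is immediate.
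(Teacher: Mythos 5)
Your proposal is correct. For the $L$-smoothness half you follow exactly the route the paper takes: the identity $\nabla^{2}V(x_{1})=\Ebb_{\mu_{x_{1}}}\left[\nabla_{11}^{2}U\right]-\mathrm{Cov}_{\mu_{x_{1}}}\left(\nabla_{1}U\right)$, positive semi-definiteness of the covariance term, the principal-submatrix bound $\nabla_{11}^{2}U\preceq L\cdot I_{k}$, and the same style of justification for interchanging differentiation and integration (polynomial growth of $\nabla U$, $\nabla^{2}U$ against a measure with all moments). The only substantive difference is in the strong-convexity half: the paper simply cites \citet[Theorem~3.8]{Saumard2014} for preservation of $m$-strong log-concavity under marginalization, whereas you give the self-contained standard proof via the decomposition $U=\frac{m}{2}\left|\cdot\right|^{2}+\tilde{U}$ with $\tilde{U}$ convex and an application of Pr\'ekopa's theorem to the jointly log-concave integrand $\exp\left(-\frac{m}{2}\left|x_{2}\right|^{2}-\tilde{U}\left(x_{1},x_{2}\right)\right)$. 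That argument is correct (and is essentially how the cited result is proved), so what you buy is self-containedness at the cost of a few extra lines; your preliminary reduction to coordinate projections via an orthogonal change of variables, and your remark that Brascamp--Lieb would give an alternative but redundant route, are both fine.
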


\begin{proof}
Preservation of $m$-strong log-concavity of the marginals of $\pi$
is shown by \citet[Theorem~3.8]{Saumard2014}. For preservation of
$L$-smoothness of the potential of any marginal of $\pi$, write
the state as $x=\left(x_{1},x_{2}\right)$, such that $\pi\left(x_{1},x_{2}\right)=\exp\left(-U\left(x_{1},x_{2}\right)\right)$,
and define the marginal $\pi\left(x_{1}\right)=\int\exp\left(-U\left(x_{1},x_{2}\right)\right)\dif x_{2}=\exp\left(-V\left(x_{1}\right)\right).$
Formal computations give that
\begin{align*}
\nabla V\left(x_{1}\right) & ={\bf \Ebb}\left[\nabla_{x_{1}}U\left(x_{1},X_{2}\right)\vert X_{1}=x_{1}\right]\\
\nabla^{2}V\left(x_{1}\right) & ={\bf \Ebb}\left[\nabla_{x_{1}}^{2}U\left(x_{1},X_{2}\right)\vert X_{1}=x_{1}\right]-{\bf Cov}\left[\nabla_{x_{1}}U\left(x_{1},X_{2}\right)\vert X_{1}=x_{1}\right].
\end{align*}
By the smoothness of $U$, for any fixed $x_{1}$ one can bound $\left|\nabla_{x_{1}}U\left(x_{1},x_{2}\right)\right|\lesssim1+\left|x_{2}\right|$,
$\left|\nabla_{x_{1}}^{2}U\left(x_{1},x_{2}\right)\right|\lesssim1$,
and since log-concave measures admit moments of all orders, it is
guaranteed that the integrals above do indeed exist and are finite.
Combining this with the twice continuous-differentiability of $U$,
we may validly interchange differentiation and integration, so that
the formal identities described above hold.

Recalling that covariance matrices are positive-semidefinite and that
$U$ is $L$-smooth, compute that
\begin{align*}
\nabla^{2}V\left(x_{1}\right) & ={\bf \Ebb}\left[\nabla_{x_{1}}^{2}U\left(x_{1},X_{2}\right)\vert X_{1}=x_{1}\right]-{\bf Cov}\left[\nabla_{x_{1}}U\left(x_{1},X_{2}\right)\vert X_{1}=x_{1}\right]\\
 & \preceq{\bf \Ebb}\left[\nabla_{x_{1}}^{2}U\left(x_{1},X_{2}\right)\vert X_{1}=x_{1}\right]\\
 & \preceq L\cdot I_{d},
\end{align*}
from which the $L$-smoothness of $V$ follows.
\end{proof}
The proofs the following two propositions involve the identification
of appropriate sets whose conductance can be bounded in terms of $\sigma$;
the resulting calculations are somewhat involved so the proofs are
housed in Appendix~\ref{sec:Proofs-of-auxiliary}.
\begin{prop}
\label{prop:kappa-upper-small-sigma}Under Assumption~\ref{assu:target-distribution}
and twice continuous-differentiability of $U$,
\[
\Phi_{P}^{*}\leqslant2\cdot L^{1/2}\cdot\sigma.
\]
\end{prop}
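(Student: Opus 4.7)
The plan is to upper bound $\Phi_P^*$ by exhibiting a specific test set of measure exactly $1/2$, which is the natural strategy since $\sigma$ typically controls how far the chain can move in one step. The obvious candidate is a half-space. Let $\pi_1$ denote the first marginal of $\pi$ on $\R$; since $\pi$ has a positive density, $\pi_1$ has a continuous strictly increasing CDF $F_1$, so a median $c$ exists. Take $A := \{x \in \R^d : x_1 \leqslant c\}$, so $\pi(A) = 1/2$. Since $P(x,\mathrm{d}y) = Q(x,\mathrm{d}y)\alpha(x,y) + \delta_x(\mathrm{d}y)\bar\alpha(x)$ and $x \in A,\ y \in A^\complement$ forces $y \neq x$, the delta term contributes nothing and $P(x,A^\complement) \leqslant Q(x,A^\complement)$, so
\[
(\pi \otimes P)(A \times A^\complement) \;\leqslant\; (\pi \otimes Q)(A \times A^\complement) \;=\; \mathbb{P}\bigl(X_1 \leqslant c < X_1 + \sigma Z\bigr),
\]
where $X_1 \sim \pi_1$ and $Z \sim \mathcal{N}(0,1)$ are independent, using that the proposal acts on coordinate $1$ as an additive Gaussian shift.

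Next I would reduce the problem to controlling the density of the one-dimensional marginal. The event above forces $Z > 0$ and lies in $\{c - \sigma Z \leqslant X_1 \leqslant c\}$, so its probability equals $\Ebb[\mathbf{1}_{Z>0}(F_1(c) - F_1(c - \sigma Z))]$, and by the mean value theorem this is at most $\sigma \cdot \Ebb[Z_+] \cdot \sup_{x_1 \in \R} \pi_1(x_1)$. The key input is therefore a uniform bound on $\sup \pi_1$ depending only on $L$. By Lemma~\ref{lem:smooth_cvx_pres}, the potential $V$ of $\pi_1$ is $L$-smooth, so at its (necessarily unique) minimizer $x_*^{(1)}$ one has $V(x_1) \leqslant V(x_*^{(1)}) + \tfrac{L}{2}(x_1 - x_*^{(1)})^2$; integrating the lower bound $\pi_1(x_1) \geqslant \exp(-V(x_*^{(1)})) \cdot \exp(-\tfrac{L}{2}(x_1 - x_*^{(1)})^2)$ forces
\[
\sup_{x_1 \in \R} \pi_1(x_1) \;=\; \exp(-V(x_*^{(1)})) \;\leqslant\; \sqrt{L/(2\pi)}.
\]

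Assembling these ingredients and recalling that $\Ebb[Z_+] = 1/\sqrt{2\pi}$ gives
\[
(\pi \otimes P)(A \times A^\complement) \;\leqslant\; \sqrt{L/(2\pi)} \cdot \sigma \cdot \frac{1}{\sqrt{2\pi}} \;=\; \frac{L^{1/2} \sigma}{2\pi},
\]
and dividing by $\pi(A) = 1/2$ yields $\Phi_P^* \leqslant L^{1/2}\sigma/\pi \leqslant 2 L^{1/2}\sigma$, as $1/\pi < 2$. The one non-routine step is the passage from $L$-smoothness of $U$ in $d$ dimensions to the pointwise density bound $\sup \pi_1 \leqslant \sqrt{L/(2\pi)}$, which relies crucially on Lemma~\ref{lem:smooth_cvx_pres} so that the one-dimensional marginal inherits $L$-smoothness of its potential; strong convexity is not required for this proposition (it is only needed in Lemma~\ref{lem:smooth_cvx_pres} to guarantee marginalization works cleanly, and the smoothness constant of the marginal matches that of $U$). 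Everything else is a direct computation from the maximal-coupling inequality between $P$ and $Q$ combined with the Gaussian tail structure of the proposal.
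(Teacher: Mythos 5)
Your proof is correct, and it follows the same overall strategy as the paper's: test the conductance on a half-space through a median of the first coordinate, drop the acceptance probability so that $P(x,A^{\complement})\leqslant Q(x,A^{\complement})$, and reduce to a one-dimensional computation involving the marginal $\pi_{1}$, whose potential inherits $L$-smoothness via Lemma~\ref{lem:smooth_cvx_pres}. Where you diverge is in the final estimate of $\mathbb{P}\left(X_{1}\leqslant c<X_{1}+\sigma Z\right)$. The paper bounds $\mathbb{P}\left(Z_{1}<(\nu_{1}-x_{1})/\sigma\right)$ by a Chernoff bound, controls the marginal density via the two-sided Gaussian sandwich of Lemma~\ref{lem:gauss-bd} (which uses $m$-strong convexity), and then completes the square in a Gaussian integral. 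You instead write the probability as $\Ebb\left[\mathbf{1}_{Z>0}\left(F_{1}(c)-F_{1}(c-\sigma Z)\right)\right]$ and use only the uniform density bound $\sup\pi_{1}\leqslant\left(L/(2\pi)\right)^{1/2}$, which follows from $L$-smoothness of the marginal potential alone, together with $\Ebb\left[Z_{+}\right]=(2\pi)^{-1/2}$. This is both shorter and sharper: you obtain $\Phi_{P}^{*}\leqslant\pi^{-1}\cdot L^{1/2}\cdot\sigma$, improving the paper's constant by a factor of $2\pi$, and your final estimate makes no use of the strong-convexity parameter $m$ (strong convexity enters only through Lemma~\ref{lem:smooth_cvx_pres} and the existence of the minimizer of the marginal potential), whereas the paper's route passes through the ratio $L/m$ before it cancels. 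All steps check out, including the normalization argument $1=\int\pi_{1}\geqslant\exp\left(-V(x_{*}^{(1)})\right)\cdot\left(2\pi/L\right)^{1/2}$ that yields the sup-density bound.
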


\begin{prop}
\label{prop:kappa-upper-big-sigma}Under Assumption~\ref{assu:target-distribution},
\[
\Phi_{P}^{*}\leqslant\left(1+m\cdot\sigma^{2}\right)^{-d/2}.
\]
Furthermore, if $\sigma=\varsigma\cdot L^{-1/2}\cdot d^{-\beta}$
with $\beta<1/2$, then $\Phi_{P}^{*}={\cal O}\left(\exp\left(-c\cdot d^{1-2\cdot\beta}\right)\right)$
for any $c\in\left(0,\frac{1}{2}\cdot\frac{m}{L}\cdot\varsigma^{2}\right)$,
and in particular, decays faster than any polynomial in $1/d$.
\end{prop}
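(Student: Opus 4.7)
The plan is to bound the conductance by the \emph{acceptance} probability and then evaluate the latter at the mode of $\pi$ via a Gaussian integral. The key structural observation is that rejected moves of a Metropolis chain do not leave the current state: for any $x\in A$,
\[
P(x,A^{\complement})=\int Q(x,\mathrm{d}y)\,\alpha(x,y)\,\mathbf{1}_{A^{\complement}}(y)\leq\alpha(x),
\]
so integrating over $x\in A$ yields $(\pi\otimes P)(A\times A^{\complement})\leq\int_{A}\pi(\mathrm{d}x)\,\alpha(x)$, and hence
\[
\Phi_{P}(A)\leq\sup_{x\in A}\alpha(x) \qquad \text{whenever } \pi(A)>0.
\]

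Next I bound $\alpha$ at the unique minimizer $x_{*}$ of $U$, which exists by $m$-strong convexity. Since $\nabla U(x_{*})=0$, strong convexity yields $U(x_{*}+\sigma z)-U(x_{*})\geq\tfrac{m\sigma^{2}}{2}|z|^{2}$, so $\pi(x_{*}+\sigma z)/\pi(x_{*})\leq\mathrm{e}^{-m\sigma^{2}|z|^{2}/2}$. Dropping the $\min$ with $1$ and performing the standard Gaussian integral gives
\[
\alpha(x_{*})\leq\int \mathcal{N}(z;0,I_{d})\,\mathrm{e}^{-m\sigma^{2}|z|^{2}/2}\,\mathrm{d}z=(1+m\sigma^{2})^{-d/2}.
\]

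To pass from a pointwise bound at $x_{*}$ to one on $\Phi_{P}^{*}$, I use that $\alpha(\cdot)$ is continuous on $\mathsf{E}$ by dominated convergence (since $U$ is continuous and the integrand is bounded by $1$). For any $\epsilon>0$, the sub-level set $\{x:\alpha(x)\leq\alpha(x_{*})+\epsilon\}$ is an open neighborhood of $x_{*}$, and by positivity of $\pi$'s density, intersecting it with a sufficiently small ball around $x_{*}$ yields a set $B_{\epsilon}$ with $0<\pi(B_{\epsilon})\leq\tfrac{1}{2}$. Combining the previous two steps,
\[
\Phi_{P}^{*}\leq\Phi_{P}(B_{\epsilon})\leq\sup_{B_{\epsilon}}\alpha\leq\alpha(x_{*})+\epsilon\leq(1+m\sigma^{2})^{-d/2}+\epsilon,
\]
and letting $\epsilon\to0$ gives the stated bound.

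For the asymptotic statement, substitute $\sigma^{2}=\varsigma^{2}L^{-1}d^{-2\beta}$, so that $m\sigma^{2}=(m/L)\varsigma^{2}d^{-2\beta}\to0$ as $d\to\infty$ (using $\beta<1/2$). Expanding $\log(1+x)=x+O(x^{2})$ yields
\[
\tfrac{d}{2}\log(1+m\sigma^{2})=\tfrac{1}{2}(m/L)\varsigma^{2}d^{1-2\beta}(1+o(1)),
\]
so for any $c<\tfrac{1}{2}(m/L)\varsigma^{2}$ and $d$ large enough, $(1+m\sigma^{2})^{-d/2}\leq\mathrm{e}^{-cd^{1-2\beta}}$; since $1-2\beta>0$, this decays faster than any polynomial in $1/d$. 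The only subtle point is the initial reduction $\Phi_{P}(A)\leq\sup_{x\in A}\alpha(x)$, exploiting that unsuccessful proposals cannot cross $\partial A$; once this is in hand, the remainder is a routine Gaussian integral together with a continuity and Taylor-expansion argument.
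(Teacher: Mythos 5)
Your proof is correct and follows essentially the same route as the paper: bound the probability of leaving a set by the acceptance probability, show via $m$-strong convexity and a Gaussian integral that the acceptance probability is at most $\left(1+m\cdot\sigma^{2}\right)^{-d/2}$ where the gradient vanishes, and then shrink the set around the mode. The only (immaterial) difference is that the paper derives the explicit bound $\alpha\left(x\right)\leqslant\left(1+m\cdot\sigma^{2}\right)^{-d/2}\cdot\exp\bigl(\tfrac{1}{2}\cdot\tfrac{\sigma^{2}}{1+m\cdot\sigma^{2}}\cdot\left|\nabla U\left(x\right)\right|^{2}\bigr)$ for all $x$ and uses sublevel sets of $\left|\nabla U\right|$, whereas you evaluate $\alpha$ only at $x_{*}$ and invoke continuity of $\alpha$.
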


\subsubsection{Spectral gap upper bounds}

A natural question is whether the lower bound for the spectral gap
is of the correct order when $\sigma\sim d^{-1/2}$, i.e. whether
indeed $\gamma_{P}={\rm Gap}_{R}\left(P\right)$ scales as $d^{-1}$.
Under Assumption~\ref{assu:target-distribution}, we verify this
directly and also show that this is the optimal polynomial scaling.
\begin{thm}
\label{thm:upper-bound-gap}Under Assumption~\ref{assu:target-distribution}
and twice continuous-differentiability of $U$,
\[
\gamma_{P}\leqslant\min\left\{ \frac{1}{2}\cdot L\cdot\sigma^{2},\left(1+m\cdot\sigma^{2}\right)^{-d/2}\right\} .
\]
Hence, among polynomial scalings of the $\sigma$, the scaling $\sigma\sim d^{-1/2}$
is optimal with $\gamma_{P}\sim d^{-1}$.
\end{thm}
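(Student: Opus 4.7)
The statement packages two upper bounds together with a sharpness claim about polynomial scalings, so the plan is to treat these in turn. The second bound is immediate: Proposition~\ref{prop:kappa-upper-big-sigma} gives $\Phi_P^{*} \leqslant (1+m\sigma^2)^{-d/2}$, and since $P$ is a $\pi$-reversible positive kernel (as noted at the start of Section~\ref{sec:Spectral-gap-of-RWM}), we have $\gamma_P = \GapR(P) \leqslant \Phi_P^{*}$ by Lemma~\ref{lem:lawlersokal}. Hence only the bound $\gamma_P \leqslant \frac{1}{2} L \sigma^2$ and the optimality claim require real work. Note that one cannot simply square the conductance upper bound $\Phi_P^{*} \leqslant 2 L^{1/2} \sigma$ of Proposition~\ref{prop:kappa-upper-small-sigma}, since this would yield a bound with the wrong constant; in fact, a direct variational argument gives a sharper constant.

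For the first bound I would use the variational characterization $\GapR(P) = \inf_{g \in \ELL_0(\pi)} \calE(P,g)/\|g\|_2^2$, with the linear test function $f(x) = \langle v, x\rangle - \pi(\langle v, \cdot\rangle)$ for an arbitrary unit vector $v \in \R^d$. On the numerator side, since the rejection part of $P$ contributes nothing to $\calE(P,f)$ and $f(x+\sigma z) - f(x) = \sigma\langle v,z\rangle$, the trivial bound $\alpha(x, x+\sigma z) \leqslant 1$ yields
\[
\calE(P, f) \;\leqslant\; \frac{1}{2} \int \pi(\dif x) \int \mathcal{N}(\dif z; 0, I_d)\cdot \sigma^2 \langle v, z\rangle^2 \;=\; \frac{\sigma^2}{2}.
\]
On the denominator side I need $\Var_\pi(\langle v, X\rangle) \geqslant 1/L$. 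This is a reverse-Brascamp--Lieb / Cram\'er--Rao type inequality which I would derive by integration by parts: differentiating $e^{-U}$ gives $\Ebb_\pi[\langle v,\nabla U(X)\rangle \langle v, X\rangle] = |v|^2$ and $\Ebb_\pi[\langle v, \nabla U(X)\rangle^2] = v^\top \Ebb_\pi[\nabla^2 U(X)] v \leqslant L|v|^2$ (using $L$-smoothness). Cauchy--Schwarz on the first identity combined with the second yields $\Var_\pi(\langle v, X\rangle) \geqslant 1/L$. Combining gives $\gamma_P = \GapR(P) \leqslant (\sigma^2/2)/(1/L) = L\sigma^2/2$.

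For the optimality claim, fix a polynomial scaling $\sigma = \varsigma \cdot L^{-1/2} \cdot d^{-\beta}$. For $\beta \geqslant 1/2$ the first bound gives $\gamma_P \leqslant (\varsigma^2/2) \cdot d^{-2\beta}$, which among polynomial scalings is optimized at $\beta = 1/2$ with $\gamma_P \in \mathcal{O}(d^{-1})$, matching the $\Omega(d^{-1})$ lower bound of Corollary~\ref{cor:rwm-conductance-lower-Lm}. For $\beta < 1/2$, the remark following Proposition~\ref{prop:kappa-upper-big-sigma} shows that $(1+m\sigma^2)^{-d/2}$ decays faster than any polynomial in $d^{-1}$, so $\gamma_P$ decays faster than $d^{-1}$. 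Hence $\beta = 1/2$ is the unique optimal polynomial exponent.

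I do not anticipate any major obstacle here: the Dirichlet-form upper bound is essentially a one-line computation using $\alpha \leqslant 1$, and the variance lower bound reduces to a standard integration-by-parts identity whose validity is guaranteed by the Gaussian-like tails implied by $m$-strong convexity (cf.~Lemma~\ref{lem:gauss-bd}), so justifying the boundary terms vanish is routine. The only place where one must be a little careful is to ensure that the integration by parts can be applied componentwise, but under Assumption~\ref{assu:target-distribution} with twice continuous differentiability this is unproblematic.
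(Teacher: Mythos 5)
Your proposal is correct and follows essentially the same route as the paper: the bound $\gamma_{P}\leqslant\frac{1}{2}\cdot L\cdot\sigma^{2}$ via the variational characterization with a linear test function and the Cram\'er--Rao variance lower bound (Lemma~\ref{lem:gap-upper-small-sigma}), the bound $\gamma_{P}\leqslant\left(1+m\cdot\sigma^{2}\right)^{-d/2}$ via Proposition~\ref{prop:kappa-upper-big-sigma} and Cheeger's inequality together with positivity, and the same case split on $\beta$ for the optimality claim. The only difference is that you derive the inequality ${\rm Var}_{\pi}\left(\left\langle v,X\right\rangle \right)\geqslant L^{-1}\cdot\left|v\right|^{2}$ from scratch by Stein-type integration by parts, whereas the paper cites it directly from \citet{Saumard2014}; your derivation is valid under the stated regularity.
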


\begin{proof}
The bounds follow from Lemma~\ref{lem:gap-upper-small-sigma} and
Proposition~\ref{prop:kappa-upper-big-sigma} combined with Lemma~\ref{lem:lawlersokal}.
Let $\sigma=\varsigma\cdot L^{-1/2}\cdot d^{-\beta}$. If $\beta\geqslant\frac{1}{2}$
then we obtain $\gamma_{P}\in\mathcal{O}\left(d^{-2\cdot\beta}\right)$
and this rate is maximized by taking $\beta=\frac{1}{2}$. Combined
with Corollary~\ref{cor:rwm-conductance-lower-Lm}, we may conclude
that $\gamma_{P}$ decays as $d^{-1}$ as $d\to\infty$ when $\beta=\frac{1}{2}$.
On the other hand, if $\beta<\frac{1}{2}$ then $\gamma_{P}\cdot d$
converges to $0$ and hence $\beta<\frac{1}{2}$ leads to a faster
decay of $\gamma_{P}$ than $\beta=\frac{1}{2}$.
\end{proof}
\begin{lem}
\label{lem:gap-upper-small-sigma}Let $U$ be $L$-smooth and twice
continuously differentiable. Then
\[
\gamma_{P}\leqslant\frac{1}{2}\cdot L\cdot\sigma^{2}.
\]
\end{lem}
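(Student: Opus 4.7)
The plan is to use the variational formula for the spectral gap. Since $P$ is positive (as noted at the start of Section~\ref{sec:Spectral-gap-of-RWM}, following Baxendale) and $\pi$-reversible, we have $\gamma_{P}={\rm Gap}_{R}(P)=\inf\{\mathcal{E}(P,f)/\|f\|_{2}^{2}:f\in\ELL_{0}(\pi),f\neq 0\}$, so it suffices to exhibit a single test function whose Rayleigh quotient is at most $\frac{1}{2}L\sigma^{2}$. The natural candidate is a coordinate function $f(x)=x_{1}-\pi(x_{1})$, since linear functions interact exactly with Gaussian proposals and their gradients detect the upper curvature bound encoded by $L$-smoothness.

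The numerator is easy: dropping the acceptance ratio ($\alpha\leqslant 1$) and using that $Q(x,\cdot)=\mathcal{N}(x,\sigma^{2}I_{d})$ has second moment $\sigma^{2}$ in any coordinate direction gives
\[
\mathcal{E}(P,f)=\tfrac{1}{2}\int\pi({\rm d}x)Q(x,{\rm d}y)\alpha(x,y)(y_{1}-x_{1})^{2}\leqslant\tfrac{1}{2}\sigma^{2}.
\]
For the denominator, I would establish the variance lower bound $\mathrm{Var}_{\pi}(x_{1})\geqslant L^{-1}$ by a short integration-by-parts calculation of Brascamp--Lieb/Cram\'er--Rao type. Specifically, integration by parts against $\pi\propto e^{-U}$ gives the moment identity $\mathbb{E}_{\pi}[\partial_{1}U]=0$ and $\mathbb{E}_{\pi}[x_{1}\partial_{1}U]=1$, so that $\mathrm{Cov}_{\pi}(x_{1},\partial_{1}U)=1$. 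A second integration by parts yields $\mathbb{E}_{\pi}[(\partial_{1}U)^{2}]=\mathbb{E}_{\pi}[\partial_{1}^{2}U]$, and $L$-smoothness with twice continuous differentiability forces $\partial_{1}^{2}U\leqslant L$ pointwise, hence $\mathbb{E}_{\pi}[(\partial_{1}U)^{2}]\leqslant L$. Cauchy--Schwarz then gives
\[
1=\mathrm{Cov}_{\pi}(x_{1},\partial_{1}U)\leqslant\mathrm{Var}_{\pi}(x_{1})^{1/2}\cdot\mathbb{E}_{\pi}[(\partial_{1}U)^{2}]^{1/2}\leqslant\mathrm{Var}_{\pi}(x_{1})^{1/2}L^{1/2}.
\]
Combining yields $\gamma_{P}\leqslant\mathcal{E}(P,f)/\mathrm{Var}_{\pi}(x_{1})\leqslant\frac{1}{2}L\sigma^{2}$.

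The only delicate point, and what I would view as the main obstacle, is justifying the integrations by parts: this needs $\pi$ to give $x_{1}$ finite second moment and the relevant boundary terms at infinity to vanish. In the setting where the lemma is ultimately invoked (Theorem~\ref{thm:upper-bound-gap}, under the full Assumption~\ref{assu:target-distribution}), $m$-strong convexity provides rapid tail decay of $\pi$ and pointwise quadratic growth of $|\nabla U|$ through Lemma~\ref{lem:gauss-bd}, making the manipulations standard; if one wants the bare $L$-smoothness statement one can approximate $f$ by a smooth compactly supported cut-off and pass to the limit, or cite a standard density argument. No delicate analysis of the Metropolis acceptance ratio is needed because the bound $\alpha\leqslant 1$ is already tight enough for the linear test function to detect the correct $L\sigma^{2}$ scaling.
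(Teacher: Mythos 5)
Your proof is correct and follows essentially the same route as the paper: the same linear test function in the variational characterization of $\GapR(P)$, the same Dirichlet-form upper bound $\frac{1}{2}\sigma^{2}|v|^{2}$ obtained by dropping the acceptance ratio, and the same variance lower bound $\mathrm{Var}_{\pi}(x_{1})\geqslant L^{-1}$. The only difference is that the paper cites the Cram\'er--Rao inequality from Saumard--Wellner at this last step, whereas you re-derive it via integration by parts and Cauchy--Schwarz; the integrability caveat you raise applies equally to the paper's citation-based argument and is resolved in the setting where the lemma is actually invoked.
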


\begin{proof}
By the Cramér--Rao inequality \citep[see, e.g.,][Eq.~10.25]{Saumard2014},
it holds for any $v\in\mathbb{R}^{d}$ that
\[
{\rm Var}_{\pi}\left(\left\langle v,X\right\rangle \right)\geqslant v^{\top}\cdot{\bf \Ebb}_{\pi}\left[\nabla^{2}\left(-\log\pi\left(X\right)\right)\right]^{-1}\cdot v\geqslant L^{-1}\cdot\left|v\right|^{2},
\]
by $L$-smoothness. For $v\neq0$, define $g_{v}\left(x\right):=\left\langle v,x-{\bf \Ebb}_{\pi}\left[X\right]\right\rangle $,
and compute
\begin{align*}
\mathcal{E}\left(P,g_{v}\right) & =\frac{1}{2}\int\pi\left({\rm d}x\right)P\left(x,{\rm d}y\right)\left(g_{v}\left(y\right)-g_{v}\left(x\right)\right)^{2}\\
 & =\frac{1}{2}\int\pi\left({\rm d}x\right)P\left(x,{\rm d}y\right)\left\langle v,y-x\right\rangle ^{2}\\
 & \leqslant\frac{1}{2}\int\pi\left({\rm d}x\right)Q\left(x,{\rm d}y\right)\left\langle v,y-x\right\rangle ^{2}\\
 & =\frac{1}{2}\int\pi\left({\rm d}x\right){\cal N}\left({\rm d}z;0,I_{d}\right)\left\langle v,\sigma\cdot z\right\rangle ^{2}\\
 & =\frac{1}{2}\cdot\sigma^{2}\cdot\left|v\right|^{2}.
\end{align*}
We obtain
\[
\gamma_{P}={\rm Gap}_{{\rm R}}\left(P\right)=\inf_{f\in\ELL_{0}\left(\pi\right)}\frac{{\cal E}\left(P,f\right)}{\left\Vert f\right\Vert _{2}^{2}}\leqslant\frac{{\cal E}\left(P,g_{v}\right)}{\left\Vert g_{v}\right\Vert _{2}^{2}}\leqslant\frac{\frac{1}{2}\cdot\sigma^{2}\cdot\left|v\right|^{2}}{L^{-1}\cdot\left|v\right|^{2}}=\frac{1}{2}\cdot L\cdot\sigma^{2}.\qedhere
\]
\end{proof}

\subsection{Implications for the asymptotic variance}

In this sub-section, we address the asymptotic variance of MCMC estimators
computed from RWM chains. We will show that when using appropriately-tuned
RWM chains to compute expectations of functions under $\pi$, the
asymptotic variance of these estimators is an inflation of the ideal
variance by a factor which scales linearly with the dimension of the
problem. Furthermore, we will exhibit that for a specific class of
functions (in particular, affine functions) that this bound is tight
in terms of its dimension-dependence.
\begin{prop}
\label{prop:result-avar-RWM}Let Assumption~\ref{assu:target-distribution}
hold, and $\sigma=\varsigma\cdot L^{-1/2}\cdot d^{-1/2}$ for any
$\varsigma>0$. Then, for any $f\in\ELL_{0}\left(\pi\right)$, the
asymptotic variance of $f$ can be bounded as
\begin{align*}
{\rm var}\left(P,f\right) & \leqslant2^{10}\cdot C_{\gamma}^{-2}\cdot\varsigma^{-2}\cdot\exp\left(2\cdot\varsigma^{2}\right)\cdot\kappa\cdot d\cdot\left\Vert f\right\Vert _{2}^{2}.
\end{align*}
Additionally, for any linear $f\in\ELL_{0}\left(\pi\right)$, ${\rm var}\left(P,f\right)\geqslant2\cdot\varsigma^{-2}\cdot d\cdot\left\Vert f\right\Vert _{2}^{2}$.
\end{prop}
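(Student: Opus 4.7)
The plan is to treat the two bounds separately, using the standard reversible-chain relation between spectral gap and asymptotic variance for the upper bound, and a Cauchy--Schwarz spectral estimate combined with a direct Dirichlet form computation for the lower bound.

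For the upper bound, I would invoke positivity and reversibility of $P$, which have already been established for the RWM kernel. For any positive $\pi$-reversible $P$, the spectrum of $P$ on $\ELL_{0}(\pi)$ lies in $[0,1-\gamma_{P}]$, so the spectral theorem gives
\[
{\rm var}\left(P,f\right) = \int_{0}^{1-\gamma_{P}}\frac{1+\lambda}{1-\lambda}\,{\rm d}\mu_{f}(\lambda) \leqslant \frac{2-\gamma_{P}}{\gamma_{P}}\cdot\left\Vert f\right\Vert_{2}^{2} \leqslant \frac{2}{\gamma_{P}}\cdot\left\Vert f\right\Vert_{2}^{2},
\]
where $\mu_{f}$ is the spectral measure of $f$. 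Plugging in the lower bound $\gamma_{P}\geqslant 2^{-9}\cdot C_{\gamma}^{2}\cdot\varsigma^{2}\cdot\exp(-2\varsigma^{2})\cdot d^{-1}\cdot (m/L)$ from Corollary~\ref{cor:rwm-conductance-lower-Lm} yields the claimed constant $2^{10}\cdot C_{\gamma}^{-2}$ after rearrangement.

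For the lower bound, let $f(x) = \langle v, x - \Ebb_{\pi}[X]\rangle$ with $v\neq 0$. The computation in the proof of Lemma~\ref{lem:gap-upper-small-sigma} gives $\calE(P,f) \leqslant \frac{1}{2}\cdot\sigma^{2}\cdot |v|^{2}$, using that the Metropolis acceptance can only reduce the proposal's Dirichlet form. For the lower bound on $\|f\|_{2}^{2}$, $L$-smoothness of $U$ implies via the Cram\'er--Rao inequality that ${\rm Var}_{\pi}(\langle v, X\rangle) \geqslant L^{-1}\cdot |v|^{2}$, exactly as invoked in the same lemma. The main ingredient is then a classical spectral lower bound on asymptotic variance: applying Cauchy--Schwarz to the spectral measure $\mu_{f}$ via
\[
\left\Vert f\right\Vert_{2}^{4} = \left(\int (1-\lambda)^{1/2}\cdot(1-\lambda)^{-1/2}\,{\rm d}\mu_{f}\right)^{2} \leqslant \calE(P,f)\cdot\int(1-\lambda)^{-1}\,{\rm d}\mu_{f},
\]
and rewriting $\int(1-\lambda)^{-1}\,{\rm d}\mu_{f} = \tfrac{1}{2}({\rm var}(P,f) + \|f\|_{2}^{2})$ by the identity $\tfrac{2}{1-\lambda} = \tfrac{1+\lambda}{1-\lambda}+1$, yields
\[
{\rm var}\left(P,f\right) \geqslant \frac{2\cdot\left\Vert f\right\Vert_{2}^{4}}{\calE(P,f)} - \left\Vert f\right\Vert_{2}^{2}.
\]

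Combining the three estimates with $\sigma = \varsigma\cdot L^{-1/2}\cdot d^{-1/2}$, so that $L\sigma^{2} = \varsigma^{2}/d$, gives $\|f\|_{2}^{2}/\calE(P,f)\geqslant 2d/\varsigma^{2}$, hence ${\rm var}(P,f)\geqslant \|f\|_{2}^{2}\cdot(4d/\varsigma^{2}-1)$, which is at least $2\cdot\varsigma^{-2}\cdot d\cdot\|f\|_{2}^{2}$ as soon as $d\geqslant \varsigma^{2}/2$ (the low-dimensional case being uninteresting and easily absorbed into a trivial bound or constant adjustment). The main obstacle is the spectral identity in the lower bound --- it is a short calculation, but it requires invoking the spectral theorem carefully on $\ELL_{0}(\pi)$ and verifying that $f$ lies in the domain of $(I-P)^{-1/2}$ so the integrals make sense; everything else is bookkeeping using results already developed in the paper.
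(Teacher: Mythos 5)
Your upper bound is the paper's argument verbatim in spectral-measure notation: $\mathrm{var}(P,f)\leqslant 2\cdot\gamma_P^{-1}\cdot\Vert f\Vert_2^2$ for positive reversible $P$, combined with Corollary~\ref{cor:rwm-conductance-lower-Lm}; that part is fine. For the lower bound you and the paper share the same two analytic ingredients ($\Vert f\Vert_2^2\geqslant L^{-1}\cdot\vert v\vert^2$ and $\calE(P,f)\leqslant\frac12\cdot\sigma^2\cdot\vert v\vert^2$ from Lemma~\ref{lem:gap-upper-small-sigma}), and your Cauchy--Schwarz step $\Vert f\Vert_2^4\leqslant\calE(P,f)\cdot\langle f,(\Id-P)^{-1}f\rangle$ is exactly the variational representation $\langle f,(\Id-P)^{-1}f\rangle=\sup_g\{2\langle f,g\rangle-\calE(P,g)\}$ evaluated at $g=c\cdot f$, which is what the paper uses. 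The routes diverge only in how $\mathrm{var}(P,f)$ is related to $\langle f,(\Id-P)^{-1}f\rangle$: the paper uses positivity of $P$ to write $\mathrm{var}(P,f)=\langle f,(\Id+P)(\Id-P)^{-1}f\rangle\geqslant\langle f,(\Id-P)^{-1}f\rangle\geqslant\Vert f\Vert_2^4/\calE(P,f)$, which delivers $2\cdot\varsigma^{-2}\cdot d\cdot\Vert f\Vert_2^2$ uniformly in $\varsigma$ with no case split, whereas you use the exact identity $\mathrm{var}(P,f)=2\langle f,(\Id-P)^{-1}f\rangle-\Vert f\Vert_2^2$ and pay a $-\Vert f\Vert_2^2$ correction.

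That correction is the one point you cannot wave away as a ``constant adjustment'': the constant in the statement is fixed, and your chain only yields $(4\cdot d\cdot\varsigma^{-2}-1)\cdot\Vert f\Vert_2^2\geqslant 2\cdot\varsigma^{-2}\cdot d\cdot\Vert f\Vert_2^2$ when $\varsigma^2\leqslant 2\cdot d$. The remaining regime $\varsigma^2>2\cdot d$ is genuinely covered, but only via positivity: there $2\cdot\varsigma^{-2}\cdot d<1$ and, since the spectrum of $P$ on $\ELL_0(\pi)$ lies in $[0,1)$, one has $\mathrm{var}(P,f)=\int\frac{1+\lambda}{1-\lambda}\,\dif\mu_f(\lambda)\geqslant\Vert f\Vert_2^2$. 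You should state this explicitly --- for a merely reversible (non-positive) kernel the trivial bound $\mathrm{var}(P,f)\geqslant\Vert f\Vert_2^2$ is false, so the case split does require the positivity of the RWM kernel that the paper establishes via \citet{baxendale2005renewal}. The domain issue you flag for $(\Id-P)^{-1/2}$ is a non-issue here, since $\GapR(P)>0$ makes $(\Id-P)^{-1}$ a bounded operator on $\ELL_0(\pi)$.
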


\begin{proof}
Since $P$ is reversible and ${\rm Gap}_{{\rm R}}\left(P\right)>0$,
${\rm Id}-P$ is invertible on $\ELL_{0}\left(\pi\right)$. We have
\begin{align*}
{\rm var}\left(P,f\right) & =\left\langle f,\left(\Id+P\right)\cdot\left(\Id-P\right)^{-1}\cdot f\right\rangle ,
\end{align*}
Moreover, by considering the spectral resolution of $f$ with respect
to $P$, it is classical that 
\begin{align*}
\left\langle f,\left(\Id+P\right)\cdot\left(\Id-P\right)^{-1}\cdot f\right\rangle  & \leqslant\frac{2}{{\rm Gap}_{\mathrm{R}}\left(P\right)}\cdot\left\Vert f\right\Vert _{2}^{2},
\end{align*}
where ${\rm Gap}_{\mathrm{R}}\left(P\right)$ is the right spectral
gap of $P$. Recalling that ${\rm Gap}_{\mathrm{R}}\left(P\right)\geqslant\gamma_{P}$
(for any reversible $P$), we apply Corollary~\ref{cor:rwm-conductance-lower-Lm}
to bound
\begin{align*}
{\rm Gap}_{\mathrm{R}}\left(P\right) & \geqslant2^{-9}\cdot C_{\gamma}^{2}\cdot\varsigma^{2}\cdot\exp\left(-2\cdot\varsigma^{2}\right)\cdot d^{-1}\cdot\frac{m}{L},
\end{align*}
and deduce the first bound. For the second bound, by positivity of
$P$ we may write
\[
{\rm var}\left(P,f\right)=\left\langle f,\left(\Id+P\right)\cdot\left(\Id-P\right)^{-1}\cdot f\right\rangle \geqslant\left\langle f,\left(\Id-P\right)^{-1}\cdot f\right\rangle .
\]
Recall the variational representation \citep[see, e.g.,][]{caracciolo1990nonlocal},

\[
\left\langle f,\left(\Id-P\right)^{-1}\cdot f\right\rangle =\sup_{g\in\ELL_{0}\left(\pi\right)}\left\{ 2\cdot\left\langle f,g\right\rangle -\mathcal{E}\left(P,g\right)\right\} .
\]
We will consider taking $f:x\mapsto\left\langle v,x-\Ebb_{\pi}\left[X\right]\right\rangle $,
and $g=c\cdot f$ for $c\in\R$. By the argument in the proof of Lemma~\ref{lem:gap-upper-small-sigma},
it holds that
\[
\left\Vert f\right\Vert _{2}^{2}\geqslant L^{-1}\cdot\left|v\right|^{2},\qquad\mathcal{E}\left(P,f\right)\leqslant\frac{1}{2}\cdot\sigma^{2}\cdot\left|v\right|^{2}.
\]
We thus see that
\begin{align*}
{\rm var}\left(P,f\right) & \geqslant\left\langle f,\left(\Id-P\right)^{-1}\cdot f\right\rangle \\
 & =\sup_{g\in\ELL_{0}\left(\pi\right)}\left\{ 2\cdot\left\langle f,g\right\rangle -\mathcal{E}\left(P,g\right)\right\} \\
 & \geqslant\sup_{c\in\R}\left\{ 2\cdot c\cdot\left\Vert f\right\Vert _{2}^{2}-c^{2}\cdot\mathcal{E}\left(P,f\right)\right\} \\
 & =\frac{\left\Vert f\right\Vert _{2}^{2}}{\mathcal{E}\left(P,f\right)}\cdot\left\Vert f\right\Vert _{2}^{2}\\
 & \geqslant\frac{L^{-1}\cdot\left|v\right|^{2}}{\frac{1}{2}\cdot\sigma^{2}\cdot\left|v\right|^{2}}\cdot\left\Vert f\right\Vert _{2}^{2}\\
 & =2\cdot\varsigma^{-2}\cdot d\cdot\left\Vert f\right\Vert _{2}^{2}.\qedhere
\end{align*}
\end{proof}

\subsection{\label{subsec:Discussion-of-the-assumptions}Discussion of the assumptions}

Under Assumption~\ref{assu:target-distribution}, Corollary~\ref{cor:rwm-conductance-lower-Lm}
provides explicit bounds on the spectral gap of the RWM kernel for
any $\sigma>0$. Theorem~\ref{thm:upper-bound-gap} further shows
that the optimal dependence on dimension is $d^{-1}$ for this class
of target measures. On the other hand, it is clear from the proof
that the lower bounds on the spectral gap and conductance profile
have $m$-strong convexity and $L$-smoothness of $U$ as sufficient
but not necessary conditions.

For showing lower bounds, $m$-strong convexity of the potential is
only used to verify an appropriate isoperimetric profile inequality.
Hence, any other method for establishing such inequalities could be
used instead. For instance, one may follow Example~\ref{exa:convex-poincare-to-profile}
to deduce an isoperimetric profile inequality for any log-concave
probability measure; an explicit constant could be obtained by study
of the corresponding overdamped Langevin diffusion. Example~\ref{exa:holley-stroock}
could also be used to extend this reasoning to perturbations of log-concave
probability measures. Similarly, for showing lower bounds, $L$-smoothness
is only assumed to establish that the acceptance probability is uniformly
lower bounded above $0$, i.e. $\alpha_{0}=\inf_{x\in\mathsf{E}}\alpha\left(x\right)>0$;
our Lemma~\ref{lem:general-accept-x} shows that this is possible
under far less restrictive smoothness conditions on $U$, covering
both light-tailed targets and potentials whose gradients are only,
e.g., Hölder-continuous. As such, $L$-smoothness should not be viewed
as strictly necessary for the RWM to be performant; this stands in
contrast to various gradient-based MCMC algorithms, whose performance
can deteriorate in the absence of $L$-smoothness \citep[see, e.g.,][]{Livingstone2022}.

On the other hand, if $U$ has particularly poor regularity (e.g.
non-Lipschitz-continuous gradients), then it can be necessary to scale
$\sigma$ differently in order to stabilize the acceptance probability
away from $0$, (see also \citet{vogrinc2021counterexamples}, who
study this phenomenon in the optimal scaling framework). For a concrete
example, consider the target with potential given by $U\left(x\right)=\left|x\right|_{p}^{p}$
for $p\in[1,2)$. Observing that $U\left(\sigma\cdot z\right)-U\left(0\right)=\sigma^{p}\cdot\left(\sum_{i=1}^{d}\left|z_{i}\right|^{p}\right)$,
one sees that unless $\sigma^{p}\in\mathcal{O}\left(d^{-1}\right)$,
then $\alpha\left(0\right)$ will tend to $0$ as $d\to\infty$. Similar
cautionary examples which necessitate anomalous scalings of $\sigma$
can be constructed by designing potentials with `sharp' growth around
a local minimum, and following the strategy of Proposition~\ref{prop:kappa-upper-big-sigma}.

\section{\label{sec:Convergence-and-mixing}Convergence and mixing time for
RWM}

In this section, we analyze the mixing time for the RWM kernel $P$
defined in Section~\ref{sec:Spectral-gap-of-RWM} under Assumption~\ref{assu:target-distribution}.

\subsection{Three phase mixing}

The mixing time bound essentially consists of up to three phases.
If the initial chi-squared divergence is very large, then there is
an initial phase during which the convergence takes place at an exponential
rate depending only on $\alpha_{0}$ (and in particular, \textit{not}
directly on $\sigma^{2}$). Subsequently, there is a secondary phase
during which convergence takes place at a faster-than-exponential
rate, depending now on both $\sigma^{2}$ and $\alpha_{0}$. Finally,
once the chi-squared divergence drops below a universal constant (e.g.
$8$), the convergence is again exponential, again with a rate depending
on both $\sigma^{2}$ and $\alpha_{0}$. Qualitatively similar behaviour
was also observed (for a different algorithm) in the work of \citet{mou2019efficient}.
In the proof of Theorem~\ref{thm:RWM_mixing} in Appendix~\ref{sec:Proofs-of-auxiliary}
we provide two additional different bounds, valid for any $\varsigma>0$,
which are stronger than the stated bound. In particular, the stated
bound may be conservative when $u_{0}$ is sufficiently small , since
the first phase can be non-existent and the second phase is bounded
fairly crudely.
\begin{thm}
\label{thm:RWM_mixing}Under Assumption~\ref{assu:target-distribution},
let $\mu\ll\pi$ be a probability measure and $u_{0}=\chi^{2}\left(\mu,\pi\right)$.
Let $\sigma=\varsigma\cdot L^{-1/2}\cdot d^{-1/2}$ and $\kappa=L/m$,
with $\varsigma>0$ arbitrary and let the universal constants $C_{\ell},C_{\gamma}$
be as defined in Lemma~\ref{lem:Bounds-on-isoperimetric}. Then,
to guarantee that $\chi^{2}\left(\mu P^{n},\pi\right)\leqslant\varepsilon_{{\rm Mix}}\in\left(0,8\right)$
we may take
\begin{align*}
n & \geqslant2+2^{10}\cdot\exp\left(\varsigma^{2}\right)\cdot\log\left(\max\left\{ u_{0},1\right\} \right)\\
 & +2^{14}\cdot C_{\ell}^{-2}\cdot\exp\left(2\cdot\varsigma^{2}\right)\cdot\varsigma^{-2}\cdot\kappa\cdot d\cdot\left\{ \log\left(16\cdot C_{\ell}^{-2}\varsigma^{-2}\right)+\log\left(\kappa\cdot d\right)+\varsigma^{2}\right\} \\
 & +2^{6}\cdot C_{\gamma}^{-2}\cdot\exp\left(2\cdot\varsigma^{2}\right)\cdot\varsigma^{-2}\cdot\kappa\cdot d\cdot\log\left(\frac{8}{\varepsilon_{{\rm Mix}}}\right),
\end{align*}
i.e., of order $\mathcal{O}\left(\log u_{0}+\kappa\cdot d\cdot\log\left(\kappa\cdot d\right)+\kappa\cdot d\cdot\log\left(\varepsilon_{{\rm Mix}}^{-1}\right)\right)$.
\end{thm}

\subsection{Two feasible ``warm starts''}

We provide two complexity bounds based on Theorem~\ref{thm:RWM_mixing},
corresponding to two different algorithmically feasible initial distributions
$\mu$. In Remark~\ref{rem:normal-L-initialization-RWM}, we consider
a Gaussian initial distribution centered at the mode of the density
$\pi$ with covariance $L^{-1}\cdot I_{d}$; when $U$ is convex,
identifying the mode numerically is feasible and hence the strategy
relies on explicit knowledge of $L$. In Remark~\ref{rem:bc-acc-warm}
we consider the approach suggested by \citet{belloni2009computational},
for which the initial distribution is given by the distribution of
the first `accepted' proposal from $Q\left(x_{0},\cdot\right)$ where
$x_{0}$ is arbitrary. This does not require explicit knowledge of
$L$, only e.g. a bound that allows one to tune $\sigma$ such that
$\varsigma\in\Theta\left(1\right)$, and corresponds directly to how
RWM chains are often initialized in practice. In particular, we observe
that the RWM mixing time is fairly robust to the choice of initial
point $x_{0}$ as long as it is not very far from the mode, and this
is due to the fast phases of convergence identified in Theorem~\ref{thm:RWM_mixing}.

\begin{rem}
\label{rem:normal-L-initialization-RWM}If $\mu=\mathcal{N}\left(x_{*},L^{-1}\cdot I_{d}\right)$,
with $x_{*}$ the mode of $\pi$, then by Lemma~\ref{lem:gauss-bd}
we may obtain the bound $u_{0}=\chi^{2}\left(\mu,\pi\right)\leqslant\kappa^{d/2}$.
It then holds that $\log u_{0}=\mathcal{O}\left(d\cdot\log\kappa\right)$,
and one obtains a mixing time bound which scales as $\mathcal{O}\left(\kappa\cdot d\cdot\log\left(\kappa\cdot d\right)+\kappa\cdot d\cdot\log\left(\varepsilon_{{\rm Mix}}^{-1}\right)\right)$.
In relation to the comments preceding Theorem~\ref{thm:RWM_mixing},
we note that by taking $\varsigma$ sufficiently small, one may consider
(\ref{eq:rwm-mix-bound-2})--(\ref{eq:rwm-mix-bound-2-vo}) in the
proof of Theorem~\ref{thm:RWM_mixing}, ensure $u_{0}\leqslant4\cdot v_{\circ}^{-1}$
and thereby reduce the complexity to $\mathcal{O}\left(\kappa\cdot d\cdot\log\left(d\cdot\log\kappa\right)+\kappa\cdot d\cdot\log\left(\varepsilon_{{\rm Mix}}^{-1}\right)\right)$.
For example, taking $\varsigma^{2}=\frac{1}{2}$ as in Remark~\ref{rem:optimal-varsigma},
one can deduce that
\[
n\geqslant2+96982\cdot\kappa\cdot d\cdot\log\left(\frac{d}{2}\cdot\log\kappa\right)+3446\cdot\kappa\cdot d\cdot\log\left(\frac{8}{\varepsilon_{{\rm Mix}}}\right),
\]
is sufficient for $\chi^{2}\left(\mu P^{n},\pi\right)\leqslant\varepsilon_{{\rm Mix}}\in(0,8)$
when $\kappa^{d/2}>8$.
\end{rem}

In Example~\ref{exa:bayesian-logistic-regression}, we have $\kappa\leqslant1+\frac{1}{4}\cdot\sigma_{0}^{2}\cdot\lambda_{{\rm Max}}\left(AA^{\top}\right)$
and if $A$ is a random matrix with i.i.d. entries from a distribution
with mean $0$, variance $1$ and finite $4$th moment, then \citet[Theorem~3.1]{yin1988limit}
gives $\frac{1}{N}\lambda_{{\rm Max}}\left(AA^{\top}\right)\to\frac{1}{\alpha}(1+\sqrt{\alpha})^{2}$
a.s., where $d/N\to\alpha$. Hence, if $\sigma_{0}^{2}$ is $\mathcal{O}(d^{-1})$
then it is reasonable to expect $\kappa$ independent of $d$ in this
regime and hence the $\mathcal{O}(d\log d)$ scaling of $n$ given
above. The precise bounds are likely quite loose, e.g. $d=100$, $\kappa=10$,
$\varepsilon_{{\rm Mix}}=\frac{1}{2}$ give $n\sim0.47\times10^{9}$
, but yet are not astronomical. The analysis also does not take into
account explicitly any concentration of the posterior distribution,
and indeed the bounds hold irrespective of the values the data take.

\begin{rem}
\label{rem:bc-acc-warm}There is another approach to obtaining a warm
start which neatly sidesteps the need for any preliminary optimization,
suggested by \citet{belloni2009computational}. The idea is to initialize
the chain by a single accepted move of the Metropolis kernel from
some arbitrary point $x_{0}$, i.e.
\begin{align*}
\mu\left(\mathrm{d}x\right) & =P^{\alpha}\left(x_{0},\mathrm{d}x\right)\\
 & :=Q\left(x_{0},\mathrm{d}x\right)\cdot\alpha\left(x_{0},x\right)\cdot\alpha\left(x_{0}\right)^{-1}.
\end{align*}
To this end, one can compute directly (in fact, for \textit{any} Metropolis--Hastings
chain) that
\begin{align*}
\pi\left(\mathrm{d}x\right)\cdot P^{\alpha}\left(x,\mathrm{d}y\right) & =\pi\left(\mathrm{d}x\right)\cdot Q\left(x,\mathrm{d}y\right)\cdot\alpha\left(x,y\right)\cdot\alpha\left(x\right)^{-1}\\
 & =\pi\left(\mathrm{d}y\right)\cdot Q\left(y,\mathrm{d}x\right)\cdot\alpha\left(y,x\right)\cdot\alpha\left(x\right)^{-1}.
\end{align*}
Disintegrating this joint measure appropriately, one sees for the
RWM that
\begin{align*}
\frac{\mathrm{d}P^{\alpha}\left(x_{0},\cdot\right)}{\mathrm{d}\pi}\left(x\right) & =\alpha\left(x,x_{0}\right)\cdot\alpha\left(x_{0}\right)^{-1}\cdot\frac{\mathrm{d}Q\left(x,\cdot\right)}{\mathrm{d}\pi}\left(x_{0}\right)\\
 & \leqslant1\cdot\alpha_{0}^{-1}\cdot\left(\sup_{y\in\E}\frac{\mathrm{d}Q\left(x,\cdot\right)}{\mathrm{d}{\rm Leb}}\left(y\right)\right)\cdot\left(\frac{\mathrm{d}\pi}{\mathrm{d}{\rm Leb}}\left(x_{0}\right)\right)^{-1}.
\end{align*}
Computing that $\alpha_{0}^{-1}\leqslant2\cdot\exp\left(\frac{1}{2}\cdot\varsigma^{2}\right)$,
$\sup_{y\in\E}\frac{\mathrm{d}Q\left(x,\cdot\right)}{\mathrm{d}{\rm Leb}}\left(y\right)=\left(2\cdot\pi\cdot\sigma^{2}\right)^{-d/2}$,
$\left(\frac{\mathrm{d}\pi}{\mathrm{d}{\rm Leb}}\left(x_{0}\right)\right)^{-1}\leqslant\kappa^{d/2}\cdot\left(2\cdot\pi\cdot L^{-1}\right)^{d/2}\cdot\exp\left(\frac{1}{2}\cdot L\cdot\left|x_{0}-x_{*}\right|^{2}\right)$,
one obtains that
\begin{align*}
\sup_{x\in\E}\frac{\mathrm{d}P^{\alpha}\left(x_{0},\cdot\right)}{\mathrm{d}\pi}\left(x\right) & \leqslant2\cdot\exp\left(\frac{1}{2}\cdot\varsigma^{2}\right)\cdot\left(\frac{\kappa\cdot d}{\varsigma^{2}}\right)^{d/2}\cdot\exp\left(\frac{1}{2}\cdot L\cdot\left|x_{0}-x_{*}\right|^{2}\right)\\
 & \in\exp\left(\mathcal{O}\left(d\cdot\log\left(d\cdot\kappa\right)+L\cdot\left|x_{0}-x_{*}\right|^{2}\right)\right),
\end{align*}
when $\varsigma$ is of order $1$. As such, provided that $L\cdot\left|x_{0}-x_{*}\right|^{2}\in\mathcal{O}\left(\kappa\cdot d\cdot\log\left(\kappa\cdot d\right)\right)$,
it holds that $\log u_{0}\in\mathcal{O}\left(\kappa\cdot d\cdot\log\left(\kappa\cdot d\right)\right)$,
and one obtains a mixing time bound which again scales as $\mathcal{O}\left(\kappa\cdot d\cdot\log\left(\kappa\cdot d\right)+\kappa\cdot d\cdot\log\left(\varepsilon_{{\rm Mix}}^{-1}\right)\right)$.
From the perspective of complexity analysis, it is thus essentially
sufficient to initialize the chain by identifying some point within
a reasonable distance, i.e. $\mathcal{O}\left(\left(\frac{d}{m}\cdot\log\left(\kappa\cdot d\right)\right)^{1/2}\right)$
of the mode, and waiting until a single proposed move is accepted.
\end{rem}

\subsection{Comparisons to existing results}

The prior work of \citet{belloni2009computational} is closely related;
the authors study the application of MCMC techniques to frequentist
estimation tasks for which direct optimization of the objective function
is challenging. For their application, a different isoperimetric inequality
is used, which allows them to handle potentials which are non-convex,
non-smooth, or even both. However they assume that their target distributions
are supported on a (large) closed ball. Their complexity bounds are
essentially obtained by estimating the spectral gap, and constructing
an appropriate `warm start', i.e. an initial distribution $\mu$ which
satisfies $\frac{{\rm d}\mu}{{\rm d}\pi}\leqslant W<\infty$, which
implies a bound on the initial $\ELL$ distance to equilibrium, recalled
in Remark~\ref{rem:bc-acc-warm}. Their result holds for a particular
choice of $\sigma^{2}$, which depends on theoretical quantities that
are typically unknown. In contrast, our bounds are valid for any $\sigma^{2}$.
Their conductance bound is of the form $\Phi_{P}^{*}\in\Omega\left(d^{-1}\right)$,
as opposed to the $\Phi_{P}^{*}\in\Omega\left(d^{-1/2}\right)$ that
we find. This may appear suboptimal, but it may also be the case that
their class of target distributions is sufficiently harder than those
which we consider, to the point that the conductance is genuinely
worse in this way.

Closest to our work are the twin papers \citet{Dwivedi-Chen-Wainwright-JMLR:v20:19-306}
and \citet{chenJMLR:v21:19-441}, which give complexity bounds for
the RWM under Assumption~\ref{assu:target-distribution}. Under a
`feasible' Gaussian initial law (roughly as in Remark~\ref{rem:normal-L-initialization-RWM}),
\citet{Dwivedi-Chen-Wainwright-JMLR:v20:19-306} obtain complexity
bounds of $\mathcal{O}\left(d^{2}\cdot\kappa^{2}\cdot\log^{1.5}\left(\kappa\cdot\varepsilon_{{\rm Mix}}^{-1}\right)\right)$
which \citet{chenJMLR:v21:19-441} refined to $\mathcal{O}\left(d\cdot\kappa^{2}\cdot\log\left(d\cdot\varepsilon_{{\rm Mix}}^{-1}\right)\right)$
by making use of the conductance profile framework; we observe that
our complexity analysis implies a weaker dependence on $\kappa$.
Their results also assume precise values of $\sigma^{2}$ that are
typically unknown in practical applications, in contrast with ours.

\section{\label{sec:Cvg-mix-pcn}Convergence and mixing time for pCN}

In this section, we apply the same technique to analyze the convergence
to equilibrium of the preconditioned Crank--Nicolson algorithm. We
consider the following class of distributions, consistent with both
Assumptions~\ref{assu:pi-density-lebesgue} and~\ref{assu:target-distribution}:
\begin{assumption}
\label{assu:pCN_target}The target measure $\pi$ on $\E=\R^{d}$
can be written
\[
\pi\left({\rm d}x\right)\propto\mathcal{N}\left({\rm d}x;0,\mathsf{C}\right)\cdot\exp\left(-\Psi\left(x\right)\right),
\]
where $\Psi$ is convex, $L$-smooth, and minimized at $x=0$, and
$\mathsf{C}$ is a positive definite covariance matrix.
\end{assumption}

Letting $\nu=\mathcal{N}\left(0,\mathsf{C}\right)$, throughout this
section we denote by $P$ the pCN kernel defined by (\ref{eq:metropolis-kernel}),
with $Q$ the $\nu$-reversible Gaussian kernel defined for a fixed
but arbitrary $\rho\in(0,1)$ by
\[
Q\left(x,A\right)=\int\mathbf{1}_{A}\left(\rho\cdot x+\eta\cdot z\right)\,\mathcal{N}\left(\dif z;0,\mathsf{C}\right),\qquad x\in\E,A\in\mathscr{E},
\]
where $\rho^{2}+\eta^{2}=1$. We will refer to $\eta$ as the step-size
of this kernel, and denote $\alpha_{0}:=\inf_{x\in\E}\alpha\left(x\right)$.
In particular, we have that $\varpi\propto\exp\left(-\Psi\right)$
is the density of $\pi$ w.r.t. the reference measure $\nu$. By \citet[Proposition 3(i)]{doucet2015efficient},
we may deduce that $P$ is positive, taking in their notation $\chi=\nu$
and $r\left(u,v\right)=\mathcal{N}\left(u;\rho^{1/2}\cdot v,\left(1-\rho\right)\cdot\mathsf{C}\right)$.

The pCN algorithm is particularly popular in Bayesian Inverse Problems
\citep[see, e.g.,][Example~5.3]{Stuart2010}, where $\mathsf{C}$
is typically a finite section of some infinite-dimensional trace-class
covariance operator. As with RWM, one can handle relaxations of $L$-smoothness
of $\Psi$ by a suitable adaptation of Lemma~\ref{lem:general-accept-x}.

\begin{rem}
\label{rem:mL-pcn-conversion}Assume $\pi$ has an $m$-strongly log-concave
density w.r.t. Lebesgue, with associated potential $U$ minimized
at $0$. Then in Assumption~\ref{assu:pCN_target} one may take $\mathsf{C}=m^{-1}\cdot I_{d}$
and $\Psi\left(x\right)=U\left(x\right)-\frac{1}{2}\cdot m\cdot\left|x\right|^{2}$.
We observe that if $U$ is $L'$-smooth, then $\Psi$ is $(L'-m)$-smooth,
and in particular we obtain the pCN condition number $\kappa=L/m=\kappa'-1$,
where $\kappa'=L'/m$ is the condition number one would associate
with the RWM kernel. Our results for pCN with more general $\mathsf{C}$
can therefore be interpreted as applying to the class of densities
with respect to Lebesgue which possess $m$-strongly convex and $L$-smooth
potentials, where we see an improvement over RWM in terms of the condition
number, at least if one is willing to use the minimizer of $U$ to
define an appropriate parameterization.
\end{rem}

Throughout this section, we write ${\rm Tr}\left(\mathsf{C}\right)$
for the trace of a matrix $\mathsf{C}$, and define the norm
\begin{equation}
\left|x\right|_{\mathsf{C}^{-1}}:=\left|\mathsf{C}^{-1/2}\cdot x\right|,\qquad x\in\E.\label{eq:norm_C}
\end{equation}
As with RWM, one can derive a spectral gap estimate for pCN by applying
Theorem~\ref{thm:IP-to-mix}.
\begin{thm}
\label{thm:pcn-gap}Under Assumption~\ref{assu:pCN_target}, $\pi$
admits a regular, concave isoperimetric minorant $\tilde{I}_{\pi}=\varphi_{\gamma}\circ\Phi_{\gamma}^{-1}$
w.r.t. $\left|\cdot\right|_{\mathsf{C}^{-1}}$ and
\[
\Phi_{P}^{*}\geqslant\frac{1}{4}\cdot C_{\gamma}\cdot\alpha_{0}^{2}\cdot\eta,\qquad\gamma_{P}\geqslant2^{-5}\cdot C_{\gamma}^{2}\cdot\alpha_{0}^{4}\cdot\eta^{2},
\]
where the constant $C_{\gamma}$ is defined in Lemma~\ref{lem:Bounds-on-isoperimetric}.
Writing $\eta=\varsigma\cdot\left(L\cdot{\rm Tr}\left(\mathsf{C}\right)\right)^{-1/2}\in\left(0,1\right)$,
we have $\alpha_{0}\geqslant\frac{1}{2}\cdot\exp\left(-\frac{1}{2}\cdot\varsigma^{2}\right)$,
and hence
\begin{equation}
\gamma_{P}\geqslant2^{-9}\cdot C_{\gamma}^{2}\cdot\exp\left(-2\cdot\varsigma^{2}\right)\cdot\varsigma^{2}\cdot\left(L\cdot{\rm Tr}\left(\mathsf{C}\right)\right)^{-1}.\label{eq:pcn-gap-bound-varsigma}
\end{equation}
Optimizing over $\varsigma$ gives
\[
\gamma_{P}\geqslant2^{-10}\cdot C_{\gamma}^{2}\cdot{\rm e}^{-1}\cdot\left(L\cdot{\rm Tr}\left(\mathsf{C}\right)\right)^{-1}\geqslant3.62784\times10^{-5}\cdot\left(L\cdot{\rm Tr}\left(\mathsf{C}\right)\right)^{-1}.
\]
\end{thm}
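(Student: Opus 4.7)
The plan is to verify the three ingredients required by Theorem~\ref{thm:IP-to-mix} (a regular concave isoperimetric minorant of $\pi$, the close coupling condition for $P$, and a uniform lower bound on $\alpha_0$) and then assemble them.

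First, consider the linear map $T\colon y \mapsto \mathsf{C}^{1/2} y$, which is a bijective isometry from $(\R^d, |\cdot|)$ onto $(\R^d, |\cdot|_{\mathsf{C}^{-1}})$. Its pullback $T^{-1}_{\#}\pi$ has density proportional to $\exp(-V)$ with $V(y) = \tfrac{1}{2}|y|^{2} + \Psi(\mathsf{C}^{1/2} y)$, and $V$ is $1$-strongly convex because $\Psi$ is convex. Lemma~\ref{lem:Bounds-on-isoperimetric} (applied with $m=1$) then yields $I_{T^{-1}_{\#}\pi} \geq \varphi_{\gamma}\circ\Phi_{\gamma}^{-1}$ w.r.t.\ $|\cdot|$, and Lemma~\ref{lem:transport-isoperimetry} transfers this verbatim to an isoperimetric minorant for $I_\pi$ w.r.t.\ $|\cdot|_{\mathsf{C}^{-1}}$. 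The function $\varphi_{\gamma}\circ\Phi_{\gamma}^{-1}$ is regular and concave (as noted in Lemma~\ref{lem:Bounds-on-isoperimetric}) and equals $C_\gamma$ at $1/4$.

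For close coupling, the two proposal laws $Q(x,\cdot) = \mathcal{N}(\rho x, \eta^{2}\mathsf{C})$ and $Q(y,\cdot) = \mathcal{N}(\rho y, \eta^{2}\mathsf{C})$ share a common covariance, so an affine change of variable reduces their TV distance to that between $\mathcal{N}(0,I_d)$ and $\mathcal{N}(\rho\eta^{-1}\mathsf{C}^{-1/2}(y-x), I_d)$. Lemma~\ref{lem:RWM-continuity-proposal} (with $\sigma=1$) then gives $\|Q(x,\cdot)-Q(y,\cdot)\|_{\mathrm{TV}} \leq \tfrac{\rho}{2\eta}|x-y|_{\mathsf{C}^{-1}}$, and combined with $\rho \leq 1$ and Lemma~\ref{lem:met-tv-bound} this shows $P$ is $(|\cdot|_{\mathsf{C}^{-1}},\,\alpha_0 \eta,\,\alpha_0/2)$-close coupling. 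Invoking Theorem~\ref{thm:IP-to-mix} with $\tilde{I}_\pi(1/4) = C_\gamma$, and observing that $2\alpha_0\eta C_\gamma < 1$ (since $\alpha_0,\eta \leq 1$ and $C_\gamma < 1/2$) so that the inner minimum is attained by its second argument, delivers $\Phi_P^{*} \geq \tfrac{1}{4}C_\gamma\alpha_0^{2}\eta$; positivity of $P$ together with Cheeger's inequality (Lemma~\ref{lem:lawlersokal}) then gives $\gamma_P \geq 2^{-5}C_\gamma^{2}\alpha_0^{4}\eta^{2}$.

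The main subtlety is the uniform lower bound on $\alpha_0$: a naive second-order expansion of $\Psi(\rho x + \eta z) - \Psi(x)$ based at $x$ produces a residual $\tfrac{L}{2}(1-\rho)^{2}|x|^{2}$ which is not uniform in $x$, so the hard part is choosing the correct base point. The remedy is the telescoping decomposition
\[
\Psi(\rho x + \eta z) - \Psi(x) = \bigl[\Psi(\rho x + \eta z) - \Psi(\rho x)\bigr] + \bigl[\Psi(\rho x) - \Psi(x)\bigr].
\]
Writing $\rho x = \rho\cdot x + (1-\rho)\cdot 0$ as a convex combination, convexity of $\Psi$ together with $\Psi(0) \leq \Psi(x)$ forces the second bracket to be $\leq 0$; the first bracket is bounded by $L$-smoothness at the base point $\rho x$ as $\eta\langle\nabla\Psi(\rho x), z\rangle + \tfrac{L\eta^{2}}{2}|z|^{2}$. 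Substituting into $\alpha(x)$, separating the $z$-linear and $z$-quadratic parts via $\min\{1,ab\} \geq \min\{1,a\}\min\{1,b\}$, exploiting the $z \leftrightarrow -z$ symmetry of $\mathcal{N}(0,\mathsf{C})$ together with $\min\{1,c\} + \min\{1,c^{-1}\} \geq 1$, and finally applying Jensen's inequality to $t \mapsto \exp(-t)$ (the same strategy as in Lemma~\ref{lem:general-accept-x}) yields $\alpha_0 \geq \tfrac{1}{2}\exp\bigl(-\tfrac{L\eta^{2}}{2}\mathrm{Tr}(\mathsf{C})\bigr)$, which under the prescribed scaling $\eta = \varsigma(L\mathrm{Tr}(\mathsf{C}))^{-1/2}$ equals $\tfrac{1}{2}\exp(-\varsigma^{2}/2)$. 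Substituting $\alpha_0^{4} \geq 2^{-4}\exp(-2\varsigma^{2})$ and $\eta^{2} = \varsigma^{2}/(L\mathrm{Tr}(\mathsf{C}))$ into the spectral gap bound gives the penultimate claim, and maximizing $\varsigma^{2} \mapsto \varsigma^{2}\exp(-2\varsigma^{2})$ at $\varsigma^{2} = 1/2$ (with value $(2\mathrm{e})^{-1}$), combined with $C_\gamma \geq 0.3177765$, produces the final numerical constant.
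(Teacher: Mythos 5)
Your proposal is correct and follows essentially the same route as the paper: the isoperimetric minorant is obtained exactly as in Lemma~\ref{lem:pcn-threeset} (transport under $\mathsf{C}^{\pm 1/2}$ plus Lemma~\ref{lem:Bounds-on-isoperimetric} and Lemma~\ref{lem:transport-isoperimetry}), the close coupling via Pinsker and Lemma~\ref{lem:met-tv-bound} matches Lemmas~\ref{lem:PCN-continuity-proposal}--\ref{lem:close-coupling-pcn}, and your telescoping decomposition of $\Psi(\rho x+\eta z)-\Psi(x)$ with base point $\rho x$ is precisely the argument of Lemma~\ref{lem:pCN-accept-x}. The only cosmetic difference is that you apply $\rho\leqslant 1$ when fixing $\delta=\alpha_0\eta$ rather than carrying $\delta=\alpha_0\eta/\rho$ and simplifying at the end, which changes nothing.
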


\begin{proof}
This is proven in an analogous fashion to Theorem~\ref{thm:gaussian-rwm-conductance-general}.
The fact that $\tilde{I}_{\pi}=\varphi_{\gamma}\circ\Phi_{\gamma}^{-1}$
is an isoperimetric minorant for $\pi$ is established in Lemma~\ref{lem:pcn-threeset},
and the close-coupling condition for pCN is established with $\varepsilon=\frac{1}{2}\cdot\alpha_{0}$
and $\delta=\alpha_{0}\cdot\frac{\eta}{\rho}$ in Lemma~\ref{lem:close-coupling-pcn}.
Thus we can apply Theorem~\ref{thm:IP-to-mix} to deduce the conductance
bound $\Phi_{P}^{*}\geqslant\frac{1}{8}\cdot\alpha_{0}\cdot\min\left\{ 1,2\cdot C_{\gamma}\cdot\alpha_{0}\cdot\frac{\eta}{\rho}\right\} $,
which we then simplify by recalling that $\rho\leqslant1$ and $2\cdot C_{\gamma}\cdot\alpha_{0}\cdot\eta\leqslant2\cdot C_{\gamma}\cdot1\cdot1<1$.
Finally, the lower bound on the acceptance rate is established in
Lemma~\ref{lem:pCN-accept-x}.
\end{proof}

\subsection{Lower bounds for pCN}

We now give appropriate results related to isoperimetry for the pCN
algorithm. The key subtlety to establishing an appropriate isoperimetric
inequality for the pCN kernel is the fact that we want to change metric
from the flat Euclidean metric $|\cdot|$ to the metric $|\cdot|_{\mathsf{C}^{-1}}$.
\begin{lem}
\label{lem:pcn-threeset}Under Assumption~\ref{assu:pCN_target},
$\pi$ admits $\varphi_{\gamma}\circ\Phi_{\gamma}^{-1}$ as an isoperimetric
minorant w.r.t. the metric $\mathsf{d}_{\mathsf{C}}\left(x,y\right)=\left|x-y\right|_{\mathsf{C}^{-1}}$.
\end{lem}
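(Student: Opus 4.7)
The plan is to reduce the problem to the Gaussian isoperimetric inequality by a linear change of variables that simultaneously whitens the Gaussian reference measure and turns the metric $\mathsf{d}_{\mathsf{C}}$ into the Euclidean one, so that Lemma~\ref{lem:transport-isoperimetry} and Lemma~\ref{lem:Bounds-on-isoperimetric} apply directly.

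Concretely, I would introduce the map $T:\R^d\to\R^d$, $T(y)=\mathsf{C}^{1/2}y$, and let $\mu$ be the unique probability measure on $\R^d$ (equipped with the Euclidean metric $|\cdot|$) satisfying $\pi=T_{\#}\mu$. Because $T$ pushes $\mathcal{N}(0,I_d)$ forward to $\mathcal{N}(0,\mathsf{C})$, a change of variables shows that $\mu$ has Lebesgue density proportional to $\mathcal{N}(y;0,I_d)\cdot\exp\bigl(-\Psi(\mathsf{C}^{1/2}y)\bigr)$, i.e.\ $\mu\propto\exp(-V)$ with potential
\[
V(y)=\tfrac{1}{2}|y|^{2}+\Psi\bigl(\mathsf{C}^{1/2}y\bigr)+\mathrm{const}.
\]
The first term is $1$-strongly convex and the second is convex (since $\Psi$ is convex and $y\mapsto\mathsf{C}^{1/2}y$ is affine), so $V$ is $1$-strongly convex. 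Lemma~\ref{lem:Bounds-on-isoperimetric} (applied with $m=1$) then gives that $\mu$ admits $\varphi_{\gamma}\circ\Phi_{\gamma}^{-1}$ as a (regular, concave) isoperimetric minorant with respect to $|\cdot|$.

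The final step is to transfer this minorant to $\pi$ via $T$. For any $y_{1},y_{2}\in\R^{d}$ one has
\[
\mathsf{d}_{\mathsf{C}}\bigl(T(y_{1}),T(y_{2})\bigr)=\bigl|\mathsf{C}^{-1/2}\mathsf{C}^{1/2}(y_{1}-y_{2})\bigr|=|y_{1}-y_{2}|,
\]
so $T$ is a bijective isometry from $(\R^{d},|\cdot|)$ onto $(\R^{d},\mathsf{d}_{\mathsf{C}})$ (positive-definiteness of $\mathsf{C}$ ensures $\mathsf{C}^{1/2}$ is invertible). Lemma~\ref{lem:transport-isoperimetry} in its isometric form therefore yields that $\pi=T_{\#}\mu$ has the same isoperimetric minorant $\varphi_{\gamma}\circ\Phi_{\gamma}^{-1}$ with respect to $\mathsf{d}_{\mathsf{C}}$, which is the claim. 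No real obstacle arises: the only thing to be careful about is invoking Caffarelli's contraction (packaged inside Lemma~\ref{lem:Bounds-on-isoperimetric}) in the whitened coordinates rather than trying to apply it directly to $\pi$, where the anisotropy of $\mathsf{C}$ would obscure the $1$-strong log-concavity.
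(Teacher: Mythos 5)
Your proof is correct and follows essentially the same route as the paper: whiten via $T(y)=\mathsf{C}^{1/2}y$, observe that the pulled-back potential $\tfrac{1}{2}|y|^{2}+\Psi(\mathsf{C}^{1/2}y)$ is $1$-strongly convex so that Lemma~\ref{lem:Bounds-on-isoperimetric} gives the Gaussian minorant in the whitened coordinates, and then transfer it back through the isometry $(\R^{d},|\cdot|)\to(\R^{d},\mathsf{d}_{\mathsf{C}})$ using Lemma~\ref{lem:transport-isoperimetry}. No gaps.
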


\begin{proof}
First, define $\pi_{W}=\left(x\mapsto\mathsf{C}^{-1/2}\cdot x\right)_{\#}\pi$.
One checks that the density of $\pi_{W}$ is given by $\mathcal{N}\left(x;0,I_{d}\right)\cdot\exp\left(-\Psi\left(\mathsf{C}^{1/2}\cdot x\right)\right)$,
for which the potential $\frac{1}{2}\cdot\left|x\right|^{2}+\Psi\left(\mathsf{C}^{1/2}\cdot x\right)$
is 1-strongly convex. By Lemma~\ref{lem:Bounds-on-isoperimetric},
one sees that $\pi_{W}$ admits $\varphi_{\gamma}\circ\Phi_{\gamma}^{-1}$
as an isoperimetric minorant w.r.t. $\left|\cdot\right|$. Defining
$\mathsf{d}_{\mathsf{C}}\left(x,y\right):=\left|x-y\right|_{\mathsf{C}^{-1}}$,
one can apply Lemma~\ref{lem:transport-isoperimetry} with $\left(\mu_{1},\E_{1},\mathsf{d}_{1}\right)=\left(\pi_{W},\E,\left|\cdot\right|\right)$,
$\left(\mu_{2},\E_{2},\mathsf{d}_{2}\right)=\left(\pi,\E,\mathsf{d}_{\mathsf{C}}\right)$,
noting that $x\mapsto\mathsf{C}^{-1/2}\cdot x$ is an \textit{isometry}
between these two metric spaces, to conclude that $\pi$ admits $\varphi_{\gamma}\circ\Phi_{\gamma}^{-1}$
as an isoperimetric minorant with respect to $\mathsf{d}_{\mathsf{C}}$.
\end{proof}
The following lemma gives a useful bound on the total variation distance
between the proposals, analogous to Lemma~\ref{lem:RWM-continuity-proposal};
the proof can be found in Appendix~\ref{sec:Proofs-of-auxiliary}.
\begin{lem}
\label{lem:PCN-continuity-proposal}If $v>0$ and $x,y\in\mathsf{E}$
satisfy $\left|x-y\right|_{\mathsf{C}^{-1}}\leqslant v\cdot\frac{\eta}{\rho}$,
\[
\left\Vert Q\left(x,\cdot\right)-Q\left(y,\cdot\right)\right\Vert _{{\rm TV}}\leqslant\frac{1}{2}\cdot v.
\]
\end{lem}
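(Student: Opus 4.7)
The plan is to exploit the fact that $Q(x,\cdot) = \mathcal{N}(\rho \cdot x, \eta^2 \cdot \mathsf{C})$, so the task reduces to bounding the total variation distance between two multivariate Gaussians with a common covariance but differing means. This is a standard and fully explicit calculation, so I expect no serious obstacle; the only thing to track carefully is the change from the Euclidean metric to the $|\cdot|_{\mathsf{C}^{-1}}$ metric that the statement uses.

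First I would reduce to a one-dimensional problem. Let $Z \sim \mathcal{N}(0, I_d)$; then $Q(x,\cdot)$ is the law of $\rho \cdot x + \eta \cdot \mathsf{C}^{1/2} \cdot Z$. Pushing forward under $w \mapsto \mathsf{C}^{-1/2} \cdot w$ preserves TV, mapping the two proposals to $\mathcal{N}(\rho \cdot \mathsf{C}^{-1/2} x, \eta^2 \cdot I_d)$ and $\mathcal{N}(\rho \cdot \mathsf{C}^{-1/2} y, \eta^2 \cdot I_d)$. By rotational invariance of the standard Gaussian I can rotate so that the difference of means lies along a single axis, reducing the problem to
\[
\left\Vert \mathcal{N}(0, \eta^2) - \mathcal{N}(r, \eta^2) \right\Vert_{\mathrm{TV}}, \qquad r := \rho \cdot |x - y|_{\mathsf{C}^{-1}}.
\]

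Next I would evaluate (or bound) this univariate TV distance. The optimal test set is $\{t > r/2\}$, giving the exact identity $2 \cdot \Phi_{\gamma}(r/(2\eta)) - 1$. Using $\Phi_{\gamma}(s) - \tfrac{1}{2} = \int_0^s \varphi_{\gamma}(u)\, \dif u \leq s \cdot \varphi_{\gamma}(0) = s/\sqrt{2\pi}$, I obtain
\[
\left\Vert Q(x,\cdot) - Q(y,\cdot) \right\Vert_{\mathrm{TV}} \leq \frac{r}{\eta \cdot \sqrt{2\pi}} \leq \frac{1}{2} \cdot \frac{\rho}{\eta} \cdot |x - y|_{\mathsf{C}^{-1}},
\]
using $1/\sqrt{2\pi} < 1/2$.

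Finally I would substitute the hypothesis $|x - y|_{\mathsf{C}^{-1}} \leq v \cdot \eta/\rho$, so that the right hand side collapses to $\tfrac{1}{2} \cdot v$, which is the claim. (Alternatively one can invoke Pinsker together with the closed form $\mathrm{KL}(\mathcal{N}(m_1, \Sigma) \,\|\, \mathcal{N}(m_2, \Sigma)) = \tfrac{1}{2} \cdot |m_1 - m_2|_{\Sigma^{-1}}^2$; this gives exactly the same constant $\tfrac{1}{2}$ in front and is arguably cleaner, but the direct one-dimensional reduction above is self-contained and makes transparent why the bound is precisely of the form claimed.)
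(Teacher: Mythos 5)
Your proof is correct. The paper itself takes the route you mention only parenthetically at the end: it computes $\mathrm{KL}\left(Q\left(x,\cdot\right),Q\left(y,\cdot\right)\right)=\frac{1}{2}\cdot\frac{\rho^{2}}{\eta^{2}}\cdot\left|x-y\right|_{\mathsf{C}^{-1}}^{2}$ and applies Pinsker, exactly as in the RWM analogue (Lemma~\ref{lem:RWM-continuity-proposal}). Your main argument — push forward by $w\mapsto\mathsf{C}^{-1/2}\cdot w$ to whiten the covariance, rotate, and use the exact univariate identity $\left\Vert \mathcal{N}\left(0,\eta^{2}\right)-\mathcal{N}\left(r,\eta^{2}\right)\right\Vert _{\mathrm{TV}}=2\cdot\Phi_{\gamma}\left(\frac{r}{2\cdot\eta}\right)-1\leqslant\frac{r}{\eta\cdot\sqrt{2\pi}}$ — is a valid alternative and is consistent with the paper's normalization of total variation (supremum over sets). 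It in fact yields the slightly sharper constant $\frac{1}{\sqrt{2\pi}}$ in place of the $\frac{1}{2}$ that Pinsker produces, which you then deliberately weaken to match the stated bound; the Pinsker route is shorter to write but loses this small factor. Either way the hypothesis $\left|x-y\right|_{\mathsf{C}^{-1}}\leqslant v\cdot\frac{\eta}{\rho}$ gives the claim, so there is nothing to fix.
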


\begin{lem}
\label{lem:close-coupling-pcn}$P$ is $\left(\left|\cdot\right|_{\mathsf{C}^{-1}},\alpha_{0}\cdot\frac{\eta}{\rho},\frac{1}{2}\cdot\alpha_{0}\right)$-close-coupling.
\end{lem}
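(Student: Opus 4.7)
The proof will mirror exactly the argument used for the RWM case in Lemma~\ref{lem:close-coupling}, with the appropriate substitution of the metric $|\cdot|_{\mathsf{C}^{-1}}$ for the Euclidean metric and of Lemma~\ref{lem:PCN-continuity-proposal} for Lemma~\ref{lem:RWM-continuity-proposal}. The key observation is that the Metropolis bound of Lemma~\ref{lem:met-tv-bound} applies here: since $Q$ is $\nu$-reversible with $\nu = \mathcal{N}(0,\mathsf{C}) \gg \pi$ and $P$ is the corresponding $\pi$-reversible Metropolis kernel with acceptance lower bound $\alpha_0$, the lemma gives
\[
\left\Vert P(x,\cdot) - P(y,\cdot)\right\Vert_{\rm TV} \leqslant \left\Vert Q(x,\cdot) - Q(y,\cdot)\right\Vert_{\rm TV} + 1 - \alpha_0, \qquad x, y \in \E.
\]

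The plan is then to take any $x, y \in \E$ with $|x-y|_{\mathsf{C}^{-1}} \leqslant \alpha_0 \cdot \eta/\rho$, which is the close-coupling threshold $\delta = \alpha_0 \cdot \eta/\rho$. Applying Lemma~\ref{lem:PCN-continuity-proposal} with $v = \alpha_0 \in [0,1]$, the hypothesis $|x-y|_{\mathsf{C}^{-1}} \leqslant \alpha_0 \cdot \eta/\rho$ yields
\[
\left\Vert Q(x,\cdot) - Q(y,\cdot)\right\Vert_{\rm TV} \leqslant \tfrac{1}{2} \cdot \alpha_0.
\]
Combining this with the Metropolis bound above gives
\[
\left\Vert P(x,\cdot) - P(y,\cdot)\right\Vert_{\rm TV} \leqslant \tfrac{1}{2} \cdot \alpha_0 + 1 - \alpha_0 = 1 - \tfrac{1}{2} \cdot \alpha_0,
\]
which is precisely the close-coupling estimate with $\varepsilon = \tfrac{1}{2} \alpha_0$. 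This establishes the claim.

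There is no real obstacle here: the two key ingredients (the Metropolis TV bound and the pCN proposal continuity estimate) have already been proved, and the argument is a direct transcription of the RWM proof. The only conceptual point worth noting is that the factor $\eta/\rho$, rather than $\eta$ alone, enters the threshold because the pCN proposal contracts the location $x$ to $\rho \cdot x$ before adding Gaussian noise of scale $\eta$; this is already accounted for inside Lemma~\ref{lem:PCN-continuity-proposal}, so no extra work is required at this stage.
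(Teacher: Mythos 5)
Your proposal is correct and follows exactly the same route as the paper: apply Lemma~\ref{lem:PCN-continuity-proposal} with $v=\alpha_{0}$ to bound the proposal TV distance by $\frac{1}{2}\cdot\alpha_{0}$, then invoke Lemma~\ref{lem:met-tv-bound} (valid since $Q$ is $\nu$-reversible with $\nu\gg\pi$) to conclude.
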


\begin{proof}
Assume $x,y\in\mathsf{E}$ are such that $\left|x-y\right|_{\mathsf{C}^{-1}}\leqslant\alpha_{0}\cdot\frac{\eta}{\rho}$.
Then $\left\Vert Q\left(x,\cdot\right)-Q\left(y,\cdot\right)\right\Vert _{{\rm TV}}\leqslant\frac{1}{2}\cdot\alpha_{0}$
by Lemma~\ref{lem:PCN-continuity-proposal}. Since $Q$ is $\nu$-reversible,
we may apply Lemma~\ref{lem:met-tv-bound} to conclude. 
\end{proof}
\begin{lem}
\label{lem:pCN-accept-x}Let $\tilde{\kappa}=L\cdot{\rm Tr}\left(\mathsf{C}\right)$.
The pCN chain satisfies
\[
\alpha\left(x\right)\geqslant\frac{1}{2}\cdot\exp\left(-\frac{1}{2}\cdot\eta^{2}\cdot\tilde{\kappa}\right)>0,\qquad x\in\mathsf{E}.
\]
In particular, take $\eta:=\varsigma\cdot\tilde{\kappa}^{-1/2}$ where
$\varsigma\in\left(0,\tilde{\kappa}^{1/2}\right)$ is arbitrary. Then
\[
\alpha_{0}:=\inf_{x\in\mathsf{E}}\alpha\left(x\right)\geqslant\frac{1}{2}\cdot\exp\left(-\frac{1}{2}\cdot\varsigma^{2}\right).
\]
\end{lem}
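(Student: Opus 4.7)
The plan is to reduce the pCN acceptance analysis to a form that closely mimics the RWM-style argument of Lemma~\ref{lem:general-accept-x}. Writing out the acceptance explicitly using $\varpi \propto \exp(-\Psi)$ with respect to $\nu = \mathcal{N}(0,\mathsf{C})$, we have
\[
\alpha(x) = \int \mathcal{N}(\mathrm{d}z; 0, \mathsf{C}) \cdot \min\bigl\{1, \exp\bigl(\Psi(x) - \Psi(\rho x + \eta z)\bigr)\bigr\}.
\]
The key initial observation is that since $\Psi$ is convex with minimizer $0$, writing $\rho x = (1-\rho) \cdot 0 + \rho \cdot x$ yields $\Psi(\rho x) \leqslant (1-\rho)\Psi(0) + \rho \Psi(x) \leqslant \Psi(x)$, hence $\Psi(x) - \Psi(\rho x + \eta z) \geqslant \Psi(\rho x) - \Psi(\rho x + \eta z)$. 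This gives the uniform-in-$x$ reduction
\[
\alpha(x) \geqslant \int \mathcal{N}(\mathrm{d}z; 0, \mathsf{C}) \cdot \min\bigl\{1, \exp\bigl(\Psi(\rho x) - \Psi(\rho x + \eta z)\bigr)\bigr\},
\]
replacing the base point $x$ with $\rho x$ without changing the increment $\eta z$.

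From here, the argument is essentially the one used to prove Lemma~\ref{lem:general-accept-x} and Corollary~\ref{cor:normal-accept-x}, just with the Gaussian measure $\mathcal{N}(0, \mathsf{C})$ in place of $\mathcal{N}(0, I_d)$. By $L$-smoothness of $\Psi$ applied at $\rho x$,
\[
\Psi(\rho x + \eta z) - \Psi(\rho x) \leqslant \langle \nabla \Psi(\rho x), \eta z\rangle + \tfrac{1}{2} L \eta^2 |z|^2.
\]
Applying the inequality $\min\{1, a \cdot b\} \geqslant \min\{1, a\} \cdot \min\{1, b\}$ separates the deterministic smoothness penalty from the gradient term, the former being trivially bounded since $\exp\bigl(-\tfrac{1}{2} L \eta^2 |z|^2\bigr) \leqslant 1$. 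Symmetrising in $z \to -z$ (using that $\mathcal{N}(0, \mathsf{C})$ and $\exp(-\tfrac{1}{2} L \eta^2 |z|^2)$ are both invariant under this reflection) and invoking $\min\{1, a\} + \min\{1, a^{-1}\} \geqslant 1$, we obtain
\[
\alpha(x) \geqslant \tfrac{1}{2} \int \mathcal{N}(\mathrm{d}z; 0, \mathsf{C}) \cdot \exp\bigl(-\tfrac{1}{2} L \eta^2 |z|^2\bigr).
\]

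A final application of Jensen's inequality to the convex map $t \mapsto \exp(-t)$, combined with the identity $\int |z|^2 \, \mathcal{N}(\mathrm{d}z; 0, \mathsf{C}) = \mathrm{Tr}(\mathsf{C})$, gives
\[
\alpha(x) \geqslant \tfrac{1}{2} \exp\bigl(-\tfrac{1}{2} L \eta^2 \mathrm{Tr}(\mathsf{C})\bigr),
\]
which is the claimed bound, uniform in $x \in \mathsf{E}$. The second statement is then immediate: substituting $\eta = \varsigma \cdot (L \cdot \mathrm{Tr}(\mathsf{C}))^{-1/2}$ makes $L \eta^2 \mathrm{Tr}(\mathsf{C}) = \varsigma^2$. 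The main conceptual obstacle is the very first step — recognising that $\Psi(\rho x) \leqslant \Psi(x)$ is exactly the right lever to neutralise the shrinkage $\rho x - x = -(1-\rho)x$, which would otherwise introduce an $x$-dependent bad term when trying to bound $\Psi(\rho x + \eta z) - \Psi(x)$ directly via smoothness; once this reduction is made, the remainder of the proof is a routine adaptation of the RWM calculation.
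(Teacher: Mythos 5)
Your proposal is correct and follows essentially the same route as the paper's proof: the convexity observation $\Psi(\rho x)\leqslant\Psi(x)$ (from the minimizer at $0$), followed by $L$-smoothness, the factorisation $\min\{1,a\cdot b\}\geqslant\min\{1,a\}\cdot\min\{1,b\}$, the $z\mapsto -z$ symmetrisation with $\min\{1,a\}+\min\{1,a^{-1}\}\geqslant1$, and a final application of Jensen's inequality together with $\int|z|^{2}\,\mathcal{N}(\mathrm{d}z;0,\mathsf{C})=\mathrm{Tr}(\mathsf{C})$. The only cosmetic difference is that you evaluate the gradient at $\rho x$ rather than at $x$, which is immaterial since the symmetrisation only requires the linear term to be an odd function of $z$.
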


\begin{proof}
For $x\in\mathsf{E}$, let $\tilde{x}:=\rho\cdot x$ and $w:=\tilde{x}+\eta\cdot z$,
with $z\sim\mathcal{N}\left(0,\mathsf{C}\right)$. Recalling that
$\rho\in\left(0,1\right)$, we can apply convexity of $\Psi$ to see
that
\begin{align*}
\Psi\left(\tilde{x}\right) & =\Psi\left(\left(1-\rho\right)\cdot0+\rho\cdot x\right)\\
 & \leqslant\left(1-\rho\right)\cdot\Psi\left(0\right)+\rho\cdot\Psi\left(x\right)\\
\implies\Psi\left(\tilde{x}\right)-\Psi\left(0\right) & \leqslant\rho\cdot\left(\Psi\left(x\right)-\Psi\left(0\right)\right)\\
 & \leqslant\Psi\left(x\right)-\Psi\left(0\right)\\
\implies\Psi\left(\tilde{x}\right) & \leqslant\Psi\left(x\right).
\end{align*}
Applying $L$-smoothness shows that
\begin{align*}
\Psi\left(w\right) & \leqslant\Psi\left(\tilde{x}\right)+\left\langle \nabla\Psi\left(x\right),\eta\cdot z\right\rangle +\frac{1}{2}\cdot L\cdot\eta^{2}\cdot\left|z\right|^{2}\\
 & \leqslant\Psi\left(x\right)+\left\langle \nabla\Psi\left(x\right),\eta\cdot z\right\rangle +\frac{1}{2}\cdot L\cdot\eta^{2}\cdot\left|z\right|^{2}.
\end{align*}
From here, we imitate the proof of Lemma~\ref{lem:general-accept-x},
writing
\begin{align*}
\alpha\left(x\right) & =\int\mathcal{N}\left({\rm d}z;0,\mathsf{C}\right)\cdot\min\left\{ 1,\exp\left(-\left[\Psi\left(w\right)-\Psi\left(x\right)\right]\right)\right\} \\
 & \geqslant\int\mathcal{N}\left({\rm d}z;0,\mathsf{C}\right)\cdot\min\left\{ 1,\exp\left(-\left[\left\langle \nabla\Psi\left(x\right),\eta\cdot z\right\rangle +\frac{1}{2}\cdot L\cdot\eta^{2}\cdot\left|z\right|^{2}\right]\right)\right\} \\
 & \geqslant\int\mathcal{N}\left({\rm d}z;0,\mathsf{C}\right)\cdot\min\left\{ 1,\exp\left(-\left[\left\langle \nabla\Psi\left(x\right),\eta\cdot z\right\rangle \right]\right)\right\} \cdot\exp\left(-\frac{1}{2}\cdot L\cdot\eta^{2}\cdot\left|z\right|^{2}\right)\\
 & \geqslant\frac{1}{2}\cdot\int\mathcal{N}\left({\rm d}z;0,\mathsf{C}\right)\cdot\exp\left(-\frac{1}{2}\cdot L\cdot\eta^{2}\cdot\left|z\right|^{2}\right)\\
 & \geqslant\frac{1}{2}\cdot\exp\left(-\int\mathcal{N}\left({\rm d}z;0,\mathsf{C}\right)\cdot\frac{1}{2}\cdot L\cdot\eta^{2}\cdot\left|z\right|^{2}\right)\\
 & =\frac{1}{2}\cdot\exp\left(-\frac{1}{2}\cdot L\cdot\eta^{2}\cdot\mathrm{Tr}\left(\mathsf{C}\right)\right).
\end{align*}
The second inequality follows by algebraic substitution.
\end{proof}

\subsection{Mixing time for pCN}

We now give mixing time results for the pCN algorithm; the proof of
Theorem~\ref{thm:pcn-mixing} is in Appendix~\ref{sec:Proofs-of-auxiliary}.
\begin{thm}
\label{thm:pcn-mixing}Under Assumption~\ref{assu:pCN_target}, let
$\mu\ll\pi$ be a probability measure and $u_{0}=\chi^{2}\left(\mu,\pi\right)$.
Let $\tilde{\kappa}=L\cdot{\rm Tr}\left(\mathsf{C}\right)$ and $\eta=\varsigma\cdot\tilde{\kappa}^{-1/2}$
with $\varsigma\in\left(0,\tilde{\kappa}^{1/2}\right)$ arbitrary
and let the universal constants $C_{\ell},C_{\gamma}$ be as defined
in Lemma~\ref{lem:Bounds-on-isoperimetric}. Then, to guarantee that
$\chi^{2}\left(\mu P^{n},\pi\right)\leqslant\varepsilon_{{\rm Mix}}\in\left(0,8\right)$
we may take
\begin{align*}
n & \geqslant2+2^{10}\cdot\exp\left(\varsigma^{2}\right)\cdot\log\left(\max\left\{ u_{0},1\right\} \right)\\
 & +2^{14}\cdot C_{\ell}^{-2}\cdot\exp\left(2\cdot\varsigma^{2}\right)\cdot\varsigma^{-2}\cdot\tilde{\kappa}\cdot\left\{ \log\left(16\cdot C_{\ell}^{-2}\cdot\varsigma^{-2}\cdot\tilde{\kappa}\right)+\varsigma^{2}\right\} \\
 & +2^{6}\cdot C_{\gamma}^{-2}\cdot\exp\left(2\cdot\varsigma^{2}\right)\cdot\varsigma^{-2}\cdot\tilde{\kappa}\cdot\log\left(\frac{8}{\varepsilon_{{\rm Mix}}}\right),
\end{align*}
i.e. of order $\mathcal{O}\left(\log u_{0}+\tilde{\kappa}\log(\tilde{\kappa})+\tilde{\kappa}\log(\varepsilon_{{\rm Mix}}^{-1})\right)$.
\end{thm}

\begin{rem}
We note that in contrast to the RWM, the assumptions made on the target
for pCN allow for a dimension-independent control of the mixing behaviour.
This phenomenon has been observed since at least \citet{hairer2014spectral},
who establish the dimension-robustness of the spectral gap under similar
assumptions, though with much less explicit quantitative results.
On the other hand, one can still quantify the difficulty of navigating
the target measure through the roughness of the potential $\Psi$,
as summarized by $L$, and the effective dimension of the prior, as
summarized by ${\rm Tr}\left(\mathsf{C}\right)$; see \citet{agapiou2017importance}
for related notions.
\end{rem}

\begin{rem}
An analogous initial distribution to that in Remark~\ref{rem:normal-L-initialization-RWM}
is\linebreak{}
 $\mu=\mathcal{N}\left(0,\mathsf{C}\cdot\left(I_{d}+L\cdot\mathsf{C}\right)^{-1}\right)$,
for which
\[
\frac{\mathrm{d}\mu}{\mathrm{d}\pi}\left(x\right)\leqslant\det\left(I_{d}+L\cdot\mathsf{C}\right)^{1/2}\leqslant\exp\left(\frac{1}{2}\cdot L\cdot\mathrm{Tr}\left(\mathsf{C}\right)\right).
\]
Hence, $\log u_{0}\in\mathcal{O}\left(\tilde{\kappa}\right)$, from
which one concludes that the mixing time is bounded as $\mathcal{O}\left(\tilde{\kappa}\cdot\log\tilde{\kappa}+\tilde{\kappa}\cdot\log\varepsilon_{\mathrm{Mix}}^{-1}\right)$.
\end{rem}

\subsection{Comparison with independent Metropolis--Hastings}

A non-local pCN chain may be defined by taking $\rho=0$, and hence
$\eta=1$. This corresponds to an independent Metropolis--Hastings
(IMH) kernel with proposal distribution $q(A)=\mathcal{N}(A;0,\mathsf{C})$.
Theorem~\ref{thm:pcn-gap}, strictly speaking, does not apply but
does allow consideration of $\rho$ arbitrarily close to $0$ by taking
$\varsigma^{2}$ arbitrarily close to $L\cdot{\rm Tr}\left(\mathsf{C}\right)$,
resulting in a spectral gap bound of order $\alpha_{0}^{4}\sim\exp\left(-2\cdot L\cdot{\rm Tr}(C)\right)$.
This is somewhat crude, perhaps because the analysis here is more
suitable for Markov chains with local behaviour. 

On the other hand, we may deduce by \citet[Theorem~2]{gaasemyr2003spectrum}
that the spectral gap of the IMH is precisely 
\[
\gamma_{P}=\alpha_{0}=\inf_{x\in\mathsf{E}}\frac{{\rm d}q}{{\rm d}\pi}(x)=\exp(\Psi(0))\int\mathcal{N}(x;0,\mathsf{C})\exp(-\Psi(x))\,{\rm d}x,
\]
and we will see that when $\rho=0$, the bound on $\alpha_{0}$ deteriorates
rapidly with $d$. By $L$-smoothness and $\nabla\Psi(0)=0$, we have
$\Psi(x)-\Psi(0)\leqslant\frac{L}{2}\left|x\right|^{2}$, so that
we have the estimate
\[
\gamma_{P}\geqslant\int\mathcal{N}(x;0,\mathsf{C})\exp\left(-\frac{L}{2}\left|x\right|^{2}\right)\,{\rm d}x={\rm det}\left(I_{d}+L\mathsf{C}\right)^{-1/2}.
\]
In the context of Remark~\ref{rem:mL-pcn-conversion}, we obtain
\[
\gamma_{P}\geqslant\left(1+\frac{L}{m}\right)^{-d/2}=(1+\kappa)^{-d/2},
\]
which decreases much faster than $d^{-1}$; this also implies poor
scaling of independent Metropolis--Hastings in comparison with RWM.
In general, we only obtain the bound
\[
\gamma_{P}\geqslant\exp\left(-\frac{1}{2}\cdot L\cdot{\rm Tr}(\mathsf{C})\right),
\]
where the exponential dependence on $L\cdot{\rm Tr}(\mathsf{C})$
is much worse than the linear dependence in (\ref{eq:pcn-gap-bound-varsigma})
when $\varsigma$ is chosen appropriately. Hence, significant improvements
in the spectral gap bound are obtained by using appropriately tuned
``local'' Markov chains.

\appendix

\section{\label{sec:Notation}Notation}
\begin{itemize}
\item The Euclidean norm on $\E=\R^{d}$ is denoted $\left|\cdot\right|$,
which we will also use to denote the associated metric.
\item We write ${\rm Leb}$, and plainly $\dif x$, for the Lebesgue measure
on $\R^{d}$.
\item We write $\ELL\left(\pi\right)$ for the Hilbert space of (equivalence
classes of) real-valued $\pi$-square-integrable measurable functions
with inner product 
\[
\left\langle f,g\right\rangle =\int_{\E}f\left(x\right)g\left(x\right)\,\dif\pi\left(x\right)\,,
\]
and corresponding norm $\left\Vert \cdot\right\Vert _{2}$. For $g\in\ELL\left(\pi\right)$,
$\Var_{\pi}\left(g\right):=\left\Vert g-\pi\left(g\right)\right\Vert _{2}^{2}$.
We write $\ELL_{0}\left(\pi\right)$ for the set of functions $f\in\ELL\left(\pi\right)$
which also satisfy $\pi\left(f\right)=0$. 
\item Given a set $A\in\mathcal{E}$ with $\pi\left(A\right)>0$, we define
the probability measure $\pi_{A}$ on $\left(\E,\mathscr{E}\right)$
via $\pi_{A}\left(\cdot\right):=\pi\left(\cdot\cap A\right)/\pi\left(A\right)$.
\item Given a probability measure $\pi$ and a function $T$ on $\E$, we
define the pushforward measure of $\pi$ under the action of $T$
by $\left(T_{\#}\pi\right)\left(A\right):=\pi\left(T^{-1}\left(A\right)\right)$.
\item For a set $A\in\mathscr{E}$, its complement in $\E$ is denoted by
$A^{\complement}$. We denote the corresponding indicator function
by $\mathbf{1}_{A}:\E\to\left\{ 0,1\right\} $.
\item For two sets $A,B\in\mathscr{E}$ and a metric $\mathsf{d}$ on $\E$,
the distance between the two sets is given by
\[
\mathsf{d}\left(A,B\right):=\inf\left\{ \mathsf{d}\left(x,y\right):x\in A,y\in B\right\} .
\]
When one of the sets is a singleton, we will simply write $\mathsf{d}\left(x,B\right)$
for $\mathsf{d}\left(\left\{ x\right\} ,B\right)$, say.
\item For two measures $\mu$ and $\nu$, we write $\nu\ll\mu$ to mean
that $\nu$ is absolutely continuous with respect to $\mu$.
\item For two probability measures $\mu$ and $\nu$ on $\left(\E,\mathscr{E}\right)$,
we let $\mu\otimes\nu\left(A\times B\right):=\mu\left(A\right)\cdot\nu\left(B\right)$
for $A,B\in\mathscr{E}$. For a Markov kernel $P\left(x,\dif y\right)$
on $\E\times\mathscr{E}$, we write for $\bar{A}\in\mathscr{E}\otimes\mathscr{E}$,
the minimal product $\sigma$-algebra, $\mu\otimes P\left(\bar{A}\right):=\int_{\bar{A}}\mu\left(\dif x\right)P\left(x,\dif y\right)$.
\item For a probability measure $\mu\ll\pi$, the chi-squared divergence
between $\mu$ and $\pi$ is given by $\chi^{2}\left(\mu,\pi\right):=\left\Vert \frac{\dif\mu}{\dif\pi}-1\right\Vert _{2}^{2}$.
\item A point mass distribution at $x$ is denoted by $\delta_{x}$.
\item We associate, to a $\pi$-invariant Markov kernel $P$, the bounded
linear operator also denoted $P:\ELL\left(\pi\right)\to\ELL\left(\pi\right)$,
given by $Pf\left(x\right)=\int_{\mathsf{E}}P\left(x,{\rm d}y\right)f\left(y\right)$.
We may refer to $P$ as a kernel or as an operator, the meaning being
clear from the context.
\item We write $\mathrm{Id}$ for the identity mapping on $\ELL\left(\pi\right)$,
and $\mathrm{id}$ for the identity mapping on $\R$, and $I_{d}$
for the $d\times d$ identity matrix.
\item Given a bounded linear operator $P:\ELL\left(\pi\right)\to\ELL\left(\pi\right)$,
we define the Dirichlet form $\calE\left(P,f\right):=\left\langle \left(\Id-P\right)f,f\right\rangle $
for any $f\in\ELL\left(\pi\right)$.
\item For a mapping $T:\left(\E,\mathsf{d}\right)\to\left(\E',\mathsf{d}'\right)$
between metric spaces, the Lipschitz norm is defined as $\left|T\right|_{{\rm Lip}}:=\sup_{x\neq y}\frac{\mathsf{d}'\left(T\left(x\right),T\left(y\right)\right)}{\mathsf{d}\left(x,y\right)}$.
\item The spectrum of a bounded linear operator $P$ is the set 
\[
\mathcal{S}\left(P\right):=\left\{ \lambda\in\mathbb{C}:P-\lambda\cdot{\rm Id}\text{ is not invertible}\right\} .
\]
\item For a $\pi$-invariant Markov kernel $P$, we denote by $\mathcal{S}_{0}\left(P\right)$
the spectrum of the restriction of $P$ to $\ELL_{0}\left(\pi\right)$.
We define the spectral gap of $P$ to be $\gamma_{P}:=1-\sup\left|\mathcal{S}_{0}\left(P\right)\right|$.
\item If $P$ is a $\pi$-reversible Markov kernel, then $\mathcal{S}_{0}\left(P\right)\subseteq\left[-1,1\right]$
and we may define the right spectral gap as ${\rm Gap_{{\rm R}}}\left(P\right):=1-\sup\mathcal{S}_{0}\left(P\right)$,
which satisfies \citep[see,  e.g., ][Theorem~22.A.19]{douc2018markov},
\[
{\rm Gap}_{\mathrm{R}}\left(P\right)=\inf_{g\in\ELL_{0}\left(\pi\right),g\neq0}\frac{\calE\left(P,g\right)}{\left\Vert g\right\Vert _{2}^{2}}\,.
\]
\item We say that a $\pi$-reversible Markov kernel $P$ is positive if
$\left\langle f,Pf\right\rangle \geqslant0$ for all $f\in\ELL\left(\pi\right)$.
In this case, $\gamma_{P}=\GapR\left(P\right)$.
\item We write $\mathcal{N}(m,\Sigma)$ where $m\in\R^{d}$ and $\Sigma$
is a $d\times d$ covariance matrix, for the corresponding Gaussian
distribution on $\R^{d}$, $\mathcal{N}\left(x;m,\Sigma\right)$ for
its density with respect to Lebesgue at $x\in\R^{d}$ and $\mathcal{N}\left(A;m,\Sigma\right)$
for the measure it assigns to $A\in\mathscr{E}$.
\item We adopt the following $\mathcal{O}$ (resp. $\Omega$) notation to
indicate when functions grow no faster than (resp. no slower than)
other functions. For $a\in\mathbb{R}\cup\{\infty\}$,
\begin{itemize}
\item If $f\left(x\right)\in\mathcal{O}\left(g\left(x\right)\right)$ as
$x\to a$, this means that $\underset{x\to a}{\lim\sup}\left|\frac{f\left(x\right)}{g\left(x\right)}\right|<\infty$.
When $a=+\infty$, then we may drop explicit mention of $a$. 
\item If $f\left(x\right)\in\Omega\left(g\left(x\right)\right)$ as $x\to a$,
this means that $\underset{x\to a}{\lim\inf}\left|\frac{f\left(x\right)}{g\left(x\right)}\right|>0$.
In particular $f\in\mathcal{O}\left(g\right)\iff g\in\Omega\left(f\right)$. 
\item We will write $f\left(x\right)\in\Theta\left(g(x)\right)$ as $x\to a$
if both $f\left(x\right)\in\mathcal{O}\left(g\left(x\right)\right)$
and $f\left(x\right)\in\Omega\left(g\left(x\right)\right)$ as $x\to a$.
\end{itemize}
\end{itemize}

\section{\label{sec:Proofs-of-auxiliary}Additional proofs}
\begin{proof}[Proof of Lemma~\ref{lem:cond-spec-profile-gap}]
For the case $v>\frac{1}{2}$, we know that $\Lambda_{P}\left(v\right)\geqslant\GapR\left(P\right)$,
since we are taking an infimum over a smaller set of functions for
any $v$. By Lemma~\ref{lem:lawlersokal}, we have $\GapR\left(P\right)\geqslant\frac{1}{2}\cdot\left[\Phi_{P}^{*}\right]^{2}$.
Now consider the case $v\in\left(0,\frac{1}{2}\right]$, let $A$
be a measurable set such that $0<\pi\left(A\right)\leqslant v$, and
let $h\in C_{0}^{+}\left(A\right)$. Consider the quantity
\begin{align*}
\mathcal{E}_{1}\left(P,h\right) & :=\frac{1}{2}\cdot\int\pi\left({\rm d}x\right)\cdot P\left(x,{\rm d}y\right)\cdot\left|h\left(y\right)-h\left(x\right)\right|
\end{align*}
and observe that, by symmetry of $\pi\otimes P$ , one can write
\begin{align*}
\mathcal{E}_{1}\left(P,h\right) & =\int\pi\left({\rm d}x\right)\cdot P\left(x,{\rm d}y\right)\cdot\left|h\left(y\right)-h\left(x\right)\right|\cdot{\bf 1}\left[h\left(x\right)<h\left(y\right)\right]\\
 & =\int\pi\left({\rm d}x\right)\cdot P\left(x,{\rm d}y\right)\cdot{\bf 1}\left[h\left(x\right)<h\left(y\right)\right]\cdot\left(h\left(y\right)-h\left(x\right)\right)\\
 & =\int\pi\left({\rm d}x\right)\cdot P\left(x,{\rm d}y\right)\cdot\int_{t\geqslant0}{\bf 1}\left[h\left(x\right)\leqslant t<h\left(y\right)\right]\,{\rm d}t\\
 & =\int_{t\geqslant0}\left(\int\pi\left({\rm d}x\right)\cdot P\left(x,{\rm d}y\right)\cdot{\bf 1}\left[h\left(x\right)\leqslant t<h\left(y\right)\right]\right)\,{\rm d}t.
\end{align*}
Now, observe that if one defines $H_{t}:=\left\{ x\in A:h\left(x\right)>t\right\} $,
then
\begin{align*}
\int\pi\left({\rm d}x\right)\cdot P\left(x,{\rm d}y\right)\cdot{\bf 1}\left[h\left(x\right)\leqslant t<h\left(y\right)\right] & =\left(\pi\otimes P\right)\left(H_{t}^{\complement}\times H_{t}\right)\\
 & =\left(\pi\otimes P\right)\left(H_{t}\times H_{t}^{\complement}\right).
\end{align*}
Recalling that $H_{t}\subseteq A$ and hence that $\pi\left(H_{t}\right)\leqslant\pi\left(A\right)$,
one sees that if $\pi(H_{t})>0$,
\begin{align*}
\left(\pi\otimes P\right)\left(H_{t}\times H_{t}^{\complement}\right) & =\pi\left(H_{t}\right)\cdot\frac{\left(\pi\otimes P\right)\left(H_{t}\times H_{t}^{\complement}\right)}{\pi\left(H_{t}\right)}\\
 & \geqslant\pi\left(H_{t}\right)\cdot\inf\left\{ \frac{\left(\pi\otimes P\right)\left(S\times S^{\complement}\right)}{\pi\left(S\right)}:0<\pi\left(S\right)\leqslant\pi\left(A\right)\right\} \\
 & =\pi\left(H_{t}\right)\cdot\Phi_{P}\left(\pi\left(A\right)\right),
\end{align*}
while the inequality holds trivially if $\pi(H_{t})=0$. It thus holds
that
\begin{align*}
\mathcal{E}_{1}\left(P,h\right) & =\int_{t\geqslant0}\left(\int\pi\left({\rm d}x\right)\cdot P\left(x,{\rm d}y\right)\cdot{\bf 1}\left[h\left(x\right)\leqslant t<h\left(y\right)\right]\right)\,{\rm d}t\\
 & =\int_{t\geqslant0}\left(\pi\otimes P\right)\left(H_{t}\times H_{t}^{\complement}\right)\,{\rm d}t\\
 & \geqslant\int_{t\geqslant0}\pi\left(H_{t}\right)\cdot\Phi_{P}\left(\pi\left(A\right)\right)\,{\rm d}t\\
 & =\left(\int_{t\geqslant0}\int\pi\left({\rm d}x\right)\cdot{\bf 1}\left[h\left(x\right)>t\right]\,{\rm d}t\right)\cdot\Phi_{P}\left(\pi\left(A\right)\right)\\
 & =\left(\int\pi\left({\rm d}x\right)\cdot\int_{t\geqslant0}{\bf 1}\left[h\left(x\right)>t\right]\,{\rm d}t\right)\cdot\Phi_{P}\left(\pi\left(A\right)\right)\\
 & =\left(\int\pi\left({\rm d}x\right)\cdot h\left(x\right)\,{\rm d}t\right)\cdot\Phi_{P}\left(\pi\left(A\right)\right)\\
 & =\pi\left(h\right)\cdot\Phi_{P}\left(\pi\left(A\right)\right).
\end{align*}
 Now, let $g\in C_{0}^{+}\left(A\right)$, and take $h=g^{2}$ in
the above to see that
\begin{align*}
\pi\left(g^{2}\right)\cdot\Phi_{P}\left(\pi\left(A\right)\right) & \leqslant\frac{1}{2}\cdot\int\pi\left({\rm d}x\right)P\left(x,{\rm d}y\right)\left|g\left(x\right)^{2}-g\left(y\right)^{2}\right|\\
 & \leqslant\frac{1}{2}\cdot\left(\int\pi\left({\rm d}x\right)P\left(x,{\rm d}y\right)\left|g\left(x\right)-g\left(y\right)\right|^{2}\right)^{1/2}\\
 & \hspace{3cm}\cdot\left(\int\pi\left({\rm d}x\right)P\left(x,{\rm d}y\right)\left|g\left(x\right)+g\left(y\right)\right|^{2}\right)^{1/2}\\
 & \leqslant\frac{1}{2}\cdot\left(2\cdot\mathcal{E}\left(P,g\right)\right)^{1/2}\cdot\left(4\cdot\pi\left(g^{2}\right)\right)^{1/2}\\
 & =2^{1/2}\cdot\mathcal{E}\left(P,g\right)^{1/2}\cdot\pi\left(g^{2}\right)^{1/2},
\end{align*}
from which we may deduce that 
\[
\frac{1}{2}\cdot\Phi_{P}\left(\pi\left(A\right)\right)^{2}\leqslant\frac{\mathcal{E}\left(P,g\right)}{\pi\left(g^{2}\right)}\leqslant\frac{\mathcal{E}\left(P,g\right)}{{\rm Var}_{\pi}\left(g\right)}.
\]
Taking an infimum over $g$ shows that $\lambda_{P}\left(A\right)\geqslant\frac{1}{2}\cdot\Phi_{P}\left(\pi\left(A\right)\right)^{2}$,
and taking an infimum over $A$ shows that $\Lambda_{P}\left(v\right)\geqslant\frac{1}{2}\cdot\Phi_{P}\left(v\right)^{2}$.
\end{proof}
\begin{proof}[Proof of Lemma~\ref{lem:iso-conductance-A-close-coupling}]
For $A\in\mathscr{E}$, define the sets
\begin{align*}
S_{1} & :=\left\{ z\in A\colon P\left(z,A^{\complement}\right)<\frac{1}{2}\cdot\varepsilon\right\} \\
S_{2} & :=\left\{ z\in A^{\complement}\colon P\left(z,A\right)<\frac{1}{2}\cdot\varepsilon\right\} 
\end{align*}
and $S_{3}:=\big(S_{1}\cup S_{2}\big)^{\complement}$, and let $\theta\in\left(0,1\right)$.
We consider two cases. First, we establish that when either $\pi\left(S_{1}\right)\leqslant\theta\cdot\pi\left(A\right)$
or $\pi\left(S_{2}\right)\leqslant\theta\cdot\pi\left(A^{\complement}\right)$,
then 
\begin{equation}
\left(\pi\otimes P\right)\big(A\times A^{\complement}\big)\geqslant\frac{1}{2}\cdot\left(1-\theta\right)\cdot\varepsilon\cdot\min\left\{ \pi\left(A\right),\pi\big(A^{\complement}\big)\right\} \,.\label{eq:3set-cond-S1-S2-small}
\end{equation}
If $\pi\left(S_{1}\right)\leqslant\theta\cdot\pi\left(A\right)$ then
\begin{align*}
\pi\left(A\right) & =\pi\left(S_{1}\right)+\pi\left(A\setminus S_{1}\right)\\
 & \leqslant\theta\cdot\pi\left(A\right)+\pi\left(A\setminus S_{1}\right)\\
\implies\quad\pi\left(A\setminus S_{1}\right) & \geqslant\left(1-\theta\right)\cdot\pi\left(A\right)
\end{align*}
Now, 
\begin{align*}
\left(\pi\otimes P\right)\left(A\times A^{\complement}\right) & \geqslant\left(\pi\otimes P\right)\left(\left(A\setminus S_{1}\right)\times A^{\complement}\right)\\
 & \geqslant\frac{1}{2}\cdot\varepsilon\cdot\pi\left(A\setminus S_{1}\right)\\
 & \geqslant\frac{1}{2}\cdot\left(1-\theta\right)\cdot\varepsilon\cdot\pi\left(A\right).
\end{align*}
Similarly, if $\pi\left(S_{2}\right)\leqslant\theta\cdot\pi\left(A^{\complement}\right)$
then
\begin{align*}
\pi\left(A^{\complement}\right) & =\pi\left(S_{2}\right)+\pi\left(A^{\complement}\setminus S_{2}\right).\\
 & \leqslant\theta\cdot\pi\left(A^{\complement}\right)+\pi\left(A^{\complement}\setminus S_{2}\right)\\
\implies\quad\pi\left(A^{\complement}\setminus S_{2}\right) & \geqslant\left(1-\theta\right)\cdot\pi\left(A^{\complement}\right),
\end{align*}
and arguing as before:
\begin{align*}
\left(\pi\otimes P\right)\left(A^{\complement}\times A\right) & \geqslant\left(\pi\otimes P\right)\left(\left(A^{\complement}\setminus S_{2}\right)\times A\right)\\
 & \geqslant\frac{1}{2}\cdot\varepsilon\cdot\pi\left(A^{\complement}\setminus S_{2}\right)\\
 & \geqslant\frac{1}{2}\cdot\left(1-\theta\right)\cdot\varepsilon\cdot\pi\left(A^{\complement}\right).
\end{align*}
The first claim thus follows. In the second case, $\pi\left(S_{1}\right)>\theta\cdot\pi\left(A\right)$
and $\pi\left(S_{2}\right)>\theta\cdot\pi\left(A^{\complement}\right)$.
As noticed by \citet{Dwivedi-Chen-Wainwright-JMLR:v20:19-306}, reversibility
is not required to establish the following
\begin{align*}
\left(\pi\otimes P\right)\left(A\times A^{\complement}\right) & =\left(\pi\otimes P\right)\left(\mathsf{E}\times A^{\complement}\right)-\left[\left(\pi\otimes P\right)\left(A^{\complement}\times\mathsf{E}\right)-\left(\pi\otimes P\right)\left(A^{\complement}\times A\right)\right]\\
 & =\pi\left(A^{\complement}\right)-\pi\left(A^{\complement}\right)+\left(\pi\otimes P\right)\left(A^{\complement}\times A\right)\\
 & =\left(\pi\otimes P\right)\left(A^{\complement}\times A\right)\,,
\end{align*}
We then compute
\begin{align*}
\left(\pi\otimes P\right)\left(A\times A^{\complement}\right) & =\frac{1}{2}\cdot\left(\pi\otimes P\right)\left(A\times A^{\complement}\right)+\frac{1}{2}\cdot\left(\pi\otimes P\right)\left(A^{\complement}\times A\right)\\
 & \geqslant\frac{1}{2}\cdot\left(\pi\otimes P\right)\left(\left(A\setminus S_{1}\right)\times A^{\complement}\right)+\frac{1}{2}\cdot\left(\pi\otimes P\right)\left(\left(A^{\complement}\setminus S_{2}\right)\times A\right)\\
 & \geqslant\frac{1}{4}\cdot\varepsilon\cdot\pi\left(A\setminus S_{1}\right)+\frac{1}{4}\cdot\varepsilon\cdot\pi\left(A^{\complement}\setminus S_{2}\right)\\
 & =\frac{1}{4}\cdot\varepsilon\cdot\pi\left(S_{3}\right)
\end{align*}
Now for $\left(z,z'\right)\in S_{1}\times S_{2}$ we have
\begin{align*}
\left\Vert P\left(z,\cdot\right)-P\left(z',\cdot\right)\right\Vert _{\mathrm{TV}} & \geqslant P\left(z,A\right)-P\left(z',A\right)\\
 & =1-P\left(z,A^{\complement}\right)-P\left(z',A\right)\\
 & >1-\varepsilon.
\end{align*}
This implies that $\mathsf{d}\left(S_{1},S_{2}\right)=\inf\left\{ \left|z-z'\right|\colon\left(z,z'\right)\in S_{1}\times S_{2}\right\} \geqslant\delta$,
since $P$ is $(\mathsf{d},\delta,\varepsilon)$-close coupling. Hence,
using Definition~\ref{def:three-set-iso-ineq} and monotonicity of
$F$, 
\begin{align*}
\left(\pi\otimes P\right)\left(A\times A^{\complement}\right) & \geqslant\frac{1}{4}\cdot\varepsilon\cdot\pi\left(S_{3}\right)\\
 & \geqslant\frac{1}{4}\cdot\varepsilon\cdot\mathsf{d}\left(S_{1},S_{2}\right)\cdot F\left(\min\left\{ \pi\left(S_{1}\right),\pi\left(S_{2}\right)\right\} \right)\\
 & \geqslant\frac{1}{4}\cdot\varepsilon\cdot\delta\cdot F\left(\min\left\{ \theta\cdot\pi\left(A\right),\theta\cdot\pi\left(A^{\complement}\right)\right\} \right).
\end{align*}
We conclude by combining this inequality with (\ref{eq:3set-cond-S1-S2-small})
and considering $A$ with $\pi(A)\leqslant\frac{1}{2}$.
\end{proof}
\begin{proof}[Proof of Theorem~\ref{thm:IP-to-mix}]
By Corollary~\ref{cor:iso-couple-conductance}, we have 
\begin{align*}
\Phi_{P}\left(v\right) & \geqslant\frac{1}{4}\cdot\varepsilon\cdot\min\left\{ 1,\frac{1}{2}\cdot\delta\cdot\frac{\tilde{I}_{\pi}\left(\frac{1}{2}\cdot v\right)}{\frac{1}{2}\cdot v}\right\} \qquad v\in\big(0,1/2\big],
\end{align*}
and the bounds on $\Phi_{P}^{*}$ and $\gamma_{P}$ follow then from
Definition~\ref{def:conductance-profile}, Lemma~\ref{lem:lawlersokal}
and positivity of $P$. Writing $h=\frac{{\rm d}\mu}{{\rm d}\pi}$
and $u_{n}:={\rm Var}_{\pi}\left(P^{n}h\right)=\chi^{2}\left(\mu P^{n},\pi\right)$,
we recall by Theorem~\ref{thm:CP-to-mix} that in order to ensure
that $u_{n}\leqslant\varepsilon_{{\rm Mix}}$, it suffices to take
\[
n\geqslant2+4\cdot\int_{\min\left\{ 4\cdot u_{0}^{-1},1/2\right\} }^{1/2}\frac{{\rm d}v}{v\cdot\Phi_{P}\left(v\right)^{2}}+\left[\Phi_{P}^{*}\right]^{-2}\cdot\log\left(\max\left\{ \frac{\min\left\{ u_{0},8\right\} }{\varepsilon_{{\rm Mix}}},1\right\} \right).
\]
Then for $v\in\left(0,v_{*}\right)$, as defined in (\ref{eq:v*_defn}),
it holds that $\Phi_{P}\left(v\right)\geqslant\frac{1}{4}\cdot\varepsilon$,
and for $v\in\left(v_{*},\frac{1}{2}\right)$, it holds that $\Phi_{P}\left(v\right)\geqslant\frac{1}{8}\cdot\varepsilon\cdot\delta\cdot\frac{\tilde{I}_{\pi}\left(\frac{1}{2}\cdot v\right)}{\frac{1}{2}\cdot v}$.
One thus writes
\begin{align*}
\int_{\min\left\{ 4\cdot u_{0}^{-1},1/2\right\} }^{1/2}\frac{1}{v\cdot\Phi_{P}\left(v\right)^{2}}\,{\rm d}v & =\int_{\min\left\{ 4\cdot u_{0}^{-1},1/2\right\} }^{\max\left\{ \min\left\{ 4\cdot u_{0}^{-1},1/2\right\} ,v_{*}\right\} }\frac{1}{v\cdot\Phi_{P}\left(v\right)^{2}}\,{\rm d}v\\
 & +\int_{\max\left\{ \min\left\{ 4\cdot u_{0}^{-1},1/2\right\} ,v_{*}\right\} }^{1/2}\frac{1}{v\cdot\Phi_{P}\left(v\right)^{2}}\,{\rm d}v.
\end{align*}
We treat the two integrals separately. For the first, write
\begin{align*}
\int_{\min\left\{ 4\cdot u_{0}^{-1},1/2\right\} }^{\max\left\{ \min\left\{ 4\cdot u_{0}^{-1},1/2\right\} ,v_{*}\right\} }\frac{{\rm d}v}{v\cdot\Phi_{P}\left(v\right)^{2}} & \leqslant\int_{\min\left\{ 4\cdot u_{0}^{-1},1/2\right\} }^{\max\left\{ \min\left\{ 4\cdot u_{0}^{-1},1/2\right\} ,v_{*}\right\} }\frac{1}{v\cdot\left(\frac{1}{4}\cdot\varepsilon\right)^{2}}\,{\rm d}v\\
 & =2^{4}\cdot\varepsilon^{-2}\cdot\log\left(\frac{\max\left\{ \min\left\{ 4\cdot u_{0}^{-1},1/2\right\} ,v_{*}\right\} }{\min\left\{ 4\cdot u_{0}^{-1},\frac{1}{2}\right\} }\right)\\
 & =2^{4}\cdot\varepsilon^{-2}\cdot\max\left\{ \log\left(\frac{u_{0}}{4\cdot v_{*}^{-1}}\right),0\right\} ,
\end{align*}
where the final equality follows from a case-by-case analysis. For
the second, write
\begin{align*}
\int_{\max\left\{ \min\left\{ 4\cdot u_{0}^{-1},1/2\right\} ,v_{*}\right\} }^{1/2} & \frac{{\rm d}v}{v\cdot\Phi_{P}\left(v\right)^{2}}\\
 & \leqslant\int_{\max\left\{ \min\left\{ 4\cdot u_{0}^{-1},1/2\right\} ,v_{*}\right\} }^{1/2}\frac{1}{v\cdot\left(\frac{1}{8}\cdot\varepsilon\cdot\delta\cdot\frac{\tilde{I}_{\pi}\left(\frac{1}{2}\cdot v\right)}{\frac{1}{2}\cdot v}\right)^{2}}\,{\rm d}v\\
 & =2^{6}\cdot\varepsilon^{-2}\cdot\delta^{-2}\cdot\int_{\max\left\{ \min\left\{ 4\cdot u_{0}^{-1},1/2\right\} ,v_{*}\right\} }^{1/2}\frac{\left(\frac{1}{2}\cdot v\right)^{2}}{v\cdot\tilde{I}_{\pi}\left(\frac{1}{2}\cdot v\right)^{2}}\,{\rm d}v\\
 & =2^{6}\cdot\varepsilon^{-2}\cdot\delta^{-2}\cdot\int_{\max\left\{ \min\left\{ 4\cdot u_{0}^{-1},1/2\right\} ,v_{*}\right\} }^{1/2}\frac{\left(\frac{1}{2}\cdot v\right)}{\tilde{I}_{\pi}\left(\frac{1}{2}\cdot v\right)^{2}}\,{\rm d}\left(\frac{1}{2}\cdot v\right)\\
 & =2^{6}\cdot\varepsilon^{-2}\cdot\delta^{-2}\cdot\int_{\max\left\{ \min\left\{ 2\cdot u_{0}^{-1},1/4\right\} ,v_{*}/2\right\} }^{1/4}\frac{\xi}{\tilde{I}_{\pi}\left(\xi\right)^{2}}\,{\rm d}\xi.
\end{align*}
For the final term, we have $\Phi_{P}^{*}=\Phi_{P}(\frac{1}{2})\geqslant2^{-2}\cdot\varepsilon\cdot\min\left\{ 1,2\cdot\delta\cdot\tilde{I}_{\pi}\left(\frac{1}{4}\right)\right\} $.
We conclude by combining these bounds.
\end{proof}
\begin{proof}[Proof of Lemma~\ref{lem:RWM-continuity-proposal}]
This is obtained via Pinsker's inequality. First compute directly
that
\[
{\rm KL}\left(Q\left(x,\cdot\right),Q\left(y,\cdot\right)\right)=\frac{1}{2\cdot\sigma^{2}}\cdot\left|x-y\right|^{2}.
\]
Recalling Pinsker's inequality, we deduce that
\[
\left\Vert Q\left(x,\cdot\right)-Q\left(y,\cdot\right)\right\Vert _{\mathrm{TV}}\leqslant\left(\frac{1}{2}\cdot\mathrm{KL}\left(Q\left(x,\cdot\right),Q\left(y,\cdot\right)\right)\right)^{1/2}=\frac{1}{2\cdot\sigma}\cdot\left|x-y\right|.\qedhere
\]
\end{proof}
\begin{proof}[Proof of Proposition~\ref{prop:kappa-upper-small-sigma}]
First, let $\nu_{1}$ be a median of $x_{1}$, the first coordinate
of $x$, under $\pi$, and let $x_{*,1}$ be the mode of the marginal
law of $x_{1}$ under $\pi$, which exists and is unique as a consequence
of Lemma~\ref{lem:smooth_cvx_pres}. Now, define $A=\left\{ x\in\mathsf{E}:x_{1}\geqslant\nu_{1}\right\} $,
so that $\pi\left(A\right)=\frac{1}{2}$. We let $Z\sim\mathcal{N}\left(0,I_{d}\right)$,
and by neglecting the acceptance probability, we obtain the bounds

\begin{align*}
\left(\pi\otimes P\right)\left(A\times A^{\complement}\right) & =\int_{A}\pi\left(\mathrm{d}x\right)P\left(x,A^{\complement}\right)\\
 & =\int_{A}\pi\left(\mathrm{d}x\right)\int_{\mathsf{E}}{\cal N}\left(\mathrm{d}z;0,I_{d}\right)\cdot\min\left\{ 1,\frac{\pi\left(x+\sigma\cdot z\right)}{\pi\left(x\right)}\right\} \cdot{\bf 1}_{A^{\complement}}\left(x+\sigma\cdot z\right)\\
 & \leqslant\int_{A}\pi\left(\mathrm{d}x\right)\mathbb{P}\left(x+\sigma\cdot Z\in A^{\complement}\right)\\
 & =\int_{A}\pi\left(\mathrm{d}x\right)\mathbb{P}\left(x_{1}+\sigma\cdot Z_{1}<\nu_{1}\right)\\
 & =\int_{x_{1}\geqslant\nu_{1}}\pi_{1}\left(\mathrm{d}x_{1}\right)\mathbb{P}\left(x_{1}+\sigma\cdot Z_{1}<\nu_{1}\right),
\end{align*}
where $\pi_{1}$ is the marginal law of $x_{1}$ under $\pi$. Recall
that by Lemma~\ref{lem:smooth_cvx_pres}, $\pi_{1}$ will also be
$m$-strongly log-concave and admit an $L$-smooth potential, and
so we may apply Lemma~\ref{lem:gauss-bd} to control the density
of $\pi_{1}$ as $\pi_{1}\left({\rm d}x_{1}\right)\leqslant\left(\frac{L}{m}\right)^{1/2}\cdot{\cal N}\left({\rm d}x_{1};x_{*,1},m^{-1}\right)$.
Substituting this, we may bound
\begin{align*}
\left(\pi\otimes P\right)\left(A\times A^{\complement}\right) & \leqslant\left(\frac{L}{m}\right)^{1/2}\cdot\int_{x_{1}\geqslant\nu_{1}}{\cal N}\left(\mathrm{d}x_{1};x_{*,1},m^{-1}\right)\mathbb{P}\left(x_{1}+\sigma\cdot Z_{1}<\nu_{1}\right)\\
 & =\left(\frac{L}{m}\right)^{1/2}\cdot\int_{x_{1}\geqslant\nu_{1}}{\cal N}\left(\mathrm{d}x_{1};x_{*,1},m^{-1}\right)\mathbb{P}\left(Z_{1}<\frac{\nu_{1}-x_{1}}{\sigma}\right)\\
 & \leqslant\left(\frac{L}{m}\right)^{1/2}\cdot\int_{x_{1}\geqslant\nu_{1}}{\cal N}\left(\mathrm{d}x_{1};x_{*,1},m^{-1}\right)\exp\left(-\frac{1}{2\cdot\sigma^{2}}\cdot\left(\nu_{1}-x_{1}\right)^{2}\right)\\
 & =\left(\frac{L}{2\pi}\right)^{1/2}\cdot\int_{x_{1}\geqslant\nu_{1}}\exp\left(-\frac{m}{2}\cdot\left(x_{1}-x_{*,1}\right)^{2}-\frac{1}{2\cdot\sigma^{2}}\cdot\left(\nu_{1}-x_{1}\right)^{2}\right){\rm d}x_{1},
\end{align*}
where we have used the Chernoff bound $\mathbb{P}\left(Z_{1}\leqslant-z\right)\leqslant\exp\left(-\frac{1}{2}z^{2}\right)$
for $z>0$, to move from the second to the third line.

Computing directly that
\begin{align*}
\frac{m}{2}\left(x_{1}-x_{*,1}\right)^{2}+\frac{1}{2\cdot\sigma^{2}}\cdot\left(\nu_{1}-x_{1}\right)^{2} & =\frac{1}{2}\cdot\frac{1+m\cdot\sigma^{2}}{\sigma^{2}}\cdot\left(x_{1}-\frac{\nu_{1}+m\cdot\sigma^{2}\cdot x_{*,1}}{1+m\cdot\sigma^{2}}\right)^{2}\\
 & \hspace{3cm}+\frac{m}{2\cdot\left(1+m\cdot\sigma^{2}\right)}\cdot\left(\nu_{1}-x_{*,1}\right)^{2},
\end{align*}
one sees that
\begin{align*}
\, & \pi\otimes P\left(A\times A^{\complement}\right)\leqslant\left(\frac{L}{2\pi}\right)^{1/2}\cdot\int_{x_{1}\geqslant\nu_{1}}\exp\left(-\frac{1}{2}\cdot\frac{1+m\cdot\sigma^{2}}{\sigma^{2}}\cdot\left(x_{1}-\frac{\nu_{1}+m\cdot\sigma^{2}\cdot x_{*,1}}{1+m\cdot\sigma^{2}}\right)^{2}\right.\\
 & \hspace{6cm}\cdot\exp\left(\left.-\frac{m}{2\cdot\left(1+m\cdot\sigma^{2}\right)}\cdot\left(\nu_{1}-x_{*,1}\right)^{2}\right)\right)\,\mathrm{d}x_{1}\\
= & \left(\frac{L\cdot\sigma^{2}}{1+m\cdot\sigma^{2}}\right)^{1/2}\cdot\exp\left(-\frac{m}{2\cdot\left(1+m\cdot\sigma^{2}\right)}\cdot\left(\nu_{1}-x_{*,1}\right)^{2}\right)\\
 & \hspace{6cm}\cdot\int_{x_{1}\geqslant\nu_{1}}{\cal N}\left(\mathrm{d}x_{1};\frac{\nu_{1}+m\cdot\sigma^{2}\cdot x_{*,1}}{1+m\cdot\sigma^{2}},\frac{\sigma^{2}}{1+m\cdot\sigma^{2}}\right)\\
\leqslant & \left(\frac{L\cdot\sigma^{2}}{1+m\cdot\sigma^{2}}\right)^{1/2}\cdot1\cdot1\\
\leqslant & \:L^{1/2}\cdot\sigma.
\end{align*}
Recalling that $\pi\left(A\right)=\pi\left(A^{\complement}\right)=\frac{1}{2}$,
the result follows.
\end{proof}
\begin{proof}[Proof of Proposition~\ref{prop:kappa-upper-big-sigma}]
By $m$-strong convexity of the potential, it holds that
\[
U\left(x+\sigma\cdot z\right)-U\left(x\right)\geqslant\left\langle \nabla U\left(x\right),\sigma\cdot z\right\rangle +\frac{1}{2}\cdot m\cdot\sigma^{2}\cdot\left|z\right|^{2},
\]
and substituting this into (\ref{eq:RWM-alpha-x}) we obtain
\begin{align*}
\alpha\left(x\right) & \leqslant\int{\cal N}\left(\mathrm{d}z;0,I_{d}\right)\cdot\min\left\{ 1,\exp\left(-\left\langle \nabla U\left(x\right),\sigma\cdot z\right\rangle -\frac{1}{2}\cdot m\cdot\sigma^{2}\cdot\left|z\right|^{2}\right)\right\} .
\end{align*}
Applying the inequality $\min\left\{ 1,c\right\} \leqslant c$ establishes
that
\[
\alpha\left(x\right)\leqslant\int{\cal N}\left(\mathrm{d}z;0,I_{d}\right)\cdot\exp\left(-\left\langle \nabla U\left(x\right),\sigma\cdot z\right\rangle -\frac{1}{2}\cdot m\cdot\sigma^{2}\cdot\left|z\right|^{2}\right).
\]
Straightforward computations show that
\begin{align*}
 & {\cal N}\left(\mathrm{d}z;0,I_{d}\right)\cdot\exp\left(-\left\langle \nabla U\left(x\right),\sigma\cdot z\right\rangle -\frac{1}{2}\cdot m\cdot\sigma^{2}\cdot\left|z\right|^{2}\right)=\left(1+m\cdot\sigma^{2}\right)^{-d/2}\\
 & \hspace{1cm}\cdot\exp\left(\frac{1}{2}\cdot\frac{\sigma^{2}\cdot\left|\nabla U\left(x\right)\right|^{2}}{1+m\cdot\sigma^{2}}\right)\cdot{\cal N}\left(\mathrm{d}z;-\frac{\sigma}{1+m\cdot\sigma^{2}}\cdot\nabla U\left(x\right),\frac{1}{1+m\cdot\sigma^{2}}\cdot I_{d}\right),
\end{align*}
which allows us to write
\[
\alpha\left(x\right)\leqslant\left(1+m\cdot\sigma^{2}\right)^{-d/2}\cdot\exp\left(\frac{1}{2}\cdot\frac{\sigma^{2}}{1+m\cdot\sigma^{2}}\cdot\left|\nabla U\left(x\right)\right|^{2}\right).
\]
Now, for $\rho>0$, define the set
\[
B_{\rho}=\left\{ x:\frac{\sigma^{2}}{1+m\cdot\sigma^{2}}\cdot\left|\nabla U\left(x\right)\right|^{2}\leqslant\rho^{2}\right\} ,
\]
which for $\rho$ small enough will have $0<\pi\left(B_{\rho}\right)<\pi\left(B_{\rho}^{\complement}\right)$.
It then follows that
\begin{align*}
\Phi_{P}^{*} & \leqslant\frac{\pi\otimes P\left(B_{\rho}\times B_{\rho}^{\complement}\right)}{\pi\left(B_{\rho}\right)}\\
 & =\int\pi_{B_{\rho}}\left(\mathrm{d}x\right)\cdot P\left(x,B_{\rho}^{\complement}\right)\\
 & \leqslant\int\pi_{B_{\rho}}\left(\mathrm{d}x\right)\cdot P\left(x,\left\{ x\right\} ^{\complement}\right)\\
 & =\int\pi_{B_{\rho}}\left(\mathrm{d}x\right)\cdot\alpha\left(x\right)\\
 & \leqslant\left(1+m\cdot\sigma^{2}\right)^{-d/2}\cdot\exp\left(\frac{1}{2}\cdot\rho^{2}\right),
\end{align*}
and taking an infimum as $\rho\to0^{+}$ gives that $\Phi_{P}\leqslant\left(1+m\cdot\sigma^{2}\right)^{-d/2}$,
as claimed.

For the subsequent remark, compute that
\begin{align*}
\log\Phi_{P}^{*} & \leqslant-\frac{1}{2}\cdot d\cdot\log\left(1+m\cdot\sigma^{2}\right)\\
 & =-\frac{1}{2}\cdot d\cdot\log\left(1+\frac{m}{L}\cdot\frac{\varsigma^{2}}{d^{2\cdot\beta}}\right)\\
 & =-\frac{1}{2}\cdot d^{1-2\cdot\beta}\cdot\left(d^{2\cdot\beta}\cdot\log\left(1+\frac{m}{L}\cdot\frac{\varsigma^{2}}{d^{2\cdot\beta}}\right)\right)\\
 & =-\frac{1}{2}\cdot d^{1-2\cdot\beta}\cdot\left(\frac{m}{L}\cdot\varsigma^{2}+\mathcal{O}\left(d^{-2\cdot\beta}\right)\right)\\
 & =-\frac{1}{2}\cdot d^{1-2\cdot\beta}\cdot\frac{m}{L}\cdot\varsigma^{2}\cdot\left(1+\mathcal{O}\left(d^{-2\cdot\beta}\right)\right)\\
 & \leqslant-\frac{1}{2}\cdot\frac{m}{L}\cdot\varsigma^{2}\cdot d^{1-2\cdot\beta}\cdot\left(1+o\left(1\right)\right).
\end{align*}
Fixing $c\in\left(0,\frac{1}{2}\cdot\frac{m}{L}\cdot\varsigma^{2}\right)$,
it holds for sufficiently large $d$ that $\log\Phi_{P}^{*}\leqslant-c\cdot d^{1-2\cdot\beta}$.
\end{proof}
\begin{proof}[Proof of Theorem~\ref{thm:RWM_mixing}]
From Lemma~\ref{lem:Bounds-on-isoperimetric}, we can write $I_{\pi}\left(p\right)\geqslant\tilde{I}_{\pi}\left(p\right):=m^{1/2}\cdot\left(\varphi_{\gamma}\circ\Phi_{\gamma}^{-1}\right)\left(p\right)$,
which admits the bounds
\[
\tilde{I}_{\pi}\left(p\right)\geqslant C_{\ell}\cdot m^{1/2}\cdot p\cdot\left(\log\frac{1}{p}\right)^{1/2}\qquad p\in\big(0,1/2\big],
\]
and $\tilde{I}_{\pi}\left(\frac{1}{4}\right)=m^{1/2}\cdot C_{{\rm \gamma}}$.
From Theorem~\ref{thm:IP-to-mix},
\begin{align*}
n & \geqslant2+2^{6}\cdot\varepsilon^{-2}\cdot\max\left\{ \log\left(\frac{u_{0}}{4\cdot v_{*}^{-1}}\right),0\right\} \\
 & +2^{8}\cdot\varepsilon^{-2}\cdot\delta^{-2}\cdot\int_{\max\left\{ \min\left\{ 2\cdot u_{0}^{-1},1/4\right\} ,v_{*}/2\right\} }^{1/4}\frac{\xi}{\tilde{I}_{\pi}\left(\xi\right)^{2}}\,{\rm d}\xi\\
 & +2^{4}\cdot\max\left\{ 1,2^{-2}\cdot\delta^{-2}\cdot\tilde{I}_{\pi}\left(\frac{1}{4}\right)^{-2}\right\} \cdot\varepsilon^{-2}\cdot\log\left(\max\left\{ \frac{\min\left\{ u_{0},8\right\} }{\varepsilon_{{\rm Mix}}},1\right\} \right)
\end{align*}
is sufficient, where $v_{*}$ is defined in (\ref{eq:v*_defn}). Additionally,
from Lemma~\ref{lem:close-coupling}, $P$ is $\left(\left|\cdot\right|,\alpha_{0}\cdot\sigma,\frac{1}{2}\cdot\alpha_{0}\right)$-close
coupling. Substituting these values and using the lower bound on $\tilde{I}_{\pi}\left(p\right)$
to upper bound the integrand and lower bound $v_{*}\geqslant v_{\circ}:=\min\left\{ \frac{1}{2},2\cdot\exp\left(-4\cdot C_{\ell}^{-2}\cdot\sigma^{-2}\cdot\alpha_{0}^{-2}\cdot m^{-1}\right)\right\} $
it suffices to take 
\begin{align*}
n & \geqslant2+2^{8}\cdot\alpha_{0}^{-2}\cdot\max\left\{ \log\left(\frac{u_{0}}{4\cdot v_{\circ}^{-1}}\right),0\right\} \\
 & +2^{10}\cdot C_{\ell}^{-2}\cdot\alpha_{0}^{-4}\cdot\sigma^{-2}\cdot m^{-1}\cdot\int_{\max\left\{ \min\left\{ 2\cdot u_{0}^{-1},1/4\right\} ,v_{\circ}/2\right\} }^{1/4}\frac{1}{\xi\cdot\log\left(\frac{1}{\xi}\right)}\,{\rm d}\xi\\
 & +2^{4}\cdot\max\left\{ 1,2^{-2}\cdot C_{\gamma}^{-2}\cdot\alpha_{0}^{-2}\cdot\sigma^{-2}\cdot m^{-1}\right\} \cdot\alpha_{0}^{-2}\cdot\sigma^{-2}\cdot\log\left(\max\left\{ \frac{\min\left\{ u_{0},8\right\} }{\varepsilon_{{\rm Mix}}},1\right\} \right),
\end{align*}
and computing that 
\[
0<a<b<1\implies\quad\int_{a}^{b}\frac{1}{\xi\cdot\log\left(\frac{1}{\xi}\right)}\,{\rm d}\xi=\log\left(\frac{\log\left(1/a\right)}{\log\left(1/b\right)}\right)
\]
provides the bound 
\begin{align*}
n & \geqslant2+2^{8}\cdot\alpha_{0}^{-2}\cdot\max\left\{ \log\left(\frac{u_{0}}{4\cdot v_{\circ}^{-1}}\right),0\right\} \\
 & +2^{10}\cdot C_{\ell}^{-2}\cdot\alpha_{0}^{-4}\cdot\sigma^{-2}\cdot m^{-1}\cdot\log\left(\frac{\log\left(\min\left\{ \max\left\{ \frac{1}{2}\cdot u_{0},4\right\} ,2\cdot v_{\circ}^{-1}\right\} \right)}{\log4}\right)\\
 & +2^{4}\cdot\max\left\{ 1,2^{-2}\cdot C_{\gamma}^{-2}\cdot\alpha_{0}^{-2}\cdot\sigma^{-2}\cdot m^{-1}\right\} \cdot\alpha_{0}^{-2}\cdot\log\left(\max\left\{ \frac{\min\left\{ u_{0},8\right\} }{\varepsilon_{{\rm Mix}}},1\right\} \right).
\end{align*}
By Corollary~\ref{cor:normal-accept-x} and $\exp\left(\varsigma^{2}\right)\cdot\varsigma^{-2}\geqslant\exp\left(1\right)$,
we obtain
\begin{align*}
\max\left\{ 1,2^{-2}\cdot C_{\gamma}^{-2}\cdot\alpha_{0}^{-2}\cdot\sigma^{-2}\cdot m^{-1}\right\}  & \leqslant\max\left\{ 1,C_{\gamma}^{-2}\cdot\exp\left(\varsigma^{2}\right)\cdot\varsigma^{-2}\cdot\kappa\cdot d\right\} \\
 & =C_{\gamma}^{-2}\cdot\exp\left(\varsigma^{2}\right)\cdot\varsigma^{-2}\cdot\kappa\cdot d,
\end{align*}
and simplifying the other terms provides the bound
\begin{align}
n & \geqslant2+2^{10}\cdot\exp\left(\varsigma^{2}\right)\cdot\max\left\{ \log\left(\frac{u_{0}}{4\cdot v_{\circ}^{-1}}\right),0\right\} \nonumber \\
 & +2^{14}\cdot C_{\ell}^{-2}\cdot\exp\left(2\cdot\varsigma^{2}\right)\cdot\varsigma^{-2}\cdot\kappa\cdot d\cdot\log\left(\frac{\log\left(\min\left\{ \max\left\{ \frac{1}{2}\cdot u_{0},4\right\} ,2\cdot v_{\circ}^{-1}\right\} \right)}{\log4}\right)\nonumber \\
 & +2^{6}\cdot C_{\gamma}^{-2}\cdot\exp\left(2\cdot\varsigma^{2}\right)\cdot\varsigma^{-2}\cdot\kappa\cdot d\cdot\log\left(\max\left\{ \frac{\min\left\{ u_{0},8\right\} }{\varepsilon_{{\rm Mix}}},1\right\} \right),\label{eq:rwm-mix-bound-2}
\end{align}
where we may also bound
\begin{align}
v_{\circ}^{-1} & =\max\left\{ 2,\frac{1}{2}\exp\left(4\cdot C_{\ell}^{-2}\cdot\sigma^{-2}\cdot\alpha_{0}^{-2}\cdot m^{-1}\right)\right\} \nonumber \\
 & \leqslant\max\left\{ 2,\frac{1}{2}\cdot\exp\left(16\cdot C_{\ell}^{-2}\cdot\exp\left(\varsigma^{2}\right)\cdot\varsigma^{-2}\cdot\kappa\cdot d\right)\right\} \nonumber \\
 & \leqslant\frac{1}{2}\cdot\exp\left(16\cdot C_{\ell}^{-2}\cdot\exp\left(\varsigma^{2}\right)\cdot\varsigma^{-2}\cdot\kappa\cdot d\right).\label{eq:rwm-mix-bound-2-vo}
\end{align}
For the stated bound, since $v_{\circ}^{-1}\geqslant2$, we have $\frac{u_{0}}{4\cdot v_{\circ}^{-1}}\leqslant u_{0}$,
leading to the bound on the first term. The second term follows from
\begin{align*}
\log\left(\frac{\log\left(\min\left\{ \max\left\{ \frac{1}{2}\cdot u_{0},4\right\} ,2\cdot v_{\circ}^{-1}\right\} \right)}{\log4}\right) & \leqslant\log\left(\frac{\log\left(2\cdot v_{\circ}^{-1}\right)}{\log4}\right)\\
 & \leqslant\log\left(\log\left(2\cdot v_{\circ}^{-1}\right)\right)\\
 & \leqslant\log\left(16\cdot C_{\ell}^{-2}\cdot\varsigma^{-2}\cdot\kappa\cdot d\right)+\varsigma^{2},
\end{align*}
where we have used the bound on $v_{\circ}^{-1}$ to arrive at the
final inequality.
\end{proof}
\begin{proof}[Proof of Lemma~\ref{lem:PCN-continuity-proposal}]
Compute directly that
\[
{\rm KL}\left(Q\left(x,\cdot\right),Q\left(y,\cdot\right)\right)=\frac{1}{2}\cdot\frac{\rho^{2}}{\eta^{2}}\cdot\left|x-y\right|_{\mathsf{C}^{-1}}^{2}.
\]
By Pinsker's inequality, it thus holds that
\[
\left\Vert Q\left(x,\cdot\right)-Q\left(y,\cdot\right)\right\Vert _{{\rm TV}}\leqslant\frac{1}{2}\cdot\frac{\rho}{\eta}\cdot\left|x-y\right|_{\mathsf{C}^{-1}}.\qedhere
\]
\end{proof}
\begin{proof}[Proof of Theorem~\ref{thm:pcn-mixing}]
The proof structure broadly follows that of Theorem~\ref{thm:RWM_mixing};
certain details which are omitted here are spelt out more clearly
in that proof.

By Lemma~\ref{lem:pcn-threeset}, $\pi$ admits the regular, concave
$\left|\cdot\right|_{\mathsf{C}^{-1}}$-isoperimetric minorant $\varphi_{\gamma}\circ\Phi_{\gamma}^{-1}$,
and by Lemma~\ref{lem:close-coupling-pcn}, $P$ satisfies a close
coupling inequality with $\delta=\alpha_{0}\cdot\frac{\eta}{\rho}$,
$\varepsilon=\frac{1}{2}\cdot\alpha_{0}$. By Theorem~\ref{thm:IP-to-mix},
it suffices to take 
\begin{align*}
n & \geqslant2+2^{6}\cdot\varepsilon^{-2}\cdot\max\left\{ \log\left(\frac{u_{0}}{4\cdot v_{\circ}^{-1}}\right),0\right\} \\
 & +2^{8}\cdot\varepsilon^{-2}\cdot\delta^{-2}\cdot\int_{\max\left\{ \min\left\{ 2\cdot u_{0}^{-1},1/4\right\} ,v_{\circ}/2\right\} }^{1/4}\frac{\xi}{\tilde{I}_{\pi}\left(\xi\right)^{2}}\,{\rm d}\xi\\
 & +2^{4}\cdot\max\left\{ 1,2^{-2}\cdot\delta^{-2}\cdot\tilde{I}_{\pi}\left(\frac{1}{4}\right)^{-2}\right\} \cdot\varepsilon^{-2}\cdot\log\left(\max\left\{ \frac{\min\left\{ u_{0},8\right\} }{\varepsilon_{{\rm Mix}}},1\right\} \right).
\end{align*}
Recalling that $\varphi_{\gamma}\circ\Phi_{\gamma}^{-1}\geqslant C_{\ell}\cdot p\cdot\left(\log\frac{1}{p}\right)^{1/2}$
for $p\in\left[0,\frac{1}{2}\right]$, we thus simplify to
\begin{align*}
n & \geqslant2+2^{8}\cdot\alpha_{0}^{-2}\cdot\max\left\{ \log\left(\frac{u_{0}}{4\cdot v_{\circ}^{-1}}\right),0\right\} \\
 & +2^{10}\cdot C_{\ell}^{-2}\cdot\alpha_{0}^{-4}\cdot\frac{\rho^{2}}{\eta^{2}}\cdot\int_{\max\left\{ \min\left\{ 2\cdot u_{0}^{-1},1/4\right\} ,v_{\circ}/2\right\} }^{1/4}\frac{1}{\xi\cdot\log\left(\frac{1}{\xi}\right)}\,{\rm d}\xi\\
 & +2^{6}\cdot\max\left\{ 1,2^{-2}\cdot C_{\gamma}^{-2}\cdot\alpha_{0}^{-2}\cdot\frac{\rho^{2}}{\eta^{2}}\right\} \cdot\alpha_{0}^{-2}\cdot\log\left(\max\left\{ \frac{\min\left\{ u_{0},8\right\} }{\varepsilon_{{\rm Mix}}},1\right\} \right).
\end{align*}
and repeating earlier calculations with the inner integral gives the
bound
\begin{align*}
n & \geqslant2+2^{8}\cdot\alpha_{0}^{-2}\cdot\max\left\{ \log\left(\frac{u_{0}}{4\cdot v_{\circ}^{-1}}\right),0\right\} \\
 & +2^{10}\cdot C_{\ell}^{-2}\cdot\alpha_{0}^{-4}\cdot\frac{\rho^{2}}{\eta^{2}}\cdot\log\left(\frac{\log\left(\min\left\{ \max\left\{ \frac{1}{2}\cdot u_{0},4\right\} ,2\cdot v_{\circ}^{-1}\right\} \right)}{\log4}\right)\\
 & +2^{6}\cdot\max\left\{ 1,2^{-2}\cdot C_{\gamma}^{-2}\cdot\alpha_{0}^{-2}\cdot\frac{\rho^{2}}{\eta^{2}}\right\} \cdot\alpha_{0}^{-2}\cdot\log\left(\max\left\{ \frac{\min\left\{ u_{0},8\right\} }{\varepsilon_{{\rm Mix}}},1\right\} \right).
\end{align*}
where $v_{\circ}:=\min\left\{ \frac{1}{2},2\cdot\exp\left(-4\cdot C_{\ell}^{-2}\cdot\alpha_{0}^{-2}\cdot\frac{\rho^{2}}{\eta^{2}}\right)\right\} $.

We observe that
\begin{align*}
\inf_{\varsigma\in\left(0,L^{1/2}\cdot{\rm Tr}\left(\mathsf{C}\right)^{1/2}\right)}\exp\left(\varsigma^{2}\right)\cdot\varsigma^{-2}\cdot\tilde{\kappa} & =\inf_{\eta\in\left(0,1\right)}\exp\left(\eta^{2}\cdot\tilde{\kappa}\right)\cdot\eta^{-2}\\
 & =\begin{cases}
\exp\left(\tilde{\kappa}\right) & \tilde{\kappa}\leqslant1,\\
\tilde{\kappa}\cdot\exp\left(1\right) & \tilde{\kappa}>1,
\end{cases}\\
 & \geqslant1.
\end{align*}
Using Lemma~\ref{lem:pCN-accept-x}, the bound above and $\rho^{2}\leqslant1$,
we obtain
\begin{align*}
\max\left\{ 1,2^{-2}\cdot C_{\gamma}^{-2}\cdot\alpha_{0}^{-2}\cdot\frac{\rho^{2}}{\eta^{2}}\right\}  & \leqslant\max\left\{ 1,C_{\gamma}^{-2}\cdot\exp\left(\varsigma^{2}\right)\cdot\varsigma^{-2}\cdot\tilde{\kappa}\right\} \\
 & =C_{\gamma}^{-2}\cdot\exp\left(\varsigma^{2}\right)\cdot\varsigma^{-2}\cdot\tilde{\kappa},
\end{align*}
providing the bound
\begin{align*}
n & \geqslant2+2^{10}\cdot\exp\left(\varsigma^{2}\right)\cdot\max\left\{ \log\left(\frac{u_{0}}{4\cdot v_{\circ}^{-1}}\right),0\right\} \\
 & +2^{14}\cdot C_{\ell}^{-2}\cdot\exp\left(2\cdot\varsigma^{2}\right)\cdot\varsigma^{-2}\cdot\tilde{\kappa}\cdot\log\left(\frac{\log\left(\min\left\{ \max\left\{ \frac{1}{2}\cdot u_{0},4\right\} ,2\cdot v_{\circ}^{-1}\right\} \right)}{\log4}\right)\\
 & +2^{6}\cdot C_{\gamma}^{-2}\cdot\exp\left(2\cdot\varsigma^{2}\right)\cdot\varsigma^{-2}\cdot\tilde{\kappa}\cdot\log\left(\max\left\{ \frac{\min\left\{ u_{0},8\right\} }{\varepsilon_{{\rm Mix}}},1\right\} \right),
\end{align*}
where
\begin{align*}
v_{\circ}^{-1} & =\max\left\{ 2,\frac{1}{2}\exp\left(4\cdot C_{\ell}^{-2}\cdot\sigma^{-2}\cdot\alpha_{0}^{-2}\cdot\frac{\rho^{2}}{\eta^{2}}\right)\right\} \\
 & \leqslant\max\left\{ 2,\frac{1}{2}\cdot\exp\left(16\cdot C_{\ell}^{-2}\cdot\exp\left(\varsigma^{2}\right)\cdot\varsigma^{-2}\cdot\tilde{\kappa}\right)\right\} \\
 & \leqslant\frac{1}{2}\cdot\exp\left(16\cdot C_{\ell}^{-2}\cdot\exp\left(\varsigma^{2}\right)\cdot\varsigma^{-2}\cdot\tilde{\kappa}\right).
\end{align*}
The stated bound is obtained by the following observations. Since
$v_{\circ}^{-1}\geqslant2$, $\frac{u_{0}}{4\cdot v_{\circ}^{-1}}\leqslant u_{0}$,
leading to the bound on the first term. The bound on the second term
follows from
\begin{align*}
\log\left(\frac{\log\left(\min\left\{ \max\left\{ \frac{1}{2}\cdot u_{0},4\right\} ,2\cdot v_{\circ}^{-1}\right\} \right)}{\log4}\right) & \leqslant\log\left(\frac{\log\left(2\cdot v_{\circ}^{-1}\right)}{\log4}\right)\\
 & \leqslant\log\left(\log\left(2\cdot v_{\circ}^{-1}\right)\right)\\
 & \leqslant\log\left(16\cdot C_{\ell}^{-2}\cdot\varsigma^{-2}\cdot\tilde{\kappa}\right)+\varsigma^{2},
\end{align*}
where we have used the bound on $v_{\circ}^{-1}$ to arrive at the
final inequality.
\end{proof}
\begin{prop}
\label{prop:spec-to-super}Suppose that the $\pi$-invariant Markov
kernel $P$ satisfies the optimized spectral profile inequality
\[
\mathcal{E}\left(P,g\right)\geqslant\mathrm{Var}_{\pi}\left(g\right)\cdot F\left(\frac{\pi\left(g\right)^{2}}{\mathrm{Var}_{\pi}\left(g\right)}\right)
\]
for all nonnegative, non-constant $\pi$-a.s. $g\in\ELL\left(\pi\right)$,
where $F$ is positive, decreasing and $\lim\sup_{v\to0^{+}}F\left(v\right)>0$
(and may be infinite). Then $P$ also satisfies a super-Poincaré inequality
of the form
\[
\mathrm{Var}_{\pi}\left(f\right)\leqslant s\cdot\mathcal{E}\left(P,f\right)+\beta\left(s\right)\cdot\pi\left(\left|f\right|\right)^{2},
\]
where $\beta$ is positive, decreasing, and can be written explicitly.
\end{prop}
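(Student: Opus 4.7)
The plan is to reduce to the case of nonnegative test functions and then perform a case-split on the optimized spectral-profile inequality.

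First I would reduce to $f\geqslant 0$. For any $f\in\ELL(\pi)$, one has the elementary identity $\mathrm{Var}_\pi(f)=\mathrm{Var}_\pi(|f|)+(\pi(|f|)^{2}-\pi(f)^{2})\leqslant\mathrm{Var}_\pi(|f|)+\pi(|f|)^{2}$, using Jensen. On the Dirichlet-form side, I would write $\mathcal{E}(P,f)=\mathcal{E}(\widehat P,f)=\tfrac12\int\pi({\rm d}x)\widehat P(x,{\rm d}y)\bigl(f(y)-f(x)\bigr)^{2}$, where $\widehat P:=(P+P^{*})/2$ is the $\pi$-reversible symmetrization of $P$; combined with the pointwise contraction $\bigl||f(y)|-|f(x)|\bigr|\leqslant|f(y)-f(x)|$, this gives the standard contractivity $\mathcal{E}(P,|f|)\leqslant\mathcal{E}(P,f)$. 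Thus it suffices to prove the super-Poincaré inequality for $g=|f|$ with a constant $\beta_{0}(s)$ and then set $\beta(s)=1+\beta_{0}(s)$.

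Next, for nonnegative non-constant $g$, I would apply the hypothesis and write $v:=\mathrm{Var}_{\pi}(g)$, $m:=\pi(g)$, so that $\mathcal{E}(P,g)\geqslant v\cdot F(m^{2}/v)$. Given $s>0$, perform the dichotomy: if $F(m^{2}/v)\geqslant 1/s$, then $v\leqslant s\cdot\mathcal{E}(P,g)$; otherwise, since $F$ is decreasing, $m^{2}/v>\rho(s)$ where
\[
\rho(s):=\sup\bigl\{u>0:F(u)\geqslant 1/s\bigr\},
\]
so that $v\leqslant m^{2}/\rho(s)$. In either case $v\leqslant s\cdot\mathcal{E}(P,g)+m^{2}/\rho(s)$, and this yields $\beta_{0}(s)=1/\rho(s)$. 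The hypothesis $\limsup_{u\to 0^{+}}F(u)>0$ (which equals $\sup_{u>0}F(u)$ by monotonicity) guarantees $\rho(s)>0$ for all $s>1/\sup F$, and $\rho$ is increasing in $s$ since $F$ is decreasing; therefore $\beta(s)=1+1/\rho(s)$ is positive and decreasing on its natural domain, as required. The remaining edge cases (namely $g\equiv\text{const}$, or $|f|\equiv\text{const}$ with $f$ not a.e.\ constant) are handled by the trivial bound $\mathrm{Var}_{\pi}(f)\leqslant\pi(|f|)^{2}$, which is compatible with $\beta(s)\geqslant 1$.

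The main obstacle is bookkeeping around the generalized inverse $\rho$: since $F$ is only assumed positive and decreasing (not strictly monotone or continuous), the inverse must be defined through a supremum and one must check positivity of $\rho(s)$ precisely under the stated $\limsup$ hypothesis. The rest — contractivity of the Dirichlet form under $|\cdot|$, monotonicity of $\beta$, and the reduction from general $f$ to $|f|$ — is routine, and so the proof should fit in under a page.
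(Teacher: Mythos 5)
Your proof is correct, but it takes a genuinely different route from the paper's on both of its two steps. For the reduction to nonnegative functions, the paper splits $f=f_{+}-f_{-}$ and invokes $\mathrm{Var}_{\pi}(f)\leqslant2\,(\mathrm{Var}_{\pi}(f_{+})+\mathrm{Var}_{\pi}(f_{-}))$ together with $\mathcal{E}(P,f_{+})+\mathcal{E}(P,f_{-})\leqslant\mathcal{E}(P,f)$ (citing Goel--Montenegro--Tetali), which costs a factor of $2$ in both slots and yields $\beta(s)=2\beta_{0}(\tfrac{1}{2}s)$; your reduction via the exact identity $\mathrm{Var}_{\pi}(f)=\mathrm{Var}_{\pi}(|f|)+\pi(|f|)^{2}-\pi(f)^{2}$ and the contraction $\mathcal{E}(P,|f|)\leqslant\mathcal{E}(P,f)$ (correctly justified through the reversible symmetrization $\widehat{P}=(P+P^{*})/2$, which leaves the Dirichlet form unchanged) avoids those factors and only adds $1$ to $\beta$. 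For the conversion itself, the paper uses a Legendre-type transform $F^{*}(w)=\inf_{v\geqslant0}\{F(v)+w\cdot v\}$ and the duality $F(v)\geqslant\sup_{w}\{F^{*}(w)-w\cdot v\}$, parametrizing $s=1/F^{*}(w)$ and $\beta_{0}(s)=w/F^{*}(w)$, which is the standard form in which super-Poincar\'e dualities appear in the literature following Wang; your threshold dichotomy with the generalized inverse $\rho(s)$ is more elementary and gives an equally explicit $\beta_{0}(s)=1/\rho(s)$. Both parametrizations are effectively restricted to $s$ above roughly $1/\sup F$, so your domain caveat is not a loss relative to the paper. Two trivial points: in your second case the correct conclusion is $m^{2}/v\geqslant\rho(s)$ rather than a strict inequality (the set $\{u:F(u)\geqslant1/s\}$ need not contain its supremum), which changes nothing; and your handling of the degenerate case $|f|$ constant via $\mathrm{Var}_{\pi}(f)\leqslant\pi(|f|)^{2}$ is exactly what the added $+1$ in $\beta$ is for.
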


\begin{proof}
For $w\geqslant0$, define 
\[
F^{*}\left(w\right)=\inf_{v\geqslant0}\left\{ F\left(v\right)+w\cdot v\right\} .
\]
Note that $F^{*}$ is positive, increasing, and $\mathrm{id}/F^{*}$
is increasing. By our assumption that $\lim\sup_{v\to0^{+}}F\left(v\right)>0$,
it also holds for $w>0$ that $F^{*}\left(w\right)>0$. One can then
write
\[
F\left(v\right)\geqslant\sup_{w\geqslant0}\left\{ F^{*}\left(w\right)-w\cdot v\right\} ,
\]
and hence for any $w>0$ and nonnegative, non-constant $g\in\ELL\left(\pi\right)$
that
\begin{align*}
\mathcal{E}\left(P,g\right) & \geqslant\mathrm{Var}_{\pi}\left(g\right)\cdot\left\{ F^{*}\left(w\right)-w\cdot\frac{\pi\left(g\right)^{2}}{\mathrm{Var}_{\pi}\left(g\right)}\right\} \\
\implies\quad\mathrm{Var}_{\pi}\left(g\right) & \leqslant\frac{1}{F^{*}\left(w\right)}\cdot\mathcal{E}\left(P,g\right)+\frac{w}{F^{*}\left(w\right)}\cdot\pi\left(g\right)^{2}.
\end{align*}
Writing $s=\frac{1}{F^{*}\left(w\right)}$ and $\beta_{0}=\left(\mathrm{Id}/F^{*}\right)\circ\left(1/F^{*}\right)$
(which is decreasing), we see for nonnegative $g\in\ELL\left(\pi\right)$
that
\[
\mathrm{Var}_{\pi}\left(g\right)\leqslant s\cdot\mathcal{E}\left(P,g\right)+\beta_{0}\left(s\right)\cdot\pi\left(g\right)^{2}.
\]
Now, let $f\in\ELL\left(\pi\right)$ and write $f_{\pm}=\max\left(\pm f,0\right)\geqslant0$,
so that
\[
\mathrm{Var}_{\pi}\left(f_{\pm}\right)\leqslant s\cdot\mathcal{E}\left(P,f_{\pm}\right)+\beta_{0}\left(s\right)\cdot\pi\left(f_{\pm}\right)^{2}.
\]
Standard calculations give that $\mathrm{Var}_{\pi}\left(f\right)\leqslant2\cdot\left(\mathrm{Var}_{\pi}\left(f_{+}\right)+\mathrm{Var}_{\pi}\left(f_{-}\right)\right)$,
$\mathcal{E}\left(P,f_{+}\right)+\mathcal{E}\left(P,f_{-}\right)\leqslant\mathcal{E}\left(P,f\right)$
\citep[Lemma~2.3]{goel2006mixing}, and $\pi\left(f_{+}\right)^{2}+\pi\left(f_{-}\right)^{2}\leqslant\pi\left(\left|f\right|\right)^{2}$.
Assembling these inequalities yields that
\begin{align*}
\mathrm{Var}_{\pi}\left(f\right) & \leqslant2\cdot\left(\mathrm{Var}_{\pi}\left(f_{+}\right)+\mathrm{Var}_{\pi}\left(f_{-}\right)\right)\\
 & \leqslant2\cdot s\cdot\left(\mathcal{E}\left(P,f_{+}\right)+\mathcal{E}\left(P,f_{-}\right)\right)+2\cdot\beta_{0}\left(s\right)\cdot\left(\pi\left(f_{+}\right)^{2}+\pi\left(f_{-}\right)^{2}\right)\\
 & \leqslant2\cdot s\cdot\mathcal{E}\left(P,f\right)+2\cdot\beta_{0}\left(s\right)\cdot\pi\left(\left|f\right|\right)^{2},
\end{align*}
i.e. that 
\[
\mathrm{Var}_{\pi}\left(f\right)\leqslant s\cdot\mathcal{E}\left(P,f\right)+\beta\left(s\right)\cdot\pi\left(\left|f\right|\right)^{2},
\]
where $\beta:s\mapsto2\cdot\beta_{0}\left(\frac{1}{2}\cdot s\right)$.
The result follows.
\end{proof}

\section{Different proposal distributions for RWM\label{sec:Different-proposal-distributions}}

For simplicity, the analysis of the RWM in Sections~\ref{sec:Spectral-gap-of-RWM}--\ref{sec:Convergence-and-mixing}
involved specifically Gaussian proposal distributions. We extend the
results here to a wide class of proposal distributions with independent
noise increments for each of the $d$ components. That is, we define
\[
Q\left(x,A\right)=\int\mathbf{1}_{A}\left(x+\sigma\cdot z\right)q^{\otimes d}\left(\mathrm{d}z\right),\qquad x\in\mathsf{E},A\in\mathscr{E},
\]
where $q$ is a probability measure on $(\mathbb{R},\mathcal{B}(\mathbb{R}))$
that is symmetric, i.e. $q(A)=q(-A)$ for all $A\in\mathcal{B}(\mathbb{R})$
where $\mathcal{B}(\mathbb{R})$ is the Borel $\sigma$-algebra of
$\mathbb{R}$. We define for $t\in\mathbb{R}$, $q_{t}$ to be the
distribution of $Z+t$ where $Z\sim q$. We define the squared Hellinger
distance between $P$ and $Q$ as
\[
d_{{\rm H}}(P,Q)^{2}=\int\left(\left(\frac{\mathrm{d}P}{\mathrm{d}\lambda}\left(x\right)\right)^{1/2}-\left(\frac{\mathrm{d}Q}{\mathrm{d}\lambda}\left(x\right)\right)^{1/2}\right)^{2}\,\lambda\left(\mathrm{d}x\right),
\]
where $\lambda$ is any common dominating reference measure. This
is a suitable metric to use because of its tensorization properties
and because by Le Cam's inequalities \citep[see, e.g.,][Section~2.4]{Tsybakov:1315296},
it holds that 
\[
\left\Vert P-Q\right\Vert _{{\rm TV}}\in\left[\frac{1}{2}\cdot d_{{\rm H}}(P,Q)^{2},d_{{\rm H}}(P,Q)\right].
\]
Hence, in order to control $\left\Vert P-Q\right\Vert _{{\rm TV}}\in\Theta\left(1\right)$,
it is necessary and sufficient to control $d_{{\rm H}}(P,Q)\in\Theta\left(1\right)$.
\begin{prop}
\label{prop:KL-smooth-xi-bound-close-coupling-1}Let $U$ be a potential
such that 
\begin{equation}
U\left(x+h\right)-U\left(x\right)-\left\langle \nabla U\left(x\right),h\right\rangle \leqslant\sum_{i=1}^{d}\psi\left(\left|h_{i}\right|\right),\label{eq:U-reg-1}
\end{equation}
for some $\psi:\mathbb{R}_{+}\to\mathbb{R}_{+}$ which is nondecreasing
and satisfies $\psi\left(0\right)=0$. Assume there exists $L_{H}>0$
such that
\[
d_{{\rm H}}(q,q_{t})^{2}\leqslant\frac{1}{2}\cdot L_{H}\cdot t^{2}\quad\text{for }t\in\mathbb{R}.
\]
Assume also for $\sigma>0$ that
\[
\int_{\mathbb{R}}q\left(\mathrm{d}z_{1}\right)\cdot\exp\left(-\psi\left(\sigma\cdot\left|z_{1}\right|\right)\right)\geqslant1-\xi\left(\sigma\right)
\]
for some $\xi:\mathbb{R}_{+}\to\mathbb{R}_{+}$ which is continuous,
nondecreasing and satisfies $\xi\left(0\right)=0$. Let $\eta>0$
and define $P_{\eta}$ to be the RWM kernel with proposal scaling
$\sigma$ satisfying $\xi\left(\sigma\right)=d^{-1}\cdot\eta$. It
then holds that
\begin{enumerate}
\item $P_{\eta}$ has minimal acceptance rate $\alpha_{0}\geqslant\frac{1}{2}\cdot\left(1-\frac{\eta}{d}\right)^{d}\geqslant\frac{1}{2}\cdot\exp\left(-\eta\cdot\frac{d}{d-\eta}\right)$.
\item The kernel $P_{\eta}$ is $\left(\left|\cdot\right|,\sigma\cdot\left(2\cdot L_{H}\right)^{-1/2}\cdot\alpha_{0},\frac{1}{2}\cdot\alpha_{0}\right)$-close
coupling.
\end{enumerate}
\end{prop}

\begin{proof}
For the first part, following the same steps in the proof of Lemma~\ref{lem:general-accept-x},
we have 
\begin{align*}
\alpha_{0} & \geqslant\frac{1}{2}\int q^{\otimes d}\left(\mathrm{d}z\right)\cdot\exp\left(-\sum_{i=1}^{d}\psi\left(\sigma\cdot\left|z_{i}\right|\right)\right)\\
 & =\frac{1}{2}\left\{ \int q\left(\mathrm{d}z_{1}\right)\cdot\exp\left(-\psi\left(\sigma\cdot\left|z_{1}\right|\right)\right)\right\} ^{d},
\end{align*}
from which the result follows. For the second part, we may write
\[
\left\Vert Q(x,\cdot)-Q(y,\cdot)\right\Vert _{{\rm TV}}=\left\Vert \bigotimes_{i=1}^{d}q_{\frac{x_{i}-y_{i}}{\sigma}}-\bigotimes_{i=1}^{d}q\right\Vert _{{\rm TV}}.
\]
Moreover, we have in general that
\begin{align*}
1-\frac{1}{2}\cdot d_{{\rm H}}\left(\bigotimes_{i=1}^{d}P_{i},\bigotimes_{i=1}^{d}Q_{i}\right)^{2} & =\prod_{i=1}^{d}\left\{ 1-\frac{1}{2}\cdot d_{{\rm H}}(P_{i},Q_{i})^{2}\right\} \\
 & \geqslant1-\sum_{i=1}^{d}\frac{1}{2}\cdot d_{{\rm H}}(P_{i},Q_{i})^{2}.
\end{align*}
We then see that 
\begin{align*}
\frac{1}{2}\cdot\left\Vert \bigotimes_{i=1}^{d}q_{\frac{x_{i}-y_{i}}{\sigma}}-\bigotimes_{i=1}^{d}q\right\Vert _{{\rm TV}}^{2} & \leqslant\sum_{i=1}^{d}\frac{1}{2}\cdot d_{{\rm H}}(q_{\frac{x_{i}-y_{i}}{\sigma}},q)^{2}\\
 & \leqslant\sum_{i=1}^{d}\frac{L_{H}\cdot\left|x_{i}-y_{i}\right|^{2}}{4\cdot\sigma^{2}}\\
 & =\frac{L_{H}\cdot\left|x-y\right|^{2}}{4\cdot\sigma^{2}},
\end{align*}
so that $\left\Vert Q(x,\cdot)-Q(y,\cdot)\right\Vert _{{\rm TV}}\leqslant\left(\frac{L_{H}}{2}\right)^{1/2}\cdot\frac{\left|x-y\right|}{\sigma}$
in general. In particular, taking $\mathsf{d}\left(x,y\right)=\left|x-y\right|\leqslant\sigma\cdot\left(2\cdot L_{H}\right)^{-1/2}\cdot\alpha_{0}$,
we obtain that $\left\Vert Q(x,\cdot)-Q(y,\cdot)\right\Vert _{{\rm TV}}\leq\frac{1}{2}\alpha_{0}$.
We may then conclude by Lemma~\ref{lem:met-tv-bound}.
\end{proof}
The conditions may be verified in various settings with specific choices
of $q$. If $\psi(x)=x^{\alpha}$ for some $\alpha\in[0,2]$ and $q$
has finite $\alpha$th moment then a bound on $\alpha_{0}$ is straightforward:
if $\sigma=\varsigma\cdot d^{-1/\alpha}$ then by Jensen's inequality,
\[
\alpha_{0}\geqslant\frac{1}{2}\cdot\exp\left(-d\int q\left(\mathrm{d}z\right)\cdot\sigma^{\alpha}\left|z\right|^{\alpha}\right)=\frac{1}{2}\cdot\exp\left(-\varsigma^{\alpha}\int q\left(\mathrm{d}z\right)\left|z\right|^{\alpha}\right).
\]

If $q$ has only a smaller moment, then the next Lemma shows that
it is still possible to obtain a bound. For example, if $\alpha=2$
and $q$ has a finite second moment, we obtain the same scaling $\xi(\sigma)\sim\sigma^{2}$
as for Gaussian proposals. If $q$ has fewer moments, or $U$ is rougher,
the scaling of $\sigma$ with dimension is more severe. For example,
if $q$ is Cauchy and $\alpha=2$ then one can only obtain $s$ arbitrarily
close to $1$ in Lemma~\ref{lem:appC}, and so a scaling of $\xi(\sigma)\sim\sigma$
which leads to scaling $\sigma\sim1/d$. In fact, the calculations
for the Cauchy can be done exactly, such that we have
\[
\int q\left(\mathrm{d}z\right)\cdot\exp\left(-\sigma^{2}\cdot z^{2}\right)=\exp\left(\sigma^{2}\right)\cdot\left\{ 1-\frac{2}{\pi}\int_{0}^{\sigma}\exp\left(-t^{2}\right)\,{\rm d}t\right\} ,
\]
which gives the same scaling.
\begin{lem}
\label{lem:appC}Suppose $\psi(x)=x^{\alpha}$ for some $\alpha\in[0,2]$,
and assume $q$ has finite $r$th moment for some $r>0$. Then
\[
\xi(\sigma)\leq\sigma^{s}\cdot\mathbb{E}_{q}\left[\left|Z\right|^{s}\right],
\]
where $s=\min\{\alpha,r\}$.
\end{lem}

\begin{proof}
Observe that $\exp(-t)\geqslant1-t\geqslant1-t^{\beta}$ for any $\beta\in[0,1]$.
Then
\begin{align*}
\int q\left(\mathrm{d}z\right)\cdot\exp\left(-\psi\left(\sigma\cdot\left|z\right|\right)\right) & \geqslant1-\int q\left(\mathrm{d}z\right)\cdot\psi\left(\sigma\cdot\left|z\right|\right)^{\beta}\\
 & =1-\sigma^{\alpha\beta}\mathbb{E}_{q}\left[\left|Z\right|^{\alpha\beta}\right],
\end{align*}
for $\beta\in[0,1]$. We conclude by taking $\beta=\min\{r/\alpha,1\}$.
\end{proof}
One approach to verifying the Hellinger condition is to follow asymptotic
statistical theory using a non-asymptotic variant of differentiability
in quadratic mean \citep[Section~7.2]{vaart_1998}. In particular,
we will assume that the proposal $q$ satisfies
\begin{equation}
\int\left(q\left(x+t\right)^{1/2}-q\left(x\right)^{1/2}\cdot\left(1+\frac{1}{2}\cdot s\left(x\right)\cdot t\right)\right)^{2}\leqslant\varphi\left(\left|t\right|\right)\quad\text{as }t\to0,\label{eq:dqm-psi}
\end{equation}
for some $s$ which is square-integrable under $q$ and some $\varphi$
which vanishes at least quadratically around $0$. The triangle inequality
then yields
\[
d_{{\rm H}}(q,q_{t})\leqslant\frac{1}{2}\cdot\left|s\right|_{\mathrm{L}^{2}\left(q\right)}\cdot\left|t\right|+\varphi\left(\left|t\right|\right)^{1/2}\in\Theta\left(t\right),
\]
as $t\to0^{+}$; boundedness of the Hellinger distance allows one
to conclude the existence of a suitable $L_{H}$. Such an estimate
should thus hold for all proposals corresponding to `regular' statistical
(location) models; see \citet[Example~7.8]{vaart_1998} for further
discussion. Alternatively, if $\log q$ is differentiable then it
may be convenient to bound $d_{{\rm H}}\left(q,q_{t}\right)$ using
the KL divergence. For example, if $\log q$ is $L_{q}$-smooth and
one defines $K(t)={\rm KL}(q,q_{t})$, we have $K(0)=K'(0)=0$ and
\[
\left|K'(s)\right|=\left|\int q(x)\left\{ \left(\log q\right)'(x-s)-\left(\log q\right)'(x)\right\} \mathrm{d}x\right|\leqslant L_{q}\left|s\right|,
\]
from which we may deduce that $d_{{\rm H}}(q,q_{t})^{2}\leqslant K(t)=\int_{0}^{t}K'(s){\rm d}s\leqslant\frac{1}{2}L_{q}t^{2}$.

Finally, the there remains an additional subtlety concerning the positivity
of the RWM Markov operator $P$, which we use to ensure that the spectral
gap and right spectral gap coincide. The approach of \citet{baxendale2005renewal}
that we have used for normal increments applies quite generally to
increment distributions which are decomposable in a particular sense,
but does not hold in complete generality. Hence one may require alternative
arguments to bound the left spectral gap or consider instead the Markov
chain associated with the operator $P_{{\rm lazy}}=\frac{1}{2}(P+{\rm Id})$,
which is necessarily positive. In particular, translating our main
arguments appropriately would then establish that the same quantitative
results (in terms of how the mixing time scales with $d,\kappa$)
hold for a lazy chain, up to some absolute constant factors.

\subsubsection*{Acknowledgements}

The authors would like to thank Persi Diaconis, the anonymous referees,
an Associate Editor and the Editors for their constructive comments
that improved the quality of this paper.

\subsubsection*{Funding}

CA, AL and AQW were supported by EPSRC grant `CoSInES (COmputational
Statistical INference for Engineering and Security)' (EP/R034710/1).
CA and SP were supported by EPSRC grant Bayes4Health, `New Approaches
to Bayesian Data Science: Tackling Challenges from the Health Sciences'
(EP/R018561/1).

\bibliographystyle{abbrvnat}
\bibliography{bib-rwm}

\end{document}